\let\mathcaltmp\mathcal
\let\mathcal\mathscr
\let\mathscr\mathcaltmp
\newtheoremstyle{plain}
  {.5\baselineskip\@plus.2\baselineskip\@minus.2\baselineskip}
  {0\baselineskip\@plus.2\baselineskip\@minus.2\baselineskip\@plus.5em}
  {\slshape}
  {}
  {\bfseries}
  {.}
  { }
  {}
\newtheoremstyle{definition}
  {.5\baselineskip\@plus.2\baselineskip\@minus.2\baselineskip}
  {0\baselineskip\@plus.2\baselineskip\@minus.2\baselineskip\@plus.5em}
  {}
  {}
  {\bfseries}
  {.}
  { }
  {}
\theoremstyle{plain}
\newcommand{\eqnum}{\refstepcounter{equation}\textup{\tagform@{\theequation}}}
\makeatletter \@addtoreset{equation}{section} \makeatother
\renewcommand{\theequation}{\thesection.\arabic{equation}}
\newtheorem{thmX}{Theorem}
\newtheorem{thm}[subsubsection]{Theorem}
\newtheorem{cor}[subsubsection]{Corollary}
\newtheorem{lem}[subsubsection]{Lemma}
\newtheorem{prop}[subsubsection]{Proposition}
\theoremstyle{definition}
\newtheorem{defn}[subsubsection]{Definition}
\newcommand\arXiv[1]{\href{http://arxiv.org/abs/#1}{arXiv:#1}}
\newcommand{\changelocaltocdepth}[1]{%
  \addtocontents{toc}{\protect\setcounter{tocdepth}{#1}}%
  \setcounter{tocdepth}{#1}}
\newcommand{\nc}{\newcommand}
\nc{\renc}{\renewcommand}
\nc{\ssec}{\subsection}
\nc{\sssec}{\subsubsection}
\nc{\on}{\operatorname}
\nc{\term}[1]{#1\xspace}
\tikzset{
  commutative diagrams/.cd,
  arrow style=tikz,
  diagrams={>=latex}}
\tikzset{
  column sep/.code=\def\pgfmatrixcolumnsep{\pgf@matrix@xscale*(#1)},
  row sep/.code   =\def\pgfmatrixrowsep{\pgf@matrix@yscale*(#1)},
  matrix xscale/.code=%
    \pgfmathsetmacro\pgf@matrix@xscale{\pgf@matrix@xscale*(#1)},
  matrix yscale/.code=%
    \pgfmathsetmacro\pgf@matrix@yscale{\pgf@matrix@yscale*(#1)},
  matrix scale/.style={/tikz/matrix xscale={#1},/tikz/matrix yscale={#1}}}
\def\pgf@matrix@xscale{1}
\def\pgf@matrix@yscale{1}
\setlist[enumerate,1]{label={(\alph*)},itemsep=\parskip}
\setlist[enumerate,2]{label={(\roman*)},itemsep=\parskip}
\setlist[itemize,1]{itemsep=\parskip,leftmargin=2em}
\newlist{enumcompress}{enumerate}{1}
\setlist[enumcompress,1]{label={(\alph*)},
  itemsep=0.3\parskip,leftmargin=*,align=left,topsep=0em}
\newlist{thmlist}{enumerate}{1}
\setlist[thmlist,1]{label={\em(\roman*)},ref={\upshape(\roman*)},
  itemsep=0.5em,leftmargin=*,align=right,widest=vi)}
\newlist{thmlistbis}{enumerate}{1}
\setlist[thmlistbis,1]{label={\em(\roman*~\textit{bis})},
  ref={(\roman*}~\textit{bis}\upshape{)},
  itemsep=0.5em,leftmargin=*,align=right, widest=vi)}
\newlist{defnlist}{enumerate}{2}
\setlist[defnlist,1]{label={(\roman*)},ref={(\roman*)},
  itemsep=0.5em,leftmargin=*,align=right,widest=vi)}
\setlist[defnlist,2]{label={(\alph*)}, ref={(\alph*)}, itemsep=0.75em,
  labelsep=0em,labelindent=0em,leftmargin=*,align=left,widest=vi),topsep=0.75em}
\newlist{inlinelist}{enumerate*}{1}
\setlist[inlinelist,1]{label={(\alph*)}}
\newlist{inlinedefnlist}{enumerate*}{1}
\definecolor{green}{HTML}{38550C}
\setlist[inlinedefnlist,1]{label={\color{green}(\roman*)}}
\nc{\cA}{\ensuremath{\mathcal{A}}\xspace}
\nc{\cB}{\ensuremath{\mathcal{B}}\xspace}
\nc{\cC}{\ensuremath{\mathcal{C}}\xspace}
\nc{\cD}{\ensuremath{\mathcal{D}}\xspace}
\nc{\cE}{\ensuremath{\mathcal{E}}\xspace}
\nc{\cF}{\ensuremath{\mathcal{F}}\xspace}
\nc{\cG}{\ensuremath{\mathcal{G}}\xspace}
\nc{\cH}{\ensuremath{\mathcal{H}}\xspace}
\nc{\cI}{\ensuremath{\mathcal{I}}\xspace}
\nc{\cJ}{\ensuremath{\mathcal{J}}\xspace}
\nc{\cK}{\ensuremath{\mathcal{K}}\xspace}
\nc{\cL}{\ensuremath{\mathcal{L}}\xspace}
\nc{\cM}{\ensuremath{\mathcal{M}}\xspace}
\nc{\cN}{\ensuremath{\mathcal{N}}\xspace}
\nc{\cO}{\ensuremath{\mathcal{O}}\xspace}
\nc{\cP}{\ensuremath{\mathcal{P}}\xspace}
\nc{\cQ}{\ensuremath{\mathcal{Q}}\xspace}
\nc{\cR}{\ensuremath{\mathcal{R}}\xspace}
\nc{\cS}{\ensuremath{\mathcal{S}}\xspace}
\nc{\cT}{\ensuremath{\mathcal{T}}\xspace}
\nc{\cU}{\ensuremath{\mathcal{U}}\xspace}
\nc{\cV}{\ensuremath{\mathcal{V}}\xspace}
\nc{\cW}{\ensuremath{\mathcal{W}}\xspace}
\nc{\cX}{\ensuremath{\mathcal{X}}\xspace}
\nc{\cY}{\ensuremath{\mathcal{Y}}\xspace}
\nc{\cZ}{\ensuremath{\mathcal{Z}}\xspace}
\nc{\sA}{\ensuremath{\mathscr{A}}\xspace}
\nc{\sB}{\ensuremath{\mathscr{B}}\xspace}
\nc{\sC}{\ensuremath{\mathscr{C}}\xspace}
\nc{\sD}{\ensuremath{\mathscr{D}}\xspace}
\nc{\sE}{\ensuremath{\mathscr{E}}\xspace}
\nc{\sF}{\ensuremath{\mathscr{F}}\xspace}
\nc{\sG}{\ensuremath{\mathscr{G}}\xspace}
\nc{\sH}{\ensuremath{\mathscr{H}}\xspace}
\nc{\sI}{\ensuremath{\mathscr{I}}\xspace}
\nc{\sJ}{\ensuremath{\mathscr{J}}\xspace}
\nc{\sK}{\ensuremath{\mathscr{K}}\xspace}
\nc{\sL}{\ensuremath{\mathscr{L}}\xspace}
\nc{\sM}{\ensuremath{\mathscr{M}}\xspace}
\nc{\sN}{\ensuremath{\mathscr{N}}\xspace}
\nc{\sO}{\ensuremath{\mathscr{O}}\xspace}
\nc{\sP}{\ensuremath{\mathscr{P}}\xspace}
\nc{\sQ}{\ensuremath{\mathscr{Q}}\xspace}
\nc{\sR}{\ensuremath{\mathscr{R}}\xspace}
\nc{\sS}{\ensuremath{\mathscr{S}}\xspace}
\nc{\sT}{\ensuremath{\mathscr{T}}\xspace}
\nc{\sU}{\ensuremath{\mathscr{U}}\xspace}
\nc{\sV}{\ensuremath{\mathscr{V}}\xspace}
\nc{\sW}{\ensuremath{\mathscr{W}}\xspace}
\nc{\sX}{\ensuremath{\mathscr{X}}\xspace}
\nc{\sY}{\ensuremath{\mathscr{Y}}\xspace}
\nc{\sZ}{\ensuremath{\mathscr{Z}}\xspace}
\nc{\bA}{\ensuremath{\mathbf{A}}\xspace}
\nc{\bB}{\ensuremath{\mathbf{B}}\xspace}
\nc{\bC}{\ensuremath{\mathbf{C}}\xspace}
\nc{\bD}{\ensuremath{\mathbf{D}}\xspace}
\nc{\bE}{\ensuremath{\mathbf{E}}\xspace}
\nc{\bF}{\ensuremath{\mathbf{F}}\xspace}
\nc{\bG}{\ensuremath{\mathbf{G}}\xspace}
\nc{\bH}{\ensuremath{\mathbf{H}}\xspace}
\nc{\bI}{\ensuremath{\mathbf{I}}\xspace}
\nc{\bJ}{\ensuremath{\mathbf{J}}\xspace}
\nc{\bK}{\ensuremath{\mathbf{K}}\xspace}
\nc{\bL}{\ensuremath{\mathbf{L}}\xspace}
\nc{\bM}{\ensuremath{\mathbf{M}}\xspace}
\nc{\bN}{\ensuremath{\mathbf{N}}\xspace}
\nc{\bO}{\ensuremath{\mathbf{O}}\xspace}
\nc{\bP}{\ensuremath{\mathbf{P}}\xspace}
\nc{\bQ}{\ensuremath{\mathbf{Q}}\xspace}
\nc{\bR}{\ensuremath{\mathbf{R}}\xspace}
\nc{\bS}{\ensuremath{\mathbf{S}}\xspace}
\nc{\bT}{\ensuremath{\mathbf{T}}\xspace}
\nc{\bU}{\ensuremath{\mathbf{U}}\xspace}
\nc{\bV}{\ensuremath{\mathbf{V}}\xspace}
\nc{\bW}{\ensuremath{\mathbf{W}}\xspace}
\nc{\bX}{\ensuremath{\mathbf{X}}\xspace}
\nc{\bY}{\ensuremath{\mathbf{Y}}\xspace}
\nc{\bZ}{\ensuremath{\mathbf{Z}}\xspace}
\nc{\bbA}{\ensuremath{\mathbb{A}}\xspace}
\nc{\bbB}{\ensuremath{\mathbb{B}}\xspace}
\nc{\bbC}{\ensuremath{\mathbb{C}}\xspace}
\nc{\bbD}{\ensuremath{\mathbb{D}}\xspace}
\nc{\bbE}{\ensuremath{\mathbb{E}}\xspace}
\nc{\bbF}{\ensuremath{\mathbb{F}}\xspace}
\nc{\bbG}{\ensuremath{\mathbb{G}}\xspace}
\nc{\bbH}{\ensuremath{\mathbb{H}}\xspace}
\nc{\bbI}{\ensuremath{\mathbb{I}}\xspace}
\nc{\bbJ}{\ensuremath{\mathbb{J}}\xspace}
\nc{\bbK}{\ensuremath{\mathbb{K}}\xspace}
\nc{\bbL}{\ensuremath{\mathbb{L}}\xspace}
\nc{\bbM}{\ensuremath{\mathbb{M}}\xspace}
\nc{\bbN}{\ensuremath{\mathbb{N}}\xspace}
\nc{\bbO}{\ensuremath{\mathbb{O}}\xspace}
\nc{\bbP}{\ensuremath{\mathbb{P}}\xspace}
\nc{\bbQ}{\ensuremath{\mathbb{Q}}\xspace}
\nc{\bbR}{\ensuremath{\mathbb{R}}\xspace}
\nc{\bbS}{\ensuremath{\mathbb{S}}\xspace}
\nc{\bbT}{\ensuremath{\mathbb{T}}\xspace}
\nc{\bbU}{\ensuremath{\mathbb{U}}\xspace}
\nc{\bbV}{\ensuremath{\mathbb{V}}\xspace}
\nc{\bbW}{\ensuremath{\mathbb{W}}\xspace}
\nc{\bbX}{\ensuremath{\mathbb{X}}\xspace}
\nc{\bbY}{\ensuremath{\mathbb{Y}}\xspace}
\nc{\bbZ}{\ensuremath{\mathbb{Z}}\xspace}
\nc{\mrm}[1]{\ensuremath{\mathrm{#1}}\xspace}
\nc{\mfr}[1]{\ensuremath{\mathfrak{#1}}\xspace}
\nc{\mit}[1]{\ensuremath{\mathit{#1}}\xspace}
\nc{\mbf}[1]{\ensuremath{\mathbf{#1}}\xspace}
\nc{\mcal}[1]{\ensuremath{\mathcal{#1}}\xspace}
\nc{\msc}[1]{\ensuremath{\mathscr{#1}}\xspace}
\nc{\sub}{\subseteq}
\nc{\too}{\longrightarrow}
\nc{\hook}{\hookrightarrow}
\nc{\hooklongrightarrow}{\lhook\joinrel\longrightarrow}
\nc{\hooklong}{\hooklongrightarrow}
\nc{\hooklongleftarrow}{\longleftarrow\joinrel\rhook}
\nc{\surj}{\twoheadrightarrow}
\nc{\twoheadlongrightarrow}{\relbar\joinrel\twoheadrightarrow}
\nc{\longrightleftarrows}{\ \raisebox{0.3ex}{\(\mathrel{\substack{\xrightarrow{\rule{1em}{0em}} \\[-1ex] \xleftarrow{\rule{1em}{0em}}}}\)}\ }
\renc{\ge}{\geqslant}
\renc{\geq}{\geqslant}
\renc{\le}{\leqslant}
\renc{\leq}{\leqslant}
\nc{\id}{\mathrm{id}}
\DeclareMathOperator{\Hom}{\on{Hom}}
\nc{\uHom}{\underline{\smash{\Hom}}}
\DeclareMathOperator{\Maps}{\on{Maps}}
\DeclareMathOperator{\End}{\on{End}}
\nc{\uEnd}{\underline{\smash{\End}}}
\nc{\colim}{\varinjlim}
\renc{\lim}{\varprojlim}
\nc{\Cofib}{\on{Cofib}}
\nc{\Fib}{\on{Fib}}
\nc{\initial}{\varnothing}
\nc{\op}{\mathrm{op}}
\DeclareMathOperator*{\fibprod}{\times}
\renc{\setminus}{\smallsetminus}
\DeclarePairedDelimiter\abs{\lvert}{\rvert}%
\newcommand{\thmref}[1]{Theorem~\ref{#1}}
\newcommand{\secref}[1]{Sect.~\ref{#1}}
\newcommand{\ssecref}[1]{Subsect. ~\ref{#1}}
\newcommand{\sssecref}[1]{(\ref{#1})}
\newcommand{\lemref}[1]{Lemma~\ref{#1}}
\newcommand{\propref}[1]{Proposition~\ref{#1}}
\newcommand{\corref}[1]{Corollary~\ref{#1}}
\renewcommand{\eqref}[1]{(\ref{#1})}
\newcommand{\itemref}[1]{\ref{#1}}
\nc{\Spec}{\on{Spec}}
\nc{\bDelta}{\mathbf{\Delta}}
\nc{\Cech}{\textnormal{\v{C}}}
\nc{\Einfty}{{\sE_\infty}}
\nc{\Mod}{{\mrm{Mod}}}
\nc{\Fun}{\on{Fun}}
\nc{\pt}{\mrm{pt}}
\nc{\an}{\mrm{an}}
\nc{\pr}{\mrm{pr}}
\nc{\Lis}{\mrm{Lis}}
\nc{\LisStk}{\underline{\Lis}}
\nc{\aff}{{\mrm{aff}}}
\nc{\dash}{\textnormal{-}}
\nc{\un}{\mbf{1}}
\nc{\Spt}{\mrm{Spt}}
\nc{\Stk}{\mrm{Stk}}
\nc{\Corr}{\on{Corr}}
\nc{\Cat}{\mrm{Cat}}
\nc{\CatL}{\mrm{Cat}^{\mrm{L}}}
\nc{\PrL}{\mrm{Pr}^{\mrm{L}}}
\nc{\PrR}{\mrm{Pr}^{\mrm{R}}}
\nc{\Ex}{\mrm{Ex}}
\nc{\unit}{\mrm{unit}}
\nc{\counit}{\mrm{counit}}
\nc{\Tot}{\on{Tot}}
\nc{\Shv}{\on{Shv}}
\nc{\Sch}{\mrm{Sch}}
\nc{\Asp}{\mrm{Asp}}
\nc{\Top}{\mrm{Top}}
\nc{\lis}{\triangleleft}
\nc{\Ani}{\mrm{Ani}}
\nc{\D}{\on{\mbf{D}}}
\nc{\Nis}{\mrm{Nis}}
\nc{\et}{{\mrm{\acute{e}t}}}
\nc{\sm}{\mrm{sm}}
\nc{\etcov}{\mrm{\acute{e}tcov}}
\nc{\repr}{\mrm{repr}}
\nc{\Spc}{\on{Spc}}
\nc{\Art}{\on{Art}}
\nc{\Pt}{\on{Pt}}
\nc{\inftyCat}{\term{$\infty$-category}}
\nc{\inftyCats}{\term{$\infty$-categories}}
\nc{\inftyGrpd}{\term{$\infty$-groupoid}}
\nc{\inftyGrpds}{\term{$\infty$-groupoids}}
\title{Lisse extensions of weaves\vspace{-2mm}}
\author[A.\,A. Khan]{Adeel A. Khan}
\date{2024-01-07}
\def\l@subsection{\@tocline{2}{0pt}{4pc}{6pc}{}}
\def\l@subsubsection{\@tocline{3}{0pt}{8pc}{8pc}{}}
\begin{document}

\begin{abstract}
  Any sheaf theory on schemes extends canonically to Artin stacks via a procedure called lisse extension.
  In this paper we show that lisse extension preserves the formalism of Grothendieck's six operations: more precisely, the lisse extension of a weave on schemes determines a weave on (higher) Artin stacks.
  The setup is general enough to apply to the stable motivic homotopy category with the six functor formalism of Voevodsky--Ayoub--Cisinski--Déglise, for instance, and is not specific to algebraic geometry: for example, it also applies to sheaves of spectra on topological stacks.
  \vspace{-5mm}
\end{abstract}

\maketitle

\renewcommand\contentsname{\vspace{-1cm}}
\tableofcontents

\parskip 0.5em

\thispagestyle{empty}


\changelocaltocdepth{1}
\section*{Introduction}

In the groundbreaking paper \cite{LiuZheng}, Y.~Liu and W.~Zheng extended the formalism of Grothendieck's six operations on derived categories of étale sheaves from schemes to Artin stacks.
While the technology they developed is not specific to derived categories of étale sheaves, their construction does have certain limitations in its applicability.
In this paper, we will prove a general extension theorem for six functor formalisms.
Some archetypical new examples are as follows:
\begin{enumerate}
  \item \emph{Spectral Betti sheaves}:\label{item:ex/an}
  The sheaf theory $X \mapsto \on{D}_{\mrm{an}}(X; \bS)$ sending a locally of finite type $\bC$-scheme $X$ to the stable \inftyCat of sheaves of spectra on the analytification $X(\bC)$.\footnote{%
    Here $\bS$ denotes the sphere spectrum.
    We could more generally take $X \mapsto \on{D}_{\mrm{an}}(X; R)$ for any $\Einfty$-ring spectrum $R$.
  }

  \item \emph{Motivic sheaves}:\label{item:ex/mot}
  The sheaf theory $X \mapsto \on{D}_{\mrm{mot}}(X; R)$ sending a scheme $X$ to the stable \inftyCat of $R$-linear motivic sheaves on $X$ (see \sssecref{sssec:weavenisex} for what we mean by this), where $R$ is a commutative ring.

  \item \emph{Motivic spectra}:\label{item:ex/sh}
  The sheaf theory $X \mapsto \on{SH}(X)$ sending a scheme $X$ to the stable \inftyCat of motivic spectra on $X$.
\end{enumerate}

In example~\itemref{item:ex/an}, the difficulty is that the unit object (the constant sheaf on the sphere spectrum $\bS$) does not lie in the heart of the cohomological t-structure on $\D(X)$; the fact that this problem does not arise in the context of derived categories of étale sheaves is exploited in \cite{LiuZheng} to bypass some major homotopy coherence issues.

In examples~\itemref{item:ex/mot} and \itemref{item:ex/sh}, there is a further orthogonal difficulty: these sheaf theories do not satisfy étale descent.\footnote{%
  unless $R$ is not a $\bQ$-algebra, in example~\itemref{item:ex/mot}
}
In particular, there is no descent along the \v{C}ech nerve of an atlas $X \surj \sX$ of an Artin stack $\sX$ by a scheme $X$.

While still relying essentially on some foundational machinery developed in \cite{LiuZheng} (as distilled conveniently in \cite[\S A.5]{Mann}), we give a different construction of the six operations on Artin stacks which circumvents both of the above-mentioned difficulties.
Given a sheaf theory $X \mapsto \D(X)$ on schemes, define the \emph{lisse extension} $\D^{\lis}$ by the limit of \inftyCats
\begin{equation}\label{eq:prjvcpda}
  \D^{\lis}(\sX) := \lim_{(S,s)} \D(S)
\end{equation}
for an Artin stack $\sX$, where the limit is taken over the \inftyCat $\Lis_\sX$ of pairs $(S, s : S \to \sX)$ where $S$ is a scheme and $s$ a smooth morphism; the transition functors are $*$-inverse image.
We would like to show that when the categories $\D(X)$ admit the six operations as $X$ varies over schemes, the \inftyCats $\D^{\lis}(\sX)$ still admit the six operations as $\sX$ varies over Artin stacks.

To state the result, we use the language of \emph{weaves} introduced in \cite{weaves}.

\begin{thmX}
  Let $\D^*_!$ be a weave on the category of schemes.
  \begin{thmlist}
    \item
    If $\D^*_!$ satisfies étale descent\footnote{%
      meaning that the underlying presheaf $\D^* : (\Sch)^\op \to \Cat$ satisfies étale descent
    }, then there exists an essentially unique extension of $\D^*_!$ to a weave $\D^{\lis,*}_!$ on the \inftyCat of (higher) Artin stacks such that the underlying presheaf $\D^{\lis,*}$ is the lisse extension of $\D^*$.

    \item
    If $\D^*_!$ satisfies Nisnevich descent, then there exists an essentially unique extension of $\D^*_!$ to a weave $\D^{\lis,*}_!$ on the \inftyCat of (higher) $\Nis$-Artin stacks such that the underlying presheaf $\D^{\lis,*}$ is the lisse extension of $\D^*$.
  \end{thmlist}
\end{thmX}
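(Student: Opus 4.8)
The plan is to construct the extended weave by induction on the Artin level, using smooth descent at each stage and feeding the verified input into the abstract extension machinery of \cite{LiuZheng} (in the form distilled in \cite[\S A.5]{Mann}), all packaged through the weave formalism of \cite{weaves}. The base case is the given weave $\D^*_!$ on schemes. For the inductive step I would use that every $(n{+}1)$-Artin stack $\sX$ admits a smooth surjection $p \colon X \to \sX$ from an $n$-Artin stack, exhibiting $\sX$ as the quotient of the smooth groupoid $X \times_\sX X \rightrightarrows X$. The lisse extension formula \eqref{eq:prjvcpda} already supplies the underlying symmetric monoidal $*$-presheaf $\D^{\lis,*}$; what remains to be built is the exceptional pushforward together with the full coherent package of base change and projection formulas carried by a weave.

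The first step would be to show that $\D^{\lis,*}$ satisfies descent along smooth surjections, so that $\D^{\lis}(\sX) \simeq \Tot\bigl(\D^{\lis}(X_\bullet)\bigr)$ for the \Cech{} nerve $X_\bullet$ of an atlas. This is exactly where the descent hypothesis enters: a smooth surjection of schemes acquires a section after an étale base change, so étale descent for $\D^*$ makes the relevant augmented cosimplicial diagram split and thereby forces smooth descent; in case (ii) the identical argument applies once one restricts to $\Nis$-Artin stacks, whose atlases are arranged so that sections exist after a Nisnevich base change. Granting smooth descent, I would obtain from the scheme-level datum, by descent, a left adjoint $p_\#$ to smooth pullback $p^*$ (including for non-representable smooth morphisms such as the structure map of a classifying stack), and relative purity would then define $p_! \simeq p_\#$ twisted by the relative dualizing object, while proper representable morphisms receive $f_! \simeq f_*$. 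A general morphism of Artin stacks would be reduced, smooth-locally on source and target, to representable morphisms and structure maps of stacks, with its exceptional pushforward assembled from these by descent.

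The principal obstacle I expect is coherence. Specifying $f_!$ on generators and checking base change and the projection formula on homotopy categories is routine; the real difficulty, flagged in the introduction for examples~\itemref{item:ex/an}--\itemref{item:ex/sh}, is to assemble these into a single weave with all higher homotopies present simultaneously. My strategy would be to never construct these homotopies by hand: I would phrase each inductive step as an instance of the abstract descent statement for weaves, so that the Liu--Zheng machinery manufactures the coherent extension formally from the verified input (existence of $p_\#$, smooth and proper base change, and the projection formula against smooth and proper maps). The delicate point is that this input must be compatible across \emph{all} smooth charts at once rather than along a single atlas, which is precisely why \eqref{eq:prjvcpda} is a limit over the entire \inftyCat $\Lis_\sX$; this is also what renders the construction insensitive to the failure of étale descent in examples~\itemref{item:ex/mot} and~\itemref{item:ex/sh}.

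Finally, for essential uniqueness I would argue formally. Given any weave on Artin stacks that extends $\D^*_!$ and whose underlying presheaf is the lisse extension, its $*$-part is fixed, its smooth exceptional pushforwards are then forced by the adjunction $p_\# \dashv p^*$ together with relative purity, and its remaining exceptional pushforwards are forced by smooth descent along atlases. Thus the restriction functor from such weaves to weaves on schemes has contractible fibers, which is the asserted essential uniqueness. The two cases differ only in the geometric context — (higher) Artin stacks with the étale topology versus (higher) $\Nis$-Artin stacks with the Nisnevich topology — for which the descent step of the second paragraph is valid.
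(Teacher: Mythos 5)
Your proposal follows essentially the same route as the paper: lisse extension of the underlying presheaf, smooth descent obtained by splitting the \v{C}ech nerve after an étale (resp.\ Nisnevich) base change, induction on the Artin level feeding into the Liu--Zheng--Mann abstract extension machinery to manufacture the coherences, Poincaré duality to pin down $f_!$ for smooth non-representable morphisms, and uniqueness extracted formally from the universal properties of the extension steps. The one caveat is that what you compress into ``verified input'' --- checking the smooth and proper axioms compatibly across \emph{all} smooth charts at once, via a double induction on representability of source and target --- is where the bulk of the paper's technical work actually lies, though your plan correctly identifies both the mechanism and why it is needed.
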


See Subsects.~\ref{ssec:weaveschet} and \ref{ssec:weaveschnis}.
Let us point out that the weave $\D^{\lis,*}_!$ in particular incorporates Poincaré duality isomorphisms for all smooth morphisms of Artin stacks (see \corref{cor:poinc}).

We have evidently bypassed the second difficulty mentioned above by simply modifying our notion of Artin stack.
The definition of \emph{$\Nis$-Artin} requires the existence of an atlas $X \surj \sX$ which is smooth and admits \emph{Nisnevich}-local sections.\footnote{%
  whereas in the usual notion of Artin stacks the atlas is required to be smooth and surjective, or equivalently smooth and admitting \emph{étale}-local sections
}
See \cite[0.2.2]{equilisse} for background on $\Nis$-Artin stacks.

In fact, we work in an axiomatic geometric context that applies not only to Artin stacks in algebraic geometry, but also to the analogues of Artin stacks on topological spaces, complex-analytic spaces, and so on.
For example, we study the example of sheaves of spectra on topological stacks in \secref{sec:shvtop}.
See \corref{cor:weave} for the abstract extension theorem.

\subsection{Conventions and notation}

\subsubsection{}

$\Ani$ denotes the \inftyCat of anima (a.k.a. \inftyGrpds or homotopy types).

\subsubsection{}

$\Cat$ denotes the \inftyCat of (large) \inftyCats, $\CatL$ denotes the \inftyCat of (large) \inftyCats and left-adjoint functors, $\PrL$ denotes the \inftyCat of presentable \inftyCats and left-adjoint functors, and $\PrR$ denotes the \inftyCat of presentable \inftyCats and right-adjoint functors.

\subsubsection{}

Let $\cC$ be an \inftyCat.
Given a presheaf of \inftyCats denoted $\D^* : \cC^\op \to \Cat$, we adopt the following notation:
\begin{itemize}
  \item For every object $X \in \cC$, $\D(X)$ denotes the \inftyCat $\D^*(X)$.
  \item For every morphism $f : X \to Y$ in $\cC$, $f^* : \D(Y) \to \D(X)$ denotes the induced functor $\D^*(f)$.
  \item Suppose $\D^*$ takes values in $\CatL$.
  Then for every morphism $f : X \to Y$ in $\cC$, $f^*$ admits a right adjoint $f_*$.
\end{itemize}

Analogously, given a presheaf denoted $\D^! : \cC^\op \to \Cat$, we write instead $f^! : \D(Y) \to \D(X)$ for the functor induced by a morphism $f : X \to Y$ in $\cC$.

Suppose $\cC$ admits finite products and $\D^*$ is lax symmetric monoidal with respect to the cartesian product on $\cC$.
Then we have:
\begin{itemize}
  \item 
  The tautological commutative comonoid structure on any object $X \in \cC$ gives rise to a symmetric monoidal structure on $\D(X)$.
  
  \item
  For any morphism $f : X \to Y$ in $\cC$, the functor $f^*$ inherits a symmetric monoidal structure.
  
  \item
  If $\D^*$ takes values in $\CatL$, then for every $X \in \cC$ the symmetric monoidal structure on $\D(X)$ is \emph{closed}.
  Thus there is an internal hom bifunctor $\uHom_{\D(X)}(-, -) : \D(X)^\op \times \D(X) \to \D(X)$ for every $X$ in $\cC$, right adjoint to the bifunctor $(-) \otimes (-) : \D(X) \times \D(X) \to \D(X)$.
\end{itemize}

\subsubsection{}

Let $\cC$ be an \inftyCat with fibred products.
By \emph{a class of morphisms} in $\cC$ we will mean a subcategory $\cE \sub \cC$ which contains all isomorphisms.

We say that $\cE$ is \emph{closed under base change} if for any morphism $f : X \to Y$ in $\cE$ and any morphism $Y' \to Y$ in $\cC$, the base change $f' : X\fibprod_Y Y' \to Y'$ belongs to $\cE$.

We say that $\cE$ is \emph{closed under sections} if for any morphism $f : X \to Y$ belonging to $\cE$, any section $i : Y \to X$ (so that $f \circ i \simeq \id$) also belongs to $\cE$.

Note that if $\cE$ is closed under base change and sections, and $f : X \to Y$ belongs to $\cE$, then the diagonal $\Delta_f : X \to X\fibprod_Y X$ also belongs to $\cE$.
Similarly, it follows that $\cE$ is closed under two-of-three.

\subsection{Acknowledgments}

I would like to thank Marc Hoyois, Tasuki Kinjo, Hyeonjun Park, Charanya Ravi, and Pavel Safronov for discussions and/or encouragement to finally write this paper.

\changelocaltocdepth{2}

\section{Weaves}

\subsection{Correspondences}

\subsubsection{}

Let $\cE$ be a class of morphisms in $\cC$ which is closed under base change.
Given two objects $X_1$ and $X_2$ in $\cC$, an \emph{$\cE$-correspondence}\footnote{%
  or simply \emph{correspondence}, when $\cE$ is clear from context
} from $X_1$ to $X_2$ in $\cC$ is a diagram of the form
\begin{equation}\label{eq:corr}
  \begin{tikzcd}
    X_{1,2} \ar{r}{g}\ar{d}{f}
    & X_2
    \\
    X_1
  \end{tikzcd}
\end{equation}
where $g$ belongs to $\cE$.

\subsubsection{}

Given a class of morphisms $\cE$ which is closed under base change, we let $\Corr_\cE(\cC)$ denote the symmetric monoidal \inftyCat of $\cE$-correspondences in $\cC$, defined as in \cite[\S 5.1]{GaitsgoryIndCoh}, \cite[Chaps.~7 and~9]{GaitsgoryRozenblyum}, or \cite[\S 6.1]{LiuZheng}.

Objects of $\Corr_\cE(\cC)$ are objects of $\cC$.
Given objects $X_1$ and $X_2$, morphisms $X_1 \to X_2$ in $\Corr_\cE(\cC)$ are $\cE$-correspondences from $X_1$ to $X_2$.
Given objects $X_1$, $X_2$ and $X_3$, and morphisms $X_1\to X_2$ and $X_2\to X_3$ given by $\cE$-correspondences
\begin{equation*}
  \begin{tikzcd}
    X_{1,2} \ar{d}{f_1}\ar{r}{g_1}
    & X_2,
    \\
    X_1
  \end{tikzcd}
  \qquad
  \begin{tikzcd}
    X_{2,3} \ar{d}{f_2}\ar{r}{g_2}
    & X_3,
    \\
    X_2
  \end{tikzcd}
\end{equation*}
respectively, the composite $(X_2 \to X_3) \circ (X_1 \to X_2)$ is given by the $\cE$-correspondence
\begin{equation*}
  \begin{tikzcd}
    X_{1,2,3} \ar{r}{g_1'}\ar{d}{f_2'}
    & X_{2,3} \ar{d}{f_2}\ar{r}{g_2}
    & X_3,
    \\
    X_{1,2} \ar{r}{g_1}\ar{d}{f_1}
    & X_2,
    & 
    \\
    X_1
    &
    & 
  \end{tikzcd}
\end{equation*}
where $X_{1,2,3}$ is the fibred product of $X_{1,2}$ and $X_{2,3}$ over $X_2$.

The monoidal product on $\Corr_\cE(\cC)$ is the cartesian product.

\subsubsection{}

There are canonical functors
\begin{equation}\label{eq:CtoCorr}
  \cC^\op \to \Corr_\cE(\cC),
  \quad \cE \to \Corr_\cE(\cC)
\end{equation}
given by identity on objects and by sending morphisms $f : X \to Y$ in $\cC$, resp. $g : X \to Y$ in $\cE$, to the correspondences
\begin{equation*}
  \begin{tikzcd}
    X \ar{d}{f}\ar[equals]{r}
    & X,
    \\
    Y
  \end{tikzcd}
  \qquad\text{resp.}~
  \begin{tikzcd}
    X \ar[equals]{d}\ar{r}{g}
    & Y,
    \\
    X
  \end{tikzcd}
\end{equation*}
respectively.
The functor $\cC^\op \to \Corr_\cE(\cC)$ is symmetric monoidal.

\subsection{Preweaves}

\subsubsection{}
\label{sssec:leftpreweave}

Let $\cC$ be an \inftyCat with fibred products, and $\cE$ a class of morphisms in $\cC$ which is closed under base change.

\begin{defn}
  A \emph{left preweave} on $(\cC,\cE)$\footnote{%
    or simply ``on $\cC$'', when $\cE$ is clear from context
  } is a lax symmetric monoidal functor
  \begin{equation}\label{eq:D^*_!}
    \D^*_! : \Corr_\cE(\cC) \to \Cat,
  \end{equation}
  with respect to the cartesian monoidal structure on $\Cat$.
\end{defn}

Given a left preweave $\D^*_!$ on $(\cC,\cE)$, we will say that a morphism in $\cC$ is \emph{shriekable} if it belongs to $\cE$.

\begin{defn}
  A \emph{preweave} $\D_*^!$ is a left preweave such that the functor $\Corr(\cC) \to \Cat$ factors through the \inftyCat $\CatL$ of \inftyCats and left adjoint functors.
  By passage to right adjoints, $\D^*_!$ determines a functor
  \[ \D_*^! : \Corr_\cE(\cC)^\op \to \Cat. \]
\end{defn}

\subsubsection{}

Given a left preweave $\D^*_!$ on $(\cC,\cE)$, we obtain by restriction along the functors \eqref{eq:CtoCorr} a lax symmetric monoidal functor
\begin{equation}
  \D^* : \cC^\op \to \Cat
\end{equation}
and a functor
\begin{equation}
  \D_! : \cE \to \Cat.
\end{equation}

If $\D^*_!$ is a preweave, we also obtain functors
\begin{equation}
  \D_* : \cC \to \Cat
\end{equation}
and
\begin{equation}
  \D^! : \cE^\op \to \Cat.
\end{equation}

\subsubsection{}

Given a left preweave $\D^*_!$ on $(\cC,\cE)$, the functor \eqref{eq:D^*_!} associates with every object $X \in \cC$ an \inftyCat $\D(X)$.

Any object $X$ in $\cC$ admits a tautological commutative comonoid structure with respect to the cartesian product, which gives rise to a commutative monoid structure on $\D(X)$ in $\Cat$, i.e., to a symmetric monoidal structure on the \inftyCat $\D(X)$.
If $\D^*_!$ is a preweave, this is a closed symmetric monoidal structure: there is an internal hom bifunctor $\uHom_{\D(X)}(-, -) : \D(X)^\op \times \D(X) \to \D(X)$ for every $X$ in $\cC$.

For every morphism $f : X \to Y$ in $\cC$, we have a symmetric monoidal functor $f^* : \D(Y) \to \D(X)$.
If $\D^*_!$ is a preweave, this admits a right adjoint $f_*$.

For every shriekable morphism $f : X \to Y$, we have a functor $f_! : \D(X) \to \D(Y)$ which is $\D(X)$-linear, where the $\D(X)$-module structure on $\D(Y)$ is via the symmetric monoidal functor $f^*$.
In particular, there is a canonical isomorphism
\begin{equation}
  \Ex^{\otimes,*}_! : f_!(-) \otimes (-) \simeq f_!(- \otimes f^*(-))
\end{equation}
called the \emph{projection formula}.
If $\D^*_!$ is a preweave, $f_!$ admits a right adjoint $f^!$.

For every cartesian square
\begin{equation}\label{eq:paterfamiliar}\begin{tikzcd}
  X' \ar{r}{g}\ar{d}{p}
  & Y' \ar{d}{q}
  \\
  X \ar{r}{f}
  & Y,
\end{tikzcd}\end{equation}
the functor $\D^*_!$ encodes a canonical isomorphism
\begin{equation}\label{eq:drawbore}
  \Ex^*_! : q^*f_! \simeq g_!p^*
\end{equation}
called the \emph{base change isomorphism}.

\subsubsection{}

We say that a preweave $\D^*_!$ is \emph{presentable} if every \inftyCat $\D(X)$ is presentable for every $X \in \cC$.
Equivalently, $\D^*_!$ takes values in $\PrL$.

\subsection{Smooth axioms}

\subsubsection{}
\label{sssec:smooth axioms}

Suppose given a left preweave on $(\cC, \cE)$.
Let $f : X \to Y$ be a morphism in $\cC$ and form, for every morphism $q: Y' \to Y$ in $\cC$, the cartesian square
\begin{equation}
  \begin{tikzcd}
    X' \ar{r}{g}\ar{d}{p}
    & Y' \ar{d}{q}
    \\
    X \ar{r}{f}
    & Y.
  \end{tikzcd}
\end{equation}
Consider the following conditions:
\begin{defnlist}[label={(Sm\arabic*)}, ref={(Sm\arabic*)}]
  \item\label{item:Sm1}
  The functor $g^*$ admits a left adjoint $g_\sharp$.

  \item\label{item:Sm2}
  The functor $g_\sharp$ satisfies the projection formula.
  That is, the exchange transformation $\Ex^{\otimes,*}_\sharp : g_\sharp(- \otimes g^*(-)) \to g_\sharp(-) \otimes (-)$ \eqref{eq:projectionshp} is invertible.

  \item\label{item:Sm3}
  The functor $f_\sharp$ commutes with $*$-inverse image.
  That is, the exchange transformation $\Ex^*_\sharp : g_\sharp p^* \to q^*f_\sharp$ \eqref{eq:Ex_sharp^*} is invertible.

  \item\label{item:Sm4}
  The functor $f_\sharp$ commutes with (shriekable) $!$-direct image.
  That is, if $q : Y' \to Y$ is shriekable, then the exchange transformation $\Ex_{\sharp,!} : f_\sharp p_! \to q_! g_\sharp$ is invertible.

  \item\label{item:Sm5}
  For any shriekable section $i : Y' \to X'$ of $g : X' \to Y'$, the functor $g_\sharp i_!$ is an equivalence.
\end{defnlist}
We say that $\D^*_!$ \emph{admits $\sharp$-direct image} for $f$ if all these conditions hold.
Given a lax symmetric monoidal functor $\D^* : \cC^\op \to \Cat$, we say that $\D^*$ admits $\sharp$-direct image for $f$ if the conditions \itemref{item:Sm1}, \itemref{item:Sm2}, and \itemref{item:Sm3} hold.\footnote{%
  Alternatively, $\D^*$ may be regarded as a left preweave $\D^*_!$ on $(\cC, \cC^{\mrm{iso}})$ where $\cC^{\mrm{iso}}$ is the subcategory of isomorphisms.
}

\subsection{Proper axioms}

Suppose given a left preweave on $(\cC, \cE)$.
Let $f : X \to Y$ be a morphism in $\cC$ and form, for every morphism $q: Y' \to Y$ in $\cC$, the cartesian square
\begin{equation}
  \begin{tikzcd}
    X' \ar{r}{g}\ar{d}{p}
    & Y' \ar{d}{q}
    \\
    X \ar{r}{f}
    & Y.
  \end{tikzcd}
\end{equation}
Consider the following conditions:
\begin{defnlist}[label={(Pr\arabic*)}, ref={(Pr\arabic*)}]
  \item\label{item:Pr1}
  The functor $g^*$ admits a right adjoint $g_*$.\footnote{%
    If $\D^*_!$ is a preweave, this condition should be interpreted in $\CatL$ rather than $\Cat$; i.e., the right adjoint $g_*$ is also required to be colimit-preserving.
  }

  \item\label{item:Pr2}
  The functor $g_*$ satisfies the projection formula.
  That is, the exchange transformation $\Ex^{\otimes,*}_* : g_*(-) \otimes (-) \to g_*(- \otimes g^*(-))$ \eqref{eq:projection*} is invertible.

  \item\label{item:Pr3}
  The functor $f_*$ commutes with $*$-inverse image.
  That is, the exchange transformation $\Ex^*_* : q^* f_* \to g_* p^*$ \eqref{eq:Ex_*^*} is invertible.

  \item\label{item:Pr4}
  The functor $f_*$ commutes with (shriekable) $!$-direct image.
  That is, the exchange transformation $\Ex_{!,*} : q_!g_* \to f_*p_!$ \eqref{eq:Ex_!*} is invertible.

  \item\label{item:Pr5}
  For any shriekable section $i : Y' \to X'$ of $g : X' \to Y'$, the functor $g_* i_!$ is an equivalence.
\end{defnlist}
We say that $\D^*_!$ \emph{admits $*$-direct image} for $f$ if all these conditions hold.
Given a lax symmetric monoidal functor $\D^* : \cC^\op \to \Cat$, we say that $\D^*$ admits $*$-direct image for $f$ if the conditions \itemref{item:Pr1}, \itemref{item:Pr2}, and \itemref{item:Pr3} hold.

\subsection{Weaves}

\subsubsection{}
\label{sssec:Csmpr}

Suppose given the following data:
\begin{itemize}
  \item $\cC$ is an \inftyCat with fibred products.
  \item $\cE$ is a class of morphisms in $\cC$ which is closed under base change.
  \item $\cC^{\sm}$ is a class of morphisms in $\cC$ which is closed under base change and contained in $\cE$.
  \item $\cC^{\pr}$ is a class of morphisms in $\cC$ which is closed under base change and contained in $\cE$.
\end{itemize}

We say a morphism is \emph{``smooth''} or \emph{``proper''} if it belongs to $\cC^\sm$ or $\cC^\pr$, respectively.
The quotation marks are part of the terminology here, but we will omit them when we are in an abstract context where there is no risk of confusion.

We say a morphism is \emph{shriekable} if it belongs to $\cE$.

\subsubsection{}

A (left) \emph{weave} on $(\cC,\cE,\cC^\sm,\cC^\pr)$ is a (left) preweave $\D^*_!$ on $(\cC,\cE)$ which admits $\sharp$-direct image for every smooth morphism in $\cC$ and $*$-direct image for every proper morphism in $\cC$.

\section{Poincaré duality and descent}

Let $\cC$ be an \inftyCat with fibred products, and $\cE$ a class of morphisms in $\cC$ which is closed under base change.
We let $\D^*_!$ be a left preweave on $(\cC,\cE)$.

\subsection{Poincaré duality}
\label{ssec:Poinc}

\subsubsection{}

Given a morphism $f : X \to Y$ in $\cC$ satisfying \itemref{item:Sm1} with shriekable diagonal, we set
\begin{equation}
  \Sigma_f := \pr_{2,\sharp} \Delta_! : \D(X) \to \D(X)
\end{equation}
where $\Delta : X \to X \fibprod_Y X$ is the diagonal and $\pr_2 : X \fibprod_Y X \to X$ is the second projection.

If $f$ satisfies \itemref{item:Sm2}, then $\Sigma_f$ is $\D(X)$-linear endofunctor of $\D(X)$, i.e., we have $\Sigma_f(-) \simeq (-) \otimes \Sigma_f(\un_X)$.
If $f$ moreover satisfies \itemref{item:Sm5}, then the object $\Sigma_f(\un_X) \in \D(X)$ is $\otimes$-invertible.

\begin{prop}\label{prop:fshptwist}
  If $\D^*_!$ satisfies \itemref{item:Sm1} and \itemref{item:Sm4} for a shriekable morphism $f : X \to Y$, then there is a canonical isomorphism $f_\sharp \simeq f_! \Sigma_f$.
\end{prop}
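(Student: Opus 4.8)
The plan is to obtain $\Sigma_f$ from a single application of the base-change compatibility \itemref{item:Sm4}, applied to the square obtained by base-changing $f$ along itself. First I would form the cartesian square
\[
  \begin{tikzcd}
    X \fibprod_Y X \ar{r}{\pr_2}\ar{d}{\pr_1}
    & X \ar{d}{f}
    \\
    X \ar{r}{f}
    & Y,
  \end{tikzcd}
\]
in which both projections are base changes of the shriekable morphism $f$, hence themselves shriekable and satisfying \itemref{item:Sm1}. This is exactly the shape occurring in \itemref{item:Sm4} with the outer morphism $q$ taken to be $f$ itself; the hypothesis that $f$ is shriekable is precisely what licenses this choice of $q$, since \itemref{item:Sm4} requires $q$ to be shriekable.

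Applying \itemref{item:Sm4} to this square yields a canonical isomorphism $\Ex_{\sharp,!} : f_\sharp \pr_{1,!} \xrightarrow{\ \sim\ } f_! \pr_{2,\sharp}$ of functors $\D(X \fibprod_Y X) \to \D(Y)$. I would then precompose both sides with $\Delta_! : \D(X) \to \D(X \fibprod_Y X)$, where $\Delta$ is the shriekable diagonal used to define $\Sigma_f$. On the right this produces $f_! \pr_{2,\sharp} \Delta_! = f_! \Sigma_f$ by the very definition of $\Sigma_f$. On the left, functoriality of $\D_! : \cE \to \Cat$ along the composable pair $\Delta, \pr_1 \in \cE$ identifies $\pr_{1,!} \Delta_! \simeq (\pr_1 \circ \Delta)_!$, and since $\pr_1 \circ \Delta = \id_X$ this is the identity functor. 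Chaining these identifications gives the desired $f_\sharp \simeq f_! \Sigma_f$.

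The computation is essentially formal once \itemref{item:Sm4} is in hand, so I do not expect a genuine obstacle in the manipulation itself; it is worth noting that none of \itemref{item:Sm2}, \itemref{item:Sm3}, \itemref{item:Sm5} enter the argument, only \itemref{item:Sm1} (to make sense of $f_\sharp$ and $\Sigma_f$) and \itemref{item:Sm4}. The one point deserving care is coherence in the \inftyCat setting: I must ensure that the equivalence $\pr_{1,!} \Delta_! \simeq \id$ is the canonical one supplied by the functoriality datum of $\D_!$, rather than some ad hoc equivalence, and that precomposition with $\Delta_!$ is compatible with the exchange transformation $\Ex_{\sharp,!}$ in the expected way. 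Since both the exchange transformation and the composition isomorphism are packaged into the functor $\D^*_!$ on correspondences and are not constructed by hand, the resulting isomorphism $f_\sharp \simeq f_! \Sigma_f$ is automatically canonical, which also justifies the word ``canonical'' in the statement.
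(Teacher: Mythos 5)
Your argument is correct and coincides with the paper's own proof: the same cartesian square $X \fibprod_Y X \rightrightarrows X \to Y$, the same invocation of \itemref{item:Sm4} with $q = f$ (licensed by $f$ being shriekable) to invert $\Ex_{\sharp,!} : f_\sharp \pr_{1,!} \to f_! \pr_{2,\sharp}$, and the same precomposition with $\Delta_!$ using $\pr_1 \circ \Delta \simeq \id$. Your closing remarks on coherence and on which axioms are actually used are accurate and add nothing that conflicts with the paper.
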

\begin{proof}
  Form the cartesian square
  \[\begin{tikzcd}
    X\fibprod_YX \ar{d}{\pr_1}\ar{r}{\pr_2}
    & X \ar{d}{f}
    \\
    X \ar{r}{f}
    & Y.
  \end{tikzcd}\]
  By \itemref{item:Sm4}, the exchange transformation $\Ex_{\sharp,!} : f_\sharp \pr_{1,!} \to f_! \pr_{2,\sharp}$ is invertible.
  Applying $\Delta_!$ on the right yields the canonical isomorphism
  \[
    f_\sharp
    \simeq f_\sharp \pr_{1,!} \Delta_!
    \xrightarrow{\Ex_{\sharp,!}} f_! \pr_{2,\sharp} \Delta_!
    \simeq f_! \Sigma_f
  \]
  asserted.
\end{proof}

\subsubsection{}

Passing to right adjoints from \propref{prop:fshptwist} yields:

\begin{cor}[Poincaré duality]\label{cor:poinc}
  If $\D^*_!$ satisfies \itemref{item:Sm1}, \itemref{item:Sm4} and \itemref{item:Sm5} for a shriekable morphism $f : X \to Y$, then there is a canonical isomorphism
  \begin{equation}
    f^! \simeq \Sigma_f f^* \simeq f^*(-) \otimes \Sigma_f(\un_X).
  \end{equation}
  In particular, the object $f^!(\un_Y) \simeq \Sigma_f(\un_X)$ is $\otimes$-invertible.
\end{cor}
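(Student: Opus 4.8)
The plan is to deduce the statement from \propref{prop:fshptwist} by passing to right adjoints in the canonical isomorphism $f_\sharp \simeq f_! \Sigma_f$. This isomorphism is available since we assume \itemref{item:Sm1} and \itemref{item:Sm4}, and we work with a preweave, so the three functors appearing in it all admit right adjoints. Indeed, $f_\sharp$ is right adjoint-paired with $f^*$ by \itemref{item:Sm1}; the functor $f_!$ admits the right adjoint $f^!$ by the definition of a preweave; and $\Sigma_f = \pr_{2,\sharp}\Delta_!$ is, by construction, of the precise shape $g_\sharp i_!$ occurring in \itemref{item:Sm5}, with $g = \pr_2 : X\fibprod_Y X \to X$ (a base change of the shriekable $f$, hence shriekable) and $i = \Delta$ its section (shriekable, as is required for $\Sigma_f$ to be defined at all). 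Hence \itemref{item:Sm5} asserts exactly that $\Sigma_f$ is an \emph{equivalence}, so its right adjoint is its inverse $\Sigma_f^{-1}$.

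Next I would invoke the functoriality of the formation of right adjoints, so that the isomorphism of \propref{prop:fshptwist} transports to a canonical isomorphism of right adjoints. Writing $f_\sharp = f_! \circ \Sigma_f$ (first $\Sigma_f$, then $f_!$), the right adjoint of the composite is the composite of the right adjoints in the reverse order, namely $\Sigma_f^{-1} \circ f^!$. Comparing with the right adjoint $f^*$ of $f_\sharp$ yields the canonical isomorphism
\[
  f^* \simeq \Sigma_f^{-1}\, f^!,
\]
and applying $\Sigma_f$ on the left gives $f^! \simeq \Sigma_f f^*$, the first asserted isomorphism.

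The remaining assertions are then formal. The second isomorphism $\Sigma_f f^* \simeq f^*(-) \otimes \Sigma_f(\un_X)$ is the $\D(X)$-linearity of $\Sigma_f$ recorded before \propref{prop:fshptwist} (this is where \itemref{item:Sm2} enters). For the final clause, evaluate at the unit: since $f^*$ is symmetric monoidal, $f^*(\un_Y) \simeq \un_X$, whence $f^!(\un_Y) \simeq \Sigma_f(\un_X)$; and $\Sigma_f(\un_X)$ is $\otimes$-invertible, as was observed above using \itemref{item:Sm2} together with the fact (here supplied by \itemref{item:Sm5}) that $\Sigma_f$ is an equivalence.

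The one step that is not a mere rearrangement of adjunctions already in hand is the passage to right adjoints itself: one must ensure that it sends the natural isomorphism of \propref{prop:fshptwist} to a natural isomorphism, rather than only producing objectwise inverses. This is where I would appeal to the $\infty$-categorical functoriality of right adjoints (via the relative adjoint / straightening formalism), applied to the diagram of functors underlying \propref{prop:fshptwist}. Everything else—the existence of the three right adjoints, and the identification of $\Sigma_f$ as an invertible endofunctor—has already been arranged by the hypotheses \itemref{item:Sm1}, \itemref{item:Sm2}, \itemref{item:Sm4}, and \itemref{item:Sm5}.
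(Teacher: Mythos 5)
Your proposal is correct and is exactly the paper's argument: the paper's entire proof is ``passing to right adjoints from Proposition~\ref{prop:fshptwist}'', and you have simply spelled out the details (that $f_\sharp$, $f_!$, $\Sigma_f$ have right adjoints $f^*$, $f^!$, $\Sigma_f^{-1}$ respectively, the last because \itemref{item:Sm5} applied to $\pr_2$ and its section $\Delta$ says $\Sigma_f$ is an equivalence). Your parenthetical observation that \itemref{item:Sm2} is what underlies the second isomorphism $\Sigma_f f^* \simeq f^*(-)\otimes\Sigma_f(\un_X)$ is also accurate, and if anything is a point the paper's one-line proof glosses over.
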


\subsubsection{}

The following criterion is useful for checking \itemref{item:Sm5}.

\begin{lem}\label{lem:checkSm5}
  If $\D^*_!$ satisfies \itemref{item:Sm1}, \itemref{item:Sm2} and \itemref{item:Sm4} for a shriekable morphism $f : X \to Y$, then it satisfies \itemref{item:Sm5} for $f$ if and only if the object $\Sigma_g(\un_{X'}) \in \D(X')$ is $\otimes$-invertible for every base change $g : X' \to Y'$ of $f$ along a morphism $q: Y' \to Y$ in $\cC$.
\end{lem}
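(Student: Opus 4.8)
The plan is to reduce both implications to a single computation of the composite $g_\sharp i_!$ attached to a shriekable section $i : Y' \to X'$ of a base change $g : X' \to Y'$ of $f$. The crucial inputs are available for every such $g$: since \itemref{item:Sm1} and \itemref{item:Sm4} hold, \propref{prop:fshptwist} gives $g_\sharp \simeq g_! \Sigma_g$; and since \itemref{item:Sm2} holds, $\Sigma_g$ is $\D(X')$-linear, i.e. $\Sigma_g(-) \simeq (-) \otimes \Sigma_g(\un_{X'})$. Note that \itemref{item:Sm1}, \itemref{item:Sm2}, \itemref{item:Sm4} are each stated for $f$ together with its base changes, so they apply verbatim to any base change $g$.

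For the ``if'' direction I would assume $\Sigma_g(\un_{X'})$ is $\otimes$-invertible for every base change $g$ and compute, for a shriekable section $i$ of $g$,
\[
  g_\sharp i_!
  \simeq g_! \Sigma_g i_!
  \simeq g_!\bigl(i_!(-) \otimes \Sigma_g(\un_{X'})\bigr)
  \simeq g_! i_!\bigl((-) \otimes i^*\Sigma_g(\un_{X'})\bigr)
  \simeq (-) \otimes i^*\Sigma_g(\un_{X'}),
\]
using in turn \propref{prop:fshptwist}, the $\D(X')$-linearity of $\Sigma_g$, the projection formula for the shriekable morphism $i$, and the identity $g_! i_! \simeq (g i)_! = \id$ coming from $g \circ i \simeq \id_{Y'}$. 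Since $i^*$ is symmetric monoidal it carries the $\otimes$-invertible object $\Sigma_g(\un_{X'})$ to a $\otimes$-invertible object, so $g_\sharp i_!$ is tensoring with an invertible object and hence an equivalence; this is exactly \itemref{item:Sm5}.

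For the ``only if'' direction I would specialize to the diagonal. For a base change $g$, the second projection $\pr_2 : X' \fibprod_{Y'} X' \to X'$ is a base change of $g$ (hence of $f$), and the diagonal $\Delta : X' \to X' \fibprod_{Y'} X'$ is a shriekable section of $\pr_2$; by definition $\Sigma_g = \pr_{2,\sharp}\Delta_!$ is precisely the functor $g'_\sharp i_!$ appearing in \itemref{item:Sm5} for $g' = \pr_2$ and $i = \Delta$. Hence \itemref{item:Sm5} forces $\Sigma_g$ to be an equivalence, and by the $\D(X')$-linearity from \itemref{item:Sm2} this means tensoring with $\Sigma_g(\un_{X'})$ is an equivalence, i.e. $\Sigma_g(\un_{X'})$ is $\otimes$-invertible.

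The points requiring care are bookkeeping rather than a genuine obstacle: that each of \itemref{item:Sm1}, \itemref{item:Sm2}, \itemref{item:Sm4} descends to base changes of $f$ (immediate, since these axioms already quantify over base changes), and that the diagonal of $g$ is shriekable, so that $\Sigma_g$ is defined and \itemref{item:Sm5} applies to it. The substantive step is the one-line computation in the ``if'' direction, where the projection formula for $i_!$ together with $g_! i_! \simeq \id$ collapses $g_\sharp i_!$ to tensoring by the pulled-back twist $i^*\Sigma_g(\un_{X'})$.
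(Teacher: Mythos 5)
Your proof is correct and follows essentially the same route as the paper: the sufficiency direction is the identical computation (Proposition~\ref{prop:fshptwist} plus the projection formulas for $g_\sharp$ and $i_!$ and $g\circ i\simeq\id$ collapsing $g_\sharp i_!$ to tensoring with $i^*\Sigma_g(\un_{X'})$), merely applied to a general object rather than reduced to the unit first, and your necessity argument via the diagonal section of $\pr_2$ is exactly what the paper leaves implicit behind ``necessity is clear.''
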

\begin{proof}
  Necessity is clear.
  Conversely, let $q: Y' \to Y$ be a morphism, $g : X' \to Y'$ the base change of $f$, and $i : Y' \to X'$ a shriekable section of $g$.
  The claim is that $g_\sharp i_!$ is invertible, or equivalently that $g_\sharp i_!(\un_{X'})$ is $\otimes$-invertible (since $g_\sharp$ and $i_!$ satisfy the projection formula, the former by \itemref{item:Sm2}).
  By \propref{prop:fshptwist}, we compute:
  \begin{equation*}
    g_\sharp i_!(\un_{X'})
    \simeq g_! (i_! (\un_{X'}) \otimes \Sigma_g(\un_{X'}))
    \simeq g_! (i_! i^* \Sigma_g(\un_{X'}))
    \simeq i^* \Sigma_g(\un_{X'})
  \end{equation*}
  using the projection formula for $g_\sharp$ \itemref{item:Sm2}, the projection formula for $i_!$, and that $g\circ i \simeq \id$ by assumption.
  Since $i^*$ is symmetric monoidal, it follows that if $\Sigma_g(\un_{X'}) \in \D(X')$ is $\otimes$-invertible, then so is $g_\sharp i_!(\un_{X'})$.
\end{proof}

\subsection{Descent for presheaves}

\subsubsection{}

Let $\cC$ be an \inftyCat with fibred products and finite coproducts, and $\cV$ an \inftyCat admitting totalizations.
Let $F : \cC^\op \to \cV$ be a $\cV$-valued presheaf of \inftyCats.
Let $(f_\alpha : Y_\alpha \to X)_\alpha$ be a collection of morphisms in $\cC$.

We say that $F$ satisfies \emph{\v{C}ech descent} along $(f_\alpha)_\alpha$ if the canonical morphism
\begin{equation}
  F(X) \to \Tot(F(\widetilde{Y}_\bullet))
\end{equation}
is invertible, where $\widetilde{Y}_\bullet$ is the \v{C}ech nerve of $\widetilde{Y} := \coprod_\alpha Y_\alpha \to X$.

When $F$ is \emph{radditive}\footnote{%
  cf. \cite{VoevodskyRadditive}
}, i.e., sends finite coproducts in $\cC$ to products in $\cV$, this amounts to the condition that the following is a limit diagram in $\cV$:
\[
  F(X)
  \to \prod_\alpha F(Y_\alpha)
  \rightrightarrows \prod_{\alpha,\beta} F(Y_{\alpha,\beta})
  \rightrightrightarrows \prod_{\alpha,\beta,\gamma} F(Y_{\alpha,\beta,\gamma})
  \rightrightrightrightarrows \cdots,
\]
where $Y_{\alpha_1,\ldots,\alpha_n} := Y_{\alpha_1} \fibprod_X \cdots \fibprod_X Y_{\alpha_n}$ for any subset of indices $\alpha_1,\ldots,\alpha_n$.

\subsubsection{}

Let $\cC^{\sm}$ be a class of morphisms in $\cC$ which is closed under base change.
Morphisms belonging to $\cC^{\sm}$ will be called \emph{smooth}.

\begin{lem}\label{lem:desc*}
  Let $\D^* : \cC^\op \to \Cat$ be a radditive presheaf of \inftyCats on $\cC$.
  Suppose that for every smooth morphism $f$ in $\cC$, $\D^*$ satisfies \itemref{item:Sm1} and \itemref{item:Sm3} for $f$ and $f^*$ admits a right adjoint $f_*$.
  Let $(f_\alpha : Y_\alpha \to X)_\alpha$ be a finite collection of smooth morphisms in $\cC$ and write
  $$f_{\alpha_1,\ldots,\alpha_n} : Y_{\alpha_1,\ldots,\alpha_n} \to X$$
  for any subset of indices $\alpha_1,\ldots,\alpha_n$.
  Then the following conditions are equivalent:
  \begin{thmlist}[leftmargin=3em,widest={(ii.b)}]
    \item\label{item:desc*/cat}
    The presheaf $\D^*$ satisfies \v{C}ech descent along $(f_\alpha : Y_\alpha \to X)_\alpha$.
  \end{thmlist}
  \begin{thmlist}[start=2,label={\em(\roman*.a)},ref={\upshape(\roman*.a)},leftmargin=3em,widest={(ii.b)}]
    \item\label{item:desc*/sheaf}
    For every $\sF \in \D(X)$ the following is a colimit diagram in $\D(X)$:
    \[
      \cdots
      \rightrightrightarrows \bigoplus_{\alpha,\beta} f_{\alpha,\beta,\sharp}f_{\alpha,\beta}^*(\sF)
      \rightrightarrows \bigoplus_\alpha f_{\alpha,\sharp}f_\alpha^*(\sF)
      \to \sF.
    \]
  \end{thmlist}
  \begin{thmlist}[start=2,label={\em(\roman*.b)},ref={\upshape(\roman*.b)},leftmargin=3em,widest={(ii.b)}]
    \item\label{item:desc*/sheafb}
    For every $\sF \in \D(X)$ the following is a limit diagram in $\D(X)$:
    \[
      \sF
      \to \prod_\alpha f_{\alpha,*}f_\alpha^*(\sF)
      \rightrightarrows \prod_{\alpha,\beta} f_{\alpha,\beta,*}f_{\alpha,\beta}^*(\sF)
      \rightrightrightarrows
      \cdots
    \]
  \end{thmlist}
  \begin{thmlist}[start=3,leftmargin=3em,widest={(ii.b)}]
    \item\label{item:desc*/cons}
    The family of functors $(f_\alpha^*)_\alpha$ is jointly conservative.
  \end{thmlist}
\end{lem}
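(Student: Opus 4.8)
The plan is to collapse the finite family into a single morphism and then recognize each of the four conditions as an avatar of the conservativity of one pullback functor, exploiting the two-sided adjunction $p_\sharp \dashv p^* \dashv p_*$. First I would set $\widetilde Y := \coprod_\alpha Y_\alpha$ with structure map $p : \widetilde Y \to X$. Radditivity gives $\D(\widetilde Y) \simeq \prod_\alpha \D(Y_\alpha)$, under which $p^* = (f_\alpha^*)_\alpha$; by \itemref{item:Sm1} and the standing hypothesis that each $f_\alpha^*$ has a right adjoint, $p^*$ acquires a left adjoint $p_\sharp \simeq \bigoplus_\alpha f_{\alpha,\sharp}$ and a right adjoint $p_* \simeq \prod_\alpha f_{\alpha,*}$. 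Under $\D(\widetilde Y) \simeq \prod_\alpha \D(Y_\alpha)$, condition \itemref{item:desc*/cons} is exactly the statement that $p^*$ is conservative, so the goal becomes to show that each of \itemref{item:desc*/cat}, \itemref{item:desc*/sheaf}, \itemref{item:desc*/sheafb} is equivalent to ``$p^*$ is conservative''.

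Next I would identify the two explicit diagrams with (co)bar resolutions. Every face map in the \v{C}ech nerve $\widetilde Y_\bullet$ is a base change of some $f_\alpha$, hence smooth, and every component $Y_{\alpha_0\cdots\alpha_n} \to X$ is smooth; in particular all the functors in sight carry both adjoints and \itemref{item:Sm3} is available throughout. Using \itemref{item:Sm3} and composition of smooth maps I would check that
\[
  (p_\sharp p^*)^{n+1} \simeq \bigoplus_{\alpha_0,\ldots,\alpha_n} f_{\alpha_0\cdots\alpha_n,\sharp}\, f_{\alpha_0\cdots\alpha_n}^*,
\]
so that the augmented simplicial object of \itemref{item:desc*/sheaf} is the bar resolution $\lvert (p_\sharp p^*)^{\bullet+1}\sF \rvert \to \sF$ of the comonad $p_\sharp p^*$ on $\D(X)$. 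Taking the mate of the exchange isomorphism of \itemref{item:Sm3} in the all-smooth square $Y_\alpha \fibprod_X Y_\beta$ — legitimate precisely because all four maps there are smooth and therefore carry the adjoints $(-)_*$ — yields the dual base change $f_\beta^* f_{\alpha,*} \simeq \pr_{\beta,*}\pr_\alpha^*$, and hence $(p_* p^*)^{n+1} \simeq \prod_{\alpha_0,\ldots,\alpha_n} f_{\alpha_0\cdots\alpha_n,*}\, f_{\alpha_0\cdots\alpha_n}^*$, exhibiting the cosimplicial object of \itemref{item:desc*/sheafb} as the cobar resolution $\sF \to \Tot (p_* p^*)^{\bullet+1}\sF$ of the monad $p_* p^*$.

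With these identifications the categorical arguments are short. For conservativity $\Rightarrow$ \itemref{item:desc*/sheaf}: since $p^*$ has the right adjoint $p_*$ it preserves all colimits, so the $\infty$-categorical Barr--Beck--Lurie theorem makes $p^*$ monadic and presents every object as the realization of its bar resolution, which is \itemref{item:desc*/sheaf}; dually, since $p^*$ has the left adjoint $p_\sharp$ it preserves all limits, so $p^*$ is comonadic and the cobar reconstruction is \itemref{item:desc*/sheafb}. The usual ``preserves split (co)simplicial (co)limits'' hypothesis is free here precisely because of the adjoint on the opposite side. For the converses, if $u$ is a morphism of $\D(X)$ with $p^* u$ invertible, then $(p_\sharp p^*)^{\bullet+1}u$ (resp.\ $(p_* p^*)^{\bullet+1}u$) is a levelwise equivalence, and \itemref{item:desc*/sheaf} (resp.\ \itemref{item:desc*/sheafb}) presents $u$ as the corresponding (co)limit, forcing $u$ to be invertible. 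Finally, \itemref{item:Sm3} supplies Beck--Chevalley along $p$, so by the equivalence between effective descent and comonadicity (the machinery of \cite{LiuZheng} as distilled in \cite[\S A.5]{Mann}) condition \itemref{item:desc*/cat} is equivalent to comonadicity of $p^*$, i.e.\ to its conservativity; this also realizes the descent diagram as the cobar complex of $p^* p_*$, so \itemref{item:desc*/cat} and \itemref{item:desc*/sheafb} literally coincide.

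I expect the main obstacle to be the bookkeeping of the identification step: matching the face and degeneracy maps of the \v{C}ech nerve with the counit/comultiplication and unit of the relevant (co)monad, so that the diagrams written in \itemref{item:desc*/sheaf} and \itemref{item:desc*/sheafb} are genuinely the abstract (co)bar resolutions and not merely termwise isomorphic to them. Once that coherence is pinned down, the rest is a routine application of Barr--Beck--Lurie in which the two-sided adjoint removes the only nontrivial hypothesis.
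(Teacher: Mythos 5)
Your proposal is correct and takes essentially the same route as the paper: in both, radditivity collapses the family to the single morphism $p : \widetilde{Y} = \coprod_\alpha Y_\alpha \to X$, and the crux is that \itemref{item:Sm3} (and its mate for the right adjoints $f_*$) supplies the Beck--Chevalley/adjointability conditions needed to run Lurie's monadicity and cosimplicial-descent machinery, reducing everything to conservativity of $p^*$. The only organizational difference is that the paper deduces (ii.a) and (ii.b) formally from (i) -- as invertibility of the (co)unit of $F_\sharp \dashv F^* \dashv F_*$ once $F^*$ is an equivalence -- and reserves the Barr--Beck-type argument for (iii) $\Rightarrow$ (i), whereas you prove (iii) $\Rightarrow$ (ii.a) and (iii) $\Rightarrow$ (ii.b) directly as (co)bar resolutions; both routes rest on the same adjointability input.
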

\begin{proof}
  The implications \itemref{item:desc*/cat} $\Rightarrow$ \itemref{item:desc*/sheaf} and \itemref{item:desc*/cat} $\Rightarrow$ \itemref{item:desc*/sheafb} follow from \ssecref{ssec:limstar} and \ssecref{ssec:colimshp}, respectively.
  It is obvious that \itemref{item:desc*/sheaf} $\Rightarrow$ \itemref{item:desc*/cons} and \itemref{item:desc*/sheafb} $\Rightarrow$ \itemref{item:desc*/cons}.
  It remains to show \itemref{item:desc*/cons} $\Rightarrow$ \itemref{item:desc*/cat}.
  By additivity, the claim is that the functor
  \[
    F^* : \D(X)
    \to \Tot(\D(\widetilde{Y}_\bullet))
    \simeq \Tot\Big(\prod_{\alpha_1,\ldots,\alpha_\bullet} \D(Y_{\alpha_1,\ldots,\alpha_\bullet})\Big),
  \]
  as in \eqref{eq:F^*}, is an equivalence, where $\widetilde{Y} = \coprod_{\alpha} Y_\alpha$ and $\widetilde{Y}_\bullet$ is its \v{C}ech nerve.
  For this we apply the dual of \cite[Cor.~4.7.5.3]{LurieHA}, whose conditions are verified in view of our assumption that for every smooth morphism $f$, $f^*$ preserves limits and colimits and that $f_*$ commutes with $*$-inverse image along smooth morphisms (by the dual of \itemref{item:Sm3}).
  The conclusion is that the right adjoint $F_*$ \eqref{eq:F_*} is fully faithful, and that $F^*$ is an equivalence if and only if the functors $f_\alpha^*$ are jointly conservative.
\end{proof}

\subsection{Descent for preweaves}

\subsubsection{}

Assume that $\cC$ admits finite coproducts.
A left preweave $\D^*_!$ is \emph{radditive} if $\D^*$ and $\D^!$ are radditive.

\begin{prop}\label{prop:descweave}
  Let $\D^*_!$ be a preweave on $(\cC,\cE)$ which is radditive.
  Suppose that $\D^*_!$ admits $\sharp$-direct image for every smooth shriekable morphism $f$ in $\cC$.
  Let $(f_\alpha : Y_\alpha \to X)_\alpha$ be a finite collection of smooth shriekable morphisms in $\cC$.
  Then the following conditions are equivalent:
  \begin{thmlist}
    \item\label{item:descweave/cons*}
    The family $(f_\alpha^*)_\alpha$ is jointly conservative.
    \item\label{item:descweave/cech*}
    The presheaf $\D^* : \cC^\op \to \Cat$ satisfies\footnote{%
      If $\D(X)$ is presentable for every $X$, then by \cite[Prop.~5.5.3.13]{LurieHTT} this is equivalent to the same statement for $\D^* : \cC^\op \to \PrL$.
    } \v{C}ech descent along $(f_\alpha)_\alpha$.
    \item\label{item:descweave/cons!}
    The family $(f_\alpha^!)_\alpha$ is jointly conservative.
    \item\label{item:descweave/cech!}
    The presheaf $\D^! : \cC^\op \to \Cat$ satisfies\footnote{%
      If $\D(X)$ is presentable for every $X$, then by \cite[Thm.~5.5.3.18]{LurieHTT} this is equivalent to the same statement for $\D^! : \cC^\op \to \PrR$.
    } \v{C}ech descent along $(f_\alpha)_\alpha$.
  \end{thmlist}
\end{prop}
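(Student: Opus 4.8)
The plan is to derive all four equivalences from \lemref{lem:desc*}, applied once to $\D^*$ and once to $\D^!$, using \corref{cor:poinc} (Poincaré duality) as the bridge between the $*$- and $!$-formalisms. First I would establish \itemref{item:descweave/cons*} $\Leftrightarrow$ \itemref{item:descweave/cech*} by applying \lemref{lem:desc*} verbatim to $\D^*$. Its hypotheses hold: $\D^*$ is radditive since $\D^*_!$ is; for every smooth shriekable $f$ the functor $f^*$ has a left adjoint $f_\sharp$ by \itemref{item:Sm1} and satisfies the base-change identity \itemref{item:Sm3}; and $f^*$ has a right adjoint $f_*$ because $\D^*_!$ is a preweave. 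The lemma then identifies \v{C}ech descent for $\D^*$ with joint conservativity of $(f_\alpha^*)_\alpha$, which is exactly the desired equivalence.

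Next I would obtain \itemref{item:descweave/cons*} $\Leftrightarrow$ \itemref{item:descweave/cons!} from Poincaré duality. Since $\D^*_!$ admits $\sharp$-direct image for each smooth shriekable $f_\alpha$, \corref{cor:poinc} applies and gives $f_\alpha^! \simeq f_\alpha^*(-) \otimes \Sigma_{f_\alpha}(\un_{Y_\alpha})$ with $\Sigma_{f_\alpha}(\un_{Y_\alpha})$ a $\otimes$-invertible object of $\D(Y_\alpha)$. Tensoring by an invertible object is an autoequivalence, hence conservative, so $f_\alpha^!$ reflects isomorphisms precisely when $f_\alpha^*$ does; joint conservativity of the two families is therefore equivalent.

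It remains to append \itemref{item:descweave/cech!} to this chain, and the cleanest route is to re-run \lemref{lem:desc*} with $\D^!$ in place of $\D^*$, the transition functors $g^!$ playing the role of $g^*$ and $f_!$ playing the role of $f_\sharp$. Three hypotheses must be checked for smooth shriekable $f$: (a) $f^!$ admits a left adjoint — namely $f_!$, available since $\D^*_!$ is a preweave; (b) $f^!$ admits a right adjoint — by \corref{cor:poinc}, $f^! \simeq f^*(-) \otimes \Sigma_f(\un)$ with $\Sigma_f(\un)$ invertible, so $\sG \mapsto f_*(\Sigma_f(\un)^{-1} \otimes \sG)$ is right adjoint to $f^!$; and (c) the base-change identity $q^! f_! \simeq g_! p^!$ for a cartesian square of smooth morphisms, the $!$-analogue of \itemref{item:Sm3}. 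Here I would note that every face map of the \v{C}ech nerve is a base change of some smooth $f_\alpha$, so each square entering the descent argument is a square of smooth morphisms and (a)--(c) indeed suffice. Granting these, \lemref{lem:desc*} yields \v{C}ech descent for $\D^!$ $\Leftrightarrow$ joint conservativity of $(f_\alpha^!)_\alpha$, i.e.\ \itemref{item:descweave/cech!} $\Leftrightarrow$ \itemref{item:descweave/cons!}, closing the loop.

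The main obstacle is verifying (c). I would deduce it from the weave base change $\Ex^*_! : q^* f_! \simeq g_! p^*$ together with Poincaré duality, rewriting $q^! \simeq q^*(-) \otimes \Sigma_q(\un_{Y'})$ and $p^! \simeq p^*(-) \otimes \Sigma_p(\un_{X'})$ and applying the projection formula for $g_!$: one computes $q^! f_! \simeq (g_! p^*(-)) \otimes \Sigma_q(\un_{Y'}) \simeq g_!\big(p^*(-) \otimes g^*\Sigma_q(\un_{Y'})\big)$. This reduces (c) to the stability of the Poincaré twist under base change, namely $g^*\Sigma_q(\un_{Y'}) \simeq \Sigma_p(\un_{X'})$; since $\Sigma_q(\un_{Y'}) \simeq q^!(\un_Y)$ and $\Sigma_p(\un_{X'}) \simeq p^!(\un_X)$ by \corref{cor:poinc}, this is the commutation $g^* q^! \simeq p^! f^*$ evaluated at the unit, which I expect to be the genuinely delicate coherence to pin down rigorously.
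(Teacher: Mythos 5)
Your proposal is correct and follows essentially the same route as the paper: \lemref{lem:desc*} applied to $\D^*$ for \itemref{item:descweave/cons*}$\Leftrightarrow$\itemref{item:descweave/cech*}, Poincar\'e duality (\corref{cor:poinc}) for \itemref{item:descweave/cons*}$\Leftrightarrow$\itemref{item:descweave/cons!}, and \lemref{lem:desc*} applied to $\D^!$ (with $f_!$ as the left adjoint and $f_*(-\otimes\Sigma_f(\un)^{\otimes -1})$ as the right adjoint of $f^!$) for \itemref{item:descweave/cons!}$\Leftrightarrow$\itemref{item:descweave/cech!}; you are in fact more explicit than the paper about verifying the $!$-base-change hypothesis. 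The coherence $g^*q^! \simeq p^!f^*$ that you flag as delicate is the exchange transformation $\Ex^{*,!}$ \eqref{eq:Ex^*,!}, which by the appendix is the right transpose of $\Ex_{\sharp,!}$ and hence invertible by \itemref{item:Sm4}, already guaranteed since all morphisms in the \v{C}ech nerve are smooth shriekable and $\D^*_!$ admits $\sharp$-direct image for such.
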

\begin{proof}
  Applying \lemref{lem:desc*} to the presheaf $\D^*$ shows that \itemref{item:descweave/cons*} $\Leftrightarrow$ \itemref{item:descweave/cech*}.
  For every smooth shriekable morphism $f$ in $\cC$, we have $f^! \simeq f^*(-) \otimes \Sigma_f(\un)$ by Poincaré duality, with $\Sigma_f(\un)$ a $\otimes$-invertible object \ssecref{ssec:Poinc}.
  This shows that \itemref{item:descweave/cons*} $\Leftrightarrow$ \itemref{item:descweave/cons!}.
  Finally, to show \itemref{item:descweave/cech!} $\Leftrightarrow$ \itemref{item:descweave/cons!} by applying \lemref{lem:desc*} to $\D^!$, which applies because for every smooth morphism $f$, $f^!$ admits a left adjoint $f_!$ commuting with $*$-inverse image, and $f^! \simeq f^*(-) \otimes \Sigma_f(\un)$ admits a right adjoint $f_*(- \otimes \Sigma_f(\un)^{\otimes -1})$.
\end{proof}

\section{Lisse extension}

\subsection{Artin stacks}
\label{ssec:Art}

\subsubsection{}
\label{sssec:Csmcov}

Let $(\cC,\cC^\sm,\cC^\etcov)$ be the following data:
\begin{itemize}
  \item $\cC$ is an \inftyCat with fibred products.
  \item $\cC^{\sm}$ is a class of morphisms in $\cC$ which is closed under base change.
  \item $\cC^{\etcov}$ is a class of morphisms in $\cC$ which is closed under base change and contained in $\cC^\sm$.
\end{itemize}
We assume that whenever $f : X \to Y$ is a morphism in $\cC$ and $p : X' \to X$ is a morphism in $\cC^\etcov$, if the composite $f \circ p$ belongs to $\cC^\sm$ then so does $f$.

\subsubsection{}

We fix a context $(\cC, \cC^\sm, \cC^\etcov)$ as in \sssecref{sssec:Csmcov}.

A \emph{space} is an object of $\cC$.
A morphism of spaces is \emph{smooth}, resp. \emph{étale covering}, if it belongs to $\cC^\sm$, resp. $\cC^\etcov$.

We also write $\Spc(\cC)$, or simply $\Spc$ when there is no risk of confusion, for the \inftyCat of spaces (i.e., $\Spc := \Spc(\cC) := \cC$).

\subsubsection{}
\label{sssec:smooth covering}

A morphism of spaces $f : X \to Y$ \emph{admits étale-local sections} if there exists an étale covering morphism $Y' \to Y$ such that the base change $X\fibprod_Y Y' \to Y'$ admits a section.
A \emph{smooth covering} morphism is a smooth morphism which admits étale-local sections.

\subsubsection{}

A \emph{stack} is a presheaf of anima on $\Spc$ which satisfies \v{C}ech descent with respect to étale covering morphisms.

\subsubsection{}
\label{sssec:nArt}

A stack is \emph{$0$-Artin} if it is a space\footnote{%
  We will identify spaces with the stacks they represent.
}.
For $n>0$ we define inductively:
\begin{defnlist}
  \item A morphism of stacks $f : \sX \to \sY$ is \emph{$(n-1)$-representable} if for every space $Y$ and every morphism $Y \to \sY$, the fibred product $\sX \fibprod_\sY Y$ is $(n-1)$-Artin.
  \item A $(n-1)$-representable morphism $f : \sX \to \sY$ is \emph{smooth}, resp. \emph{étale covering}, if for every space $Y$ and every morphism $Y \to \sY$, the base change $\sX \fibprod_\sY Y \to Y$ is a smooth, resp. étale covering, morphism.
  \item A stack $\sX$ is \emph{$n$-Artin} if its diagonal $\Delta_\sX : \sX \to \sX \times \sX$ is $(n-1)$-representable, and there exists a space $X$ and a morphism\footnote{%
    which is automatically $(n-1)$-representable when the diagonal $\Delta_\sX$ is $(n-1)$-representable
  } $p : X \to \sX$ which is smooth and admits étale-local sections.
  \item If $Y$ is a space and $\sX$ is $n$-Artin, a morphism $f : \sX \to Y$ is \emph{smooth}, resp. \emph{étale covering}, if there exists\footnote{%
    It follows from the condition in \sssecref{sssec:Csmcov} that if this holds for a single such $(X,p)$, then it holds for any such.
  } a space $X$ and a smooth morphism $p : X \to \sX$ admitting étale-local sections such that the composite $f\circ p : X \to Y$ is a smooth, resp. étale covering, morphism of spaces.
\end{defnlist}
We say that a stack is \emph{Artin} if it is $n$-Artin for some $n$, and a morphism of stacks is \emph{eventually representable} if it is $n$-representable for some $n$.
Any morphism of $n$-Artin stacks is $n$-representable.

We denote by $\Art(\cC)$, or simply $\Art$ when there is no risk of confusion, the \inftyCat of Artin stacks (by definition a full subcategory of the \inftyCat of stacks).
We denote by $\Art_n(\cC)$, or simply $\Art_n$, the full subcategory of $\Art$ spanned by $n$-Artin stacks.

\subsubsection{}

A morphism of Artin stacks $f : \sX \to \sY$ \emph{admits étale-local sections} if for every space $Y$ and every morphism $Y \to \sY$, there exists an étale covering morphism of spaces $Y' \to Y$ such that $\sX\fibprod_\sY Y' \to Y'$ admits a section.

A morphism of Artin stacks $f : \sX \to \sY$ is \emph{smooth covering} if it is smooth and admits étale-local sections.

\subsubsection{}
\label{sssec:smoothloc}

Let $\cE_0 \sub \Spc$ be a class of morphisms closed under base change.
We say $\cE_0$ satisfies \emph{smooth-locality on the target} if a morphism of spaces $f : X \to Y$ lies in $\cE_0$ if and only if there exists a smooth covering morphism $Y' \surj Y$ such that the base change $X \fibprod_Y Y' \to Y'$ belongs to $\cE_0$.

\subsubsection{}
\label{sssec:E_lis^0}

Let $\cE_0 \sub \Spc$ be a class of morphisms closed under base change and satisfying smooth-locality on the target.

We define a new class $\cE_0^\lis$ of morphisms in $\Art$ as follows.
A representable morphism of Artin stacks $f : \sX \to \sY$ belongs to $\cE_0^\lis$ if and only if for every space $V$ and every smooth covering morphism $v : V \surj \sY$, the morphism of spaces $\sX \fibprod_\sY V \to V$ belongs to $\cE_0$.

It is clear that this property is closed under composition and indeed defines a subcategory $\cE_\lis^0 \sub \Art$, and that it is closed under base change.
If $\cE_0$ is closed under sections, then $\cE_\lis^0$ is closed under sections.

\subsubsection{}

Let $\cE_0 \sub \Spc$ be a class of morphisms closed under base change.
We say $\cE_0$ satisfies \emph{smooth-locality on the source and target} if a morphism of spaces $f : X \to Y$ lies in $\cE_0$ if and only if there exists a commutative square
\[\begin{tikzcd}
  U \ar{r}{f_0}\ar[twoheadrightarrow]{d}{u}
  & V \ar[twoheadrightarrow]{d}{v}
  \\
  X \ar{r}{f}
  & Y
\end{tikzcd}\]
where $f_0$ is a morphism in $\cE_0$ and $v : V \surj Y$ and $(u,f_0) : U \surj X \fibprod_Y V$ are smooth covering morphisms.

\subsubsection{}
\label{sssec:E_lis}

Let $\cE_0 \sub \Spc$ be a class of morphisms closed under base change and satisfying smooth-locality on the source and target.

We define a new class $\cE_\lis$ of morphisms in $\Art$ as follows.
A morphism of Artin stacks $f : \sX \to \sY$ belongs to $\cE_\lis$ if and only if there exists a commutative square
\[\begin{tikzcd}
  X \ar{r}{f_0}\ar{d}{u}
  & Y \ar{d}{v}
  \\
  \sX \ar{r}{f}
  & \sY
\end{tikzcd}\]
where $X$ and $Y$ are spaces, $f_0$ is a morphism in $\cE_0$, and $v : Y \surj \sY$ and $(u,f_0) : X \surj \sX \fibprod_\sY Y$ are smooth covering morphisms.\footnote{%
  Equivalently (see e.g. \cite[Tag~06FL]{Stacks}), for \emph{any} commutative square as above where $X$ and $Y$ are spaces and $v : Y \surj \sY$ and $(u,f_0) : X \surj \sX\fibprod_\sY Y$ are smooth covering, the morphism $f_0$ belongs to $\cE_0$.
}

This property is closed under composition and indeed defines a subcategory $\cE_\lis \sub \Art$, see e.g. \cite[Tag~06FL]{Stacks}.
It is also closed under base change (see e.g. \cite[Tag~06FQ]{Stacks}).
If $\cE_0$ is closed under sections, then so is $\cE_\lis$.\footnote{%
  Let $f : \sX \to \sY$ and $g : \sY \to \sS$ be morphisms of Artin stacks with $g \in \cE_\lis$ and $g \circ f \in \cE_\lis$.
  Let $S \surj \sS$ be a smooth covering where $S$ is a space, $Y \surj \sY \fibprod_\sS S$ a smooth covering where $Y$ is a space, and $X \surj \sX \fibprod_\sX Y$ a smooth covering where $X$ is a space.
  Since $g$ in $\cE_\lis$ and $g \circ f \in \cE_\lis$ we deduce that $Y \to S$ lies in $\cE_0$ and $X \to Y \to S$ lies in $\cE_0$.
  Since $\cE_0$ is closed under two-of-three, it follows that $X \to Y$ lies in $\cE_0$ and hence $f : \sX \to \sY$ lies in $\cE_\lis$.
}

Note that the class $\cE_\lis^0$ \sssecref{sssec:E_lis^0} consists of the morphisms of $\Art$ that are representable and belong to $\cE_\lis$.

\subsection{Lisse extension of presheaves}
\label{ssec:lisextpresheaf}

We fix a context $(\cC, \cC^\sm, \cC^\etcov)$ as in \sssecref{sssec:Csmcov} and adopt the terminology of \ssecref{ssec:Art}.

\subsubsection{}

Let $\sX$ be an Artin stack.
Consider the slice \inftyCat $\Art_{/\sX}$, whose objects are pairs $(\sS,s)$ consisting of an Artin stack $\sS$ and a morphism $s : \sS \to \sX$, and whose morphisms $f : (\sS',s') \to (\sS,s)$ are commutative triangles
\[ \begin{tikzcd}
  S' \ar{rr}{f}\ar[swap]{rd}{s'}
  & & S \ar{ld}{s}
  \\
  & \sX. &
\end{tikzcd} \]
We denote by $\Pt_\sX$ the full subcategory spanned by pairs $(S,s)$ where $S$ is a space.

We denote by $\Lis_\sX$ the full subcategory of $\Pt_\sX$ spanned by pairs $(S, s)$ where $s : S \to \sX$ is \emph{smooth}.
Similarly, $\LisStk_\sX$ is the full subcategory of $\Art_{/\sX}$ spanned by pairs $(\sS,s)$ where $s : \sS \to \sX$ is smooth.

\subsubsection{}

Let $\sX$ be an Artin stack.
Let $F : \Lis_\sX^\op \to \cV$ be a presheaf valued in an \inftyCat $\cV$ with limits.
The \emph{lisse extension} of $F$ is the presheaf $$F^\lis : \LisStk_\sX^{\op} \to \cV$$ defined as the right Kan extension of $F$ along the fully faithful functor $\Lis_\sX \hook \LisStk_\sX$.
In particular, we have
\[
  F^\lis(\sX) \simeq \lim_{(S,s)} F(S)
\]
where the limit is taken over $(S,s)\in\Lis_\sX$.

\subsubsection{}
\label{sssec:spclis}

Given a presheaf $F : (\Spc)^{\op} \to \cV$, its \emph{lisse extension} $F^\lis$ is its right Kan extension
$$F^\lis : (\Art)^{\op} \to \cV$$
along $\Spc \hook \Art$.
In particular, for every $\sX \in \Art$ we have
\[
  F^\lis(\sX) \simeq \lim_{(S,s)} F(S),
\]
where the limit is taken over pairs $(S,s) \in \Pt_\sX$.

\subsubsection{}

Let $\cV$ be an \inftyCat with limits.
The following statement is key to our analysis of lisse extensions.

\begin{prop}[Descent]\label{prop:coccus}\leavevmode
  \begin{thmlist}
    \item
    If $F : (\Spc)^{\op} \to \cV$ satisfies \v{C}ech descent along étale covering morphisms of spaces, then $F^\lis$ satisfies \v{C}ech descent along any smooth covering morphism of Artin stacks.

    \item
    Let $\sX$ be an Artin stack.
    If $F : (\Lis_\sX)^{\op} \to \cV$ satisfies \v{C}ech descent along étale covering morphisms of spaces, then $F^\lis$ satisfies \v{C}ech descent along any smooth covering morphism of Artin stacks.
  \end{thmlist}
\end{prop}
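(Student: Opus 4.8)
The plan is to identify $F^\lis$ with the restriction to $\Art$ of the canonical limit-preserving extension of $F$ to all presheaves, and then to read off \v{C}ech descent from the fact that a smooth covering exhibits its target as an effective epimorphism in the topos of stacks. First I would reformulate the lisse extension: by \sssecref{sssec:spclis}, $F^\lis$ is the right Kan extension of $F$ along $\Spc \hook \Art$, and since for a stack $\sX$ the relevant comma category is $\Pt_\sX = \Spc \times_{\on{PSh}(\Spc)} \on{PSh}(\Spc)_{/\sX}$, this is nothing but the restriction to $\Art \sub \on{PSh}(\Spc) = \Fun(\Spc^\op,\Ani)$ of the right Kan extension $\widetilde F : \on{PSh}(\Spc)^\op \to \cV$ of $F$ along the Yoneda embedding. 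Concretely $\widetilde F(\mathcal G) \simeq \lim_{(S \to \mathcal G)} F(S)$, so $\widetilde F(\sX) \simeq F^\lis(\sX)$ and $\widetilde F(\sU_n) \simeq F^\lis(\sU_n)$ for every Artin stack. The one formal property I will use repeatedly is that $\widetilde F$ converts colimits of presheaves into limits in $\cV$ (density of representables: every presheaf is the colimit of its category of elements).

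Next, fix a smooth covering morphism $p : \sU \to \sX$ of Artin stacks with \v{C}ech nerve $\sU_\bullet$. Because $p$ admits étale-local sections (\sssecref{sssec:smooth covering}), it is an effective epimorphism in the $\infty$-topos of stacks (\v{C}ech sheaves of anima for the étale topology on $\Spc$), so the augmentation gives $\sX \simeq |\sU_\bullet|$ as a colimit computed \emph{in stacks}. Writing $P := |\sU_\bullet|$ for the same colimit computed \emph{in presheaves}, the canonical map $P \to \sX$ is the sheafification unit, hence an étale-local equivalence. Applying the colimit-to-limit property of $\widetilde F$ to the presheaf colimit yields $\widetilde F(P) \simeq \Tot \widetilde F(\sU_\bullet) \simeq \Tot F^\lis(\sU_\bullet)$, whereas $\widetilde F(\sX) \simeq F^\lis(\sX)$. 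Thus \v{C}ech descent for $F^\lis$ along $p$ is \emph{exactly} the assertion that $\widetilde F$ sends the local equivalence $P \to \sX$ to an equivalence.

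The heart of the matter---and the step I expect to require the most care---is to prove that $\widetilde F$ inverts every étale-local equivalence. Here I would argue that stacks are precisely the presheaves local with respect to the set of \v{C}ech maps $\{\,|\check C(S'/S)| \to S\,\}$ indexed by étale coverings $S' \to S$ of spaces (locality against this set is, by definition, \v{C}ech descent), so that the reflective localization $\on{PSh}(\Spc) \to \Stk$ has as its equivalences the strongly saturated class generated by this set, and $P \to \sX$ belongs to it. Since $\widetilde F$ turns colimits into limits, the class of morphisms it inverts is strongly saturated; and it inverts each generator, as $\widetilde F(|\check C(S'/S)|) \simeq \Tot F(\check C(S'/S)) \simeq F(S)$ is precisely the hypothesized \v{C}ech descent of $F$ along $S' \to S$. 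Therefore $\widetilde F$ inverts the whole class, in particular $P \to \sX$, proving (i). The subtlety to respect throughout is to stay within the \emph{non-hypercomplete} (\v{C}ech) localization, matching the paper's definition of stack via \v{C}ech descent, so that the generating set may legitimately be taken to be the \v{C}ech covers rather than all hypercovers.

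For (ii) I would run the identical argument relative to $\sX$: now $F^\lis$ on $\LisStk_\sX$ is the right Kan extension from the site $\Lis_\sX$ of smooth spaces over $\sX$, and the role of $\on{PSh}(\Spc)$ is played by presheaves on $\Lis_\sX$, inside which $\LisStk_\sX$ is carved out by \v{C}ech descent. The only extra input needed is that every smooth Artin stack over $\sX$ is the colimit of the smooth spaces lying over it---equivalently, admits a smooth simplicial resolution by such spaces---which holds because Artin stacks admit smooth atlases (\sssecref{sssec:nArt}). With this density statement in hand, the effective-epimorphism step, the presheaf-versus-sheaf colimit comparison, and the inversion of local equivalences via the descent of $F$ on $\Lis_\sX$ are formally identical to (i).
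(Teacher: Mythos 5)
Your proof is correct, but it takes a genuinely different route from the paper's. The paper argues by induction on the Artin level $n$: a cofinality lemma (base change along $\Pt_\sY(\Art_n) \to \Pt_\sX(\Art_n)$ admits a left adjoint) reduces descent along $(n-1)$-representable smooth coverings to the case of $(n-1)$-Artin bases, and a general smooth covering of $n$-Artin stacks is then handled by a bi-cosimplicial argument in which, after passing to an atlas $Y \surj \sY$ refined so that $\sX\fibprod_\sY Y \to Y$ splits, each row of the double \v{C}ech diagram is a split (co)simplicial object. Your argument instead identifies $F^\lis$ with the restriction of the unique limit-preserving extension $\widetilde F$ of $F$ to $\on{PSh}(\Spc)^\op$ and observes that the class of maps inverted by $\widetilde F$ is strongly saturated, hence contains every étale-local equivalence once it contains the generating \v{C}ech maps; the \v{C}ech map of a smooth covering of Artin stacks is such a local equivalence because smooth coverings are effective epimorphisms of stacks. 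This is shorter and more conceptual, but it outsources real content to the topos-theoretic package you invoke: that \v{C}ech descent along the singleton pretopology of étale coverings agrees with the sheaf condition for the generated topology (so that $\Stk$ is a topological, hence left-exact, localization and the effective-epimorphism characterization via local sections applies), that the $L$-equivalences coincide with the strongly saturated class generated by the \v{C}ech covers (HTT 5.5.4.15), and the smallness hypotheses these results require when $\Spc$ is large. The paper's induction avoids all of this machinery, needing only elementary cofinality and split-totalization arguments, and its reduction pattern (first representable morphisms, then atlases) is recycled throughout the rest of the paper for the axioms (Sm1)--(Sm5) and (Pr1)--(Pr5). Your observation that one must stay in the non-hypercomplete localization is exactly the right precaution, and your relative version for (ii) over $\Lis_\sX$ is set up correctly (the needed inputs being that representables on $\Lis_\sX$ coming from $\LisStk_\sX$ are étale sheaves and that restricted Yoneda preserves the relevant fibre products).
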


We will prove \propref{prop:coccus} in \ssecref{ssec:coccusproof} below.
We first derive some important consequences.

Denote by $\Shv(\Spc)_\cV$ the \inftyCat of presheaves on $\Spc$ satisfying \v{C}ech descent along smooth covering morphisms, and by $\Shv(\Art)_\cV$ the \inftyCat of presheaves on $\Art$ satisfying \v{C}ech descent along smooth covering morphisms of Artin stacks.

\begin{cor}\label{cor:Shvreseq}\leavevmode
  \begin{thmlist}
    \item
    The restriction functor
    \begin{equation}\label{eq:Shvreseq1}
      \Shv(\Art)_\cV \to \Shv(\Spc)_\cV
    \end{equation}
    is an equivalence, with inverse $F\mapsto F^\lis$.

    \item
    For any Artin stack $\sX$, the restriction functor
    \begin{equation}\label{eq:Shvreseq2}
      \Shv(\LisStk_\sX)_\cV \to \Shv(\Lis_\sX)_\cV
    \end{equation}
    is an equivalence, with inverse $F\mapsto F^\lis$.
  \end{thmlist}
\end{cor}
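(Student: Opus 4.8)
The plan is to recognize $F\mapsto F^\lis$ as the right adjoint of the restriction functor and then verify that, on the sheaf subcategories, both the unit and counit of this adjunction are equivalences. Write $i : \Spc \hook \Art$ (resp. $i : \Lis_\sX \hook \LisStk_\sX$) for the inclusion. By definition $(-)^\lis$ is the right Kan extension $i_*$ along $i$, hence is right adjoint to restriction $i^*$ on all of $\Fun((\Art)^\op,\cV)$. I would first check that this adjunction restricts to the subcategories of sheaves. That $(-)^\lis$ sends $\Shv(\Spc)_\cV$ into $\Shv(\Art)_\cV$ is exactly \propref{prop:coccus}: since every étale covering is in particular a smooth covering, a presheaf satisfying smooth-covering descent a fortiori satisfies étale-covering descent, so by \propref{prop:coccus}(i) its lisse extension satisfies smooth-covering descent on $\Art$. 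Conversely $i^*$ sends $\Shv(\Art)_\cV$ into $\Shv(\Spc)_\cV$: a smooth covering of spaces is a smooth covering of Artin stacks whose \v{C}ech nerve stays inside $\Spc$, so descent for $G$ restricts to descent for $G|_\Spc$.

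Because $i$ is fully faithful, the right Kan extension $i_* = (-)^\lis$ is fully faithful, i.e. the counit $i^* i_* \to \id$ is an equivalence; concretely, $\Pt_S$ (resp. $\Lis_S$) has terminal object $(S,\id)$ when $S$ is a space, so $F^\lis(S)\simeq F(S)$ for all $F$. Thus the counit is automatic, and the whole content is to prove that the unit $G \to (G|_\Spc)^\lis$ is an equivalence for every $G \in \Shv(\Art)_\cV$.

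I would establish this by induction on the Artin level $n$ of the argument $\sX$. For $n = 0$, $\sX$ is a space and the unit map is an equivalence by the previous paragraph. For the inductive step, choose a smooth covering $p : X \surj \sX$ by a space and let $X_\bullet$ be its \v{C}ech nerve. Since $\Delta_\sX$ is $(n-1)$-representable, each iterated fibred product $X_k = \sX \fibprod_{\sX^{k+1}} X^{k+1}$ is $(n-1)$-Artin for $k \geq 1$, while $X_0 = X$ is a space. Both $G$ and $(G|_\Spc)^\lis$ satisfy smooth-covering descent — the latter by \propref{prop:coccus}(i) — so the unit induces a map of totalizations
\[ G(\sX) \simeq \Tot(G(X_\bullet)) \longrightarrow \Tot\bigl((G|_\Spc)^\lis(X_\bullet)\bigr) \simeq (G|_\Spc)^\lis(\sX). \]
By the inductive hypothesis (and the base case at $k=0$) the unit is an equivalence on each $X_k$, and these equivalences are natural in $[k] \in \bDelta$, so the induced map on totalizations is an equivalence. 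This closes the induction and proves part~(i).

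Part~(ii) is word-for-word the same, replacing $\Spc,\Art$ by $\Lis_\sX,\LisStk_\sX$ and \propref{prop:coccus}(i) by \propref{prop:coccus}(ii): one inducts on the Artin level of a stack $\sS$ smooth over $\sX$, choosing an atlas $S \surj \sS$ with $S$ a space smooth over $\sX$ (a composite of smooth coverings), whose \v{C}ech nerve $S_\bullet$ lies in $\LisStk_\sX$ with $S_0 \in \Lis_\sX$ and $S_k$ of strictly smaller Artin level for $k \geq 1$. The main obstacle is precisely this inductive step: one needs the same descent to hold simultaneously for $G$ and for $(G|_\Spc)^\lis$, which is where \propref{prop:coccus} is indispensable, together with the fact that the \v{C}ech nerve of an atlas lowers the Artin level so that the induction has traction. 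Everything else — the adjunction and the automatic equivalence of the counit — is formal.
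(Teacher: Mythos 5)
Your proof is correct, but it takes a different route from the paper's. You prove directly that the unit $G \to (G|_{\Spc})^\lis$ of the restriction/right-Kan-extension adjunction is invertible, by induction on the Artin level: descent along the atlas $X \surj \sX$ holds for $G$ by hypothesis and for $(G|_{\Spc})^\lis$ by \propref{prop:coccus}, and the \v{C}ech nerve has terms of strictly lower level, so the unit is an equivalence on totalizations. The paper instead first reduces to the universal case $\cV = \Ani$ (identifying $\Shv(-)_\cV$ with limit-preserving functors out of $\Shv(-)^\op$), then observes that for $\cV=\Ani$ the restriction functor also has a fully faithful, colimit-preserving \emph{left} adjoint (sheafified left Kan extension), and proves the equivalence by showing this left adjoint generates under colimits --- which again comes down to the fact that an $(n+1)$-Artin stack is the geometric realization of the \v{C}ech nerve of an atlas, whose terms are $n$-Artin. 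So the geometric input (atlas, \v{C}ech nerve, \propref{prop:coccus}) is identical, but the categorical packaging differs: your argument is more elementary and works for arbitrary $\cV$ with limits without any detour through $\Ani$ or through left adjoints, whereas the paper's argument isolates the reusable statement that spaces generate $\Shv(\Art)$ under colimits. Both are complete; just make sure you record explicitly (as you do in passing) that restriction preserves the descent condition because $\Spc \hook \Art$ preserves the relevant fibred products, so \v{C}ech nerves of smooth coverings of spaces agree in the two categories.
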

\begin{proof}
  It follows from \propref{prop:coccus} that the assignments $F \mapsto F^\lis$ preserve the descent conditions and restrict to right adjoints of the restriction functors \eqref{eq:Shvreseq1} and \eqref{eq:Shvreseq2}.
  We will only write the proof for \eqref{eq:Shvreseq1}, as the same argument works for \eqref{eq:Shvreseq2}.

  Note that the case of general $\cV$ follows from the universal case $\cV=\Ani$.
  Indeed, denoting by $\Shv(\Spc)$ and $\Shv(\Art)$ the respective \inftyCats formed with $\cV=\Ani$, note that $\Shv(\Spc)_\cV$ is equivalent to the \inftyCat of limit-preserving functors $\Shv(\Spc)^\op \to \cV$, while $\Shv(\Art)_\cV$ is equivalent to the \inftyCat of limit-preserving functors $\Shv(\Art)^\op \to \cV$.
  This follows from the universal properties describing colimit-preserving functors out of $\Shv(\Spc)$ and $\Shv(\Art)$ into an \inftyCat with colimits.
  
  Since $\Ani$ admits colimits, the restriction functor \eqref{eq:Shvreseq1} in this case also admits a \emph{left} adjoint, given by left Kan extending along $\Spc\hook\Art$ and then localizing.
  This is fully faithful and colimit-preserving, so it will suffice to show that it generates under colimits.
  By induction it is enough to show that if $\sY$ is an $(n+1)$-Artin stack, $n\ge 0$, then it belongs to the full subcategory of $\Shv(\Art)$ generated under colimits by $n$-Artin stacks.
  Let $p : Y \surj \sY$ a smooth covering morphism where $Y$ is a space.
  By construction, $\abs{Y_\bullet} \to \sY$ is invertible in $\Shv(\Art)$.
  Since $\sY$ has $n$-representable diagonal, each $Y_n$ is $n$-Artin.
  The claim follows.
\end{proof}

\begin{cor}\label{cor:entropionize}
  If $F : (\Spc)^\op \to \cV$ satisfies \v{C}ech descent along étale covering morphisms, then for every $\sX \in \Art$ there is a canonical isomorphism
  \[
    F^\lis|_{\LisStk_\sX} \to (F|_{\Lis_\sX})^\lis
  \]
  of presheaves on $\LisStk_\sX$.
  In particular, the canonical map
  \begin{equation}
    F^\lis(\sX) \simeq \lim_{(S,s)\in\Pt_\sX} F(S)
    \to \lim_{(S,s)\in\Lis_\sX} F(S).
  \end{equation}
  is invertible.
\end{cor}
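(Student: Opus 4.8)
The plan is to exhibit the asserted map as the unit of the adjunction between restriction along $\Lis_\sX \hook \LisStk_\sX$ and its right adjoint $(-)^\lis$ (right Kan extension), and then to show that this unit is invertible by combining \propref{prop:coccus} with \corref{cor:Shvreseq}. First I would record the elementary fact that $F^\lis$ restricts to $F$ on spaces: for a space $S$ the \inftyCat $\Pt_S$ admits the terminal object $(S,\id_S)$, so the formula of \sssecref{sssec:spclis} gives $F^\lis(S)\simeq F(S)$. Consequently $F^\lis|_{\LisStk_\sX}$ restricts along the fully faithful inclusion $\Lis_\sX\hook\LisStk_\sX$ to $F|_{\Lis_\sX}$, and the unit of the adjunction supplies a canonical map $F^\lis|_{\LisStk_\sX}\to (F|_{\Lis_\sX})^\lis$. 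Evaluated at the terminal object $(\sX,\id_\sX)\in\LisStk_\sX$ this becomes the comparison map $\lim_{\Pt_\sX} F\to \lim_{\Lis_\sX} F$ induced by $\Lis_\sX\hook\Pt_\sX$, so identifying the canonical isomorphism with the unit is exactly what makes the ``in particular'' clause a special case of the main assertion.

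Next I would verify that $F^\lis|_{\LisStk_\sX}$ lies in $\Shv(\LisStk_\sX)_\cV$. Since $F$ satisfies étale descent, \propref{prop:coccus} shows that $F^\lis$ satisfies \v{C}ech descent along every smooth covering morphism of Artin stacks. A smooth covering morphism in $\LisStk_\sX$ is one whose underlying morphism of Artin stacks is smooth covering; its \v{C}ech nerve stays inside $\LisStk_\sX$ (each term is smooth over $\sX$, being the composite of a smooth projection with the structure map to $\sX$) and agrees termwise, on underlying Artin stacks, with the nerve formed in $\Art$. Hence the descent condition for $F^\lis|_{\LisStk_\sX}$ is inherited from that of $F^\lis$.

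Finally I would invoke \corref{cor:Shvreseq}(ii): the restriction functor $\Shv(\LisStk_\sX)_\cV\to\Shv(\Lis_\sX)_\cV$ is an equivalence whose inverse is $(-)^\lis$. Since this equivalence is realized by a functor whose right adjoint is $(-)^\lis$, the unit of the adjunction is invertible on all of $\Shv(\LisStk_\sX)_\cV$; that is, for every sheaf $G$ the unit $G\to (G|_{\Lis_\sX})^\lis$ is an equivalence. Applying this to $G=F^\lis|_{\LisStk_\sX}$, which the previous step showed to be a sheaf, yields that the canonical map is an equivalence, and evaluation at $(\sX,\id_\sX)$ gives the ``in particular'' statement. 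The only genuinely nonformal input is the descent result \propref{prop:coccus}, proved separately; everything else is bookkeeping, and I expect the one point demanding care to be the identification of the canonical map with the adjunction unit together with the check that smooth-covering descent restricts correctly from $\Art$ to the slice \inftyCat $\LisStk_\sX$.
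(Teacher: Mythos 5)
Your proof is correct and follows essentially the same route as the paper: both arguments reduce to checking that $F^\lis|_{\LisStk_\sX}$ extends $F|_{\Lis_\sX}$ and satisfies \v{C}ech descent along smooth covering morphisms in $\LisStk_\sX$ (via \propref{prop:coccus}), and then invoke the equivalence of \corref{cor:Shvreseq} to conclude that the comparison map, i.e.\ the unit of the restriction/right-Kan-extension adjunction, is invertible. The extra details you supply (the terminal object of $\Pt_S$, the stability of the \v{C}ech nerve under the slice) are fine and consistent with the paper's terser phrasing.
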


\begin{proof}
  It follows from \corref{cor:Shvreseq} that $(F|_{\Lis_\sX})^\lis$ is the unique presheaf $\LisStk_\sX^\op \to \cV$ which
  \begin{inlinelist}
    \item extends $F|_{\Lis_\sX}$, and
    \item satisfies \v{C}ech descent along smooth covering morphisms in $\LisStk_\sX$.
  \end{inlinelist}
  It will thus suffice to check the same properties for $F^\lis|_{\LisStk_\sX}$.
  The first holds because $(F^\lis|_{\LisStk_\sX})|_{\Lis_\sX} \simeq (F^\lis|_{\Spc})|_{\Lis_\sX}$, and $F^\lis|_{\Spc} \simeq F$ by construction.
  The second holds because $F^\lis$ satisfies \v{C}ech descent along smooth covering morphisms in $\Art$ (\propref{prop:coccus}), so its restriction $(F^\lis)|_{\LisStk_\sX}$ satisfies \v{C}ech descent along smooth covering morphisms in $\LisStk_\sX$.
\end{proof}

\begin{cor}[Saturation]\label{cor:sat}\leavevmode
  \begin{thmlist}
    \item \label{item:sat/one}
    If $F : (\Spc)^\op \to \cV$ satisfies \v{C}ech descent along étale covering morphisms of spaces, then for every smooth morphism of Artin stacks $f : \sX \to \sY$, the canonical map
    \begin{equation}\label{eq:sat1}
      F^\lis(\sX) \to \lim_{(T,t)\in\Lis_\sY} F^\lis(\sX \fibprod_\sY T)
    \end{equation}
    is invertible.

    \item \label{item:sat/two}
    Let $\sY$ be an Artin stack and $F : (\Lis_\sY)^\op \to \cV$ a presheaf.
    If $F$ satisfies \v{C}ech descent along étale covering morphisms of spaces, then for every morphism of Artin stacks $f : \sX \to \sY$, the canonical map
    \begin{equation}\label{eq:sat2}
      F^\lis(\sX) \to \lim_{(T,t)\in\Lis_\sY} F^\lis(\sX \fibprod_\sY T)
    \end{equation}
    is invertible.
  \end{thmlist}
\end{cor}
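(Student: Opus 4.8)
The plan is to realise the right-hand side of \eqref{eq:sat1} as the value at a terminal object of a lisse extension, and then to identify that lisse extension by means of the descent characterisation in \corref{cor:Shvreseq}. I will carry out part~\itemref{item:sat/one} in detail; part~\itemref{item:sat/two} is formally identical once $\Spc$ is replaced by $\Lis_\sY$ and $\Art$ by $\Art_{/\sY}$, using the relative statements in \propref{prop:coccus} and \corref{cor:Shvreseq}.

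Base change along $f$ sends an object $(\sZ,s)$ of $\Art_{/\sY}$ to $\sX\fibprod_\sY\sZ$, whose projection to $\sX$ is smooth (being a base change of the smooth map $s$); in particular $\sX\fibprod_\sY\sZ$ is again an Artin stack. Precomposing $F^\lis$ with this functor, I would form the presheaf
\[
  \widetilde{H}\colon \LisStk_\sY^\op \to \cV,
  \qquad
  \widetilde{H}(\sZ,s) := F^\lis(\sX\fibprod_\sY\sZ).
\]
Its value at the terminal object $(\sY,\id)$ is $F^\lis(\sX\fibprod_\sY\sY) \simeq F^\lis(\sX)$, the left-hand side of \eqref{eq:sat1}. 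On the other hand, the defining formula $K^\lis(\sY)\simeq\lim_{(T,t)\in\Lis_\sY}K(T,t)$ for lisse extensions identifies $(\widetilde{H}|_{\Lis_\sY})^\lis(\sY)$ with $\lim_{(T,t)\in\Lis_\sY}F^\lis(\sX\fibprod_\sY T)$, the right-hand side. Moreover the canonical comparison map of \eqref{eq:sat1} is exactly the unit $\widetilde{H}\to(\widetilde{H}|_{\Lis_\sY})^\lis$ of the adjunction (restriction $\dashv$ lisse extension), evaluated at $(\sY,\id)$: its component at $(T,t)$ is $\widetilde{H}$ applied to the structure map $(T,t)\to(\sY,\id)$, which is $F^\lis$ of the projection $\sX\fibprod_\sY T\to\sX$. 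Thus it suffices to prove that this unit is invertible, i.e.\ that $\widetilde{H}$ agrees with its own lisse extension.

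By \corref{cor:Shvreseq} this holds as soon as $\widetilde{H}$ belongs to $\Shv(\LisStk_\sY)_\cV$, that is, as soon as $\widetilde{H}$ satisfies \v{C}ech descent along smooth covering morphisms; this is the heart of the argument. Given such a covering $p\colon\sZ'\surj\sZ$ in $\LisStk_\sY$ with \v{C}ech nerve $\sZ'_\bullet$, I would base change along $f$. Since pullback preserves fibred products, $\sX\fibprod_\sY\sZ'_\bullet$ is the \v{C}ech nerve of $\sX\fibprod_\sY\sZ'\to\sX\fibprod_\sY\sZ$, and the latter is again a smooth covering morphism because smooth coverings are stable under base change. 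Hence $\widetilde{H}(\sZ)\to\Tot\widetilde{H}(\sZ'_\bullet)$ is identified with $F^\lis(\sX\fibprod_\sY\sZ)\to\Tot F^\lis((\sX\fibprod_\sY\sZ)_\bullet)$, which is an equivalence since $F^\lis$ itself satisfies \v{C}ech descent along smooth covering morphisms by \propref{prop:coccus}. Therefore $\widetilde{H}\simeq(\widetilde{H}|_{\Lis_\sY})^\lis$, and evaluating at $(\sY,\id)$ yields \eqref{eq:sat1}.

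I expect the remaining difficulties to be bookkeeping rather than substance: verifying that the abstract unit furnished by \corref{cor:Shvreseq} agrees on the nose with the concrete map \eqref{eq:sat1}, and checking the compatibility of base change with the formation of \v{C}ech nerves. For part~\itemref{item:sat/two} one must read $F^\lis$ as the right Kan extension of $F$ along $\Lis_\sY\hook\Art_{/\sY}$, so that $F^\lis(\sX\fibprod_\sY\sZ)$ is defined even though $f$ need not be smooth; with that convention the construction of $\widetilde{H}$ and the descent verification are unchanged, now invoking the relative parts of \propref{prop:coccus} and \corref{cor:Shvreseq}.
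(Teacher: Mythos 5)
Your proof is correct and is essentially the paper's own argument: your $\widetilde{H}$ is the pullback of $F^\lis$ along the base-change functor $\LisStk_\sY \to \LisStk_\sX$, and your key step --- that $\widetilde{H}$ satisfies \v{C}ech descent along smooth coverings because base change preserves smooth coverings and \v{C}ech nerves, so that the unit of (restriction $\dashv$ lisse extension) is invertible on it by \corref{cor:Shvreseq} --- is precisely the invertibility of the natural transformation $\underline{\smash{\phi}}_f^* \to i_{\sY,*}\phi_f^*$ in the paper's proof. The only organizational difference is that you prove \itemref{item:sat/one} directly and call \itemref{item:sat/two} formally identical, whereas the paper proves \itemref{item:sat/two} first and deduces \itemref{item:sat/one} via \corref{cor:entropionize}.
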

\begin{proof}
  \itemref{item:sat/two}:
  Denote by $i_\sY : \Lis_\sY \hook \LisStk_\sY$ the inclusion.
  Given a morphism $f : \sX \to \sY$ of Artin stacks, denote by
  \[\underline{\smash{\phi}}_f : \LisStk_\sY \to \LisStk_\sX\]
  the base change functor and
  \[\phi_f : \Lis_\sY \to \LisStk_\sX\]
  its restriction along $i_\sY$.
  Since smooth covering morphisms in $\LisStk_\sY$ are preserved under base change along $\sX \to \sY$, the restriction functors $\underline{\smash{\phi}}_f^*$ and $\phi_f^*$ preserve the descent conditions.
  Consider the natural transformation
  \[
    \underline{\smash{\phi}}_f^*
    \xrightarrow{\mrm{unit}} i_{\sY,*} i^*_\sY \underline{\smash{\phi}}_f^*
    \simeq i_{\sY,*} \phi_f^*
  \]
  of functors $\Shv_{\sm}(\LisStk_\sX)_\cV \to \Shv_{\sm}(\LisStk_\sY)_\cV$.
  Since $i_\sY^*$ is an equivalence (\corref{cor:Shvreseq}), this natural transformation is invertible.
  Applying this to $F \in \Shv_{\sm}(\LisStk_\sY)_\cV$ and evaluating on the object $\sY \in \LisStk_\sY$ yields that the canonical map
  \[
    F^\lis(\sX) \to \lim_{(T,t)\in\Lis_\sY} F(\sX \fibprod_\sY T)
  \]
  is invertible, as claimed.

  \noindent\itemref{item:sat/one}:
  Given $F : (\Spc)^\op \to \cV$ and a smooth morphism $f : \sX \to \sY$, note that by \corref{cor:entropionize} we have
  \[ F^\lis(\sX) \simeq F^\lis|_{\LisStk_\sY}(\sX) \simeq (F|_{\Lis_\sY})^\lis(\sX) \]
  and similarly $F^\lis(\sX\fibprod_\sY T) \simeq (F|_{\Lis_\sY})^\lis(\sX\fibprod_\sY T)$ for every $(T,t)\in\Lis_\sY$.
  Under these identifications, the map in question is the isomorphism of \itemref{item:sat/two} for the presheaf $(F|_{\Lis_\sY})$.
\end{proof}

\subsection{Proof of \propref{prop:coccus}}
\label{ssec:coccusproof}

We let $F : (\Spc)^\op \to \cV$ be a presheaf and $F^\lis : (\Art)^\op \to \cV$ its lisse extension.

\begin{lem}\label{lem:cofin1}
  Let $f : \sX \to \sY$ be an $n$-representable morphism of Artin stacks.
  Then the canonical map
  \begin{equation}\label{eq:cofin1}
    F^\lis(\sX) \to \lim_{(\sT,t)\in\Pt_\sY(\Art_n)} F^\lis(\sX \fibprod_\sY \sT)
  \end{equation}
  is invertible.
\end{lem}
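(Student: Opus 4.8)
The plan is to realize both sides of \eqref{eq:cofin1} as limits of $F$ over categories of \emph{spaces} and to compare the two index categories by a cofinality argument. By the pointwise formula for the lisse extension \sssecref{sssec:spclis}, the source is $F^\lis(\sX) \simeq \lim_{(S,s)\in\Pt_\sX} F(S)$, while for each $(\sT,t)\in\Pt_\sY(\Art_n)$ we have $F^\lis(\sX\fibprod_\sY\sT)\simeq\lim_{(U,u)\in\Pt_{\sX\fibprod_\sY\sT}}F(U)$ (here $\sX\fibprod_\sY\sT$ is again Artin, so $F^\lis$ is defined on it). I would let $\cG$ denote the \inftyCat of quadruples $((\sT,t),(U,u))$ with $(\sT,t)\in\Pt_\sY(\Art_n)$ and $(U,u)\in\Pt_{\sX\fibprod_\sY\sT}$; the forgetful functor $q:\cG\to\Pt_\sY(\Art_n)$ is a coCartesian fibration whose fibre over $(\sT,t)$ is $\Pt_{\sX\fibprod_\sY\sT}$. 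Writing $\pi:\cG\to\Pt_\sX$ for the projection $((\sT,t),(U,u))\mapsto(U,\,U\to\sX\fibprod_\sY\sT\to\sX)$, Fubini's theorem for limits (computing the inner limits as the fibrewise right Kan extension of $F\circ\pi$ along $q$, which is legitimate because $q$ is coCartesian) identifies the right-hand side of \eqref{eq:cofin1} with $\lim_\cG (F\circ\pi)$, and identifies the canonical map in question with the restriction map $\lim_{\Pt_\sX}F\to\lim_\cG(F\circ\pi)$ along $\pi$.

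It therefore suffices to prove that $\pi$ is initial. I would do this by exhibiting a section $\sigma:\Pt_\sX\to\cG$ which is itself initial: since $\pi\circ\sigma\simeq\id_{\Pt_\sX}$, restriction along $\sigma$ is then a one-sided inverse to the equivalence induced by restriction along $\sigma$ on the other factor, forcing the restriction along $\pi$ to be an equivalence. Concretely, define $\sigma(S,s):=\big((S,\,f\circ s),\,(S,(s,\id_S))\big)$, using that every space is $n$-Artin (so that $(S,f\circ s)\in\Pt_\sY(\Art_n)$); here $(s,\id_S):S\to\sX\fibprod_\sY S$ denotes the map with components $s$ and $\id_S$. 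Clearly $\pi\circ\sigma=\id_{\Pt_\sX}$.

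By the dual of Quillen's Theorem A \cite[Theorem 4.1.3.1]{LurieHTT}, $\sigma$ is initial provided that for every $d=((\sT,t),(U,u))\in\cG$ the comma \inftyCat $\Pt_\sX\times_\cG\cG_{/d}$ is weakly contractible. Writing $u=(a,b)$ with $a:U\to\sX$ and $b:U\to\sT$, a direct unwinding shows that a morphism $\sigma(S,s)\to d$ in $\cG$ amounts to a single map $\gamma:S\to U$ of spaces with $a\circ\gamma=s$: the $\sT$-component is forced to equal $b\circ\gamma$, and the compatibility over $\sY$ is then automatic since $t\circ b=f\circ a$. Consequently the assignment $(S,s,\gamma)\mapsto(S,\gamma)$ identifies $\Pt_\sX\times_\cG\cG_{/d}$ with the slice $\Pt_U$ of spaces over $U$, which has the terminal object $(U,\id_U)$ and is therefore weakly contractible. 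Hence $\sigma$ is initial and the proof concludes.

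The two points requiring care are the fibrational Fubini step of the first paragraph—identifying the iterated limit with $\lim_\cG(F\circ\pi)$ and matching the comparison map with restriction along $\pi$—and, above all, the bookkeeping in the final identification: one must verify that a morphism out of $\sigma(S,s)$ carries no information beyond the map $\gamma:S\to U$, in particular that the $\sT$-component is redundant. Once this is checked, the appearance of the contractible slice $\Pt_U$ makes the cofinality transparent, and I expect this reduction to the terminal object $(U,\id_U)$ to be the conceptual heart of the argument.
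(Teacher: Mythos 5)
Your proof is correct, but it takes a genuinely different route from the paper's, which is a two-line argument: since $\Spc \sub \Art_n$, transitivity of right Kan extension gives $F^\lis(\sX) \simeq \lim_{(\sS,s)\in\Pt_\sX(\Art_n)} F^\lis(\sS)$, and the base change functor $f^* : \Pt_\sY(\Art_n) \to \Pt_\sX(\Art_n)$, $(\sT,t)\mapsto (\sX\fibprod_\sY\sT,\pr)$, admits the left adjoint $(\sS,s)\mapsto(\sS,f\circ s)$ and is therefore cofinal (which is exactly the condition needed, since the functor being limited is contravariant). You instead stay entirely at the level of spaces, assemble the iterated limit over the total category $\cG$ of the coCartesian fibration $q$, and verify cofinality of $\pi$ by hand via a section and Quillen's Theorem A. The two arguments are secretly the same: your section $\sigma(S,s) = \big((S,f\circ s),(S,(s,\id_S))\big)$ is precisely the unit of the paper's adjunction, and your comma-category computation (reducing to the terminal object $(U,\id_U)$ of $\Pt_U$) unwinds the statement that this unit is initial in each comma category. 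What the paper's approach buys is brevity — no Grothendieck construction, no fibrational Fubini; what yours buys is that it never invokes the intermediate right Kan extension over $\Art_n$, at the cost of the coherence bookkeeping you yourself flag (in the Fubini step, the operative fact is that $q^{\op}$ is a Cartesian fibration, so the right Kan extension along it is computed fibrewise and its transition maps are restriction along the pushforward functors, recovering the functoriality of $F^\lis(\sX\fibprod_\sY -)$). Both arguments use the $n$-representability of $f$ only to ensure that $\sX\fibprod_\sY\sT$ is again ($n$-)Artin.
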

\begin{proof}
  Since $f$ is $n$-representable, there is a base change functor $f^* : \Pt_\sY(\Art_n) \to \Pt_\sX(\Art_n)$.
  Note that it admits a left adjoint, sending $(\sS,s)\in\Pt_\sX(\Art_n)$ to $(\sS,f\circ s) \in \Pt_\sY(\Art_n)$, so it is cofinal.
\end{proof}

\begin{lem}
  Let $F : (\Spc)^\op \to \cV$ be a presheaf satisfying \v{C}ech descent along smooth covering morphisms.
  Then $F^\lis : (\Art)^\op \to \cV$ satisfies \v{C}ech descent along smooth covering morphisms of Artin stacks.
\end{lem}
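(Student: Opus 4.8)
The plan is to prove that the canonical map $F^\lis(\sY) \to \Tot(F^\lis(\sY'_\bullet))$ is an equivalence for every smooth covering $p : \sY' \to \sY$ of Artin stacks, where $\sY'_\bullet$ denotes the \v{C}ech nerve. Two preliminary facts will be used throughout: since the \v{C}ech nerve commutes with base change, $\sY'_n \fibprod_\sY \sT \simeq (\sY' \fibprod_\sY \sT)_n$ for any test object $\sT \to \sY$; and all the $\sY'_n$ are $M$-Artin for a single $M$ depending only on the levels of $\sY$ and $\sY'$. The whole argument rests on the reformulation of $F^\lis$ supplied by \lemref{lem:cofin1} applied to $\id_\sY$, which is $n$-representable for every $n$: for every $n$ we obtain a canonical equivalence
\begin{equation*}
  F^\lis(\sY) \simeq \lim_{(\sT,t)\in\Pt_\sY(\Art_n)} F^\lis(\sT),
\end{equation*}
so that $F^\lis$ may be computed as a right Kan extension from $n$-Artin test objects rather than merely from spaces.

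The core is the case where $p$ has representable fibres, i.e.\ $\sY' \fibprod_\sY S$ is a space for every space $S \to \sY$. Here I would argue as follows. By definition $F^\lis(\sY) \simeq \lim_{(S,s)\in\Pt_\sY} F(S)$. For each space $s : S \to \sY$ the base change $\sY' \fibprod_\sY S \to S$ is a smooth covering \emph{of a space by a space}, so the hypothesis that $F$ satisfies \v{C}ech descent along smooth covering morphisms of spaces yields $F(S) \simeq \Tot_n F(\sY'_n \fibprod_\sY S)$. This is the one and only place where smooth-covering descent for $F$ is invoked, and it is exactly what separates the present statement from its étale analogue. Taking the limit over $(S,s)\in\Pt_\sY$ and commuting it past the totalization (limits commute with limits) gives
\begin{equation*}
  F^\lis(\sY)
  \simeq \lim_{(S,s)} \Tot_n F(\sY'_n \fibprod_\sY S)
  \simeq \Tot_n \lim_{(S,s)} F(\sY'_n \fibprod_\sY S).
\end{equation*}
Since the projection $\pi_n : \sY'_n \to \sY$ again has representable fibres, \lemref{lem:cofin1} at test level $0$ identifies $\lim_{(S,s)\in\Pt_\sY} F(\sY'_n \fibprod_\sY S) \simeq F^\lis(\sY'_n)$, whence $F^\lis(\sY) \simeq \Tot_n F^\lis(\sY'_n)$.

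For a general smooth covering I would run a dévissage on the representability degree $k$ of $p$, the case just treated being $k = 0$. Granting descent for all coverings of degree $< k$, one chooses a space atlas $b : Y' \surj \sY'$, so that $a := p \circ b : Y' \surj \sY$ is a space atlas of $\sY$; comparing the \v{C}ech nerve of $a$ with that of $p$ through the induced bisimplicial Artin stack reduces descent along $p$ to descent along $a$ together with descent along the base changes of $b$ over the terms $\sY'_n$. The point is that the diagonal $\Delta_p : \sY' \to \sY' \fibprod_\sY \sY'$ is $(k-1)$-representable, so the coverings appearing in the comparison have strictly smaller representability degree and are governed by the inductive hypothesis, while the outer limit is once more reorganised by the reformulation of $F^\lis$ above and by \lemref{lem:cofin1} applied to the now-$(k-1)$-representable projections.

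The main obstacle is precisely this interchange of the limit defining $F^\lis$ with the \v{C}ech totalization, carried out compatibly with the dévissage. The Fubini-type commutation of the nested limits is formal; the delicate part is the bookkeeping of Artin levels and representability degrees so that \lemref{lem:cofin1} and the inductive hypothesis apply \emph{simultaneously}: one must verify that at each stage the projections $\pi_n$ retain exactly the representability needed for \lemref{lem:cofin1} to identify the inner limit with $F^\lis(\sY'_n)$, and that the bisimplicial refinement genuinely lowers the representability degree so that the induction is well-founded. Everything else is a routine manipulation of \v{C}ech nerves under base change.
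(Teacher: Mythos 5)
Your ``core case'' (representable smooth coverings) is correct, and it is essentially the paper's key mechanism in its simplest instance: \lemref{lem:cofin1} for the representable projections $\sY'_n \to \sY$ identifies the inner limits with $F^{\lis}(\sY'_n)$, and the descent hypothesis on $F$ is applied fibrewise over $\Pt_\sY$. The gap is in the general step. Your induction runs on the representability degree $k$ of $p$ alone, and you assert that the auxiliary coverings produced by the bisimplicial comparison --- the composite atlas $a = p \circ b : Y' \surj \sY$ and the base changes of $b : Y' \surj \sY'$ --- have degree $< k$ because $\Delta_p$ is $(k-1)$-representable. This is not true in general: the fibre of $a$ over a space $S \to \sY$ is a fibre product of two spaces over $\sY$, hence a pullback of $\Delta_\sY$, so $a$ is $(n-1)$-representable where $n$ is the Artin level of $\sY$; likewise the degree of $b$ is governed by $\Delta_{\sY'}$. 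These degrees are controlled by the Artin levels of $\sY$ and $\sY'$, not by $k$, and whenever $k < n$ they need not be smaller than $k$. So the dévissage reintroduces coverings that are not covered by the inductive hypothesis, and the induction is not well-founded.

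The paper closes exactly this loop by inducting on the Artin level $n$ instead, in two stages. First, for an $(n-1)$-representable smooth covering of $n$-Artin stacks it applies \lemref{lem:cofin1} over $\Pt_\sY(\Art_{n-1})$ --- your core-case move, but with $(n-1)$-Artin test objects in place of spaces --- so that the descent map becomes a limit of descent maps for smooth coverings of $(n-1)$-Artin stacks, which are handled by the inductive hypothesis in $n$. Second, for an arbitrary smooth covering $f : \sX \surj \sY$ of $n$-Artin stacks it chooses a space atlas $q : Y \surj \sY$, refined by an étale cover so that $\sX \fibprod_\sY Y \surj Y$ admits a section; \v{C}ech descent along $q$ and its base change (both $(n-1)$-representable, hence covered by the first stage) rewrites the descent map as a totalization of the maps $F^{\lis}(Y_m) \to \Tot(F^{\lis}(\sX_\bullet \fibprod_\sY Y_m))$, each of which is invertible because the corresponding augmented cosimplicial object is split. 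That splitting is the point to absorb: the second stage requires no descent input beyond the $(n-1)$-representable case, precisely where your appeal to ``descent along the base changes of $b$'' feeds back into coverings of uncontrolled degree.
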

\begin{proof}
  By induction, it will suffice to show that if $F^\lis$ satisfies \v{C}ech descent along smooth covering morphisms of $(n-1)$-Artin stacks, $n>0$, then it satisfies \v{C}ech descent along smooth covering morphisms of $n$-Artin stacks.

  Let $f : \sX \surj \sY$ be an $(n-1)$-representable smooth covering morphism of $n$-Artin stacks.
  By \lemref{lem:cofin1}, the canonical map
  \begin{equation}\label{eq:usurluxt}
    F^\lis(\sY) \to \Tot(F^\lis(\sX_\bullet))
  \end{equation}
  is the limit over $(\sT,t) \in \Pt_\sY(\Art_{n-1})$ of the maps
  \begin{equation}\label{eq:jhqaivsy}
    F^\lis(\sT) \to \Tot(F^\lis(\sX_\bullet \fibprod_\sY \sT)).
  \end{equation}
  Since the base change $\sX\fibprod_\sY\sT \surj \sT$ is a smooth covering morphism of $(n-1)$-Artin stacks, \eqref{eq:jhqaivsy} is invertible by the inductive hypothesis on $F$.
  Hence \eqref{eq:usurluxt} is invertible.

  Now let $f : \sX \surj \sY$ be any smooth covering morphism of $n$-Artin stacks and let us show that the canonical map
  \begin{equation}\label{eq:usurluxt2}
    F^\lis(\sY) \to \Tot(F^\lis(\sX_\bullet))
  \end{equation}
  is invertible.
  Let $q : Y \surj \sY$ be a smooth covering morphism where $Y$ is a space.
  Since $f$ is smooth covering, we may assume (up to replacing $Y$ by an étale cover $Y' \surj Y$) that $\sX\fibprod_\sY Y \surj Y$ admits a section.
  Since $q$ is $(n-1)$-representable, $F^\lis$ satisfies \v{C}ech descent along $q$ and its base change $\sX \fibprod_\sY Y \surj \sX$ by the discussion above.
  Hence \eqref{eq:usurluxt2} is the totalization (in the $\filledsquare$ direction) of the cosimplicial diagram of maps
  \begin{equation}\label{eq:jhqaivsy2}
    F^\lis(Y_\filledsquare) \to \Tot_\bullet(F^\lis(\sX_\bullet \fibprod_\sY Y_\filledsquare)),
  \end{equation}
  where $\Tot_\bullet$ indicates the totalization formed in the $\bullet$ direction.
  This is levelwise invertible since each $\sX_\bullet\fibprod_\sY Y_n \surj Y_n$, $[n]\in\bDelta$, admits a section.
\end{proof}

\subsection{Lisse extension of preweaves}

\subsubsection{}

We fix a context $(\cC, \cC^\sm, \cC^\etcov)$ as in \sssecref{sssec:Csmcov} and adopt the terminology of \ssecref{ssec:Art}.

\subsubsection{}
\label{sssec:mainthms}

Let $\cE_0$ be a class of morphisms in $\Spc$ which is closed under base change and sections, contains all smooth morphisms of spaces, and satisfies smooth-locality on the source and target \sssecref{sssec:smoothloc}.
Let $\cE_\lis^0$ and $\cE_\lis$ be the classes of morphisms in $\Art$ defined in \sssecref{sssec:E_lis^0} and \sssecref{sssec:E_lis}, respectively; recall that $\cE_\lis^0$ consists of the \emph{representable} morphisms in $\cE_\lis$.

Our first main result is as follows.

\begin{thm}\label{thm:extweave}
  Let $\D^*_!$ be a left preweave on $(\Spc,\cE_0)$.
  Then we have:
  \begin{thmlist}
    \item\label{item:extweave/repr}
    There exists a unique extension of $\D^*_!$ to a left preweave $\D^{\lis,*}_!$ on $(\Art, \cE_\lis^0)$ whose underlying presheaf $\D^{\lis,*} : (\Art)^\op \to \Cat$ is the lisse extension of $\D^* : (\Spc)^\op \to \Cat$.
    If $\D^*_!$ is a presentable preweave, then $\D^{\lis,*}_!$ is a presentable preweave.

    \item\label{item:extweave/main}
    If $\D^*_!$ is a presentable preweave which admits $\sharp$-direct image with respect to smooth morphisms of spaces, and $\D^* : (\Spc)^\op \to \Cat$ satisfies \v{C}ech descent along étale covering morphisms of spaces, then $\D^*_!$ extends uniquely to a presentable preweave $\D^{\lis,*}_!$ on $(\Art, \cE_\lis)$ whose underlying presheaf $\D^{\lis,*} : (\Art)^\op \to \Cat$ satisfies \v{C}ech descent along smooth covering morphisms of Artin stacks.
  \end{thmlist}
\end{thm}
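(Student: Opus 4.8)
The plan is to realize both extensions as lax symmetric monoidal functors out of the relevant correspondence \inftyCats, constructing the covariant $!$-structure by smooth descent from the space level and feeding, where needed, the descent results established above into the Liu--Zheng correspondence-category machinery in the form of \cite[\S A.5]{Mann}. In either part the underlying presheaf is not a choice: it is forced to be the lisse extension $\D^{\lis,*}$ of $\D^*$, and this already pins down every $*$-inverse image functor and every exchange datum built from them. What must be produced is the $!$-structure on shriekable morphisms of Artin stacks together with its base change and projection compatibilities, and what must be checked is that there is no other choice.

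For \itemref{item:extweave/repr} the crucial point is that a \emph{representable} shriekable morphism $f : \sX \to \sY$ in $\cE_\lis^0$ requires no descent for $\D$. Because $f$ is representable, base change gives a functor $\Phi_f : \Lis_\sY \to \Lis_\sX$, $(T,t)\mapsto(\sX\fibprod_\sY T, p_T)$, where $p_T : \sX\fibprod_\sY T\to\sX$ is the (smooth) projection and $\sX\fibprod_\sY T$ is a space; moreover the other projection $g_T : \sX\fibprod_\sY T\to T$ lies in $\cE_0$, hence is shriekable at the space level. I would define $f_!$ as the composite of the restriction $\Phi_f^* : \D^{\lis,*}(\sX) = \lim_{(S,s)\in\Lis_\sX}\D(S) \to \lim_{(T,t)\in\Lis_\sY}\D(\sX\fibprod_\sY T)$ with the functor $\lim_{(T,t)}\D(\sX\fibprod_\sY T)\to\lim_{(T,t)}\D(T) = \D^{\lis,*}(\sY)$ assembled from the space-level $g_{T,!}$, the transition coherences being precisely the base change isomorphisms $\Ex^*_!$ of $\D^*_!$. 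Base change and the projection formula for $f_!$ are then inherited termwise. Uniqueness is immediate: any extension must satisfy $t^* f_! \simeq g_{T,!}\,p_T^*$ for every $(T,t)\in\Lis_\sY$, and since the $t^*$ are the projections of the limit $\D^{\lis,*}(\sY)$ they jointly determine $f_!$. Presentability is preserved because each $\D^{\lis,*}(\sX)$ is a limit computed in $\PrL$ (the transition functors being colimit-preserving when $\D^*_!$ is presentable) and $\PrL\hook\Cat$ creates limits. Assembling these data into a genuine lax symmetric monoidal functor $\Corr_{\cE_\lis^0}(\Art)\to\Cat$ is the formal output of the cited machinery.

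For \itemref{item:extweave/main} the new phenomenon is non-representable shriekable morphisms in $\cE_\lis$, for which $f_!$ can only be defined by genuine smooth descent, the construction of \itemref{item:extweave/repr} becoming the representable base layer. Two inputs make the descent available. First, under the étale-descent hypothesis on $\D^*$, \propref{prop:coccus} shows that $\D^{\lis,*}$ satisfies \v{C}ech descent along every smooth covering morphism of Artin stacks, so both source and target categories are reconstructed from the \v{C}ech nerves of smooth covers. Second, the hypothesis that $\D^*_!$ admits $\sharp$-direct image for smooth morphisms of spaces supplies, via the exchange isomorphism $\Ex_{\sharp,!}$ \itemref{item:Sm4}, the compatibility between the space-level $f_{0,!}$ and the $\sharp$-pushforwards along the smooth cover maps, which is exactly what makes $f_{0,!}$ compatible with the smooth-descent datum (compare \propref{prop:descweave} at the level of spaces). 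Given $f : \sX\to\sY$ in $\cE_\lis$, I would choose a smooth-covering presentation as in \sssecref{sssec:E_lis}, with $f_0 : X\to Y$ a morphism of spaces in $\cE_0$ and $v : Y\surj\sY$, $(u,f_0) : X\surj\sX\fibprod_\sY Y$ smooth coverings, and define $f_!$ by descending $f_{0,!}$ along these covers. Independence of the presentation and coherent functoriality follow from the descent property together with the smooth-locality of $\cE_0$ on source and target and the two-of-three stability of $\cE_\lis$ recorded in \sssecref{sssec:E_lis}; uniqueness follows since the descent of $\D^{\lis,*}(\sY)$ along $v$ forces $f_!$ to be the descent of $f_{0,!}$.

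The step I expect to be the main obstacle is this last descent: promoting the pointwise, descent-glued $f_!$ to a fully coherent lax symmetric monoidal functor on $\Corr_{\cE_\lis}(\Art)$, with all simultaneous higher compatibilities among composition of $!$-pushforwards, base change, and the projection formula. This cannot be argued by hand and is exactly where the Liu--Zheng descent technology of \cite[\S A.5]{Mann} is indispensable; the entire role of the hypotheses of \itemref{item:extweave/main} (presentability, $\sharp$-direct image for smooth morphisms, and étale descent of $\D^*$) is to verify that machinery's input, namely that $\D^{\lis,*}$ is a sheaf for smooth coverings (\propref{prop:coccus}) and that $\sharp$-pushforward along smooth coverings is available and exchanges with $!$, so that smooth coverings are morphisms of $!$-descent. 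By contrast the representable case \itemref{item:extweave/repr} is formal, and furnishes the base of the induction over which the general shriekable morphisms are glued.
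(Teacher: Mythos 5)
Your treatment of \itemref{item:extweave/repr} is essentially the paper's: the paper feeds the pair $(\Spc,\cE_0)\sub(\Art,\cE_\lis^0)$ directly into the abstract Liu--Zheng--Mann extension result (\propref{prop:extobj}, i.e.\ \cite[Prop.~A.5.16]{Mann}), the only thing to verify being exactly your observation that a morphism of $\cE_\lis^0$ with target a space is a morphism of spaces lying in $\cE_0$; your explicit limit formula for $f_!$ is a correct unwinding of what that machinery produces. One small correction: in part \itemref{item:extweave/repr} there is no descent hypothesis, so the lisse extension is the right Kan extension along $\Spc\hook\Art$, i.e.\ the limit over $\Pt_\sX$, not over $\Lis_\sX$; the identification $\D^{\lis}(\sX)\simeq\lim_{(S,s)\in\Lis_\sX}\D(S)$ only becomes available under the étale descent hypothesis of \itemref{item:extweave/main} (\corref{cor:entropionize}).

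For \itemref{item:extweave/main} there is a genuine gap. You propose to define $f_!$ for a non-representable $f:\sX\to\sY$ in $\cE_\lis$ by descending $f_{0,!}$ along a single chart $v: Y\surj\sY$, $X\surj\sX\fibprod_\sY Y$ by spaces. But the \v{C}ech nerve of $v$ consists of higher Artin stacks, so the gluing datum lives on stacks rather than spaces, and it must be a descent datum for the \emph{$!$-presheaf}: the actual input to the extension machinery (\propref{prop:extmor}) is that $\D(\sY)\to\lim_{(\sS,s)}\D^!(\sS)$ is an equivalence with the limit taken along $!$-inverse images (\thmref{thm:!sat}). Establishing this requires Poincaré duality $f^!\simeq\Sigma_f f^*$ with $\Sigma_f(\un)$ $\otimes$-invertible for the smooth covering morphisms of Artin stacks appearing in the nerve --- that is, \itemref{item:Sm1}, \itemref{item:Sm4} \emph{and} \itemref{item:Sm5} for smooth morphisms of stacks, not merely of spaces --- so that $*$-descent can be converted into $!$-descent via \propref{prop:descweave}. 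Your stated inputs (the space-level $\sharp$-hypothesis, the exchange $\Ex_{\sharp,!}$, and \propref{prop:coccus}) do not directly supply this: promoting the smooth axioms from spaces to Artin stacks is the content of all of \thmref{thm:presaxioms}\itemref{item:presaxioms/sm}, and it has to be interleaved with the extension itself, which the paper organizes as an induction on the representability degree of the shriekable morphisms (extending from $\cE_n^{n-1}$ to $\cE_n^n$ at each stage, \propref{prop:extweave!ind}). Without that induction, and without \itemref{item:Sm5} in particular, the step ``define $f_!$ by descending $f_{0,!}$ along these covers'' does not go through.
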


\subsubsection{}
\label{sssec:Cpr}

For the next statement, fix another class of morphisms $\cC^\pr \sub \cC$ which is closed under base change.
A morphism of spaces is \emph{``proper''} if the corresponding morphism of $\cC$ lies in $\cC^\pr$.
Consider the induced class $(\cC^\pr)_\lis^0$ of \emph{``proper representable''} morphisms in $\Art$ as defined in \sssecref{sssec:E_lis^0}.\footnote{%
  As before, we omit the quotation marks in these terminologies when we are in an abstract context where there is no risk of ambiguity.
}

\begin{thm}\label{thm:presaxioms}
  Let $\D^*_!$ be a presentable preweave on $(\Art,\cE_\lis)$ whose underlying presheaf $\D^*$ satisfies \v{C}ech descent along smooth covering morphisms.
  Assume that the restriction of $\D^*_!$ to $(\Spc,\cE_0)$ admits $\sharp$-direct image for all smooth morphisms of spaces.
  Then we have:
  \begin{thmlist}
    \item\label{item:presaxioms/sm}
    $\D^*_!$ admits $\sharp$-direct image for smooth morphisms of Artin stacks.
    
    \item\label{item:presaxioms/pr}
    $\D^*_!$ admits $*$-direct image for proper representable morphisms of Artin stacks if and only if its restriction to $(\Spc,\cE_0)$ admits $*$-direct image for proper morphisms of spaces.
  \end{thmlist}
\end{thm}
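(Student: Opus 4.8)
The plan is to reduce both assertions, via smooth descent, to the corresponding hypotheses at the level of spaces. The essential tool is the descent-for-adjoints machinery of \cite{LiuZheng} (conveniently packaged in \cite[\S A.5]{Mann}): if a cosimplicial, or more generally $\cI$-indexed, diagram of presentable \inftyCats is equipped with a map to another such diagram whose components admit left (resp.\ right) adjoints satisfying the Beck--Chevalley condition, then the limit of the components admits a left (resp.\ right) adjoint, computed by the corresponding colimit, and this adjoint again satisfies base change. I will use repeatedly that, since $\D^*$ satisfies \v{C}ech descent along smooth covering morphisms, the pullback along any atlas $u : X \surj \sX$ is conservative (\lemref{lem:desc*}); hence any exchange transformation may be checked for invertibility after pullback along a suitable atlas, reducing it to a statement about morphisms of spaces. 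Note first that every smooth morphism $f : \sX \to \sY$ of Artin stacks is shriekable: choosing a smooth atlas $Y \surj \sY$ and then an atlas $X \surj \sX \fibprod_\sY Y$ exhibits $f$ locally (in the sense of \sssecref{sssec:E_lis}) as the smooth morphism of spaces $X \to Y$, which lies in $\cE_0$; thus $f \in \cE_\lis$ and $f_!$ is defined.

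For \itemref{item:presaxioms/sm}, the construction of $f_\sharp$ proceeds in two reductions, the two-step structure being forced by the fact that smooth morphisms of Artin stacks are \emph{not} representable in general. First, by \corref{cor:sat} the functor $f^*$ is the limit over $(T,t) \in \Lis_\sY$ of the pullbacks $f_T^* : \D(T) \to \D(\sX_T)$ along the base changes $f_T : \sX_T := \sX \fibprod_\sY T \to T$. For a morphism in $\Lis_\sY$ the square relating $\sX_{T'}, \sX_T, T', T$ is cartesian, so once each $f_T$ (a smooth morphism from an Artin stack to a \emph{space}) admits $f_{T,\sharp}$ satisfying \itemref{item:Sm3}, the Beck--Chevalley condition over $\Lis_\sY$ holds and $f^*$ acquires a left adjoint satisfying base change. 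This reduces \itemref{item:Sm1} and \itemref{item:Sm3} to smooth morphisms $g : \sX' \to T$ with space target. For such $g$ I choose an atlas $X' \surj \sX'$; each term $X'_n \to T$ of the \v{C}ech nerve is a smooth morphism of spaces admitting $\sharp$ by hypothesis, and the \v{C}ech squares are cartesian so that \itemref{item:Sm3} on spaces supplies Beck--Chevalley. Writing $\D(\sX') \simeq \colim_{[n]\in\bDelta^\op} \D(X'_n)$ as a colimit in $\PrL$ (the descent limit converts to this colimit via the left adjoints along the \v{C}ech structure maps), the functor $g_\sharp := \colim_n (X'_n \to T)_\sharp$ is left adjoint to $g^*$ and satisfies base change. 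This yields \itemref{item:Sm1} and \itemref{item:Sm3} in general. The projection formula \itemref{item:Sm2} and the compatibility \itemref{item:Sm4} with $!$-pushforward are invertibility statements for exchange transformations, checked after pullback to an atlas and thereby reduced to smooth morphisms of spaces; \itemref{item:Sm5} I obtain from \lemref{lem:checkSm5}, since $\otimes$-invertibility of $\Sigma_g(\un)$ for the relevant base changes may likewise be checked on an atlas, where $g$ becomes a smooth morphism of spaces satisfying \itemref{item:Sm5}.

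For \itemref{item:presaxioms/pr}, the ``only if'' direction is immediate, since a proper morphism of spaces is a proper representable morphism of Artin stacks under $\Spc \hook \Art$, and restricting the $*$-direct image data recovers it on spaces. For ``if'', let $f : \sX \to \sY$ be proper representable and choose a smooth atlas $Y \surj \sY$ with \v{C}ech nerve $Y_\bullet$. Here representability is decisive: $X_\bullet := \sX \fibprod_\sY Y_\bullet$ consists of \emph{spaces}, the $f_n : X_n \to Y_n$ are proper morphisms of spaces, and the \v{C}ech squares relating the $X_n$ and $Y_n$ are cartesian. Under the descent identifications $\D(\sX) \simeq \Tot \D(X_\bullet)$ and $\D(\sY) \simeq \Tot \D(Y_\bullet)$, the coface functors are right-adjointable by \itemref{item:Pr3} on spaces (applicable precisely because of the cartesian \v{C}ech squares together with the hypothesis), so the descent-for-adjoints machinery produces a right adjoint $f_*$ to $f^*$ satisfying base change; this gives \itemref{item:Pr1} and \itemref{item:Pr3}. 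The remaining conditions \itemref{item:Pr2}, \itemref{item:Pr4}, \itemref{item:Pr5} are again invertibility statements, checked after pullback along the atlas and reduced to proper morphisms of spaces.

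The main obstacle is \itemref{item:presaxioms/sm}: precisely the non-representability of smooth morphisms of Artin stacks, which rules out a single-step reduction to spaces and forces the two intermediate stages, with a separate Beck--Chevalley verification at each (cartesianness of fibre-product squares over $\sY$ in the first stage, of \v{C}ech squares of an atlas in the second). Ensuring that the levelwise adjoints assemble, in a homotopy-coherent way, into a genuine functor $f_\sharp$ rather than a merely pointwise adjoint is exactly what the \cite{LiuZheng}/\cite{Mann} formalism supplies, and is where the proof requires the most care. By contrast \itemref{item:presaxioms/pr} is comparatively routine: representability makes the \v{C}ech squares cartesian, so a single descent along an atlas of the target suffices.
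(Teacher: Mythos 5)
Your overall strategy---identify $f^*$ with a limit of pullbacks between spaces, invoke the Liu--Zheng/Mann adjointability machinery to produce the adjoint, then verify the remaining axioms by reduction to spaces---is indeed the paper's strategy, but two of your steps do not go through as stated. First, a factual point in \itemref{item:presaxioms/pr}: for $\sY$ an $n$-Artin stack with $n\ge 2$, the terms $Y_k$ of the \v{C}ech nerve of an atlas $Y \surj \sY$ are only $(n-1)$-Artin, not spaces, so your claim that $X_\bullet = \sX\fibprod_\sY Y_\bullet$ consists of spaces fails and a single \v{C}ech descent does not land in the space-level hypotheses. The paper instead takes the limit over all of $\Lis_\sY$ (whose objects are spaces by definition), where representability of $f$ does guarantee that the relevant squares \eqref{eq:bbkvuasepr} consist of spaces (\sssecref{sssec:xpcdvcno}); and even then, \itemref{item:Pr3} needs a three-case argument and \itemref{item:Pr4} an induction on the representability of the shriekable morphism $q$, because $\sY'_T = \sY'\fibprod_\sY T$ is a stack when $q$ is not representable.

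Second, and more seriously: the assertion that \itemref{item:Sm2}, \itemref{item:Sm3}, \itemref{item:Sm4} and their proper analogues are ``invertibility statements checked after pullback to an atlas'' is circular as written. To transport $\Ex^*_\sharp : g_\sharp p^* \to q^* f_\sharp$ across the pullback $t^*$ along $t : T \to \sY$ in $\Lis_\sY$, you must already know that $f_\sharp$ commutes with $t^*$ and that $g_\sharp$ commutes with $t_{\sY'}^*$---instances of the very base-change formula being proved (these are exactly the first two hypotheses of \lemref{lem:smbcredY}). The paper breaks this circularity with a double induction on the representability indices of $f$ and $q$, interleaving reduction lemmas over $\Lis_{\sY'}$, $\Lis_\sY$ and $\Lis_\sX$ with \lemref{lem:limgTshp}; supplying that inductive scheme is the mathematical content of the proof of \itemref{item:presaxioms/sm}, and your proposal omits it. (The analogous circularity affects your appeal to Beck--Chevalley already in the construction of $f_{T,\sharp}$; the paper's \propref{prop:Sm1} avoids it for \itemref{item:Sm1} by observing that $f^*$ is a limit of right adjoints in $\PrR$ and that $\PrR \to \Cat$ preserves limits, with no adjointability of squares needed.) Finally, \itemref{item:Sm5} does not reduce to spaces by pullback: for $(S,s)\in\Lis_{\sX'}$ the restriction of $\Sigma_g(\un)$ along $s^*$ is governed by the morphism $\sX'\fibprod_{\sY'}S \to S$, which still has stacky source, so one never lands on a smooth morphism of spaces. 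The paper instead proves \lemref{lem:uetdxbiz}, a computation expressing $f_\sharp i_!(\un)$ as $i^* F_!(\Sigma_{f\circ s}(\un))$ via the projection formulas and Poincar\'e duality for the $(n-1)$-representable morphisms $f\circ s$ with $(S,s)\in\Lis_\sX$, again inside an induction on $n$.
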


\subsubsection{}

Let $\cC, \cC^\sm, \cC^\etcov, \cC^\pr, \cE_0, \cE_\lis$ be as above, and terminology as in \ssecref{ssec:Art}.
Combining Theorems~\ref{thm:extweave} and \ref{thm:presaxioms} shows that any presentable weave on $(\Spc,\cE,\cC^\sm,\cC^\pr)$ extends to a presentable weave on $(\Art,\cE_\lis,(\cC^\sm)_\lis,(\cC^\pr)_\lis^0)$.
Here $(\cC^\sm)_\lis$, defined as in \sssecref{sssec:E_lis}, is the class of smooth morphisms of Artin stacks, and $(\cC^\pr)_\lis^0$, defined as in \sssecref{sssec:E_lis^0}, is the class of proper representable morphisms of Artin stacks.

\begin{cor}\label{cor:weave}
  Let $\D^*_!$ be a presentable weave on $(\Spc,\cE,\cC^\sm,\cC^\pr)$.
  If the underlying presheaf $\D^*$ satisfies \v{C}ech descent along étale covering morphisms, then there is a unique presentable weave $\D^{\lis,*}_!$ on $$(\Art,\cE_\lis,(\cC^\sm)_\lis,(\cC^\pr)_\lis^0)$$ whose underlying presheaf $\D^{\lis,*}$ satisfies \v{C}ech descent along smooth covering morphisms of Artin stacks.
\end{cor}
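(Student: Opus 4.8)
The plan is to assemble \thmref{thm:extweave} and \thmref{thm:presaxioms}, since \corref{cor:weave} is essentially their combination once the definition of a weave is unwound. Recall that a weave on a context $(\cC,\cE,\cC^\sm,\cC^\pr)$ is a preweave admitting $\sharp$-direct image for every smooth morphism and $*$-direct image for every proper morphism. So I must (i) produce the preweave extension to Artin stacks, (ii) verify these two direct-image properties there, and (iii) argue uniqueness. Throughout I identify the shriekable class $\cE$ of the weave context with the class $\cE_0$ of \sssecref{sssec:mainthms}, so that its standing hypotheses---closure under base change and sections, containing all smooth morphisms of spaces, and smooth-locality on the source and target---are in force.

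First I would invoke \thmref{thm:extweave}\itemref{item:extweave/main}. Its hypotheses hold: being a presentable weave, $\D^*_!$ is in particular a presentable preweave on $(\Spc,\cE_0)$ admitting $\sharp$-direct image for smooth morphisms of spaces, and its underlying presheaf $\D^*$ satisfies \v{C}ech descent along étale covering morphisms by assumption. The theorem then yields a presentable preweave $\D^{\lis,*}_!$ on $(\Art,\cE_\lis)$ whose underlying presheaf $\D^{\lis,*}$ is the lisse extension of $\D^*$ and satisfies \v{C}ech descent along smooth covering morphisms of Artin stacks; moreover its restriction to $(\Spc,\cE_0)$ recovers $\D^*_!$, since the lisse extension restricts back to $\D^*$ as a right Kan extension along a fully faithful functor.

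Next I would feed $\D^{\lis,*}_!$ into \thmref{thm:presaxioms}. Its hypotheses are exactly what the previous step produced: $\D^{\lis,*}_!$ is a presentable preweave on $(\Art,\cE_\lis)$ whose underlying presheaf satisfies \v{C}ech descent along smooth covering morphisms, and its restriction to spaces---namely $\D^*_!$---admits $\sharp$-direct image for all smooth morphisms of spaces. Part~\itemref{item:presaxioms/sm} then gives $\sharp$-direct image for all smooth morphisms of Artin stacks, so $\D^{\lis,*}_!$ is a weave for the smooth class $(\cC^\sm)_\lis$. For the proper class, part~\itemref{item:presaxioms/pr} reduces $*$-direct image for proper representable morphisms of Artin stacks to the same property for proper morphisms of spaces, which holds because $\D^*_!$ is a weave on $(\Spc,\cE,\cC^\sm,\cC^\pr)$. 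Combining, $\D^{\lis,*}_!$ is a presentable weave on $(\Art,\cE_\lis,(\cC^\sm)_\lis,(\cC^\pr)_\lis^0)$; here one also checks that $(\cC^\sm)_\lis$ and $(\cC^\pr)_\lis^0$ are contained in $\cE_\lis$, which follows from $\cC^\sm,\cC^\pr\sub\cE_0$ together with the constructions of \sssecref{sssec:E_lis} and \sssecref{sssec:E_lis^0}.

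Uniqueness is inherited from \thmref{thm:extweave}: the weave axioms are \emph{properties} of a preweave, not extra structure, so any presentable weave on the Artin-stack context restricting to $\D^*_!$ has underlying presheaf the lisse extension of $\D^*$, hence coincides with $\D^{\lis,*}_!$ as a preweave and therefore as a weave. The one genuinely delicate point---which I would flag as the main obstacle---is the bookkeeping of hypothesis matching: confirming that $\cE$ meets the standing assumptions on $\cE_0$, and, crucially, that the restriction of the extended preweave to spaces is canonically the original $\D^*_!$, so that the space-level $\sharp$- and $*$-direct image conditions demanded by \thmref{thm:presaxioms} can be read off directly from the weave structure we started with.
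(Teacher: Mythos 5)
Your proposal is correct and follows the same route as the paper, which proves \corref{cor:weave} precisely by combining \thmref{thm:extweave}\itemref{item:extweave/main} (to obtain the presentable preweave on $(\Art,\cE_\lis)$ satisfying descent) with \thmref{thm:presaxioms} (to verify $\sharp$- and $*$-direct image for smooth, resp.\ proper representable, morphisms), uniqueness being that of the preweave since the weave axioms are properties rather than structure. Your extra bookkeeping about the standing hypotheses on $\cE_0$ is consistent with the setup of \sssecref{sssec:mainthms}, which the paper takes as given.
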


\sssec{}

We can moreover extend \corref{cor:weave} to non-Artin stacks.

Let $\cP$ be a class of morphisms in $\Art$.
We denote by $^{+}\!\cP$ the class of morphisms in $\Stk$ defined as follows: a morphism of stacks $X \to Y$ belongs to $^{+}\!\cP$ if and only if it is eventually representable, and for every morphism $V \to Y$ with $V \in \Spc$, the base change $X\fibprod_Y V \to V$ belongs to $\cP$.

\begin{cor}\label{cor:weavenonart}
  Let $\D^*_!$ be a presentable weave on $(\Spc,\cE,\cC^\sm,\cC^\pr)$.
  If the underlying presheaf $\D^*$ satisfies \v{C}ech descent along étale covering morphisms, then there is a unique presentable weave $\D^{\lis,*}_!$ on $$(\Stk,^{+}\!\cE_\lis,^{+}\!(\cC^\sm)_\lis,^{+}\!(\cC^\pr)_\lis^0)$$ whose underlying presheaf $\D^{\lis,*}$ satisfies \v{C}ech descent along smooth covering morphisms of Artin stacks.
\end{cor}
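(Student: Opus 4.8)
The plan is to reapply the lisse-extension mechanism of \thmref{thm:extweave} and \thmref{thm:presaxioms} one level higher, treating Artin stacks as the new ``spaces'', the class $(\cC^\sm)_\lis$ as the new smooth morphisms, and smooth covering morphisms of Artin stacks as the new covering class. The input is the weave on $(\Art,\cE_\lis,(\cC^\sm)_\lis,(\cC^\pr)^0_\lis)$ furnished by \corref{cor:weave}: by construction it is a presentable weave whose underlying presheaf satisfies \v{C}ech descent along smooth covering morphisms and which admits $\sharp$-direct image for every smooth morphism of Artin stacks (\thmref{thm:presaxioms}\itemref{item:presaxioms/sm}). These are exactly the hypotheses needed to rerun the construction.

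First I would pin down the underlying presheaf $\D^{\lis,*} : (\Stk)^\op \to \PrL$. As $\Stk$ is the \inftyCat of étale sheaves of anima on $\Spc$, it is generated under colimits by the representables $\Spc \hook \Stk$; since $\D^*$ satisfies étale descent, it extends uniquely to a limit-preserving functor on $\Stk$, given by the right Kan extension $\D^{\lis,*}(\sY) \simeq \lim_{(S,s)\in\Pt_\sY} \D(S)$. Being limit-preserving, it automatically satisfies \v{C}ech descent along every effective epimorphism of stacks, in particular along smooth covering morphisms; and its restriction to $\Art$ recovers the presheaf of \corref{cor:weave} by \corref{cor:entropionize}.

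Next I would build the operations for $^{+}\!\cE_\lis$. The $^{+}$ construction is rigged so that for any $f : \sX \to \sY$ in $^{+}\!\cE_\lis$ and any space $s : S \to \sY$, the base change $f_S : \sX\fibprod_\sY S \to S$ is a morphism of \emph{Artin} stacks lying in $\cE_\lis$, and so carries a functor $(f_S)_!$ supplied by \corref{cor:weave}. Using the base-change isomorphisms of the Artin weave, these functors assemble over $(S,s)\in\Pt_\sY$ into
\begin{equation*}
  f_! : \D^{\lis}(\sX) \simeq \lim_{(S,s)\in\Pt_\sY} \D^{\lis}(\sX\fibprod_\sY S) \xrightarrow{\,(f_S)_!\,} \lim_{(S,s)\in\Pt_\sY} \D(S) \simeq \D^{\lis}(\sY),
\end{equation*}
the first identification being a saturation argument as in \corref{cor:sat}. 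This is just the relative form of the mechanism of \thmref{thm:extweave}: the base $\sY$ now ranges over all stacks and the atlas is still by spaces, but the fibres over spaces are now morphisms of Artin stacks rather than of spaces. The same bookkeeping produces the lax symmetric monoidal extension $\D^{\lis,*}_! : \Corr_{^{+}\!\cE_\lis}(\Stk) \to \Cat$, and \thmref{thm:presaxioms} read fibrewise yields the smooth axioms for $^{+}\!(\cC^\sm)_\lis$ and the proper axioms for $^{+}\!(\cC^\pr)^0_\lis$. Uniqueness follows as in \thmref{thm:extweave}, since each operation is forced by its fibres over spaces together with smooth-covering descent.

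The main obstacle is that one cannot simply invoke \thmref{thm:extweave} and \thmref{thm:presaxioms} as a black box with $\cC=\Art$: that would only produce a weave on the Artin stacks \emph{of the new context} --- stacks admitting a finite tower of Artin atlases --- whereas a general object of $\Stk$ has a diagonal that need not even be eventually representable. The point of the proof is therefore to carry out the construction \emph{directly} over all of $\Stk$, using the structure of $\Stk$ as an $\infty$-topos to extend the presheaf and using the representability built into $^{+}\!\cE_\lis$ to keep every fibre Artin. Concretely, the delicate step is to upgrade the fibrewise functors $(f_S)_!$ to a coherent morphism of $\Pt_\sY$-indexed limit diagrams --- that is, to assemble them, with all higher coherences, into a functor of correspondence \inftyCats --- which is precisely what the homotopy-coherent machinery of \cite{LiuZheng} (conveniently packaged in \cite[\S A.5]{Mann}) is designed to deliver.
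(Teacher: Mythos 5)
Your proposal is correct and follows essentially the same route as the paper: the paper's entire proof is a one-line appeal to \propref{prop:extobj} (the Liu--Zheng--Mann extension result, \cite[Prop.~A.5.16]{Mann}) applied to the full subcategory inclusion $\Art \sub \Stk$ with $\cE_\lis \sub {}^{+}\!\cE_\lis$, whose hypothesis $(\ast)$ is exactly the eventual-representability built into the ${}^{+}$ construction that you use to keep every fibre over a space Artin. Your expanded description --- right Kan extension of the underlying presheaf, fibrewise assembly of the shriek functors with coherence supplied by the machinery of \cite[\S A.5]{Mann}, and fibrewise verification of the smooth and proper axioms --- is precisely what that citation packages.
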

\begin{proof}
  Follows immediately from \propref{prop:extobj} below.
\end{proof}

\section{Smooth axioms}

In this section we prove \thmref{thm:presaxioms}\itemref{item:presaxioms/sm}.

\subsection{(Sm1)}
\label{ssec:Sm1}

\subsubsection{}

Note that \itemref{item:Sm1} only depends on the underlying presheaf $\D^{\lis,*}$.
We have:

\begin{prop}\label{prop:Sm1}
  Let $\D^* : (\Art)^\op \to \PrL$ be a presentable presheaf of \inftyCats which satisfies \v{C}ech descent along smooth covering morphisms.
  If its restriction to $\Spc$ satisfies \itemref{item:Sm1} for every smooth morphism of spaces, then it satisfies \itemref{item:Sm1} for every smooth morphism of Artin stacks.
\end{prop}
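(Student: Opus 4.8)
The plan is to reduce \itemref{item:Sm1} for a smooth morphism of Artin stacks to the construction of a single left adjoint, and then to exhibit that left adjoint as a relative left Kan extension along a reindexing of lisse presentations. Since the class of smooth morphisms of Artin stacks is closed under base change (\sssecref{sssec:nArt}), every base change $g$ of a smooth morphism $f : \sX \to \sY$ is again a smooth morphism of Artin stacks; hence it suffices to show that for \emph{every} smooth morphism $f : \sX \to \sY$ of Artin stacks the pullback functor $f^* : \D(\sY) \to \D(\sX)$ admits a left adjoint $f_\sharp$. Fix such an $f$.

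First I would set up the descent presentations. Since $\D^*$ satisfies \v{C}ech descent along smooth covering morphisms, its restriction to each slice $\LisStk_\sZ$ is the lisse extension of its restriction to $\Lis_\sZ$ (\corref{cor:Shvreseq}, \corref{cor:entropionize}), yielding natural equivalences $\D(\sY) \simeq \lim_{(T,t)\in\Lis_\sY}\D(T)$ and $\D(\sX) \simeq \lim_{(U,u)\in\Lis_\sX}\D(U)$ under which the cone maps are the pullbacks $t^*$ and $u^*$. Because smooth morphisms of Artin stacks are closed under composition, postcomposition with $f$ defines a functor $\phi_f : \Lis_\sX \to \Lis_\sY$, $(U,u)\mapsto(U,f\circ u)$, and since the value $\D(-)$ depends only on the underlying space we have $\D\circ\phi_f = \D|_{\Lis_\sX}$ on the nose.

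The key identification is that, under these equivalences, $f^*$ is precisely the restriction functor along $\phi_f$: an object $M \in \D(\sY)$ corresponds to the compatible family $(t^*M)_{(T,t)}$, and $f^*M$ corresponds to $(u^*f^*M)_{(U,u)} \simeq ((f\circ u)^*M)_{(U,u)}$, which is exactly the reindexing of $(t^*M)$ along $\phi_f$; I would make this coherent by appealing to the naturality of the equivalence $\D \simeq \D^\lis$ rather than building the comparison by hand. Granting this, constructing $f_\sharp$ amounts to constructing a left adjoint to restriction along a functor between limits of $\PrL$-valued diagrams. Such a left adjoint exists and is computed by the pointwise formula $(f_\sharp G)(T,t) \simeq \colim_{(U,u),\,\phi_f(U,u)\to(T,t)} G(U,u)$: the colimit exists because each $\D(T)$ is presentable, and the resulting section again satisfies the descent (Cartesian) condition because the transition functors of $\D^*|_{\Lis_\sY}$ are pullbacks lying in $\PrL$ and therefore preserve colimits. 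This is precisely the colimit-transfer mechanism developed in \ssecref{ssec:colimshp}, which I would invoke rather than redo. As an abstract consistency check, $f^*$ is a colimit-preserving functor of presentable \inftyCats, so by the adjoint functor theorem \cite{LurieHTT} it admits a left adjoint as soon as it preserves limits — and the explicit Kan-extension formula both establishes existence and computes $f_\sharp$.

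I expect the main obstacle to be the verification that the pointwise left Kan extension of a \emph{compatible} family is again compatible, i.e. that $f_\sharp$ lands in the limit \inftyCat $\D(\sX)$ rather than merely in the \inftyCat of all sections of the $\Lis_\sX$-diagram. This is exactly where presentability is essential: it is the fact that the pullback transition functors preserve colimits (being morphisms in $\PrL$) that lets the Kan-extension colimits commute past them. A secondary, more bookkeeping-heavy point is pinning down the coherent identification of $f^*$ with $\phi_f^*$, which I would settle via the naturality in \corref{cor:Shvreseq} instead of an explicit comparison. Once $f_\sharp$ is produced for an arbitrary smooth $f$, the base-change reduction of the first paragraph upgrades this to \itemref{item:Sm1} for $f$ in full, completing the proof.
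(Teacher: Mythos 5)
Your opening reduction (it suffices to produce a left adjoint of $f^*$ for each smooth $f$, since smoothness is stable under base change) and your identification of $f^*$ with restriction along $\phi_f : \Lis_\sX \to \Lis_\sY$ (this is \corref{cor:entropionize} together with \corref{cor:sat}) are both fine. The gap is in the construction of the left adjoint. The pointwise formula $(f_\sharp G)(T,t) \simeq \colim_{(U,u),\,\phi_f(U,u)\to(T,t)} G(U,u)$ does not typecheck: each term $G(U,u)$ lives in $\D(U)$, and to form a colimit in $\D(T)$ you must first transport it along the structure morphism $w : U \to T$ of the corresponding object of the comma category. The only candidate transport is a left adjoint $w_\sharp$ of $w^*$, but $w$ is merely a morphism over $\sY$ between a space smooth over $\sX$ and a space smooth over $\sY$; a morphism between objects smooth over a common base need not be smooth, so the hypothesis \itemref{item:Sm1} for spaces provides no such functor. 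This is not bookkeeping: a relative left Kan extension along a functor between limits of $\Cat$-valued diagrams requires either coCartesian pushforwards of this kind or Beck--Chevalley isomorphisms, and neither is available here. For the same reason, your proposed fix for the compatibility (Cartesianness) of the resulting family --- that the transition functors $u^*$ preserve colimits --- is insufficient: after moving $u^*$ inside the colimit you must still identify $u^* w_\sharp$ with $w'_\sharp u'^*$ and compare the comma categories over $(T_1,t_1)$ and $(T_2,t_2)$, which is an \itemref{item:Sm3}-type input not assumed in this proposition. Finally, the appeal to \ssecref{ssec:colimshp} is circular at this stage: that mechanism presupposes the left adjoints $f_{i,\sharp}$ for the members of the covering family, i.e.\ exactly the adjoints $s_\sharp$ for smooth morphisms $s : S \to \sX$ into an Artin stack whose existence is part of what is being proved.

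The paper avoids constructing $f_\sharp$ altogether. It writes $\D(\sY)$ and $\D(\sX)$ as limits over the double index $(T,t)\in\Lis_\sY$, $(S,s)\in\Lis_{\sX_T}$, under which $f^*$ becomes the limit of the functors $(f_T\circ s)^* : \D(T) \to \D(S)$. Each of these is pullback along a smooth morphism of \emph{spaces}, hence a right adjoint by hypothesis, and one concludes because the forgetful functor $\PrR \to \Cat$ preserves limits (\cite[Thm.~5.5.3.18]{LurieHTT}), so that the limit functor is again a right adjoint; no pointwise description of $f_\sharp$ and no exchange isomorphisms are needed. If you insist on an explicit formula for $f_\sharp$, you would first need \itemref{item:Sm3} for spaces as an extra hypothesis and a restriction to comma categories with smooth structure maps, at which point you would essentially be reproving \lemref{lem:limgTshp} --- the existence statement is obtained much more cheaply by the limit-of-right-adjoints argument.
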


\begin{proof}
  Let $f : \sX \to \sY$ be a smooth morphism of Artin stacks.
  The functor $f^* : \D(\sY) \to \D(\sX)$ is identified by Corollaries~\ref{cor:entropionize} and \ref{cor:sat} with the limit over $(T,t)\in\Lis_\sY$ and $(S,s)\in\Lis_{\sX_T}$ of the functors
  \begin{equation*}
    \D(T)
    \xrightarrow{f_T^*} \D(\sX_T)
    \xrightarrow{s^*} \D(S),
  \end{equation*}
  where $f_T : \sX_T \to T$ is the base change.
  Since $f_T \circ s$ is a smooth morphism of spaces, $s^*f_T^*$ is a right adjoint by assumption.
  Since the forgetful functor $\PrR \to \Cat$ preserves limits (see \cite[Thm.~5.5.3.18]{LurieHTT}), where $\PrR$ is the \inftyCat of presentable \inftyCats and right adjoint functors, it follows that $f^*$ is also a right adjoint.
\end{proof}

\subsubsection{}

For a smooth morphism of Artin stacks $f : \sX \to \sY$, we write $f_\sharp$ for the left adjoint of $f^*$ in the situation of \propref{prop:Sm1}.

\subsubsection{}

For later use, we describe $f_\sharp$ as the limit of its base changes $f_{T,\sharp}$ over $(T,t)\in\Lis_\sY$, assuming a smooth base change formula that we will prove later.

More generally, let $f : \sX \to \sY$ be a smooth morphism and $g : \sX' \to \sY'$ its base change along a morphism $q : \sY' \to \sY$.
Given $(T,t)\in\Lis_\sY$, form the cube:
\begin{equation}\label{eq:tzwzverx}
  \begin{tikzcd}[matrix xscale=0.5, matrix yscale=0.5]
    &
    \sX'
    \ar{rr}{g}
    \ar[swap,near end]{dd}{p}
    & & \sY'
    \ar{dd}{q}
    \\
    \sX'_T
    \ar{ur}{t_{\sX'}}
    \ar[crossing over, near end]{rr}{g_T}
    \ar[swap]{dd}{p_T}
    & & \sY'_T \ar[swap, near start]{ur}{t_{\sY'}}
    \\
    &
    \sX
    \ar[near start]{rr}{f}
    & & \sY
    \\
    \sX_T \ar[near end]{ur}{t_\sX}
    \ar{rr}{f_T}
    & & T
    \ar[crossing over, leftarrow, near end, swap]{uu}{q_T}
    \ar{ur}{t}
  \end{tikzcd}
\end{equation}
Given a morphism $u : (T_1, t_1) \to (T_2, t_2)$ in $\Lis_\sY$, consider the diagram of cartesian squares
\begin{equation}\label{eq:bbkvuase}
  \begin{tikzcd}
    \sX'_{T_1} \ar{r}{g_{T_1}}\ar{d}{u_{\sX'}}
    & \sY'_{T_1} \ar{d}{u_{\sY'}}\ar{r}{q_{T_1}}
    & T_1 \ar{d}{u}
    \\
    \sX'_{T_2} \ar{r}{g_{T_2}}
    & \sY'_{T_2} \ar{r}{q_{T_2}}
    & T_2.
  \end{tikzcd}
\end{equation}
Suppose that the exchange transformation $\Ex_\sharp^*$ for the left-hand square is invertible, i.e., that $g_{T_2,\sharp}$ commutes with $u_{\sY'}^*$.
In that case, passing to limits vertically in the commutative squares
\[\begin{tikzcd}
  \D(\sX'_{T_2}) \ar{r}{g_{T_2,\sharp}}\ar{d}{u_{\sX'}^*}
  & \D(\sY'_{T_2}) \ar{d}{u_{\sY'}^*}
  \\
  \D(\sX'_{T_1}) \ar{r}{g_{T_1,\sharp}}
  & \D(\sY'_{T_1})
\end{tikzcd}\]
and using the identifications of \corref{cor:sat}, the horizontal functors give rise to a canonical functor
\begin{equation}\label{eq:limgTshp}
  \D(\sX') \simeq \lim_{(T,t)\in\Lis_\sY} \D(\sX'_T)
  \to \lim_{(T,t)\in\Lis_\sY} \D(\sY'_T) \simeq \D(\sY')
\end{equation}
where the limits are taken along $*$-inverse images.

\begin{lem}\label{lem:limgTshp}
  Let $f : \sX \to \sY$ be a smooth morphism and $g : \sX' \to \sY'$ its base change along a morphism $q : \sY' \to \sY$.
  Suppose that for every morphism $u : (T_1, t_1) \to (T_2, t_2)$ in $\Lis_\sY$, $g_{T_2,\sharp}$ commutes with $u_{\sY'}^*$.
  Then the functor $g_\sharp : \D(\sX') \to \D(\sY')$ is identified with \eqref{eq:limgTshp}.
  In other words, $g_\sharp$ is uniquely determined by commutative squares
  \[\begin{tikzcd}
    \D(\sX') \ar{r}{g_{\sharp}}\ar{d}{t_{\sX'}^*}
    & \D(\sY') \ar{d}{t_{\sY'}^*}
    \\
    \D(\sX'_T) \ar{r}{g_{T,\sharp}}
    & \D(\sY'_T)
  \end{tikzcd}\]
  for all $(T,t)\in\Lis_\sY$.
\end{lem}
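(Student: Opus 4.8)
The plan is to realize both $g^*$ and the candidate functor \eqref{eq:limgTshp} as the images, under the limit functor $\lim_{(T,t)\in\Lis_\sY}\colon \Fun(\Lis_\sY^\op,\Cat)\to\Cat$, of a single adjoint pair living in the functor \inftyCat $\Fun(\Lis_\sY^\op,\Cat)$, and then to conclude by uniqueness of adjoints. First I would package the base-changed data. Writing $\cA,\cB\colon \Lis_\sY^\op\to\Cat$ for the functors $(T,t)\mapsto\D(\sY'_T)$ and $(T,t)\mapsto\D(\sX'_T)$, with transition maps the $*$-inverse images $u_{\sY'}^*$ and $u_{\sX'}^*$, functoriality of $*$-pullback (applied to the faces of the cube \eqref{eq:tzwzverx}) organizes the collection $(g_T^*)_{(T,t)}$ into a natural transformation $\theta\colon \cA\to\cB$. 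Under the identifications of \corref{cor:sat} recalled before the statement, which give $\D(\sY')\simeq\lim\cA$ and $\D(\sX')\simeq\lim\cB$ compatibly with componentwise pullback, the functor $\lim\theta$ is identified with $g^*$.

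Next I would produce a left adjoint to $\theta$ inside $\Fun(\Lis_\sY^\op,\Cat)$. Each component $g_T^*$ admits a left adjoint $g_{T,\sharp}$ by \propref{prop:Sm1}, since $g_T$ is a smooth morphism of Artin stacks. The standard criterion for these pointwise left adjoints to assemble into an adjoint in the functor \inftyCat is that, for every $u\colon (T_1,t_1)\to(T_2,t_2)$ in $\Lis_\sY$, the mate transformation $g_{T_1,\sharp}\,u_{\sX'}^*\to u_{\sY'}^*\,g_{T_2,\sharp}$ attached to the corresponding naturality square of $\theta$ be invertible; this mate is exactly the exchange transformation $\Ex_\sharp^*$ of the left-hand square in \eqref{eq:bbkvuase}, so its invertibility is precisely the hypothesis of the lemma. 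Granting this, $\theta$ admits a left adjoint $\theta^L\colon \cB\to\cA$ whose components are the $g_{T,\sharp}$ and whose transition data are the inverse mates, and $\lim\theta^L$ is by construction the functor \eqref{eq:limgTshp}.

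Finally, since the limit functor is a $2$-functor (being compatible with the natural $\Cat$-enrichments), it carries the internal adjunction between $\theta^L$ and $\theta$ to an adjunction in $\Cat$ with left adjoint $\lim\theta^L$ and right adjoint $\lim\theta\simeq g^*$. Thus \eqref{eq:limgTshp} is left adjoint to $g^*$, and since $g_\sharp$ is by definition the left adjoint of $g^*$ (\propref{prop:Sm1}), uniqueness of adjoints yields the asserted identification $g_\sharp\simeq\lim\theta^L$; the commuting squares displayed in the statement then express the compatibility of this identification with the limit projections $t_{\sX'}^*$ and $t_{\sY'}^*$. The main obstacle is the foundational input of the second paragraph: one must justify, at the level of \inftyCats, both that invertibility of the mates upgrades the pointwise adjoints to an honest adjunction in $\Fun(\Lis_\sY^\op,\Cat)$ and that $\lim$ preserves adjunctions. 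I would settle this either by invoking the relevant $(\infty,2)$-categorical formalism (a $2$-functor preserves adjunctions, and a pointwise adjunction with invertible Beck--Chevalley mates is an internal adjunction) or, more concretely, by passing to the cartesian fibrations classified by $\cA$ and $\cB$, constructing a relative left adjoint that preserves cartesian edges — possible exactly when the mates are invertible — and then reading off the adjunction on limits by taking cartesian sections.
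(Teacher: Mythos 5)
Your proposal is correct, and it reaches the same endpoint as the paper --- showing that the candidate functor \eqref{eq:limgTshp} is left adjoint to $g^*$ and invoking uniqueness of left adjoints --- but by a genuinely different mechanism. You establish the adjunction formally: you assemble the $g_T^*$ into a natural transformation $\theta$ of diagrams $\Lis_\sY^\op \to \Cat$, use the invertibility of the mates $\Ex^*_\sharp$ (the hypothesis of the lemma) to promote the pointwise left adjoints $g_{T,\sharp}$ to a left adjoint of $\theta$ internal to the functor \inftyCat, and then push the adjunction through the limit $2$-functor. The paper instead argues directly on mapping anima: it writes the componentwise adjunction identity $\Maps_{\D(\sY'_T)}(g_{T,\sharp}t_{\sX'}^*\sF, t_{\sY'}^*\sG) \simeq \Maps_{\D(\sX'_T)}(t_{\sX'}^*\sF, g_T^*t_{\sY'}^*\sG)$, rewrites both sides via the definition of $g_?$ and of $g^*$, and passes to the limit over $(T,t)$ using \corref{cor:sat} to obtain $\Maps_{\D(\sY')}(g_?\sF,\sG) \simeq \Maps_{\D(\sX')}(\sF, g^*\sG)$ functorially. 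The trade-off is precisely the point you flag as the ``main obstacle'': your route requires the foundational inputs that pointwise adjoints with invertible Beck--Chevalley mates yield an adjunction in $\Fun(\Lis_\sY^\op,\Cat)$ and that $\lim$ preserves adjunctions (both standard, e.g. via the cartesian-fibration formulation in \cite[\S 4.7.4]{LurieHA}, so this is a citable gap rather than a genuine one), whereas the paper's mapping-space computation sidesteps that machinery entirely and is the more elementary of the two. In both arguments the invertibility hypothesis enters in the same place, namely in making the $g_{T,\sharp}$ assemble into a well-defined functor on the limits in the first place.
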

\begin{proof}
  Denote by $g_?$ the functor \eqref{eq:limgTshp}.
  Let $\sF \in \D(\sX')$ and $\sG \in \D(\sY')$.
  For every $(T,t)\in\Lis_\sY$, we have the adjunction identity
  \[
    \Maps_{\D(\sY'_T)}(g_{T,\sharp} t_{\sX'}^*\sG, t_{\sY'}^*\sG)
    \simeq \Maps_{\D(\sX'_T)}(t_{\sX'}^*\sF, g_T^*t_{\sY'}^*\sG)
  \]
  which may be written equivalently as
  \[
    \Maps_{\D(\sY'_T)}(t_{\sY'}^* g_? \sF, t_{\sY'}^* \sG)
    \simeq \Maps_{\D(\sX'_T)}(t_{\sX'}^* \sF, t_{\sX'}^*g^*\sG)
  \]
  by definition of $g_?$.
  Passing to limits over $(T,t)$ on both sides and using \corref{cor:sat}, we get the identity
  \[
    \Maps_{\D(\sY')}(g_? \sF, \sG)
    \simeq \Maps_{\D(\sX')}(\sF, g^*\sG)
  \]
  functorially in $\sF$ and $\sG$.
  It follows that $g_?$ is left adjoint to $g^*$, i.e., $g_? \simeq g_\sharp$.
\end{proof}

\subsubsection{}
\label{sssec:gofuigqa}

If $f : \sX \to \sY$ is smooth and representable, then applying \lemref{lem:limgTshp} with $q = \id_\sY$ yields that $f_\sharp$ is determined by the identities
\begin{equation}
  t^* f_\sharp \simeq f_{T,\sharp} t_\sX^*
\end{equation}
for all $(T,t)\in\Lis_\sY$.
Indeed, the assumption is satisfied as every term in the left-hand square of \eqref{eq:bbkvuase} is a space.

More generally, if $f : \sX \to \sY$ is smooth and representable and $q : \sY' \to \sY$ is representable, then we find that $g_\sharp$ is determined by the identities
\begin{equation}
  t_{\sY'}^* g_\sharp \simeq g_{T,\sharp} t_{\sX'}^*
\end{equation}
for all $(T,t)\in\Lis_\sY$, where the assumption is satisfied again for the same reason.

\subsection{(Sm3)}
\label{ssec:Sm3}

\subsubsection{}

Like \itemref{item:Sm1}, \itemref{item:Sm3} only depends on the underlying presheaf $\D^{\lis,*}$.
We have:

\begin{prop}\label{prop:Sm3}
  Let $\D^* : (\Art)^\op \to \PrL$ be a presentable presheaf of \inftyCats which satisfies \v{C}ech descent along smooth covering morphisms.
  If its restriction to $\Spc$ satisfies \itemref{item:Sm1} and \itemref{item:Sm3} for every smooth morphism of spaces, then it satisfies \itemref{item:Sm3} for every smooth morphism of Artin stacks.
\end{prop}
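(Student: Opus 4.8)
The plan is to show the exchange transformation $\Ex^*_\sharp : g_\sharp p^* \to q^* f_\sharp$ attached to the base change of a smooth $f : \sX \to \sY$ along $q : \sY' \to \sY$ is invertible, by peeling off one source of non-representability at a time until only smooth morphisms of spaces remain, where invertibility holds by hypothesis. Both $f_\sharp$ and $g_\sharp$ exist by \propref{prop:Sm1}, and since $\D^*$ satisfies smooth descent, for any smooth covering $v : V \surj \sY'$ by a space the functor $v^*$ is conservative, so it suffices to check invertibility after $v^*$. Pasting the base-change square of $v$ on top of the given square, the pasting law relates $v^*\Ex^*_\sharp$ for $(f,q)$ to the exchange transformations for $(g,v)$ and for $(f,qv)$, so that invertibility of the latter two forces invertibility of the former. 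Both of these are exchange transformations for a base change along a map out of a space, so this step reduces the statement to the case where the base-change morphism is a map from a space.

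In that case I would check invertibility levelwise over $\Lis_\sY$. By saturation (\corref{cor:sat}), $\D(\sY') \simeq \lim_{(T,t)\in\Lis_\sY}\D(\sY'_T)$ with the projections $t_{\sY'}^*$ jointly conservative, and by \lemref{lem:limgTshp} the functors $f_\sharp$ and $g_\sharp$ are computed levelwise as the limits of $f_{T,\sharp}$ and $g_{T,\sharp}$, provided the requisite commutation of $g_{T,\sharp}$ with the transition maps holds. Granting this, $\Ex^*_\sharp$ is identified with the limit of the level-$T$ exchange transformations $g_{T,\sharp} p_T^* \to q_T^* f_{T,\sharp}$. Each of these is \itemref{item:Sm3} for the smooth morphism $f_T : \sX_T \to T$ whose \emph{target} $T$ is now a space, base-changed along $q_T$.

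It remains to treat \itemref{item:Sm3} for a smooth morphism $h : \sW \to Z$ with $Z$ a space. Here I would induct on the Artin level of the source $\sW$, reducing it to a space by a smooth atlas $a : A \surj \sW$. Smooth descent gives the colimit formula $\id_{\D(\sW)} \simeq \colim_{[n]\in\bDelta^\op} a_{n,\sharp} a_n^*$, and likewise on the base change, so that $h_\sharp \simeq \colim_n (h a_n)_\sharp a_n^*$; the exchange transformation then becomes the colimit over $\bDelta^\op$ of the exchange transformations for the morphisms $h a_n : A_n \to Z$, whose sources $A_n$ have strictly smaller Artin level. By the inductive hypothesis these are invertible, and a colimit of invertible maps is invertible; the base case, $\sW$ a space, is exactly \itemref{item:Sm3} for a smooth morphism of spaces, which holds by assumption.

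The main obstacle is that these reductions are genuinely interlocking rather than sequential. The levelwise description furnished by \lemref{lem:limgTshp} requires, as an input, \itemref{item:Sm3} for the base changes $g_T$, and reducing the target via the $\Lis_\sY$ diagram reintroduces the diagonal of $\sY$ — base-changing a space along $q$ need not land in a space — which raises the Artin level of the source instead of lowering it. Likewise the descent colimit formula is itself underwritten by \itemref{item:Sm1} and \itemref{item:Sm3} for the atlas maps. The real work is therefore to arrange a single induction on Artin level that simultaneously establishes \itemref{item:Sm3} and the hypotheses of \lemref{lem:limgTshp}, bootstrapping from the representable case — where, as in \sssecref{sssec:gofuigqa}, every level-$T$ morphism is already a morphism of spaces requiring no further input — and invoking the colimit formula only for atlas maps of strictly smaller level, so that no step depends on the case it is trying to prove.
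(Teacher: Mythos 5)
Your three reductions are exactly the ones the paper uses: reducing along a smooth cover of $\sY'$ by a space is \lemref{lem:smbcredY'}, the levelwise check over $\Lis_\sY$ via \corref{cor:sat} and \lemref{lem:limgTshp} is \lemref{lem:smbcredY}, and the colimit decomposition $\id \simeq \colim a_{n,\sharp}a_n^*$ over an atlas of the source is \lemref{lem:smbcredX}. You have also correctly diagnosed the central difficulty — that each reduction's hypotheses are themselves instances of \itemref{item:Sm3}, so the reductions cannot simply be chained. The problem is that your proof stops at precisely that point: ``the real work is therefore to arrange a single induction on Artin level\dots so that no step depends on the case it is trying to prove'' is a statement of what remains to be done, not an argument, and that remaining work is the actual content of the paper's proof.

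Concretely, a single induction on ``Artin level'' is not obviously well-founded here, and you never say which level. The paper needs a lexicographic double induction: first, with $f$ representable, an induction on the representability of $q$ (whose inductive step already requires the intermediary case $(\ast)$ where $\sX,\sY$ are spaces and $\sY'$ is $n$-Artin, proved via \lemref{lem:smbcredY'}); then an induction on the representability $m$ of $f$, which must pass through the intermediate cases $(\dagger)$ (source and target $m$-Artin, via \lemref{lem:smbcredX} applied to $(m-1)$-representable atlas maps), $(\ddagger)$ ($f$ and $q$ both $m$-representable), and $(\S)$ (the hypotheses of \lemref{lem:limgTshp}) in that order before the general case can be assembled. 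For example, to apply \lemref{lem:limgTshp} to a non-representable $f$ one needs $f_{T_2,\sharp}$ to commute with $u_\sY^*$ for $u$ in $\Lis_\sY$; this is \itemref{item:Sm3} for a morphism of the \emph{same} representability as $f$ (only the ambient Artin level of source and target has dropped), so it is not covered by an induction on the representability of $f$ alone and must be supplied by $(\dagger)$, which in turn rests on the $m$-induction via \lemref{lem:smbcredX}. Similarly, your atlas induction for $h : \sW \to Z$ needs $a_{n,\sharp}$ to commute with pullback along an \emph{arbitrary} base change of $Z$, which is again an instance of the statement for a morphism into a stack of unreduced level. Until you exhibit an explicit induction scheme and verify that every invocation of \lemref{lem:smbcredY'}, \lemref{lem:smbcredY}, \lemref{lem:smbcredX} and \lemref{lem:limgTshp} strictly decreases the relevant index — bootstrapping from the representable case of \sssecref{sssec:gofuigqa} and \itemref{item:Sm3} for spaces — the proof is incomplete.
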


We have seen in \propref{prop:Sm1} that $\D^*$ satisfies \itemref{item:Sm1}.
The claim is that for every cartesian square of Artin stacks
\begin{equation}\label{eq:smbcart}
  \begin{tikzcd}
    \sX' \ar{r}{g}\ar{d}{p}
    & \sY' \ar{d}{q}
    \\
    \sX \ar{r}{f}
    & \sY,
  \end{tikzcd}
\end{equation}
where $f$ is smooth, $f_\sharp$ commutes with $q^*$: that is, the exchange transformation $\Ex^*_\sharp : g_\sharp p^* \to q^*f_\sharp$ \eqref{eq:Ex_sharp^*} is invertible.

\subsubsection{}
\label{sssec:smbcredY'}

Given $(T',t')\in\Lis_{\sY'}$, form the diagram of cartesian squares
\begin{equation}\label{eq:smbcredY'}
  \begin{tikzcd}
    \sX'_{T'} \ar{r}{g_{T'}}\ar{d}{t_{\sX'}}
    & T' \ar{d}{t'}
    \\
    \sX' \ar{r}{g}\ar{d}{p}
    & \sY' \ar{d}{q}
    \\
    \sX \ar{r}{f}
    & \sY.
  \end{tikzcd}
\end{equation}

\begin{lem}\label{lem:smbcredY'}
  Suppose that for every $(T',t')\in\Lis_{\sY'}$,
  \begin{inlinelist}
    \item\label{item:smbcredY'/t'}
    $g_\sharp$ commutes with $t'^*$;
    \item\label{item:smbcredY'/qt'}
    $f_{\sharp}$ commutes with $(q \circ t')^*$.
  \end{inlinelist}
  Then $f_\sharp$ commutes with $q^*$.
\end{lem}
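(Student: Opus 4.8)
The plan is to verify invertibility of the exchange transformation $\Ex^*_\sharp : g_\sharp p^* \to q^* f_\sharp$ by testing it after applying $t'^*$ for every $(T',t')\in\Lis_{\sY'}$, and then concluding by descent. First note that all the $\sharp$-functors in play are defined: since $f$ is smooth, so are its base changes $g$ and $g_{T'}$, whence $f_\sharp$, $g_\sharp$ and $(g_{T'})_\sharp$ exist by \propref{prop:Sm1}. Next, by \corref{cor:entropionize} and \corref{cor:sat} we have an identification $\D(\sY') \simeq \lim_{(T',t')\in\Lis_{\sY'}} \D(T')$ whose projection functors are the $*$-inverse images $t'^*$. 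In a limit of \inftyCats the projections are jointly conservative, so a morphism in $\D(\sY')$ is invertible once its image under every $t'^*$ is. It therefore suffices to show that $t'^*\,\Ex^*_\sharp$ is invertible for each $(T',t')\in\Lis_{\sY'}$.

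The second step is to invoke the pasting law for $\sharp$-exchange transformations applied to the two stacked cartesian squares of \eqref{eq:smbcredY'}: the upper square (horizontal maps $g_{T'}, g$; vertical maps $t_{\sX'}, t'$) and the lower square (horizontal maps $g, f$; vertical maps $p, q$). Writing $\Ex^{\mathrm{top}}_\sharp : (g_{T'})_\sharp t_{\sX'}^* \to t'^* g_\sharp$ for the exchange of the upper square and $\Ex^{\mathrm{out}}_\sharp : (g_{T'})_\sharp (p\, t_{\sX'})^* \to (q\, t')^* f_\sharp$ for the exchange of the outer rectangle, the pasting law provides a factorization
\[
  \Ex^{\mathrm{out}}_\sharp
  \;\simeq\;
  \bigl(t'^*\,\Ex^*_\sharp\bigr)\circ\bigl(\Ex^{\mathrm{top}}_\sharp \cdot p^*\bigr),
\]
after the canonical identifications $t_{\sX'}^* p^* \simeq (p\, t_{\sX'})^*$ and $t'^* q^* \simeq (q\, t')^*$.

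Finally, I would conclude by two-of-three. The hypothesis \itemref{item:smbcredY'/t'} is precisely the invertibility of $\Ex^{\mathrm{top}}_\sharp$, hence of $\Ex^{\mathrm{top}}_\sharp \cdot p^*$; and hypothesis \itemref{item:smbcredY'/qt'} is precisely the invertibility of $\Ex^{\mathrm{out}}_\sharp$. Since in the displayed factorization the total composite and the right-hand factor are both invertible, the left-hand factor $t'^*\,\Ex^*_\sharp$ is invertible, for every $(T',t')\in\Lis_{\sY'}$. By the first step this forces $\Ex^*_\sharp$ itself to be invertible, i.e.\ $f_\sharp$ commutes with $q^*$.

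The step I expect to be the main obstacle is the second one: making precise the compatibility of the $\sharp$-exchange transformations with vertical composition of cartesian squares. This is a standard coherence property of mates in a six-functor formalism, but here it must be applied to the functors $g_\sharp$, $(g_{T'})_\sharp$ produced by \propref{prop:Sm1} as limits of their base changes along $\Lis$, so one must check that those limit-constructed adjoints are compatible with the base change transformations appearing in the pasting. Once this compatibility is granted, the remainder is a formal two-of-three argument combined with the descent identification of \corref{cor:sat}.
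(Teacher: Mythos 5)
Your proof is correct and follows essentially the same route as the paper: the paper likewise tests $\Ex^*_\sharp$ after applying $t'^*$ for all $(T',t')\in\Lis_{\sY'}$ (using the limit description of $\D(\sY')$ from \corref{cor:sat}), uses assumption \itemref{item:smbcredY'/t'} to identify $t'^*\circ\Ex^*_\sharp$ with the exchange transformation of the outer composite square in \eqref{eq:smbcredY'}, and concludes by assumption \itemref{item:smbcredY'/qt'}. The pasting compatibility you flag as the main obstacle is exactly the identification the paper invokes implicitly, so your write-up just makes that step explicit.
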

\begin{proof}
  For the square \eqref{eq:smbcart}, the exchange transformation $\Ex^*_\sharp : g_\sharp p^* \to q^*f_\sharp$ \eqref{eq:Ex_sharp^*} is invertible if and only if for every $(T',t')\in\Lis_{\sY'}$, $t'^* \circ \eqref{eq:Ex_sharp^*}$ is invertible.
  Under the identifications
  \[
    t'^* g_\sharp p^*
    \simeq g_{T',\sharp} t_{\sX'}^* p^*
    \simeq g_{T',\sharp} (p \circ t_{\sX'})^*,
  \]
  of assumption \itemref{item:smbcredY'/t'}, $t'^* \circ \eqref{eq:Ex_sharp^*}$ is identified with $\Ex_{\sharp}^*$ for the outer composite square in \eqref{eq:smbcredY'}.
  This is invertible by assumption \itemref{item:smbcredY'/qt'}.
\end{proof}

\subsubsection{}

Given $(T,t)\in\Lis_{\sY}$, form the cube of cartesian squares
\begin{equation}\label{eq:smbcredY}
  \begin{tikzcd}[matrix xscale=0.5, matrix yscale=0.5]
    &
    \sX'
    \ar{rr}{g}
    \ar[swap,near end]{dd}{p}
    & & \sY'
    \ar{dd}{q}
    \\
    \sX'_T
    \ar{ur}{t_{\sX'}}
    \ar[crossing over, near end]{rr}{g_T}
    \ar[swap]{dd}{p_T}
    & & \sY'_T \ar[swap, near start]{ur}{t_{\sY'}}
    \\
    &
    \sX
    \ar[near start]{rr}{f}
    & & \sY
    \\
    \sX_T \ar[near end]{ur}{t_\sX}
    \ar{rr}{f_T}
    & & T
    \ar[crossing over, leftarrow, near end, swap]{uu}{q_T}
    \ar{ur}{t}
  \end{tikzcd}
\end{equation}

\begin{lem}\label{lem:smbcredY}
  Suppose that for every $(T,t)\in\Lis_{\sY}$,
  \begin{inlinelist}
    \item\label{item:smbcredY/t}
    $f_\sharp$ commutes with $t^*$;
    \item\label{item:smbcredY/t'}
    $g_\sharp$ commutes with $t_{\sY'}^*$;
    \item\label{item:smbcredY/f_T}
    $f_{T,\sharp}$ commutes with $q_T^*$.
  \end{inlinelist}
  Then $f_\sharp$ commutes with $q^*$.
\end{lem}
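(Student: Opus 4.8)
The plan is to verify the back face of the cube \eqref{eq:smbcredY} by descent. By \corref{cor:sat}, the functors $t_{\sY'}^*$ exhibit $\D(\sY')$ as the limit $\lim_{(T,t)\in\Lis_\sY}\D(\sY'_T)$, so they are jointly conservative; hence the exchange transformation $\Ex^*_\sharp : g_\sharp p^* \to q^* f_\sharp$ \eqref{eq:Ex_sharp^*} for the back face is invertible as soon as $t_{\sY'}^*\Ex^*_\sharp$ is invertible for every $(T,t)\in\Lis_\sY$. (Here $g_\sharp$ and $f_\sharp$ exist because $g$ and $f$ are smooth, using \propref{prop:Sm1}.) So the whole argument reduces to analyzing $t_{\sY'}^*\Ex^*_\sharp$ for a fixed $(T,t)$.

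Next I would identify the source and target of $t_{\sY'}^*\Ex^*_\sharp$ using the hypotheses together with the commuting side faces of the cube. For the source, hypothesis \itemref{item:smbcredY/t'} gives $t_{\sY'}^* g_\sharp \simeq g_{T,\sharp} t_{\sX'}^*$, and commutativity of the left face $p\circ t_{\sX'}\simeq t_\sX\circ p_T$ gives $t_{\sX'}^* p^* \simeq p_T^* t_\sX^*$, so $t_{\sY'}^* g_\sharp p^* \simeq g_{T,\sharp} p_T^* t_\sX^*$. For the target, commutativity of the right face $q\circ t_{\sY'}\simeq t\circ q_T$ gives $t_{\sY'}^* q^* \simeq q_T^* t^*$, and hypothesis \itemref{item:smbcredY/t} gives $t^* f_\sharp \simeq f_{T,\sharp} t_\sX^*$, so $t_{\sY'}^* q^* f_\sharp \simeq q_T^* f_{T,\sharp} t_\sX^*$. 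Under these identifications the transformation $t_{\sY'}^*\Ex^*_\sharp$ becomes a map $g_{T,\sharp} p_T^* t_\sX^* \to q_T^* f_{T,\sharp} t_\sX^*$, which I claim is the front-face exchange $\Ex^*_\sharp : g_{T,\sharp} p_T^* \to q_T^* f_{T,\sharp}$ whiskered on the right by $t_\sX^*$. Granting this, hypothesis \itemref{item:smbcredY/f_T} asserts that the front-face exchange is invertible, and whiskering preserves invertibility, so $t_{\sY'}^*\Ex^*_\sharp$ is invertible and the reduction is complete.

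The hard part is precisely the claim just made: that after transporting source and target along the identifications furnished by the four side faces of the cube, the restricted back-face exchange $t_{\sY'}^*\Ex^*_\sharp$ coincides with the whiskered front-face exchange. This is a three-dimensional coherence for the mate construction defining $\Ex^*_\sharp$, amounting to the compatibility of the exchange transformation with the decomposition of \eqref{eq:smbcredY} into its cartesian faces. I would obtain it from the standard naturality of $\Ex^*_\sharp$ under horizontal and vertical pasting of cartesian squares (which lives already at the level of the presheaf $\D^*$, e.g.\ as developed in \cite{weaves}), the point being that each identification used above is itself an instance of the canonical base-change data, so the resulting diagram of mates commutes by functoriality. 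Everything else is the bookkeeping made explicit in the previous paragraph.
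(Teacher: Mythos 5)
Your proof is correct and follows essentially the same route as the paper's: reduce via \corref{cor:sat} to checking $t_{\sY'}^*\circ\Ex^*_\sharp$ for each $(T,t)\in\Lis_\sY$, transport source and target along hypotheses \itemref{item:smbcredY/t'} and \itemref{item:smbcredY/t} (together with the commuting side faces), and identify the result with $\Ex^*_\sharp\circ t_\sX^*$ for the front face, which is invertible by \itemref{item:smbcredY/f_T}. The mate-pasting coherence you flag as the hard part is precisely the step the paper leaves implicit in the phrase ``is identified with,'' so your writeup is if anything slightly more careful than the original.
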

\begin{proof}
  By \corref{cor:sat}, it will suffice to show that $t_{\sY'}^* \circ \Ex_\sharp^*$ is invertible for every $(T,t) \in \Lis_\sY$.
  Under the canonical isomorphism
  \[ t_{\sY'}^* g_\sharp \simeq g_{T,\sharp} t_{\sX'}^* \]
  of assumption \itemref{item:smbcredY/t'}, and the isomorphism
  \[
    t_{\sY'}^* q^* f_\sharp
    \simeq q_T^* t^* f_\sharp
    \simeq q_T^* f_{T,\sharp} t_\sX^*
  \]
  of assumption \itemref{item:smbcredY/t}, $t_{\sY'}^* \circ \Ex_\sharp^*$ is identified with the natural transformation
  \[
    \Ex_{\sharp}^* \circ t_\sX^* : g_{T,\sharp} p_T^* t_{\sX}^* \to q_T^* f_{T,\sharp} t_\sX^*,
  \]
  which is invertible by assumption \itemref{item:smbcredY/f_T}.
\end{proof}

\subsubsection{}
\label{sssec:smbcredX}

Given $(S,s)\in\Lis_{\sX}$, form the diagram of cartesian squares
\begin{equation}\label{eq:smbcredX}
  \begin{tikzcd}
    \sX'_S \ar{r}{s'}\ar{d}{p_S}
    & \sX' \ar{r}{g}\ar{d}{p}
    & \sY' \ar{d}{q}
    \\
    S \ar{r}{s}
    & \sX \ar{r}{f}
    & \sY.
  \end{tikzcd}
\end{equation}

\begin{lem}\label{lem:smbcredX}
  Suppose that for every $(S,s)\in\Lis_{\sX}$,
  \begin{inlinelist}
    \item\label{item:smbcredX/s}
    $s_\sharp$ commutes with $p^*$;
    \item\label{item:smbcredX/fs}
    $(f\circ s)_{\sharp}$ commutes with $q^*$.
  \end{inlinelist}
  Then $f_\sharp$ commutes with $q^*$.
\end{lem}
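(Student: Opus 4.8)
The plan is to argue in a manner dual to the preceding two lemmas: whereas \lemref{lem:smbcredY} and \lemref{lem:smbcredY'} reduce the \emph{target} stacks $\sY$ and $\sY'$ to spaces by pulling back and testing against $t^*$ via \corref{cor:sat}, here I would reduce the \emph{source} $\sX$ to a space by a generation argument. The assertion is that the exchange transformation $\Ex^*_\sharp : g_\sharp p^* \to q^* f_\sharp$, a natural transformation of functors $\D(\sX) \to \D(\sY')$, is invertible (here $f_\sharp$ exists by \propref{prop:Sm1}, which also secures \itemref{item:Sm1} throughout). Both functors preserve colimits: $p^*$ and $q^*$ because they lie in $\PrL$, and $g_\sharp$, $f_\sharp$ as left adjoints. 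Hence the full subcategory of $\D(\sX)$ on which $\Ex^*_\sharp$ is invertible is closed under colimits, and it suffices to verify invertibility on a family of objects generating $\D(\sX)$ under colimits, for which I would take the essential images of $s_\sharp$ over $(S,s) \in \Lis_\sX$.

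To see that these generate, fix a single smooth covering atlas $p_0 : X_0 \surj \sX$ with $X_0$ a space, so that $(X_0, p_0)\in\Lis_\sX$. The standing hypothesis that $\D^*$ satisfies \v{C}ech descent along smooth covering morphisms identifies $\D(\sX) \simeq \lim_\Delta \D(X_{0,\bullet})$ along the \v{C}ech nerve; since $p_0^*$ is moreover colimit-preserving with left adjoint $p_{0,\sharp}$ (by \itemref{item:Sm1}) and is conservative by the groupoid structure of the nerve, the bar resolution $\lvert (p_{0,\sharp}p_0^*)^{\bullet+1}\sF\rvert \to \sF$ is an equivalence for every $\sF$. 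This exhibits $\sF$ as a colimit of objects in the essential image of $p_{0,\sharp}$, establishing the required generation.

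It then remains to evaluate $\Ex^*_\sharp$ on an object $s_\sharp\sG$ with $(S,s)\in\Lis_\sX$ and $\sG\in\D(S)$. Here I would combine the functoriality isomorphisms $f_\sharp s_\sharp \simeq (f\circ s)_\sharp$ and $g_\sharp s'_\sharp \simeq (g\circ s')_\sharp$ with assumption \itemref{item:smbcredX/s}, which makes $s'_\sharp p_S^* \to p^* s_\sharp$ invertible, to rewrite $g_\sharp p^* s_\sharp \simeq g_\sharp s'_\sharp p_S^* \simeq (g\circ s')_\sharp p_S^*$ and $q^* f_\sharp s_\sharp \simeq q^*(f\circ s)_\sharp$. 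The pasting law for $\Ex^*_\sharp$ along the horizontal concatenation of the two squares in \eqref{eq:smbcredX} then identifies the component of $\Ex^*_\sharp$ at $s_\sharp\sG$ with the exchange transformation $(g\circ s')_\sharp p_S^* \to q^*(f\circ s)_\sharp$ attached to the outer rectangle — which is cartesian since $\sX'_S \simeq S\fibprod_\sY\sY'$ — and this is invertible by assumption \itemref{item:smbcredX/fs}.

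I expect the main obstacle to be the coherent bookkeeping in this last identification: one must verify that the mate/pasting compatibility of $\Ex^*_\sharp$, together with the two functoriality isomorphisms, genuinely carries the restriction of the right-square exchange along $s_\sharp$ to the outer-rectangle exchange. Granting this structural naturality, invertibility on the generators together with closure under colimits completes the argument.
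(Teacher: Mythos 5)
Your argument is correct and is essentially the paper's proof: the paper likewise reduces to checking $\Ex^*_\sharp$ after precomposition with $s_\sharp$ for $(S,s)\in\Lis_\sX$ (using the colimit formula $\sF \simeq \colim_{(S,s)} s_\sharp s^*\sF$ from \ssecref{ssec:colimshp} in place of your single-atlas bar resolution, which accomplishes the same generation step), then uses assumption \itemref{item:smbcredX/s} and the pasting compatibility of exchange transformations to identify the restricted exchange with $\Ex^*_\sharp$ for the outer rectangle of \eqref{eq:smbcredX}, which is invertible by assumption \itemref{item:smbcredX/fs}. The coherence point you flag as the main obstacle is exactly the standard pasting law that the paper invokes without further comment.
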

\begin{proof}
  For the square \eqref{eq:smbcart}, the exchange transformation $\Ex^*_\sharp : g_\sharp p^* \to q^*f_\sharp$ \eqref{eq:Ex_sharp^*} is invertible if and only if for every $(S,s)\in\Lis_{\sX}$, $\eqref{eq:Ex_sharp^*} \circ s_\sharp$ is invertible (by \ssecref{ssec:colimshp} and using that each functor involved in $\Ex^*_\sharp$ commutes with colimits).
  Under the identifications
  \[
    g_\sharp p^* s_\sharp
    \simeq g_\sharp s'_\sharp p_S^*
    \simeq (g \circ s')_\sharp p_S^*
  \]
  of assumption \itemref{item:smbcredX/s}, $\eqref{eq:Ex_sharp^*} \circ s_\sharp$ is identified with $\Ex_{\sharp}^*$ for the outer composite square in \eqref{eq:smbcredX}.
  This is invertible by assumption \itemref{item:smbcredX/fs}.
\end{proof}

\subsubsection{Proof of \propref{prop:Sm3}}

Suppose given a cartesian square \eqref{eq:smbcart} where $f$ is smooth.

We first assume that $f$ is representable and argue by induction on the smallest integer $n$ such that  $q$ is $n$-representable.

\noindent\emph{$f$ representable, $q$ representable:}
For the case where $q$ is also representable, we apply \lemref{lem:smbcredY}.
For every $(T,t)\in\Lis_\sY$, assumptions~\itemref{item:smbcredY/t} and \itemref{item:smbcredY/t'} hold since $f_\sharp$ commutes with $t^*$ and $g_\sharp$ commutes with $t_{\sY'}^*$ by \sssecref{sssec:gofuigqa}.
Since $f$ and $q$ are representable, so that $f_T : \sX_T \to T$ and $q_T : \sY'_T \to T$ are maps of spaces, $f_{T,\sharp}$ commutes with $q_T^*$ by \itemref{item:Sm3} for spaces; this shows assumption~\itemref{item:smbcredY/f_T}.
The conclusion is that $f_\sharp$ commutes with $q^*$.

\noindent\emph{$f$ representable, $q$ $n$-representable:}
Assume the claim is known whenever $f$ is representable and $q$ is $(n-1)$-representable.
We first establish the following intermediary case:

$(\ast)$ $\sX=X$ and $\sY=Y$ are spaces, $\sY'$ is $n$-Artin.

To prove $(\ast)$ we apply \lemref{lem:smbcredY'}.
For every $(T',t')\in\Lis_{\sY'}$, it is enough to check that $f_\sharp$ commutes with $(q\circ t')^*$ and $g_\sharp$ commutes with $t'^*$.
Since $f : X \to Y$ is representable, so is $g : \sX' \to \sY'$.
Note that $q \circ t': T' \to Y$ is also representable, and $t' : T \to \sY'$ is $(n-1)$-representable since $\sY'$ is $1$-Artin.
We thus conclude by the induction hypothesis.

We now show that $f_\sharp$ commutes with $q^*$ when $f$ is representable and $q$ is $n$-representable.
Note that for every morphism $u : (T_1,t_1)\to(T_2,t_2)$ in $\Lis_\sY$, the morphisms $f_{T_2} : \sX_{T_2} \to T_2$, $g_{T_2} : \sX'_{T_2} \to \sY'_{T_2}$, $u_{\sY} : \sY_{T_1} \to \sY_{T_2}$, and $u_{\sY'} : \sY'_{T_1} \to \sY'_{T_2}$ are all representable (as base changes of $f$, $g$, $u$ and $u$, respectively).
Hence by the base case of the induction, $f_{T_2,\sharp}$ commutes with $u_{\sY}^*$ and $g_{T_2,\sharp}$ commutes with $u_{\sY'}^*$.
Applying \lemref{lem:limgTshp}, we deduce that for every $(T,t)\in\Lis_\sY$, $f_\sharp$ commutes with $t^*$ and $g_\sharp$ commutes with $t_{\sY'}^*$.
Since $f_T : \sX \to T$ is a morphism of spaces, and $\sY'_T$ is $n$-Artin, we have by $(\ast)$ that $f_{T,\sharp}$ commutes with $q_T^*$.
Thus we conclude by applying \lemref{lem:smbcredY}.

By induction we now have the claim when $f$ is representable and $q$ is arbitrary.
We now proceed by another induction on the smallest integer $m$ such that $f$ is $m$-representable.

\noindent\emph{$f$ $m$-representable, $q$ arbitrary:}
Assume the claim is known whenever $f$ is $(m-1)$-representable (and $q$ is arbitrary).
We begin with the following special cases:

$(\dagger)$ $\sX$ and $\sY$ are $m$-Artin.
Indeed, we apply \lemref{lem:smbcredX}: for $(S,s)\in\Lis_\sX$, $s : S \to \sX$ and $f\circ s : S \to \sY$ are $(m-1)$-representable, so by $s_\sharp$ commutes with $p^*$ and $(f\circ s)_\sharp$ with $q^*$ by the induction hypothesis.

$(\ddagger)$ $f$ $m$-representable, $q$ $m$-representable.
Indeed, we apply \lemref{lem:limgTshp}: for every morphism $u : (T_1,t_1)\to(T_2,t_2)$ in $\Lis_\sY$, 
$f_{T_2} : \sX_{T_2} \to T_2$ and $g_{T_2} : \sX'_{T_2} \to \sY'_{T_2}$ are morphisms of $m$-Artin stacks.
Hence by $(\dagger)$, $f_{T_2,\sharp}$ commutes with $u_{\sY}^*$ and $g_{T_2,\sharp}$ commutes with $u_{\sY'}^*$.
We thus find that $f_\sharp$ commutes with $t^*$ and $g_\sharp$ commutes with $t_{\sY'}^*$ for every $(T,t)\in\Lis_\sY$.
We then apply \lemref{lem:smbcredY}, whose assumptions~\itemref{item:smbcredY/t} and \itemref{item:smbcredY/t'} we have just verified.
The remaining assumption~\itemref{item:smbcredY/f_T}, commutativity of $f_{T,\sharp}$ and $q_T^*$, holds by $(\dagger)$ since $f_T : \sX_T \to T$ is a morphism of $m$-Artin stacks.

$(\S)$ $f_\sharp$ commutes with $t^*$ and $g_\sharp$ commutes with $t_{\sY'}^*$ for every $(T,t)\in\Lis_\sY$.
Indeed, applying \lemref{lem:limgTshp} it suffices to observe that for every morphism $u : (T_1,t_1)\to(T_2,t_2)$ in $\Lis_\sY$, $u : T_1 \to T_2$ and $u_{\sY'} : \sY'_{T_1} \to \sY'_{T_2}$ are representable, so $(\ddagger)$ implies that $f_{T_2,\sharp}$ commutes with $u^*$ and $g_{T_2,\sharp}$ commutes with $u_{\sY'}^*$.

We now apply \lemref{lem:smbcredY} to check that $f_\sharp$ commutes with $q^*$, where $f$ is $m$-representable and $q$ is arbitrary.
Assumptions~\itemref{item:smbcredY/t} and \itemref{item:smbcredY/t'} hold by $(\S)$.
It remains to verify assumption~\itemref{item:smbcredY/f_T}, i.e., that for every $(T,t)\in\Lis_\sY$, $f_{T,\sharp}$ commutes with $q_T^*$.
For this we apply \lemref{lem:smbcredX}: for every $(S,s)\in\Lis_\sX$, consider the cartesian squares
\[\begin{tikzcd}
  \sX'_{S} \ar{r}{s'}\ar{d}{p_S}
  & \sX'_T \ar{r}{g_T}\ar{d}{p_T}
  & \sY'_T \ar{d}{q_T}
  \\
  S \ar{r}{s}
  & \sX_T \ar{r}{f_T}
  & T.
\end{tikzcd}\]
We need to check that $s_\sharp$ commutes with $p^*$ and $(f\circ s)_\sharp$ commutes with $q^*$.
Since $\sX_T$ is $m$-Artin, $s$ is $(m-1)$-representable.
Since $S$ and $T$ are spaces, $f\circ s$ is representable.
Thus we conclude by the inductive hypothesis.

\subsection{(Sm2)}
\label{ssec:Sm2}

\itemref{item:Sm2} only depends on the underlying presheaf $\D^{\lis,*}$ as a lax symmetric monoidal functor.
We have:

\begin{prop}\label{prop:Sm2}
  Let $\D^* : (\Art)^\op \to \PrL$ be a lax symmetric monoidal functor which satisfies \v{C}ech descent along étale covering morphisms.
  If its restriction to $\Spc$ satisfies \itemref{item:Sm1} and \itemref{item:Sm2} for every smooth morphism of spaces, then it satisfies \itemref{item:Sm2} for every smooth morphism of Artin stacks.
\end{prop}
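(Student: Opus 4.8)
The functor $f_\sharp$ exists by \propref{prop:Sm1}, so the exchange transformation $\Ex^{\otimes,*}_\sharp : f_\sharp(- \otimes f^*(-)) \to f_\sharp(-) \otimes (-)$ is defined for every smooth morphism $f : \sX \to \sY$ of Artin stacks, and the goal is to prove it invertible. The plan is to isolate a \emph{target-reduction} step that pushes the target down to a space, and then to induct on the Artin level of the source, using a smooth atlas at each stage.

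First I would record the target-reduction step: for a smooth $f : \sX \to \sY$, the transformation $\Ex^{\otimes,*}_\sharp$ for $f$ is invertible if and only if it is invertible for every base change $f_T : \sX_T \to T$ along $(T,t) \in \Lis_\sY$. Indeed, the functors $t^*$ are jointly conservative by the limit description \corref{cor:sat}, and smooth base change (\propref{prop:Sm3}) gives $t^* f_\sharp \simeq f_{T,\sharp}\, t_\sX^*$; since $t^*$ and $t_\sX^*$ are symmetric monoidal, $t^*$ applied to $\Ex^{\otimes,*}_\sharp$ for $f$ is identified with $\Ex^{\otimes,*}_\sharp$ for $f_T$. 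In particular it suffices to prove the projection formula for smooth morphisms $f : \sX \to T$ whose target is a space.

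For such $f$ I would induct on the smallest $m$ with $\sX$ an $m$-Artin stack. When $m=0$ the source is a space and $f$ is a smooth morphism of spaces, so the projection formula holds by hypothesis. For the inductive step, choose a smooth atlas $p : S \surj \sX$ with $S$ a space; its \v{C}ech nerve $S_\bullet$ has each $S_k$ an $(m-1)$-Artin stack and each projection $p_k : S_k \to \sX$ an $(m-1)$-representable smooth morphism, while $f \circ p_k : S_k \to T$ is a smooth morphism from an $(m-1)$-Artin stack to a space. Descent along $p$ (\lemref{lem:desc*}) gives $\sF \simeq \colim_{[k]\in\bDelta^\op} p_{k,\sharp} p_k^*(\sF)$, and — fixing the second variable — both functors bracketing $\Ex^{\otimes,*}_\sharp$ preserve this colimit, since $f_\sharp$ is a left adjoint and $\otimes$ preserves colimits in each variable. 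On the $k$-th term I would rewrite $p_{k,\sharp}p_k^*(\sF) \otimes f^*(-) \simeq p_{k,\sharp}\bigl(p_k^*(\sF) \otimes (f\circ p_k)^*(-)\bigr)$ using the projection formula for $p_k$, and then apply the projection formula for $f\circ p_k$. The latter is the inductive hypothesis directly; the former, whose target $\sX$ is a stack, follows from the inductive hypothesis after applying the target-reduction step to $p_k$, because every base change of $p_k$ over $\Lis_\sX$ is again a smooth morphism from an $(m-1)$-Artin stack to a space. Thus $\Ex^{\otimes,*}_\sharp$ is invertible on each term, hence after passing to the colimit.

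The main obstacle is coherence rather than content. One must verify that $t^*$ carries the exchange for $f$ to the exchange for $f_T$, and — more delicately — that the projection-formula exchange for the composite $f\circ p_k$ decomposes as the exchange for $p_k$ followed by the exchange for $f$, compatibly with the descent colimit, so that the term-wise equivalences genuinely assemble into invertibility of $\Ex^{\otimes,*}_\sharp$ for $f$. In the weave formalism these are assertions about mates and the $2$-functoriality of exchange transformations; upgrading them from equivalences of isomorphism classes to homotopy-coherent identifications of natural transformations is where the real work lies.
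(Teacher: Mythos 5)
Your argument is correct and follows essentially the same route as the paper's proof: reduce to a space target using smooth base change and the joint conservativity of the $t^*$ for $(T,t)\in\Lis_\sY$, then induct on the level of the source by writing every object as a colimit of $\sharp$-pushforwards from lower-level objects and using that projection-formula exchange transformations compose. The only cosmetic difference is that the paper runs the colimit over all of $\Lis_\sX$ rather than over the \v{C}ech nerve of a single atlas, so that the relevant composites $f\circ s$ are morphisms of spaces and the base hypothesis applies directly, whereas your $f\circ p_k$ and $p_k$ each invoke the inductive hypothesis (the latter via a second target reduction).
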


We have seen in \propref{prop:Sm1} that $\D^*$ satisfies \itemref{item:Sm1}.
The claim is that for every smooth morphism of Artin stacks $f : \sX \to \sY$ and $\sG \in \D(\sY)$, the exchange transformation \eqref{eq:projectionshp}
\[
  \Ex^{\otimes,*}_\sharp : f_\sharp(- \otimes f^*(\sG))
  \to f_\sharp(-) \otimes \sG
\]
is invertible.

\subsubsection{}
\label{sssec:PrY}

Let $(T,t)\in\Lis_\sY$.
Since $t^*$ is symmetric monoidal and commutes with $f_\sharp$ (by \propref{prop:Sm3}), this is identified with the exchange transformation
\[
  \Ex^{\otimes,*}_\sharp : f_{T,\sharp}(t_\sX^*(-) \otimes f_T^*t^*\sG)
  \to f_{T,\sharp}(t_\sX^*(-) \otimes t^*(\sG))
\]
associated with $f_{T,\sharp}$.

\subsubsection{}

Let $f$ be $n$-representable for some $n\ge 0$.
For $n=0$, we may use \sssecref{sssec:PrY} to assume that $\sY=Y$ and hence $\sX=X$ are spaces, in which case the claim holds by assumption.

We argue by induction on $n>0$, assuming that the claim holds for all $(n-1)$-representable morphisms.
Again by \sssecref{sssec:PrY} we may assume that $\sY=Y$ is a space and hence $\sX$ is $n$-Artin.
Since $f_\sharp$ commutes with colimits, it will suffice to show that for every $(S,s)\in\Lis_\sX$ the natural transformation
\[
  \Ex^{\otimes,*}_\sharp \circ s_\sharp
  : f_\sharp(s_\sharp(-) \otimes f^*(\sG))
  \to f_\sharp (s_\sharp(-)) \otimes \sG
\]
is invertible.
Since $s$ is $(n-1)$-representable, $s_\sharp$ satisfies the projection formula by the inductive hypothesis.
Thus $\Ex^{\otimes,*}_\sharp \circ s_\sharp$ is identified with
\[
  \Pr^*_\sharp : f_\sharp s_\sharp(- \otimes s^*f^*(\sG))
  \to f_\sharp s_\sharp(-) \otimes \sG,
\]
the exchange transformation associated with $f \circ s : S \to \sX \to Y$.
Since the latter is a morphism of spaces, this is invertible by (Sm2) for spaces.

\subsection{(Sm4)}
\label{ssec:Sm4}

\subsubsection{}
\label{sssec:E_n}

In this subsection we will prove the following two statements.
Let $\cE_0$ and $\cE_\lis$ be as in \sssecref{sssec:E_lis}.
Denote by
$$\cE_n \sub \Art_n$$
the class consisting of morphisms in $\cE_\lis \sub \Art$ whose source and target are $n$-Artin, and by
$$\cE_n^k \sub \Art_n$$
the class of morphisms which are further $k$-representable, where $0\le k\le n$ (note that $\cE_n^n = \cE_n$).

\begin{prop}\label{prop:Sm4extobjs}
  Let $\D^*_!$ be a presentable preweave on $(\Art_n,\cE_n)$ for some $n>0$.
  If its underlying presheaf $\D^*$ satisfies \v{C}ech descent along smooth covering morphisms, then the following conditions are equivalent:
  \begin{thmlist}
    \item 
    It satisfies \itemref{item:Sm4} for all smooth morphisms of $n$-Artin stacks.
    
    \item
    Its restriction to $(\Art_n, \cE_n^{n-1})$ satisfies \itemref{item:Sm4} for all smooth morphisms of $n$-Artin stacks.
  \end{thmlist}
\end{prop}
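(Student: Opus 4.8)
The implication (i)$\Rightarrow$(ii) is immediate: the functors $f_\sharp$ and $g_\sharp$ (which exist for smooth morphisms by \propref{prop:Sm1}) and the $!$-pushforwards along morphisms that remain shriekable are unchanged when the shriekable class is cut down from $\cE_n$ to $\cE_n^{n-1}$, so requiring $\Ex_{\sharp,!}$ to be invertible for every $q \in \cE_n$ is a priori stronger than requiring it only for $q \in \cE_n^{n-1}$. The whole content is therefore the converse. Assume (ii), fix a smooth morphism $f : \sX \to \sY$ of $n$-Artin stacks and an arbitrary shriekable $q : \sY' \to \sY$ in $\cE_n$, and form the cartesian square with $g : \sX' \to \sY'$ and $p : \sX' \to \sX$. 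We must prove that $\Ex_{\sharp,!} : f_\sharp p_! \to q_! g_\sharp$ is invertible.

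The plan is to descend along a smooth atlas of $\sY'$, trading the possibly $n$-representable $q$ for $(n-1)$-representable morphisms covered by (ii). Choose a smooth covering $v : V \surj \sY'$ with $V$ a space; being a smooth atlas of an $n$-Artin stack, $v$ is $(n-1)$-representable, so $v \in \cE_n^{n-1}$. Let $\tilde v : W \surj \sX'$ be the base change of $v$ along $g$, where $W := \sX' \fibprod_{\sY'} V$; let $g_0 : W \to V$ be the base change of $g$, and set $p_0 := p \circ \tilde v$ and $q_0 := q \circ v$. Then the outer rectangle
\[\begin{tikzcd}
  W \ar{r}{g_0}\ar{d}{p_0} & V \ar{d}{q_0} \\
  \sX \ar{r}{f} & \sY
\end{tikzcd}\]
is cartesian. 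Moreover $q_0 : V \to \sY$ is $(n-1)$-representable: for a space $Y$ and a map $Y \to \sY$, the base change $V \fibprod_\sY Y$ is the pullback of the $(n-1)$-representable diagonal $\Delta_\sY$ along $V \times Y \to \sY \times \sY$, hence $(n-1)$-Artin. Thus $q_0 \in \cE_n^{n-1}$, and $\tilde v$, as a base change of $v$, is again an $(n-1)$-representable smooth covering.

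Since $f_\sharp$, $p_!$, $q_!$ and $g_\sharp$ all preserve colimits, and the objects $\tilde v_!(\sH)$ generate $\D(\sX')$ under colimits, it suffices to show that $\Ex_{\sharp,!} \circ \tilde v_!$ is invertible. The generation statement follows from \v{C}ech descent of $\D^*$ along the smooth covering $\tilde v$ (\lemref{lem:desc*}, which yields generation by the images of $\tilde v_\sharp$), together with Poincaré duality $\tilde v_\sharp \simeq \tilde v_! \Sigma_{\tilde v}$ for $\tilde v$: the latter is \propref{prop:fshptwist}, whose only input beyond \propref{prop:Sm1} is the instance of \itemref{item:Sm4} for $\tilde v$ along its own base change—an $(n-1)$-representable square covered by (ii)—with $\Sigma_{\tilde v}(\un)$ invertible by \lemref{lem:checkSm5}, so that $\Sigma_{\tilde v}$ is an equivalence and the images of $\tilde v_!$ and $\tilde v_\sharp$ coincide. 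Now on the source side $p_! \tilde v_! \simeq p_{0,!}$ by functoriality of $!$-pushforward along $p_0 = p \circ \tilde v$. On the target side, the square with bottom edge $g$ and right edge $v$ is an instance of \itemref{item:Sm4} for the smooth morphism $g$ relative to the $(n-1)$-representable shriekable $v$, so by (ii) we get $g_\sharp \tilde v_! \simeq v_! g_{0,\sharp}$, whence $q_! g_\sharp \tilde v_! \simeq q_! v_! g_{0,\sharp} \simeq q_{0,!} g_{0,\sharp}$. Under these identifications $\Ex_{\sharp,!} \circ \tilde v_!$ becomes the exchange transformation of the outer cartesian rectangle, i.e. the instance of \itemref{item:Sm4} for $f$ relative to $q_0 \in \cE_n^{n-1}$, which is invertible by (ii).

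The step requiring genuine care is the last one: one must verify that $\Ex_{\sharp,!} \circ \tilde v_!$, after the two identifications above, genuinely \emph{is} the exchange transformation attached to the pasted outer rectangle, and not merely abstractly isomorphic to it. This is the compatibility of the mixed exchange transformation $\Ex_{\sharp,!}$ with vertical pasting of cartesian squares (the square for $f$ stacked on the square for $g$) and with the composition isomorphisms $p_! \tilde v_! \simeq p_{0,!}$ and $q_! v_! \simeq q_{0,!}$. Such pasting laws are part of the homotopy-coherent data packaged in $\D^*_!$ as a functor out of the correspondence category; making them precise at the level of coherences is the main obstacle, while everything else is bookkeeping about levels of representability.
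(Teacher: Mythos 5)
Your proof is correct and is essentially the paper's own argument: the paper's \lemref{lem:sm!artredT'} performs exactly your reduction (colimit generation of $\D(\sX')$ by $!$-pushforward from an atlas of $\sY'$ pulled back along $g$, followed by pasting of the two exchange squares), the only organizational difference being that the paper first reduces along $\Lis_\sY$ via \lemref{lem:sm!artredT} and then takes atlases of the spaces $\sY'_T$, whereas you take a single atlas $V \surj \sY'$ directly. One small correction: your appeal to \lemref{lem:checkSm5} for the $\otimes$-invertibility of $\Sigma_{\tilde{v}}(\un)$ is both backwards (that lemma deduces invertibility of $\Sigma$ from \itemref{item:Sm5}, which in the paper's logical order is only established \emph{after} \itemref{item:Sm4}) and unnecessary, since \propref{prop:fshptwist} alone gives $\tilde{v}_\sharp \simeq \tilde{v}_!\,\Sigma_{\tilde{v}}$ and hence the containment of the image of $\tilde{v}_\sharp$ in that of $\tilde{v}_!$, which is all your generation argument needs.
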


\begin{prop}\label{prop:Sm4extmorph}
  Let $\D^*_!$ be a presentable preweave on $(\Art_n, \cE_n^{n-1})$ for some $n>0$.
  If its underlying presheaf $\D^*$ satisfies \v{C}ech descent along smooth covering morphisms, then the following conditions are equivalent:
  \begin{thmlist}
    \item\label{item:Sm4extmorph/n}
    It satisfies \itemref{item:Sm4} for all smooth morphisms of $n$-Artin stacks.

    \item\label{item:Sm4extmorph/frep}
    It satisfies \itemref{item:Sm4} for all smooth $(n-1)$-representable morphisms of $n$-Artin stacks.

    \item\label{item:Sm4extmorph/n-1}
    Its restriction to $(\Art_{n-1}, \cE_{n-1}^{n-1}) = (\Art_{n-1}, \cE_{n-1})$ satisfies \itemref{item:Sm4} for all smooth morphisms of $(n-1)$-Artin stacks.
  \end{thmlist}
\end{prop}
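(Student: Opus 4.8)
The plan is to establish the two equivalences \itemref{item:Sm4extmorph/n}$\Leftrightarrow$\itemref{item:Sm4extmorph/frep} and \itemref{item:Sm4extmorph/frep}$\Leftrightarrow$\itemref{item:Sm4extmorph/n-1}. Two of the four implications are formal: \itemref{item:Sm4extmorph/n}$\Rightarrow$\itemref{item:Sm4extmorph/frep} holds because $(n-1)$-representable smooth morphisms are a subclass of all smooth morphisms, and \itemref{item:Sm4extmorph/frep}$\Rightarrow$\itemref{item:Sm4extmorph/n-1} holds because every morphism of $(n-1)$-Artin stacks is $(n-1)$-representable and the shriekable classes of $(\Art_{n-1},\cE_{n-1})$ and $(\Art_n,\cE_n^{n-1})$ agree on $\Art_{n-1}$, so the two relevant exchange transformations coincide. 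Throughout, \itemref{item:Sm1} and \itemref{item:Sm3} hold for every smooth morphism of $n$-Artin stacks by \propref{prop:Sm1} and \propref{prop:Sm3}; in particular $f_\sharp$ is defined and commutes with $*$-inverse image.

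The engine for both nontrivial implications is a reduction of the base to a space. For a cartesian square with $f:\sX\to\sY$ smooth, $q:\sY'\to\sY$ shriekable, and base changes $g:\sX'\to\sY'$, $p:\sX'\to\sX$, I want to invert $\Ex_{\sharp,!}:f_\sharp p_!\to q_!g_\sharp$. Since $\D(\sY)\simeq\lim_{(T,t)\in\Lis_\sY}\D(T)$ \eqref{eq:prjvcpda}, the family $(t^*)_{(T,t)}$ is jointly conservative, so it suffices to invert $t^*\circ\Ex_{\sharp,!}$ for every $(T,t)$. Combining \itemref{item:Sm3} for $f$ and $g$ (to pass $t^*$ through $f_\sharp$ and $t_{\sY'}^*$ through $g_\sharp$) with the base change isomorphism \eqref{eq:drawbore} (to pass $*$-inverse image through $p_!$ and $q_!$), one identifies $t^*\circ\Ex_{\sharp,!}$ with the exchange $\Ex_{\sharp,!}^{(T)}:f_{T,\sharp}p_{T,!}\to q_{T,!}g_{T,\sharp}$ of the base-changed square over $T$, whiskered by $t_{\sX'}^*$. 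Thus it is enough to verify \itemref{item:Sm4} for $f_T:\sX_T\to T$ over the space $T$, for all $(T,t)\in\Lis_\sY$. For \itemref{item:Sm4extmorph/n-1}$\Rightarrow$\itemref{item:Sm4extmorph/frep}, take $f$ to be $(n-1)$-representable: after the reduction $\sY=T$ is a space and $f_T,q_T$ are $(n-1)$-representable over $T$, so $\sX_T,\sY'_T,\sX'_T$ are all $(n-1)$-Artin and $q_T\in\cE_{n-1}$; the reduced square lies entirely in $(\Art_{n-1},\cE_{n-1})$, and \itemref{item:Sm4} for it is exactly \itemref{item:Sm4extmorph/n-1}.

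The substantive implication is \itemref{item:Sm4extmorph/frep}$\Rightarrow$\itemref{item:Sm4extmorph/n}. After the reduction we may assume $\sY=Y$ is a space, $\sX$ is $n$-Artin, and $f$ smooth. Choose a smooth covering atlas $s:S\to\sX$ with $S$ a space; it is $(n-1)$-representable. Base-changing $q$ yields a diagram of cartesian squares with rows $S'\to\sX'\to Y'$ over $S\to\sX\to Y$, and the Beck--Chevalley pasting law writes the exchange of the outer square as the composite
\[
  f_\sharp s_\sharp p'_!
  \xrightarrow{f_\sharp\ast\Ex^{(s)}} f_\sharp p_! s''_\sharp
  \xrightarrow{\Ex^{(f)}\ast s''_\sharp} q_! g_\sharp s''_\sharp,
\]
where $\Ex^{(s)}$, $\Ex^{(f)}$ and the composite $\Ex^{(f\circ s)}$ are the $(\sharp,!)$-exchanges of the left, right and outer squares. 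By \itemref{item:Sm4extmorph/frep}, $\Ex^{(s)}$ (for the $(n-1)$-representable $s$) and $\Ex^{(f\circ s)}$ (for the representable $f\circ s:S\to Y$ between spaces) are invertible, whence $\Ex^{(f)}\ast s''_\sharp$ is invertible. Finally, $s'':S'\to\sX'$ is a smooth covering morphism, so by \v{C}ech descent of $\D^*$ (\lemref{lem:desc*}) every object of $\D(\sX')$ is a colimit of objects in the image of $s''_\sharp$ (each term of the \v{C}ech resolution factoring through $s''_\sharp$ via the first projection); since $f_\sharp,p_!,q_!,g_\sharp$ all preserve colimits in the presentable preweave, invertibility of $\Ex^{(f)}\ast s''_\sharp$ forces $\Ex^{(f)}$ to be invertible.

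I expect the main obstacle to be this final passage: setting up the pasting law coherently and reducing to the descent-generated objects, together with the base-change coherence underlying the reduction (that $t^*\circ\Ex_{\sharp,!}$ is the base-changed exchange), both of which rest on the compatibilities built into the correspondence formalism rather than on any single cited statement.
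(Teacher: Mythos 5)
Your proposal is correct and follows essentially the same route as the paper: the two formal implications, then for each nontrivial implication a reduction over $\Lis_\sY$ to a square with base a space (using joint conservativity of the $t^*$ together with \itemref{item:Sm3} and the base change isomorphism, which is the paper's \lemref{lem:sm!artredT}), followed for \itemref{item:Sm4extmorph/frep}$\Rightarrow$\itemref{item:Sm4extmorph/n} by an atlas $S\to\sX_T$, the pasting law for $\Ex_{\sharp,!}$, and colimit-generation from $s''_\sharp$ (the paper's \lemref{lem:sm!artredS}). The only cosmetic differences are that you generate colimits from the \v{C}ech nerve of a single atlas rather than from all of $\Lis_{\sX_T}$, and that for \itemref{item:Sm4extmorph/n-1}$\Rightarrow$\itemref{item:Sm4extmorph/frep} you observe directly that the base-changed square lies in $(\Art_{n-1},\cE_{n-1})$ instead of running the atlas reduction a second time — both are valid.
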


\subsubsection{}

We begin with some general reductions.
Let $\cA \sub \Art$ be a full subcategory of Artin stacks which contains $\Spc$ and is closed under fibred products, and let $\cE \sub \cA$ be a class of morphisms which is closed under base change and sections and contains $\cE_\lis^0$.
Let $\D^*_!$ be a presentable preweave on $(\cA,\cE)$ whose underlying presheaf $\D^*$ satisfies \v{C}ech descent along smooth covering morphisms.
Following the convention of \sssecref{sssec:leftpreweave}, we will say that a morphism of Artin stacks is \emph{shriekable} if it belongs to $\cE$.

\subsubsection{}

Given a cartesian square
\begin{equation}\label{eq:sm!art}
  \begin{tikzcd}
    \sX' \ar{r}{g}\ar{d}{p}
    & \sY' \ar{d}{q}
    \\
    \sX \ar{r}{f}
    & \sY
  \end{tikzcd}
\end{equation}
in $\cA$ with $f$ smooth and $q$ shriekable, recall the exchange transformation $\Ex_{\sharp,!} : f_\sharp p_! \to q_! g_\sharp$ \eqref{eq:Ex_shp,!}.
Recall that when this is invertible, we say that \emph{$f_\sharp$ commutes with $q_!$}.

\subsubsection{}

We will require the following reduction lemmas:

\begin{lem}\label{lem:sm!artredS}
  Suppose given a cartesian square of the form \eqref{eq:sm!art}.
  Suppose that for every $(S,s)\in\Lis_\sX$,
  \begin{inlinelist}
    \item\label{item:bfwzlwir}
    $s_\sharp$ commutes with $p_{S,!}$;
    \item\label{item:fjdnfwbh}
    $(f \circ s)_\sharp$ commutes with $p_{S,!}$.
  \end{inlinelist}
  Then $f_\sharp$ commutes with $p_!$.
\end{lem}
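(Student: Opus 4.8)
The plan is to test the exchange transformation $\Ex_{\sharp,!} : f_\sharp p_! \to q_! g_\sharp$ of \eqref{eq:sm!art} against a smooth atlas of $\sX'$ pulled back from one of $\sX$, and then to recognize the resulting composite as the exchange transformation of a larger rectangle assembled from the two hypotheses. Concretely, for $(S,s)\in\Lis_\sX$ I juxtapose \eqref{eq:sm!art} with the cartesian square expressing the base change of $p$ along $s$, obtaining a diagram of two horizontally composable cartesian squares with bottom row $S \xrightarrow{s} \sX \xrightarrow{f} \sY$, top row $\sX'_S \xrightarrow{s'} \sX' \xrightarrow{g} \sY'$, and vertical maps $p_S$, $p$, $q$, where $s' : \sX'_S \to \sX'$ and $p_S : \sX'_S \to S$ are the base changes of $s$ and $p$.

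First I would note that $\Ex_{\sharp,!}$ is a natural transformation of colimit-preserving functors $\D(\sX')\to\D(\sY)$: the functors $f_\sharp$ and $g_\sharp$ are left adjoints, while $p_!$ and $q_!$ admit right adjoints because the preweave is presentable and $p$, $q$ are shriekable. Exactly as in the proof of \lemref{lem:smbcredX}, the colimit-generation result of \ssecref{ssec:colimshp} then reduces the claim to showing that $\Ex_{\sharp,!}\circ s'_\sharp$ is invertible for every $(S,s)\in\Lis_\sX$; this is legitimate because the images of the functors $s'_\sharp$ (for $s$ ranging over the smooth coverings of $\sX$, so that $s'$ is a smooth covering of $\sX'$) generate $\D(\sX')$ under colimits, using that $\D^*$ satisfies smooth descent.

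The core of the argument is then a pasting computation. Using assumption \itemref{item:bfwzlwir}, i.e.\ invertibility of $\Ex_{\sharp,!}$ for the left-hand square, I identify $p_!\,s'_\sharp \simeq s_\sharp\,p_{S,!}$; combining this with the pseudofunctoriality isomorphisms $f_\sharp s_\sharp \simeq (f\circ s)_\sharp$ and $g_\sharp s'_\sharp \simeq (g\circ s')_\sharp$, the composite $\Ex_{\sharp,!}\circ s'_\sharp$ becomes identified with the exchange transformation $\Ex_{\sharp,!}$ of the outer composite rectangle (bottom $f\circ s$, top $g\circ s'$, sides $p_S$ and $q$). The latter is invertible by assumption \itemref{item:fjdnfwbh}, which completes the reduction.

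The main obstacle is the coherence underlying this last identification: I must know that the horizontal pasting of the two $\sharp,!$-exchange transformations (for the left square and for \eqref{eq:sm!art}) coincides with the exchange transformation of the outer rectangle. This is a formal property of the weave $\D^*_!$ as a functor on correspondences and should be invoked rather than recomputed; granting it, the identification above is mechanical (and the invertible factor coming from \itemref{item:bfwzlwir} can be cancelled by two-out-of-three). A secondary point worth checking is that the generation principle of \ssecref{ssec:colimshp} applies to the covering $s'$, whose source $\sX'_S$ is in general an Artin stack rather than a space — but this only relies on $s'$ being a smooth covering together with smooth descent for $\D^*$, both of which hold.
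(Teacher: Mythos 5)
Your proposal is correct and follows essentially the same route as the paper: reduce via the colimit-generation principle of \ssecref{ssec:colimshp} to invertibility of $\Ex_{\sharp,!}\circ s_{\sX',\sharp}$ for each $(S,s)\in\Lis_\sX$, then use hypothesis \itemref{item:bfwzlwir} to identify this composite with the exchange transformation of the outer rectangle and conclude by \itemref{item:fjdnfwbh}. The pasting-compatibility of the exchange transformations that you flag is indeed the formal point the paper leaves implicit in the phrase ``this is identified with''.
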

\begin{proof}
  Since each term in \eqref{eq:Ex_shp,!} commutes with colimits, it will suffice using \ssecref{ssec:colimshp} to show that $\eqref{eq:Ex_shp,!} \circ s_{\sX',\sharp}$ is invertible for every $(S,s)\in\Lis_\sX$.
  Under assumption~\itemref{item:bfwzlwir}, this is identified with
  \[
    \Ex_{\sharp,!} : (f \circ s)_\sharp p_{S,!} \to q_! (g\circ s_{\sX'})_\sharp,
  \]
  which is invertible by \itemref{item:fjdnfwbh}.
\end{proof}

\begin{lem}\label{lem:sm!artredT}
  Suppose given a cartesian square of the form \eqref{eq:sm!art}.
  Suppose that for every $(T,t)\in\Lis_\sY$, $f_{T,\sharp}$ commutes with $p_{T,!}$.
  Then $f_\sharp$ commutes with $p_!$.
\end{lem}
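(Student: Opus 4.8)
The plan is to prove invertibility of the exchange transformation $\Ex_{\sharp,!} : f_\sharp p_! \to q_! g_\sharp$, a natural transformation of functors $\D(\sX') \to \D(\sY)$, by pulling back along $*$-inverse images out of $\Lis_\sY$, in close analogy with the proof of \lemref{lem:smbcredY}. Since $\D(\sY) \simeq \lim_{(T,t)\in\Lis_\sY} \D(T)$ along $*$-inverse images (\corref{cor:sat}), the family $(t^*)_{(T,t)\in\Lis_\sY}$ is jointly conservative and detects invertibility of natural transformations valued in $\D(\sY)$; it therefore suffices to show that $t^* \circ \Ex_{\sharp,!}$ is invertible for every $(T,t)\in\Lis_\sY$.

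For this I would form, for each $(T,t)\in\Lis_\sY$, the cube of cartesian squares obtained from \eqref{eq:sm!art} by base change along $t$, as in \eqref{eq:smbcredY}, whose front face is the base-changed square carrying the smooth morphism $f_T$ and the shriekable morphism $q_T$. Note first that $p$ is shriekable, being the base change of $q$, so that $p_!$ and its base change isomorphisms are available. Combining the smooth base change isomorphisms of \propref{prop:Sm3} for the faces involving $f$ and $g$, namely $t^* f_\sharp \simeq f_{T,\sharp} t_\sX^*$ and $t_{\sY'}^* g_\sharp \simeq g_{T,\sharp} t_{\sX'}^*$, with the (always invertible) base change isomorphism $\Ex^*_!$ \eqref{eq:drawbore} for the faces involving $p$ and $q$, namely $t_\sX^* p_! \simeq p_{T,!} t_{\sX'}^*$ and $t^* q_! \simeq q_{T,!} t_{\sY'}^*$, one computes
\[
  t^* f_\sharp p_! \simeq f_{T,\sharp} p_{T,!} t_{\sX'}^*,
  \qquad
  t^* q_! g_\sharp \simeq q_{T,!} g_{T,\sharp} t_{\sX'}^*.
\]
Under these identifications $t^* \circ \Ex_{\sharp,!}$ becomes the exchange transformation $\Ex_{\sharp,!}$ for the front face, precomposed with $t_{\sX'}^*$, which is invertible precisely because $f_{T,\sharp}$ commutes with $p_{T,!}$ by hypothesis.

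The main obstacle is the final identification: one must check that $t^* \circ \Ex_{\sharp,!}$ genuinely corresponds to the front-face exchange transformation precomposed with $t_{\sX'}^*$, and not merely that the two have the same source and target. This amounts to the coherence of $\Ex_{\sharp,!}$ under further base change, established by pasting the exchange squares making up the cube; it is the same bookkeeping that underlies \lemref{lem:smbcredY} and \lemref{lem:smbcredX}, and with it in hand the joint conservativity of the $t^*$ yields the claim.
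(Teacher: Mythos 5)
Your proof is correct and follows essentially the same route as the paper: reduce to invertibility after applying $t^*$ for all $(T,t)\in\Lis_\sY$ via descent, then use the commutation of $f_\sharp$, $g_\sharp$ (by \itemref{item:Sm3}) and of $p_!$, $q_!$ with $*$-inverse image to identify $t^*\circ\Ex_{\sharp,!}$ with $\Ex_{\sharp,!}\circ t_{\sX'}^*$ for the base-changed square. Your closing remark about the coherence of the pasted exchange squares is exactly the bookkeeping the paper leaves implicit.
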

\begin{proof}
  It will suffice to show that $t^* \eqref{eq:Ex_shp,!}$ is invertible.
  Since $p_!$, $q_!$, $f_\sharp$ and $g_\sharp$ commute with $*$-inverse image (the latter two by \itemref{item:Sm3}), $t^* \eqref{eq:Ex_shp,!}$ is identified with
  \[
    \Ex_{\sharp,!} \circ t_{\sX'}^* : f_{T,\sharp} p_{T,!} t_{\sX'}^*
    \to q_{T,!} g_{T,\sharp} t_{\sX'}^*,
  \]
  which is invertible by the assumption.
\end{proof}

\begin{lem}\label{lem:sm!artredT'}
  Suppose given a cartesian square of the form \eqref{eq:sm!art}.
  Suppose that for every $(T',t')\in\Lis_{\sY'}$,
  \begin{inlinelist}
    \item\label{item:esjrlgtd}
    $g_\sharp$ commutes with $t'_{\sX',!}$;
    \item\label{item:fmygscsc}
    $f_\sharp$ commutes with $(p \circ t'_{\sX'})_!$.
  \end{inlinelist}
  Then $f_\sharp$ commutes with $p_!$.
\end{lem}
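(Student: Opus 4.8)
The plan is to show that the exchange transformation $\Ex_{\sharp,!} : f_\sharp p_! \to q_! g_\sharp$ of \eqref{eq:Ex_shp,!} is invertible by testing it against the $!$-pushforwards indexed by $\Lis_{\sY'}$. Since $f_\sharp$, $p_!$, $q_!$ and $g_\sharp$ all preserve colimits, $\Ex_{\sharp,!}$ is a natural transformation of colimit-preserving functors $\D(\sX') \to \D(\sY)$; hence, by \ssecref{ssec:colimshp}, it will suffice to check that $\Ex_{\sharp,!} \circ t'_{\sX',!}$ is invertible for every $(T',t') \in \Lis_{\sY'}$, provided the essential images of the $t'_{\sX',!}$ generate $\D(\sX')$ under colimits. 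Establishing this generation is the one nonformal step.

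To justify the reduction, I would argue that the functors $t'_{\sX',!}$, as $(T',t')$ ranges over $\Lis_{\sY'}$, generate $\D(\sX')$ under colimits. Given any $(S,s) \in \Lis_{\sX'}$, put $T' := S$ and $t' := g \circ s \in \Lis_{\sY'}$, so that $\sX'_{T'} = \sX' \fibprod_{\sY'} S$. The graph $\sigma := (s,\id_S) : S \to \sX'_{T'}$ is then a section of the smooth morphism $g_{T'} : \sX'_{T'} \to S$, and since $\cE$ is closed under sections $\sigma$ is shriekable; moreover $t'_{\sX'} \circ \sigma = s$, so by functoriality of $!$-pushforward $s_! \simeq t'_{\sX',!}\,\sigma_!$. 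Thus every $s_!$ with $(S,s) \in \Lis_{\sX'}$ factors through some $t'_{\sX',!}$, and as the former generate $\D(\sX')$ under colimits (\ssecref{ssec:colimshp}), so do the latter. Note that this uses only the existence and functoriality of $!$-pushforward for shriekable morphisms, not Poincaré duality for $t'_{\sX'}$ (which is unavailable while \itemref{item:Sm4} is still being established).

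With the reduction in place, I would fix $(T',t') \in \Lis_{\sY'}$ and form the cartesian squares obtained by stacking $t' : T' \to \sY'$ on top of \eqref{eq:sm!art}: the top square has horizontal maps $g_{T'} : \sX'_{T'} \to T'$ and $g$, and vertical maps $t'_{\sX'}$ and $t'$, while the bottom square is \eqref{eq:sm!art}. By the pasting law for the exchange transformation, the exchange of the outer composite square factors as
\[
  f_\sharp (p \circ t'_{\sX'})_! \xrightarrow{\;\Ex_{\sharp,!}\,\cdot\, t'_{\sX',!}\;} q_! g_\sharp t'_{\sX',!} \xrightarrow{\;q_!\,\cdot\,\Ex_{\sharp,!}^{\mathrm{top}}\;} (q \circ t')_! g_{T',\sharp},
\]
where $\Ex_{\sharp,!}^{\mathrm{top}} : g_\sharp t'_{\sX',!} \to t'_! g_{T',\sharp}$ is the exchange of the top square. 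Hypothesis \itemref{item:esjrlgtd} makes the second arrow invertible, and hypothesis \itemref{item:fmygscsc} makes the composite (the exchange of the outer square) invertible; by two-out-of-three the first arrow $\Ex_{\sharp,!} \circ t'_{\sX',!}$ is invertible, which is exactly what was needed.

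The main obstacle, as flagged, is the generation statement in the second paragraph: legitimately reducing invertibility of $\Ex_{\sharp,!}$ to its precompositions with the $!$-pushforwards from $\Lis_{\sY'}$. Everything downstream is the formal pasting of exchange transformations and a two-out-of-three argument. The crux rests on \ssecref{ssec:colimshp} together with the shriekable-section factorization $s_! \simeq t'_{\sX',!}\,\sigma_!$, whose validity hinges precisely on $\cE$ being closed under sections; this is what allows the argument to proceed without appealing to any duality beyond the formal structure of a preweave.
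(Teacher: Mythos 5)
Your overall strategy coincides with the paper's: reduce invertibility of $\Ex_{\sharp,!} : f_\sharp p_! \to q_! g_\sharp$ to invertibility of its precomposition with $t'_{\sX',!}$ for each $(T',t')\in\Lis_{\sY'}$, and then identify that precomposition, via hypothesis \itemref{item:esjrlgtd} and vertical pasting of exchange transformations, with the exchange transformation of the outer composite square, which is invertible by hypothesis \itemref{item:fmygscsc}. That second half is correct and is exactly what the paper does.

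The gap is in your justification of the reduction, i.e., in the generation claim you yourself flag as the crux. What \ssecref{ssec:colimshp} provides (combined with \corref{cor:sat} applied to $\sX'\to\sY'$, resp.\ with descent of $\D^*$ along $\Lis_{\sX'}$) is that every object of $\D(\sX')$ is a colimit of objects in the essential images of the \emph{$\sharp$-pushforwards} $t'_{\sX',\sharp}$, resp.\ $s_\sharp$ — the left adjoints of the $*$-inverse images. Your argument needs generation by the \emph{$!$-pushforwards} $s_!$, and you cite \ssecref{ssec:colimshp} for the assertion that the $s_!$ generate, which is not what that subsection says; generation by $!$-pushforwards would instead come from \ssecref{ssec:niavzhjn} together with descent for $\D^!$, which via \propref{prop:descweave} requires the full $\sharp$-direct image package and so is not available at this stage. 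Your factorization $s_!\simeq t'_{\sX',!}\,\sigma_!$ through the graph section is fine as far as it goes, but it transports the wrong class of functors. The way to close the gap — and the reading that the paper's own terse citation of \ssecref{ssec:colimshp} presupposes — is precisely the identification $t'_{\sX',\sharp}\simeq t'_{\sX',!}\,\Sigma_{t'_{\sX'}}$ of \propref{prop:fshptwist}, which shows that the essential image of $t'_{\sX',\sharp}$ is contained in that of $t'_{\sX',!}$, so that $\sharp$-generation implies $!$-generation. This requires \itemref{item:Sm1} and \itemref{item:Sm4} for the smooth morphism $t'_{\sX'}$ relative to its own base changes, which is exactly what the inductive hypotheses supply in every application of the lemma (in Propositions \ref{prop:Sm4extobjs} and \ref{prop:Sm4extmorph} these morphisms lie in $\cE_n^{n-1}$). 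By explicitly disclaiming any use of this identification ``while \itemref{item:Sm4} is still being established'', you have discarded the one ingredient that makes the reduction legitimate, and the generation statement is left unsupported.
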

\begin{proof}
  By \ssecref{ssec:colimshp} and since each functor in \eqref{eq:Ex_shp,!} preserves colimits, it will suffice to show that $\eqref{eq:Ex_shp,!}\circ t'_{\sX',!}$ is invertible.
  By assumption~\itemref{item:esjrlgtd}, this is identified with $\Ex_{\sharp,!}: f_\sharp (p\circ t'_{\sX'})_! \to (q \circ t)'_! g_{T',\sharp}$, which is invertible by assumption~\itemref{item:fmygscsc}.
\end{proof}

\subsubsection{Proof of \propref{prop:Sm4extobjs}}

Suppose given a cartesian square as in \eqref{eq:sm!art}, where $f : \sX \to \sY$ is a smooth morphism of $n$-Artin stacks and $q : \sY' \to \sY$ lies in $\cE_n$.
Given $(T,t)\in\Lis_\sY$ and $(T',t')\in\Lis_{\sY'_T}$, form the diagram of cartesian squares
\begin{equation*}
  \begin{tikzcd}
    \sX'_{T'} \ar{r}{g_{T'}}\ar{d}{t'_{\sX'}}
    & T' \ar{d}{t'}
    \\
    \sX'_T \ar{r}{g_T}\ar{d}{p_T}
    & \sY'_T \ar{d}{q_T}
    \\
    \sX_T \ar{r}{f_T}
    & T.
  \end{tikzcd}
\end{equation*}
For every $(T',t')$, $g_{T,\sharp}$ commutes with $t'_!$ since $t'$ lies in $\cE_n^{n-1}$.
Similarly, $f_{T,\sharp}$ commutes with $(q_T\circ t')_!$ since $q_T \circ t'$ lies in $\cE_n^{0}$.
Applying \lemref{lem:sm!artredT'} as $(T',t')$ varies, we deduce that $f_{T,\sharp}$ commutes with $q_{T,!}$.
Applying \lemref{lem:sm!artredT} as $(T,t)$ varies, we then deduce that $f_\sharp$ commutes with $q_!$.

\subsubsection{Proof of \propref{prop:Sm4extmorph}}

The implications \itemref{item:Sm4extmorph/n} $\Rightarrow$ \itemref{item:Sm4extmorph/frep} $\Rightarrow$ \itemref{item:Sm4extmorph/n-1} are clear.

\noindent\itemref{item:Sm4extmorph/n-1} $\Rightarrow$ \itemref{item:Sm4extmorph/frep}:
Suppose given a cartesian square of $n$-Artin stacks \eqref{eq:sm!art} where $f : \sX \to \sY$ is smooth $(n-1)$-representable and $q : \sY' \to \sY$ lies in $\cE_n^{n-1}$.
Given $(T,t)\in\Lis_\sY$ and $(S,s)\in\Lis_{\sX_T}$, form the diagram of cartesian squares
\begin{equation*}
  \begin{tikzcd}
    \sX'_{S'} \ar{r}{s_{\sX'}}\ar{d}{p_{S}}
    & \sX'_T \ar{r}{g_T}\ar{d}{p_T}
    & \sY'_T \ar{d}{q_T}
    \\
    S \ar{r}{s}
    & \sX_T \ar{r}{f_T}
    & T.
  \end{tikzcd}
\end{equation*}
Since $f$ and $q$ are $(n-1)$-representable, the right-hand square consists of $(n-1)$-Artin stacks.
Hence for every $(S,s)$, $s$ is a smooth morphism of $(n-1)$-Artin stacks and $p_T$ lies in $\cE_{n-1}$.
By \itemref{item:Sm4extmorph/n-1}, $s_\sharp$ commutes with $p_{T,!}$.
Similarly, $f_T \circ s$ is a smooth morphism of spaces and $q_T$ lies in $\cE_{n-1}$, so $(f_T\circ s)_\sharp$ also commutes with $q_{T,!}$ by \itemref{item:Sm4extmorph/n-1}.
Applying \lemref{lem:sm!artredS} as $(S,s)$ varies, we deduce that $f_{T,\sharp}$ commutes with $q_{T,!}$ for every $(T,t)$.
Applying \lemref{lem:sm!artredT} as $(T,t)$ varies, we then deduce that $f_\sharp$ commutes with $q_!$.

\noindent\itemref{item:Sm4extmorph/frep} $\Rightarrow$ \itemref{item:Sm4extmorph/n}:
Suppose given a cartesian square of $n$-Artin stacks \eqref{eq:sm!art} where $f : \sX \to \sY$ is a smooth morphism of $n$-Artin stacks and $q : \sY' \to \sY$ lies in $\cE_n^{n-1}$.
Given $(T,t)\in\Lis_\sY$ and $(S,s)\in\Lis_{\sX_T}$, form the diagram of cartesian squares
\begin{equation*}
  \begin{tikzcd}
    S' \ar{r}{s'}\ar{d}{p_{S}}
    & \sX'_T \ar{r}{g_T}\ar{d}{p_T}
    & \sY'_T \ar{d}{q_T}
    \\
    S \ar{r}{s}
    & \sX_T \ar{r}{f_T}
    & T.
  \end{tikzcd}
\end{equation*}
For every $(S,s)$, $s$ is smooth $(n-1)$-representable, and $q_T, p_T$ lie in $\cE_n^{n-1}$.
Hence by \itemref{item:Sm4extmorph/frep}, $s_\sharp$ commutes with $p_{T,!}$ and $(f_T\circ s)_\sharp$ commutes with $q_{T,!}$.
Applying \lemref{lem:sm!artredS} as $(S,s)$ varies, we deduce that $f_{T,\sharp}$ commutes with $q_{T,!}$ for every $(T,t)$.
Applying \lemref{lem:sm!artredT} as $(T,t)$ varies, we then deduce that $f_\sharp$ commutes with $q_!$.

\subsection{(Sm5)}
\label{ssec:Sm5}

\subsubsection{}

Let $\cE_n$ and $\cE_n^k$ be as in \sssecref{sssec:E_n}.
In this subsection we will prove:

\begin{prop}\label{prop:Sm5extmorph}
  Let $\D^*_!$ be a presentable preweave on $(\Art_n, \cE_n^{n-1})$ for some $n>0$.
  If it satisfies \itemref{item:Sm1}, \itemref{item:Sm2}, \itemref{item:Sm3} and \itemref{item:Sm4} for all smooth morphisms in $\Art_n$, and its underlying presheaf $\D^*$ satisfies \v{C}ech descent along smooth covering morphisms, then the following conditions are equivalent:
  \begin{thmlist}
    \item\label{item:Sm5extmorph/n}
    It satisfies \itemref{item:Sm5} for all smooth morphisms of $n$-Artin stacks.

    \item\label{item:Sm5extmorph/frep}
    It satisfies \itemref{item:Sm5} for all smooth $(n-1)$-representable morphisms of $n$-Artin stacks.

    \item\label{item:Sm5extmorph/n-1}
    Its restriction to $(\Art_{n-1}, \cE_{n-1}^{n-1}) = (\Art_{n-1}, \cE_{n-1})$ satisfies \itemref{item:Sm5} for all smooth morphisms of $(n-1)$-Artin stacks.
  \end{thmlist}
\end{prop}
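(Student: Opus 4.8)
The plan is to reduce \itemref{item:Sm5} for a smooth $f$ to the $\otimes$-invertibility of the twist object $\Sigma_f(\un_\sX)$, and then to propagate this invertibility between representability levels by smooth-local descent. First I would record the key reformulation: for every smooth morphism $f : \sX \to \sY$ of $n$-Artin stacks, \itemref{item:Sm5} holds for $f$ if and only if $\Sigma_f(\un_\sX)$ is $\otimes$-invertible. The diagonal $\Delta_f$ is $(n-1)$-representable, hence shriekable, so $\Sigma_f = \pr_{2,\sharp}\Delta_{f,!}$ is defined; and since $\Sigma_f$ is built from $\sharp$- and $!$-operations, \itemref{item:Sm3} and the base change isomorphism \eqref{eq:drawbore} yield, for every base change $g : \sX' \to \sY'$ of $f$ with projection $p : \sX' \to \sX$, a canonical identification $\Sigma_g(\un_{\sX'}) \simeq p^*\Sigma_f(\un_\sX)$. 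For necessity I note that the second projection $\pr_2 : \sX \fibprod_\sY \sX \to \sX$ is a base change of $f$ of which $\Delta_f$ is a shriekable section, so \itemref{item:Sm5} forces $\Sigma_f = \pr_{2,\sharp}\Delta_{f,!}$ to be an equivalence, whence $\Sigma_f(\un_\sX)$ is invertible. For sufficiency, the computation in the proof of \lemref{lem:checkSm5} shows $g_\sharp i_!(\un) \simeq i^*\Sigma_g(\un_{\sX'})$ for any shriekable section $i$ of a base change $g$, which is invertible once $\Sigma_f(\un_\sX)$ is, and then $g_\sharp i_! \simeq (-) \otimes g_\sharp i_!(\un)$ by the projection formulas for $g_\sharp$ and $i_!$. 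Throughout I would use that $\otimes$-invertibility is smooth-local: since $\D^*$ satisfies \v{C}ech descent along smooth covering morphisms, the symmetric monoidal restriction functors along a smooth covering are jointly conservative, and an object is invertible exactly when all of its restrictions are.

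Under this reformulation each condition becomes a statement about invertibility of $\Sigma_f(\un_\sX)$: \itemref{item:Sm5extmorph/n} for all smooth $f$ of $n$-Artin stacks, \itemref{item:Sm5extmorph/frep} for smooth $(n-1)$-representable $f$, and \itemref{item:Sm5extmorph/n-1} for smooth $f$ of $(n-1)$-Artin stacks (the twist agrees with the one computed in the restricted preweave, being intrinsic). The implications \itemref{item:Sm5extmorph/n} $\Rightarrow$ \itemref{item:Sm5extmorph/frep} $\Rightarrow$ \itemref{item:Sm5extmorph/n-1} are immediate, since any morphism of $(n-1)$-Artin stacks is $(n-1)$-representable. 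For \itemref{item:Sm5extmorph/n-1} $\Rightarrow$ \itemref{item:Sm5extmorph/frep}, given $f$ smooth $(n-1)$-representable I would choose a smooth covering $t : T \surj \sY$ from a space. As $f$ is $(n-1)$-representable, $\sX_T := \sX \fibprod_\sY T$ is $(n-1)$-Artin, so $f_T : \sX_T \to T$ is a smooth morphism of $(n-1)$-Artin stacks and $\Sigma_{f_T}(\un_{\sX_T})$ is invertible by \itemref{item:Sm5extmorph/n-1}. Since the covering $t_\sX : \sX_T \surj \sX$ satisfies $t_\sX^*\Sigma_f(\un_\sX) \simeq \Sigma_{f_T}(\un_{\sX_T})$, smooth-locality gives that $\Sigma_f(\un_\sX)$ is invertible.

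For \itemref{item:Sm5extmorph/frep} $\Rightarrow$ \itemref{item:Sm5extmorph/n} I would first isolate the multiplicativity of the twist: for composable smooth morphisms $a : U \to V$, $b : V \to W$ with shriekable diagonals there is a canonical isomorphism
\[
  \Sigma_{b\circ a}(\un_U) \simeq \Sigma_a(\un_U) \otimes a^*\Sigma_b(\un_V).
\]
This follows by factoring $\Delta_{b\circ a} : U \to U \fibprod_W U$ as $U \xrightarrow{\Delta_a} U\fibprod_V U \xrightarrow{w} U\fibprod_W U$, where $w$ is a base change of $\Delta_b$, and computing $\Sigma_{b\circ a} = \pr_{2,\sharp} w_! \Delta_{a,!}$ by commuting $\pr_{2,\sharp}$ past $w_!$ via \itemref{item:Sm4} and the projection formula \itemref{item:Sm2} (equivalently, by applying $f_\sharp \simeq f_!\Sigma_f$ of \propref{prop:fshptwist} to the three morphisms). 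Now let $f : \sX \to \sY$ be an arbitrary smooth morphism of $n$-Artin stacks. Choosing a smooth covering $t : T \surj \sY$ from a space, $\sX_T$ is $n$-Artin and, as above, it suffices to show $\Sigma_{f_T}(\un_{\sX_T})$ is invertible. Choose a smooth covering $s : S \surj \sX_T$ from a space $S$; then $s : S \to \sX_T$ and $f_T \circ s : S \to T$ are smooth $(n-1)$-representable morphisms of $n$-Artin stacks, so $\Sigma_s(\un_S)$ and $\Sigma_{f_T\circ s}(\un_S)$ are invertible by \itemref{item:Sm5extmorph/frep}. Multiplicativity then gives $\Sigma_{f_T\circ s}(\un_S) \simeq \Sigma_s(\un_S) \otimes s^*\Sigma_{f_T}(\un_{\sX_T})$, so $s^*\Sigma_{f_T}(\un_{\sX_T})$ is invertible; by smooth-locality $\Sigma_{f_T}(\un_{\sX_T})$ is invertible, and hence so is $\Sigma_f(\un_\sX)$.

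The representability bookkeeping—that all diagonals, sections, and atlas maps lie in $\cE_n^{n-1}$, so that the relevant $\sharp$- and $!$-functors and twists are defined—is governed by the conventions of \sssecref{sssec:nArt} and I would treat it as routine. The main obstacle is the multiplicativity isomorphism: it is the one place where the interplay of $\sharp$- and $!$-pushforwards (\itemref{item:Sm4}) with the projection formula is genuinely used, and the delicate point is to produce a \emph{canonical} isomorphism from the diagonal factorization rather than a merely abstract one. The smooth-local descent of $\otimes$-invertibility, though elementary, is the second ingredient that makes the level-by-level propagation go through.
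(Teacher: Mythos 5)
Your reduction of \itemref{item:Sm5} to the $\otimes$-invertibility of $\Sigma_f(\un)$ is only half valid in this setting, and the missing half is exactly where the difficulty of the proposition lies. The preweave lives on $(\Art_n,\cE_n^{n-1})$, so the shriekable morphisms are the $(n-1)$-representable ones; a general smooth morphism $f$ of $n$-Artin stacks (e.g.\ one with target a space and source a genuinely $n$-Artin stack) is \emph{not} shriekable, and $f_!$ does not exist. The necessity direction of your reformulation is fine, and so is the sufficiency direction for shriekable $f$ --- that is \lemref{lem:checkSm5} --- but the computation you invoke there, $g_\sharp i_!(\un) \simeq g_!\bigl(i_!(\un)\otimes\Sigma_g(\un)\bigr) \simeq i^*\Sigma_g(\un)$, runs through $g_\sharp \simeq g_!\Sigma_g$ (\propref{prop:fshptwist}) and hence needs $g_!$. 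In the implication \itemref{item:Sm5extmorph/frep} $\Rightarrow$ \itemref{item:Sm5extmorph/n} the morphism $f$ (and each of its base changes $g$) is precisely the kind that is only $n$-representable, so even after you have shown that $\Sigma_f(\un_\sX)$ is $\otimes$-invertible --- which your multiplicativity-plus-smooth-locality argument does plausibly achieve --- you have no route from that invertibility to the statement that $g_\sharp i_!$ is an equivalence. (Attempts to bypass $f_!$ via \itemref{item:Sm4} only produce identifications of the form $i_!\bigl(f_\sharp i_!(\un)\bigr)\simeq i_!\bigl(i^*\Sigma_f(\un)\bigr)$, and $i_!$ is not fully faithful for a general section $i$, so one cannot cancel it.) Your \itemref{item:Sm5extmorph/n-1} $\Rightarrow$ \itemref{item:Sm5extmorph/frep} step is unaffected, since there $f$ is $(n-1)$-representable and \lemref{lem:checkSm5} applies; it is essentially the paper's \lemref{lem:Sm5redT}.

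The paper closes exactly this gap with \lemref{lem:uetdxbiz}: instead of computing $\Sigma_f(\un)$ and then trying to recover $f_\sharp i_!(\un)$ from it, one writes $f_\sharp i_!(\un) \simeq \colim_{(S,s)\in\Lis_\sX} (f\circ s)_\sharp i_{S,!}(\un)$ using \ssecref{ssec:colimshp}, applies \propref{prop:fshptwist} to the \emph{atlas composites} $f\circ s$ (which are smooth and $(n-1)$-representable, hence shriekable), and identifies the result with $i^*F_!\bigl(\Sigma_{f\circ s}(\un)\bigr)$, where $F_!$ is the equivalence supplied by descent. Invertibility of the objects $\Sigma_{f\circ s}(\un_S)$ --- which follows from \itemref{item:Sm5extmorph/frep} exactly as in your argument --- then gives invertibility of $f_\sharp i_!(\un)$ without ever invoking $f_!$. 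If you want to keep your structure, you should replace the final step ``$\Sigma_f(\un)$ invertible $\Rightarrow$ \itemref{item:Sm5}'' by this colimit computation; your multiplicativity lemma then becomes unnecessary (and note that its proof also needs care, since $b=f_T$ is not shriekable, so the clean derivation via \propref{prop:fshptwist} applied to $a$, $b$ and $b\circ a$ is not available either).
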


\subsubsection{}

We begin with several preparatory lemmas.
Let $\cA \sub \Art$ be a full subcategory of Artin stacks which contains $\Spc$ and is closed under fibred products, and let $\cE \sub \cA$ be a class of morphisms which is closed under base change and sections and contains $\cE_\lis^0$.
Let $\D^*_!$ be a presentable preweave on $(\cA,\cE)$ satisfying 
\itemref{item:Sm1}, \itemref{item:Sm2}, \itemref{item:Sm3} and \itemref{item:Sm4} for all smooth morphisms in $\cA$, and whose underlying presheaf $\D^*$ satisfies \v{C}ech descent along smooth covering morphisms.
As in \sssecref{sssec:leftpreweave}, we will say that a morphism of Artin stacks is \emph{shriekable} if it belongs to $\cE$.

\begin{lem}\label{lem:tetuqsba}
  Let $\sX \in \cA$ be an Artin stack and $\sF \in \D(\sX)$ an object.
  Assume that for every $(S,s) \in \Lis_\sX$, $\D^*$ satisfies \itemref{item:Sm2} for $s : S \to \sX$.
  Then $\sF$ is $\otimes$-invertible if and only if $s^*(\sF) \in \D(S)$ is invertible for every $(S,s)\in\Lis_\sX$.
\end{lem}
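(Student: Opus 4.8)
The plan is to prove the nontrivial (``if'') implication by producing an explicit candidate inverse via the internal hom and then checking that the evaluation map is invertible by smooth descent. The easy direction is immediate: for any $(S,s)\in\Lis_\sX$ the functor $s^*$ is symmetric monoidal, hence carries a $\otimes$-invertible object of $\D(\sX)$ to a $\otimes$-invertible object of $\D(S)$.

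For the converse, assume $s^*(\sF)$ is invertible for every $(S,s)\in\Lis_\sX$, and set $\sG := \uHom_{\D(\sX)}(\sF,\un_\sX)$, which exists since $\D^*_!$ is presentable and hence $\D(\sX)$ is closed symmetric monoidal. The crux is a smooth base change formula for the internal hom: for every $(S,s)\in\Lis_\sX$ the canonical comparison map $s^*\,\uHom_{\D(\sX)}(\sF,-) \to \uHom_{\D(S)}(s^*\sF,\,s^*(-))$ is invertible. I would prove this by testing against an arbitrary $A\in\D(S)$ and running the adjunction chain
\begin{align*}
  \Maps_S\bigl(A,\, s^*\uHom_{\D(\sX)}(\sF,\sH)\bigr)
  &\simeq \Maps_\sX\bigl(s_\sharp A,\, \uHom_{\D(\sX)}(\sF,\sH)\bigr)
  \simeq \Maps_\sX\bigl(s_\sharp(A)\otimes\sF,\, \sH\bigr)\\
  &\simeq \Maps_\sX\bigl(s_\sharp(A\otimes s^*\sF),\, \sH\bigr)
  \simeq \Maps_S\bigl(A\otimes s^*\sF,\, s^*\sH\bigr),
\end{align*}
using the adjunction $s_\sharp\dashv s^*$ (available by \itemref{item:Sm1}), the internal hom adjunction, and the projection formula $s_\sharp(A\otimes s^*\sF)\simeq s_\sharp(A)\otimes\sF$ of \itemref{item:Sm2}; the final term is $\Maps_S(A,\uHom_{\D(S)}(s^*\sF,s^*\sH))$. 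This step is precisely where the hypothesis \itemref{item:Sm2} for the charts $s$ enters, and it is the main obstacle—the remaining steps are formal. Applying the formula with $\sH=\un_\sX$ yields $s^*\sG\simeq\uHom_{\D(S)}(s^*\sF,\un_S)$, which is the $\otimes$-inverse of the invertible object $s^*\sF$.

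Finally I would show that the evaluation map $\mathrm{ev}\colon \sF\otimes\sG\to\un_\sX$ is an equivalence. By \corref{cor:entropionize} (using that $\D^*$ satisfies \v{C}ech descent along smooth covering morphisms) we may identify $\D(\sX)\simeq\lim_{(S,s)\in\Lis_\sX}\D(S)$ along $*$-inverse images; in particular the family $(s^*)_{(S,s)\in\Lis_\sX}$ is jointly conservative, so it suffices to check that $s^*(\mathrm{ev})$ is invertible for each $(S,s)$. Since $s^*$ is symmetric monoidal, $s^*(\mathrm{ev})$ is identified, via the base change formula above, with the evaluation $s^*\sF\otimes (s^*\sF)^{\otimes -1}\to\un_S$ of the invertible object $s^*\sF$, which is an equivalence. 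Hence $\mathrm{ev}$ is an equivalence, so $\sG$ is a $\otimes$-inverse of $\sF$ and $\sF$ is $\otimes$-invertible, as claimed.
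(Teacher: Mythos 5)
Your proposal is correct and follows essentially the same route as the paper: both reduce $\otimes$-invertibility of $\sF$ to invertibility of the evaluation map $\sF\otimes\uHom_\sX(\sF,\un_\sX)\to\un_\sX$, check this after applying the jointly conservative family $(s^*)$ supplied by smooth descent, and use \itemref{item:Sm2} to commute $s^*$ past the internal hom (the paper cites the exchange map \eqref{eq:f^*Hom} from the appendix, whose invertibility is exactly what your adjunction chain establishes).
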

\begin{proof}
  Since $\D(\sX)$ is closed symmetric monoidal, $\sF$ is $\otimes$-invertible if and only if the evaluation morphism
  \begin{equation}\label{eq:otpabymk}
    \sF \otimes \uHom_\sX(\sF, \un_\sX) \to \un_\sX
  \end{equation}
  is invertible.
  This holds if and only if $s^* \eqref{eq:otpabymk}$ is invertible for all $(S,s) \in \Lis_\sX$.
  Since each $s$ is smooth and thus satisfies the projection formula by \itemref{item:Sm2}, $s^*$ commutes with $\uHom$ by \eqref{eq:f^*Hom}.
  The claim follows.
\end{proof}

\begin{lem}\label{lem:Sm5redT}
  Let $f : \sX \to \sY$ be a smooth morphism of Artin stacks in $\cA$.
  Assume that $\D^*_!$ satisfies \itemref{item:Sm2} and \itemref{item:Sm3} for $f$, as well as \itemref{item:Sm5} for every base change $f_T : \sX_T \to T$ along $(T,t)\in\Lis_\sY$.
  Then it satisfies \itemref{item:Sm5} for $f$.
\end{lem}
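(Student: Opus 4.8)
The plan is to verify \itemref{item:Sm5} for $f$ by reducing, via the saturation isomorphism of \corref{cor:sat}, to the base changes $f_T$ over $(T,t)\in\Lis_\sY$, for which \itemref{item:Sm5} is assumed. Unwinding the definition, I must show that for every base change $g : \sX' \to \sY'$ of $f$ along a morphism $q : \sY' \to \sY$ and every shriekable section $i : \sY' \to \sX'$ of $g$, the endofunctor $g_\sharp i_! : \D(\sY') \to \D(\sY')$ is an equivalence. Since $g$ is again smooth (being a base change of $f$), $g_\sharp$ exists and \itemref{item:Sm3} holds for $g$.

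For each $(T,t)\in\Lis_\sY$ I form the base change along $t$, obtaining $g_T : \sX'_T \to \sY'_T$ and $i_T : \sY'_T \to \sX'_T$, where $\sX'_T := \sX'\fibprod_\sY T$ and $\sY'_T := \sY'\fibprod_\sY T$. Because $\sX'_T \simeq \sX_T \fibprod_T \sY'_T$, the morphism $g_T$ is the base change of $f_T : \sX_T \to T$ along $q_T : \sY'_T \to T$, and $i_T$ is a shriekable section of $g_T$ (as $\cE$ is closed under base change). Hence the hypothesis that \itemref{item:Sm5} holds for $f_T$ gives that each $g_{T,\sharp} i_{T,!}$ is an equivalence. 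Moreover \itemref{item:Sm3} for $g$ gives $t_{\sY'}^* g_\sharp \simeq g_{T,\sharp} t_{\sX'}^*$, and the base change isomorphism $\Ex^*_!$ \eqref{eq:drawbore} for the cartesian square relating $i$ and $i_T$ gives $t_{\sX'}^* i_! \simeq i_{T,!} t_{\sY'}^*$; together these yield $t_{\sY'}^* g_\sharp i_! \simeq g_{T,\sharp} i_{T,!} t_{\sY'}^*$. Running the same isomorphisms for a morphism $u : (T_1,t_1)\to(T_2,t_2)$ in $\Lis_\sY$ shows that $(g_{T,\sharp} i_{T,!})_{(T,t)}$ is a natural endomorphism of the diagram $(T,t)\mapsto\D(\sY'_T)$.

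Finally, by saturation (\corref{cor:sat}\itemref{item:sat/two}, applied to the presheaf $\D^*|_{\Lis_\sY}$---which satisfies étale descent---and to the morphism $q$), the functors $t_{\sY'}^*$ exhibit $\D(\sY') \simeq \lim_{(T,t)\in\Lis_\sY}\D(\sY'_T)$. Since $g_\sharp i_!$ commutes with these projections by the previous paragraph, it is identified, exactly as in \lemref{lem:limgTshp}, with the limit $\lim_{(T,t)}\,(g_{T,\sharp} i_{T,!})$ of the endomorphisms $g_{T,\sharp} i_{T,!}$; a limit of equivalences is an equivalence, so $g_\sharp i_!$ is an equivalence, as required. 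I expect the main obstacle to lie in this last step: one must check that \corref{cor:sat} applies to the \emph{possibly non-smooth} morphism $q$---which is why part \itemref{item:sat/two}, valid for arbitrary $f$, is invoked rather than \itemref{item:sat/one}---and that the family $(g_{T,\sharp} i_{T,!})$ genuinely assembles into $g_\sharp i_!$ in the limit rather than merely agreeing objectwise, a coherence that \lemref{lem:limgTshp} is designed to supply.
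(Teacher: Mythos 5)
Your argument is correct in outline but takes a genuinely different route from the paper's, and the difference is instructive. The paper never tries to realize the functor $g_\sharp i_!$ as a limit of the functors $g_{T,\sharp}i_{T,!}$. Instead it first invokes the projection formulas for $g_\sharp$ (this is where the hypothesis \itemref{item:Sm2} enters --- note that your proof never uses \itemref{item:Sm2} at all) and for $i_!$ to reduce the assertion ``$g_\sharp i_!$ is an equivalence'' to the assertion ``the object $g_\sharp i_!(\un)$ is $\otimes$-invertible''; it then checks this object-level condition after pullback along each $(T,t)\in\Lis_\sY$ via \lemref{lem:tetuqsba} (stated immediately beforehand for exactly this purpose), computing $t^*g_\sharp i_!(\un)\simeq g_{T,\sharp}i_{T,!}(\un)$ using \itemref{item:Sm3} and $!$-base change and concluding by \itemref{item:Sm5} for the base changes of $f$. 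The payoff of that reduction is precisely the point you flag as your ``main obstacle'': testing $\otimes$-invertibility of an \emph{object} along a jointly conservative family of symmetric monoidal pullbacks requires no coherence whatsoever, whereas your route needs the family $(g_{T,\sharp}i_{T,!})_{(T,t)}$ to assemble into a genuine natural endomorphism of the diagram $(T,t)\mapsto\D(\sY'_T)$ --- objectwise commutation with the projections $t_{\sY'}^*$ by itself only yields that $g_\sharp i_!$ is conservative, not that it is an equivalence. \lemref{lem:limgTshp} supplies the needed coherence for $g_\sharp$ alone (via the adjunction trick), so completing your argument would further require an analogue of \lemref{lem:limgTshp} for $i_!$ (provable in the style of \lemref{lem:limgT*}, using the right adjoint $i^!$ available since $\D^*_!$ is a preweave) together with the compatibility of the two identifications under composition; this is all doable but is exactly the machinery the paper's object-level reduction is designed to avoid. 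On the other hand, your version has the merit of explicitly treating a general base change $g$ of $f$ and a section thereof, which is what \itemref{item:Sm5} literally demands, whereas the paper writes out only the case of a section of $f$ itself.
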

\begin{proof}
  Let $i : \sY \to \sX$ be a shriekable section of $f$.
  Form the diagram of cartesian squares
  \begin{equation*}
    \begin{tikzcd}
      T \ar{r}{i_T}\ar{d}{t}
      & \sX_T \ar{r}{f_T}\ar{d}{t_\sX}
      & T \ar{d}{t}
      \\
      \sY \ar{r}{i}
      & \sX \ar{r}{f}
      & \sY.
    \end{tikzcd}
  \end{equation*}
  Since $f_\sharp$ and $i_!$ satisfy the projection formula (the former by \itemref{item:Sm2}), it will suffice to show that $f_\sharp i_!(\un_\sX) \in \D(\sX)$ is $\otimes$-invertible.
  By \lemref{lem:tetuqsba} and the commutativity of $f_\sharp$ and $i_!$ with $*$-inverse image (the former by \itemref{item:Sm3}), this is equivalent to the $\otimes$-invertibility of $f_{T,\sharp} i_{T,!}(\un_T) \in \D(T)$, which follows from the fact that $f_T$ satisfies \itemref{item:Sm5}.
\end{proof}

\begin{lem}\label{lem:uetdxbiz}
  Let $f : \sX \to \sY$ be a smooth morphism of Artin stacks in $\cA$.
  Assume that for every $(S,s) \in \Lis_\sX$, $\D^*_!$ satisfies \itemref{item:Sm2} and \itemref{item:Sm4} for $f\circ s : S \to \sY$, and that $f\circ s$ shriekable.
  If $\Sigma_{f \circ s}(\un_S)$ is $\otimes$-invertible, then $f_\sharp i_!$ is an equivalence for every shriekable section $i$ of $f$.
\end{lem}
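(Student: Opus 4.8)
The plan is to reduce the assertion to the $\otimes$-invertibility of a single object, and then to compute that object on a smooth atlas of $\sX$, where the hypotheses on the composites $f\circ s$ become usable.

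First I would record that $f_\sharp i_!$ is tensoring with a fixed object. As $f$ is smooth it satisfies \itemref{item:Sm2}, so $f_\sharp$ obeys the projection formula, and $i_!$ obeys the projection formula since $i$ is shriekable. Writing $\sF := f_\sharp i_!(\un_\sY)$ and using $f\circ i\simeq\id_\sY$, for every $\sG\in\D(\sY)$ one gets
\[
  f_\sharp i_!(\sG)\simeq f_\sharp\big(i_!(\un_\sY)\otimes f^*\sG\big)\simeq f_\sharp i_!(\un_\sY)\otimes\sG\simeq\sF\otimes\sG,
\]
so $f_\sharp i_!$ is an equivalence if and only if $\sF$ is $\otimes$-invertible. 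Next I would note that the stated hypotheses are exactly the input of \lemref{lem:checkSm5} for each composite $g:=f\circ s$ with $(S,s)\in\Lis_\sX$: such $g$ is smooth (so satisfies \itemref{item:Sm1}), shriekable, and satisfies \itemref{item:Sm2} and \itemref{item:Sm4}; and for every base change $g'$ of $g$ the object $\Sigma_{g'}(\un)$ is a $*$-pullback of $\Sigma_{g}(\un_S)$, hence $\otimes$-invertible. By \lemref{lem:checkSm5}, $g=f\circ s$ satisfies \itemref{item:Sm5}, so \propref{prop:fshptwist} ($g_\sharp\simeq g_!\Sigma_g$) and \corref{cor:poinc} are available for all these composites.

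Now I would compute $\sF$ through a smooth-covering atlas $s:S\surj\sX$ with \v{C}ech nerve $S_\bullet$ (so $(S_n,s_n)\in\Lis_\sX$). Smooth descent for $\sX$ in its $\sharp$-form (\ssecref{ssec:colimshp}) gives $\id_{\D(\sX)}\simeq\abs{s_{\bullet,\sharp}s_\bullet^*}$, and since $f_\sharp$ preserves colimits,
\[
  \sF\simeq\abs{(f s_\bullet)_\sharp\, s_\bullet^*\, i_!(\un_\sY)}.
\]
Set $g_n:=f s_n$ and let $i_n:\sY\fibprod_\sX S_n\to S_n$ be the base change of $i$ along $s_n$; base change yields $s_n^* i_!(\un_\sY)\simeq i_{n,!}(\un)$, and one checks $g_n\circ i_n\simeq\bar s_n$, the base change of $s_n$ along $i$ (a smooth covering of $\sY$, shriekable as a composite of shriekable maps, even though $s_n$ itself need not be). Using \itemref{item:Sm5} for $g_n$ from the previous step together with \itemref{item:Sm2} and the projection formula for $i_{n,!}$,
\[
  (g_n)_\sharp i_{n,!}(\un)\simeq g_{n,!}\big(i_{n,!}(\un)\otimes\Sigma_{g_n}(\un)\big)\simeq \bar s_{n,!}\big(i_n^*\Sigma_{g_n}(\un_{S_n})\big).
\]
Writing $\omega_n:=i_n^*\Sigma_{g_n}(\un_{S_n})$, which is $\otimes$-invertible because $i_n^*$ is symmetric monoidal, this exhibits $\sF\simeq\abs{\bar s_{\bullet,!}\,\omega_\bullet}$ as a realization of $!$-pushforwards, along the \v{C}ech nerve of the covering $\bar s:\sY\fibprod_\sX S\surj\sY$, of $\otimes$-invertible objects.

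To finish, I would interpret $(\omega_\bullet)$ as a descent datum for $\D^!$ along this covering: by \propref{prop:descweave} the presheaf $\D^!$ satisfies \v{C}ech descent along smooth coverings, so a compatible family of $\otimes$-invertible objects descends to a $\otimes$-invertible $\omega\in\D(\sY)$ with $\abs{\bar s_{\bullet,!}\,\omega_\bullet}\simeq\omega$, giving $\sF\simeq\omega$ invertible. The main obstacle is precisely verifying that $(\omega_\bullet)$ is a genuine descent datum — that the twists $\Sigma_{g_n}$, their restrictions $i_n^*$ along the sections, and the $!$- and $*$-structure maps of $S_\bullet$ are compatibly identified. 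Because neither $f$ nor the atlas $s$ is assumed shriekable (only the composites $f\circ s$ are), \propref{prop:fshptwist} is unavailable for $f$ directly, so this coherence cannot be read off from $f$ and must be assembled entirely from the shriekable composites $f\circ s$; establishing the homotopy-coherent naturality of the Poincaré package of the second step along $\Lis_\sX$, rather than any single exchange identity, is the delicate point, and it ultimately rests on \itemref{item:Sm5} for the composites (hence for morphisms of spaces) secured above.
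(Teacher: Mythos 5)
Your computation tracks the paper's proof of this lemma quite closely: the reduction to $\otimes$-invertibility of $f_\sharp i_!(\un)$, the use of $(f\circ s)_\sharp \simeq (f\circ s)_!(-\otimes \Sigma_{f\circ s}(\un))$ from \propref{prop:fshptwist}, the projection formula for $i_{S,!}$, and the identity $f\circ i \simeq \id$ all appear there in the same roles (the paper runs the colimit over all of $\Lis_\sX$ rather than over the \v{C}ech nerve of a single atlas, which is immaterial). The problem is the step you yourself flag as ``the delicate point'': you never verify that $(\omega_\bullet) = (i_n^*\Sigma_{g_n}(\un))_n$ assembles into a coherent object of $\lim \D^!(\sY\fibprod_\sX S_\bullet)$, and without that the final appeal to descent does not go through. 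This is a genuine gap, not a routine verification: the twists $\Sigma_{g_n}$ are defined one morphism at a time, and their compatibility under the simplicial structure maps — which requires commuting $!$-pullbacks past the $i_n^*$'s — is precisely what needs an argument. (A smaller issue: \propref{prop:descweave} as stated concerns smooth shriekable morphisms of objects of $\cC$, i.e.\ of spaces, whereas $\bar s$ covers the Artin stack $\sY$, so you would need its stacky analogue.)

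The paper closes this gap by a reorganization you are missing: instead of descending in $\D(\sY)$, it uses the base change $i^* s_! \simeq s_{\sY,!}\, i_S^*$ to pull $i^*$ \emph{outside} the colimit, obtaining $f_\sharp i_!(\un) \simeq i^*\colim_{(S,s)} s_!\, \Sigma_{f\circ s}(\un)$. The family being assembled is then $(\Sigma_{f\circ s}(\un_S))_{(S,s)}$ with no $i_n^*$ inside; by Poincar\'e duality (\corref{cor:poinc}, available for the shriekable composites $f\circ s$ once \itemref{item:Sm5} is secured for them, as you do via \lemref{lem:checkSm5}) this family is canonically $((f\circ s)^!(\un_\sY))_{(S,s)}$, which is \emph{tautologically} coherent under $!$-pullback by the functoriality of $\D^!$ on $\cE$, since $(f\circ s)\circ u \simeq f\circ s'$ for every $u : (S',s') \to (S,s)$ in $\Lis_\sX$ (and every such $u$ is shriekable by two-of-three). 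The colimit is thus $F_!$ of an honest object of $\lim_{(S,s)}\D(S)$, $F_!$ is an equivalence by $!$-descent over $\Lis_\sX$, the resulting object of $\D(\sX)$ is $\otimes$-invertible since its pullbacks to the atlas are (\lemref{lem:tetuqsba}), and applying the symmetric monoidal $i^*$ finishes. If you rewrite your $\omega_n$ as $i_n^*(f\circ s_n)^!(\un_\sY)$ and commute $i^*$ out before taking the colimit, your argument becomes the paper's.
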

\begin{proof}
  Given $(S,s) \in \Lis_\sX$, form the commutative diagram
  \[\begin{tikzcd}
    \sY_S \ar{r}{i_S}\ar{d}{s_\sY}
    & S \ar{d}{s}\ar{rd}{f\circ s}
    &
    \\
    \sY \ar{r}{i}
    & \sX \ar{r}{f}
    & \sY.
  \end{tikzcd}\]
  Since $f\circ s$ is shriekable and satisfies \itemref{item:Sm2} and \itemref{item:Sm4}, we have
  \[
    (f\circ s)_\sharp
    \simeq (f\circ s)_! \Sigma_{f\circ s}
    \simeq (f\circ s)_! (- \otimes \Sigma_{f\circ s}(\un))
  \]
  by \propref{prop:fshptwist}.
  Using \ssecref{ssec:colimshp} we thus compute
  \begin{align*}
    f_\sharp i_!(\un)
    &\simeq \colim_{(S,s)} (f\circ s)_\sharp i_{S,!}(\un)\\
    &\simeq \colim_{(S,s)} (f\circ s)_! (i_{S,!}(\un) \otimes \Sigma_{f\circ s}(\un))\\
    &\simeq \colim_{(S,s)} (f\circ s)_! i_{S,!}i_S^*\Sigma_{f\circ s}(\un)\\
    &\simeq \colim_{(S,s)} s_{\sY,!} i_S^*\Sigma_{f\circ s}(\un)\\
    &\simeq i^* \colim_{(S,s)} s_! \Sigma_{f\circ s}(\un),
  \end{align*}
  using the projection formula for $(f\circ s)_!$, the identity $f\circ i \simeq \id$, the base change formula for $s_!$ and the commutativity of $i^*$ with colimits.
  In the notation of \ssecref{ssec:niavzhjn}, with $F$ the diagram $\Lis_\sX \to \cS_{/\sX}$, we may further write this as $i^* F_! (\Sigma_{f\circ s}(\un))$.
  Since $\Sigma_{f\circ s}(\un)$ is $\otimes$-invertible by \itemref{item:Sm5}, $F_!$ is an equivalence, and $i^*$ is symmetric monoidal, this object is $\otimes$-invertible.
\end{proof}

\subsubsection{Proof of \propref{prop:Sm5extmorph}}

The implications \itemref{item:Sm5extmorph/n} $\Rightarrow$ \itemref{item:Sm5extmorph/frep} $\Rightarrow$ \itemref{item:Sm5extmorph/n-1} are clear.

\noindent\itemref{item:Sm5extmorph/n-1} $\Rightarrow$ \itemref{item:Sm5extmorph/frep}:
Let $f : \sX \to \sY$ be a smooth $(n-1)$-representable morphism of $n$-Artin stacks and $i : \sY \to \sX$ a section.
By \lemref{lem:Sm5redT} it will suffice to show that for every $(T,t)\in\Lis_\sY$, the base change $f_T : \sX_T \to T$ satisfies \itemref{item:Sm5}.
Since $\sX_T$ is $(n-1)$-Artin, this holds by assumption~\itemref{item:Sm5extmorph/n-1}.

\noindent\itemref{item:Sm5extmorph/frep} $\Rightarrow$ \itemref{item:Sm5extmorph/n}:
Let $f : \sX \to \sY$ be a smooth morphism of $n$-Artin stacks and $i : \sY \to \sX$ a section.
Given $(S,s) \in \Lis_\sX$, note that $f \circ s : S \to \sY$ is smooth and $(n-1)$-representable.
Hence it is shriekable and satisfies \itemref{item:Sm2} and \itemref{item:Sm4} (by assumption).
Moreover, assumption~\ref{item:Sm5extmorph/frep} implies that $\Sigma_{f \circ s}(\un_S)$ is $\otimes$-invertible.
Thus we may apply \lemref{lem:uetdxbiz} to deduce that $f_\sharp i_!$ is an equivalence.

\subsection{Proof of \thmref{thm:presaxioms}\itemref{item:presaxioms/sm}}

Let $\D^*_!$ be a presentable preweave on $(\Art,\cE_\lis)$ whose underlying presheaf $\D^{\lis,*}$ satisfies \v{C}ech descent along smooth covering morphisms.
We assume that the restriction of $\D^*_!$ to $(\Spc,\cE_0)$ admits $\sharp$-direct image with respect to smooth morphisms of spaces.

By Propositions~\ref{prop:Sm1}, \ref{prop:Sm2}, and \ref{prop:Sm3}, it satisfies \itemref{item:Sm1}, \itemref{item:Sm2} and \itemref{item:Sm3} for all smooth morphisms of Artin stacks.
Applying Propositions~\ref{prop:Sm4extobjs} and \ref{prop:Sm4extmorph} inductively, we see that it satisfies \itemref{item:Sm4} for all smooth morphisms of Artin stacks.
Applying \propref{prop:Sm5extmorph} inductively, we see that it satisfies \itemref{item:Sm5} for all smooth morphisms of Artin stacks.
In summary, $\D^*_!$ admits $\sharp$-direct image for all smooth morphisms of Artin stacks.

\section{Proper axioms}

In this section we prove \thmref{thm:presaxioms}\itemref{item:presaxioms/pr}.

\subsection{Preliminaries}

Let $\D^* : (\Art)^\op \to \PrL$ be a presentable presheaf of \inftyCats which satisfies \v{C}ech descent along smooth covering morphisms.
We assume that its restriction to $\Spc$ admits $\sharp$-direct image for every smooth morphism of spaces and $*$-direct image for every proper morphism of spaces.

\subsubsection{}

Let $f : \sX \to \sY$ be a morphism of Artin stacks.
We begin by describing $f_*$ as the limit of its base changes $f_{T,*}$ over $(T,t)\in\Lis_\sY$, assuming a base change formula.

More generally, let $f : \sX \to \sY$ be a morphism and $g : \sX' \to \sY'$ its base change along a morphism $q : \sY' \to \sY$.
Given $(T,t)\in\Lis_\sY$, form the cube:
\begin{equation}\label{eq:tzwzverxpr}
  \begin{tikzcd}[matrix xscale=0.5, matrix yscale=0.5]
    &
    \sX'
    \ar{rr}{g}
    \ar[swap,near end]{dd}{p}
    & & \sY'
    \ar{dd}{q}
    \\
    \sX'_T
    \ar{ur}{t_{\sX'}}
    \ar[crossing over, near end]{rr}{g_T}
    \ar[swap]{dd}{p_T}
    & & \sY'_T \ar[swap, near start]{ur}{t_{\sY'}}
    \\
    &
    \sX
    \ar[near start]{rr}{f}
    & & \sY
    \\
    \sX_T \ar[near end]{ur}{t_\sX}
    \ar{rr}{f_T}
    & & T
    \ar[crossing over, leftarrow, near end, swap]{uu}{q_T}
    \ar{ur}{t}
  \end{tikzcd}
\end{equation}
Given a morphism $u : (T_1, t_1) \to (T_2, t_2)$ in $\Lis_\sY$, consider the diagram of cartesian squares
\begin{equation}\label{eq:bbkvuasepr}
  \begin{tikzcd}
    \sX'_{T_1} \ar{r}{g_{T_1}}\ar{d}{u_{\sX'}}
    & \sY'_{T_1} \ar{d}{u_{\sY'}}\ar{r}{q_{T_1}}
    & T_1 \ar{d}{u}
    \\
    \sX'_{T_2} \ar{r}{g_{T_2}}
    & \sY'_{T_2} \ar{r}{q_{T_2}}
    & T_2.
  \end{tikzcd}
\end{equation}
Suppose that the exchange transformation $\Ex_*^*$ \eqref{eq:Ex_*^*} for the left-hand square is invertible, i.e., that $g_{T_2,*}$ commutes with $u_{\sY'}^*$.
In that case, passing to limits vertically in the commutative squares
\[\begin{tikzcd}
  \D(\sX'_{T_2}) \ar{r}{g_{T_2,*}}\ar{d}{u_{\sX'}^*}
  & \D(\sY'_{T_2}) \ar{d}{u_{\sY'}^*}
  \\
  \D(\sX'_{T_1}) \ar{r}{g_{T_1,*}}
  & \D(\sY'_{T_1})
\end{tikzcd}\]
and using the identifications of \corref{cor:sat}, the horizontal functors give rise to a canonical functor
\begin{equation}\label{eq:limgT*}
  \D(\sX') \simeq \lim_{(T,t)\in\Lis_\sY} \D(\sX'_T)
  \to \lim_{(T,t)\in\Lis_\sY} \D(\sY'_T) \simeq \D(\sY')
\end{equation}
where the limits are taken along $*$-inverse images.

\begin{lem}\label{lem:limgT*}
  Let $f : \sX \to \sY$ be a morphism and $g : \sX' \to \sY'$ its base change along a morphism $q : \sY' \to \sY$.
  Suppose that for every morphism $u : (T_1, t_1) \to (T_2, t_2)$ in $\Lis_\sY$, $g_{T_2,*}$ commutes with $u_{\sY'}^*$.
  Then the functor $g_* : \D(\sX') \to \D(\sY')$ is identified with \eqref{eq:limgT*}.
  In other words, $g_*$ is uniquely determined by commutative squares
  \[\begin{tikzcd}
    \D(\sX') \ar{r}{g_*}\ar{d}{t_{\sX'}^*}
    & \D(\sY') \ar{d}{t_{\sY'}^*}
    \\
    \D(\sX'_T) \ar{r}{g_{T,*}}
    & \D(\sY'_T)
  \end{tikzcd}\]
  for all $(T,t)\in\Lis_\sY$.
\end{lem}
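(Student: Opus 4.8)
The plan is to show that the functor \eqref{eq:limgT*}, which I will write $g_?$, is right adjoint to $g^* : \D(\sY') \to \D(\sX')$. Since right adjoints are unique up to equivalence, this forces $g_? \simeq g_*$, and the commutative squares in the statement then hold by the very construction of $g_?$. This is formally dual to the proof of \lemref{lem:limgTshp}, so I would run exactly that Yoneda argument with the variance reversed.

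Concretely, I would fix $\sF \in \D(\sX')$ and $\sG \in \D(\sY')$. For each $(T,t) \in \Lis_\sY$ the levelwise adjunction $(g_T^*, g_{T,*})$ supplies an identity
\[
  \Maps_{\D(\sX'_T)}(g_T^* t_{\sY'}^* \sG,\, t_{\sX'}^* \sF)
  \simeq \Maps_{\D(\sY'_T)}(t_{\sY'}^* \sG,\, g_{T,*} t_{\sX'}^* \sF).
\]
I would then rewrite both sides using the cube \eqref{eq:tzwzverxpr}: on the left, functoriality of $*$-inverse image along the cartesian face $\sX'_T \to \sY'_T \to \sY' \leftarrow \sX'$ gives $g_T^* t_{\sY'}^* \simeq t_{\sX'}^* g^*$, while on the right the square defining $g_?$ gives $g_{T,*} t_{\sX'}^* \simeq t_{\sY'}^* g_?$. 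Hence the identity becomes
\[
  \Maps_{\D(\sX'_T)}(t_{\sX'}^* g^* \sG,\, t_{\sX'}^* \sF)
  \simeq \Maps_{\D(\sY'_T)}(t_{\sY'}^* \sG,\, t_{\sY'}^* g_? \sF),
\]
natural in $(T,t)$.

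Finally I would pass to the limit over $(T,t) \in \Lis_\sY$. By \corref{cor:sat} the inverse-image functors identify $\D(\sX')$ with $\lim_{(T,t)} \D(\sX'_T)$ and $\D(\sY')$ with $\lim_{(T,t)} \D(\sY'_T)$, so mapping anima in the source categories are the corresponding limits of mapping anima; taking limits of both sides of the displayed identity yields
\[
  \Maps_{\D(\sX')}(g^* \sG,\, \sF)
  \simeq \Maps_{\D(\sY')}(\sG,\, g_? \sF)
\]
functorially in $\sF$ and $\sG$, which exhibits $g_?$ as right adjoint to $g^*$, i.e. $g_? \simeq g_*$.

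The one point requiring genuine care — and the only place the hypothesis enters — is that the levelwise equivalences above must assemble into a map of diagrams indexed by $\Lis_\sY$ before the limit can be formed. This is precisely what the assumption that $g_{T_2,*}$ commutes with $u_{\sY'}^*$ for every $u : (T_1,t_1) \to (T_2,t_2)$ provides: it is what makes $g_?$ well defined as a functor in the first place and what guarantees that the defining squares of $g_?$ are compatible with the transition functors $u_{\sX'}^*$ and $u_{\sY'}^*$, so that the identifications are compatible across $\Lis_\sY$. Once this naturality is in place, the remainder is a mechanical dualization of \lemref{lem:limgTshp}, and I expect no further obstacle.
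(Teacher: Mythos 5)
Your proof is correct and follows essentially the same route as the paper: define $g_?$ as the limit functor \eqref{eq:limgT*}, rewrite the levelwise adjunction identities for $(g_T^*, g_{T,*})$ via $g_T^* t_{\sY'}^* \simeq t_{\sX'}^* g^*$ and $g_{T,*} t_{\sX'}^* \simeq t_{\sY'}^* g_?$, and pass to the limit over $\Lis_\sY$ using \corref{cor:sat} to exhibit $g_?$ as right adjoint to $g^*$. Your closing remark about the hypothesis being exactly what makes the levelwise identifications assemble into a map of diagrams is a point the paper leaves implicit, but it does not alter the argument.
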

\begin{proof}
  Denote by $g_?$ the functor \eqref{eq:limgT*}.
  Let $\sF \in \D(\sX')$ and $\sG \in \D(\sY')$.
  For every $(T,t)\in\Lis_\sY$, we have the adjunction identity
  \[
    \Maps_{\D(\sX'_T)}(g_T^* t_{\sY'}^*\sG, t_{\sX'}^*\sF)
    \simeq \Maps_{\D(\sY'_T)}(t_{\sY'}^*\sG, g_{T,*} t_{\sX'}^*\sF)
  \]
  which may be written equivalently as
  \[
    \Maps_{\D(\sX'_T)}(t_{\sX'}^*g^* \sG, t_{\sX'}^*\sF)
    \simeq \Maps_{\D(\sY'_T)}(t_{\sY'}^*\sG, t_{\sY'}^* g_? \sF)
  \]
  by definition of $g_?$.
  Passing to limits over $(T,t)$ on both sides and using \corref{cor:sat}, we get the identity
  \[
    \Maps_{\D(\sX')}(g^*\sG, \sF)
    \simeq \Maps_{\D(\sY')}(\sG, g_? \sF)
  \]
  functorially in $\sF$ and $\sG$.
  It follows that $g_?$ is right adjoint to $g_*$, i.e., $g_? \simeq g_*$.
\end{proof}

\subsubsection{}
\label{sssec:xpcdvcno}

If $f : \sX \to \sY$ is proper and representable, then applying \lemref{lem:limgT*} with $q = \id_\sY$ yields that $f_*$ is determined by the identities
\begin{equation}
  t^* f_* \simeq f_{T,*} t_\sX^*
\end{equation}
for all $(T,t)\in\Lis_\sY$.
Indeed, the assumption is satisfied as every term in the left-hand square of \eqref{eq:bbkvuasepr} is a space, hence has invertible $\Ex^*_*$ by the assumption of \itemref{item:Pr2} for spaces.

More generally, if $f : \sX \to \sY$ is proper representable and $q : \sY' \to \sY$ is representable, then we find that $g_*$ is determined by the identities
\begin{equation}
  t_{\sY'}^* g_* \simeq g_{T,*} t_{\sX'}^*
\end{equation}
for all $(T,t)\in\Lis_\sY$, where the assumption is satisfied again for the same reason.

\subsection{\itemref{item:Pr1}}
\label{ssec:Pr1}

Let $\D^* : (\Art)^\op \to \PrL$ be a presentable presheaf of \inftyCats which satisfies \v{C}ech descent along smooth covering morphisms.
Assume that its restriction to $\Spc$ admits $\sharp$-direct image along smooth morphisms of spaces and $*$-direct image along proper morphisms of spaces.

Let $f : \sX \to \sY$ be a proper representable morphism of Artin stacks.
Given a diagram $(\sF_i)_i$ in $\D(\sX)$, the claim is that the canonical morphism in $\D(\sY)$
\begin{equation}\label{eq:Pr1}
  \colim_i f_*(\sF_i)
  \to f_*(\colim_i \sF_i)
\end{equation}
is invertible.
It will suffice to check that it becomes invertible after applying $t^*$ for any $(T,t)\in\Lis_\sY$.
By \sssecref{sssec:xpcdvcno} and the fact that $t^*$ commutes with colimits, $t^* \eqref{eq:Pr1}$ is identified with 
\[
  \colim_i f_{T,*}(t_\sX^*\sF_i)
  \to f_{T,*}(t_\sX^*\colim_i \sF_i)
\]
which is invertible by \itemref{item:Pr1} for the morphism of spaces $f_T : \sX_T \to T$.

\subsection{\itemref{item:Pr2}}
\label{ssec:Pr2}

Let $\D^* : (\Art)^\op \to \PrL$ be a presentable presheaf of \inftyCats which satisfies \v{C}ech descent along smooth covering morphisms.
Assume that its restriction to $\Spc$ admits $\sharp$-direct image along smooth morphisms of spaces and $*$-direct image along proper morphisms of spaces.

Let $f : \sX \to \sY$ be a proper representable morphism of Artin stacks.
We claim that the exchange transformation $\Ex^{\otimes,*}_* : f_*(-) \otimes (-) \to f_*(- \otimes f^*(-))$ \eqref{eq:projection*} is invertible.
It will suffice to show that for every $(T,t)\in\Lis_\sY$, the induced natural transformation $t^* \Ex^{\otimes,*}_*$ is invertible.
Using \sssecref{sssec:xpcdvcno} and the symmetric monoidality of $t^*$ and $t_\sX^*$, this is identified with
\[
  \Ex^{\otimes,*}_* : f_{T,*}(t_\sX^*(-)) \otimes t^*(-)
  \to f_{T,*}(t_\sX^*(-) \otimes f_T^*t^*(-)).
\]
Since $f_T : \sX_T \to T$ is a proper morphism of spaces, this is invertible by \itemref{item:Pr2}.

\subsection{\itemref{item:Pr3}}
\label{ssec:Pr3}

Let $\D^* : (\Art)^\op \to \PrL$ be a presentable presheaf of \inftyCats which satisfies \v{C}ech descent along smooth covering morphisms.
Assume that its restriction to $\Spc$ admits $\sharp$-direct image along smooth morphisms of spaces and $*$-direct image along proper morphisms of spaces.

Suppose given a cartesian square of Artin stacks
\begin{equation}\label{eq:prbcart}
  \begin{tikzcd}
    \sX' \ar{r}{g}\ar{d}{p}
    & \sY' \ar{d}{q}
    \\
    \sX \ar{r}{f}
    & \sY
  \end{tikzcd}
\end{equation}
where $f$ is proper representable.
We claim that $f_*$ commutes with $q^*$: that is, the exchange transformation $\Ex^*_* : q^* f_* \to g_* p^*$ \eqref{eq:Ex_*^*} is invertible.

\subsubsection{}
\label{sssec:prbcredY'}

Given $(T',t')\in\Lis_{\sY'}$, form the diagram of cartesian squares
\begin{equation}\label{eq:prbcredY'}
  \begin{tikzcd}
    \sX'_{T'} \ar{r}{g_{T'}}\ar{d}{t_{\sX'}}
    & T' \ar{d}{t'}
    \\
    \sX' \ar{r}{g}\ar{d}{p}
    & \sY' \ar{d}{q}
    \\
    \sX \ar{r}{f}
    & \sY.
  \end{tikzcd}
\end{equation}

\begin{lem}\label{lem:prbcredY'}
  Suppose that for every $(T',t')\in\Lis_{\sY'}$,
  \begin{inlinelist}
    \item\label{item:prbcredY'/t'}
    $g_*$ commutes with $t'^*$;
    \item\label{item:prbcredY'/qt'}
    $f_*$ commutes with $(q \circ t')^*$.
  \end{inlinelist}
  Then $f_*$ commutes with $q^*$.
\end{lem}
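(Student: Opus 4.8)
The plan is to transpose the smooth-case argument of \lemref{lem:smbcredY'} verbatim, replacing $\sharp$-direct image by $*$-direct image throughout. The mechanism enabling the reduction is that $\D^*$ satisfies \v{C}ech descent along smooth covering morphisms, so by the saturation presentation $\D(\sY') \simeq \lim_{(T',t')\in\Lis_{\sY'}} \D(T')$ (\corref{cor:sat}), the family $(t'^*)_{(T',t')\in\Lis_{\sY'}}$ is jointly conservative on $\D(\sY')$. Since $\Ex^*_*$ \eqref{eq:Ex_*^*} is a natural transformation of functors $\D(\sX) \to \D(\sY')$, it is therefore invertible if and only if $t'^* \circ \Ex^*_*$ is invertible for every $(T',t')\in\Lis_{\sY'}$.

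First I would rewrite both sides of $t'^* \circ \Ex^*_*$. On the target, assumption~\itemref{item:prbcredY'/t'} gives $t'^* g_* \simeq g_{T',*} t_{\sX'}^*$, so that
\[ t'^* g_* p^* \simeq g_{T',*} t_{\sX'}^* p^* \simeq g_{T',*} (p \circ t_{\sX'})^*, \]
using the functoriality $t_{\sX'}^* p^* \simeq (p\circ t_{\sX'})^*$. On the source, $t'^* q^* f_* \simeq (q\circ t')^* f_*$ is automatic. Under these identifications, and invoking the compatibility of the base-change isomorphism $\Ex^*_*$ with the vertical pasting of the two cartesian squares in \eqref{eq:prbcredY'}, the transformation $t'^* \circ \Ex^*_*$ is identified with $\Ex^*_*$ for the outer composite cartesian square, whose bottom arrow is $f : \sX \to \sY$ and whose right vertical arrow is $q\circ t' : T' \to \sY$.

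This outer exchange transformation is precisely the assertion that $f_*$ commutes with $(q\circ t')^*$, which is assumption~\itemref{item:prbcredY'/qt'}; hence it is invertible. As this holds for every $(T',t')\in\Lis_{\sY'}$ and the functors $t'^*$ are jointly conservative, $\Ex^*_*$ is invertible, which is exactly the claim that $f_*$ commutes with $q^*$.

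The one step demanding care is the middle identification of $t'^* \circ \Ex^*_*$ with the exchange transformation of the pasted square; this is the proper analogue of the corresponding step in \lemref{lem:smbcredY'} and rests purely on the coherence of the base-change isomorphisms packaged in the preweave $\D^*_!$. I expect no obstacle beyond this bookkeeping, the argument being formally dual to the already-established smooth case.
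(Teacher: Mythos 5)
Your proposal is correct and follows essentially the same route as the paper's own proof: reduce to checking invertibility after applying $t'^*$ for each $(T',t')\in\Lis_{\sY'}$ (justified by the limit presentation of $\D(\sY')$ from \corref{cor:sat}), use assumption~\itemref{item:prbcredY'/t'} to identify $t'^*\circ\Ex^*_*$ with the exchange transformation of the pasted outer square in \eqref{eq:prbcredY'}, and conclude by assumption~\itemref{item:prbcredY'/qt'}. Your explicit remark that the family $(t'^*)$ is jointly conservative is exactly the (implicit) justification the paper relies on for its first step.
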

\begin{proof}
  For the square \eqref{eq:prbcart}, the exchange transformation $\Ex^*_* : g_*p^* \to q^*f_*$ \eqref{eq:Ex_*^*} is invertible if and only if for every $(T',t')\in\Lis_{\sY'}$, $t'^* \circ \eqref{eq:Ex_sharp^*}$ is invertible.
  Under the identifications
  \[
    t'^* g_* p^*
    \simeq g_{T',*} t_{\sX'}^* p^*
    \simeq g_{T',*} (p \circ t_{\sX'})^*,
  \]
  of assumption \itemref{item:prbcredY'/t'}, $t'^* \circ \eqref{eq:Ex_sharp^*}$ is identified with $\Ex_{*}^*$ for the outer composite square in \eqref{eq:prbcredY'}.
  This is invertible by assumption \itemref{item:prbcredY'/qt'}.
\end{proof}

\subsubsection{}

Given $(T,t)\in\Lis_{\sY}$, form the cube of cartesian squares
\begin{equation}\label{eq:prbcredY}
  \begin{tikzcd}[matrix xscale=0.5, matrix yscale=0.5]
    &
    \sX'
    \ar{rr}{g}
    \ar[swap,near end]{dd}{p}
    & & \sY'
    \ar{dd}{q}
    \\
    \sX'_T
    \ar{ur}{t_{\sX'}}
    \ar[crossing over, near end]{rr}{g_T}
    \ar[swap]{dd}{p_T}
    & & \sY'_T \ar[swap, near start]{ur}{t_{\sY'}}
    \\
    &
    \sX
    \ar[near start]{rr}{f}
    & & \sY
    \\
    \sX_T \ar[near end]{ur}{t_\sX}
    \ar{rr}{f_T}
    & & T
    \ar[crossing over, leftarrow, near end, swap]{uu}{q_T}
    \ar{ur}{t}
  \end{tikzcd}
\end{equation}

\begin{lem}\label{lem:prbcredY}
  Suppose that for every $(T,t)\in\Lis_{\sY}$,
  \begin{inlinelist}
    \item\label{item:prbcredY/t}
    $f_*$ commutes with $t^*$;
    \item\label{item:prbcredY/t'}
    $g_*$ commutes with $t_{\sY'}^*$;
    \item\label{item:prbcredY/f_T}
    $f_{T,*}$ commutes with $q_T^*$.
  \end{inlinelist}
  Then $f_*$ commutes with $q^*$.
\end{lem}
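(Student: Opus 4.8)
The plan is to dualise the proof of the smooth counterpart \lemref{lem:smbcredY}, systematically replacing the $\sharp$-pushforward by the $*$-pushforward. Concretely, the goal is to prove that the exchange transformation $\Ex^*_* : q^* f_* \to g_* p^*$ \eqref{eq:Ex_*^*} attached to the back face \eqref{eq:prbcart} of the cube \eqref{eq:prbcredY} is invertible; this is a natural transformation of functors $\D(\sX) \to \D(\sY')$.

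First I would appeal to \corref{cor:sat}, applied to the morphism $q : \sY' \to \sY$, which identifies $\D(\sY')$ with $\lim_{(T,t)\in\Lis_\sY} \D(\sY'_T)$ along $*$-inverse images. Consequently it suffices to check that $t_{\sY'}^* \circ \Ex^*_*$ is invertible for every $(T,t)\in\Lis_\sY$. I would then rewrite each side of this composite. On the target $g_* p^*$, hypothesis \itemref{item:prbcredY/t'} gives $t_{\sY'}^* g_* \simeq g_{T,*} t_{\sX'}^*$, and the cartesian left face of the cube gives $t_{\sX'}^* p^* \simeq p_T^* t_\sX^*$, so that $t_{\sY'}^* g_* p^* \simeq g_{T,*} p_T^* t_\sX^*$. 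On the source $q^* f_*$, the cartesian right face gives $t_{\sY'}^* q^* \simeq q_T^* t^*$, and hypothesis \itemref{item:prbcredY/t} gives $t^* f_* \simeq f_{T,*} t_\sX^*$, so that $t_{\sY'}^* q^* f_* \simeq q_T^* f_{T,*} t_\sX^*$.

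Under these identifications I expect $t_{\sY'}^* \circ \Ex^*_*$ to be matched with the natural transformation $\Ex^*_* \circ t_\sX^* : q_T^* f_{T,*} t_\sX^* \to g_{T,*} p_T^* t_\sX^*$, namely the exchange transformation for the front cartesian face $(\sX'_T, \sY'_T, \sX_T, T)$ precomposed with $t_\sX^*$. This is invertible precisely by hypothesis \itemref{item:prbcredY/f_T}, which asserts that $f_{T,*}$ commutes with $q_T^*$, and the lemma then follows. The only delicate point---and the sole place the argument could go wrong---is the coherence bookkeeping showing that the composite of base-change isomorphisms used above actually carries $t_{\sY'}^* \circ \Ex^*_*$ to the front-face exchange $\Ex^*_* \circ t_\sX^*$; this is the standard pasting compatibility for exchange squares along a commutative cube, entirely analogous to the corresponding verification in the proof of \lemref{lem:smbcredY}, so I anticipate no genuine obstacle beyond careful tracking of the mates.
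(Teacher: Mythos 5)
Your proposal is correct and follows essentially the same route as the paper: reduce via \corref{cor:sat} to checking $t_{\sY'}^*\circ\Ex^*_*$ for each $(T,t)\in\Lis_\sY$, use hypotheses \itemref{item:prbcredY/t} and \itemref{item:prbcredY/t'} together with the cartesian faces of the cube to identify this with $\Ex^*_*\circ t_\sX^*$ for the front face, and conclude by \itemref{item:prbcredY/f_T}. The pasting compatibility you flag is indeed the only bookkeeping point, and the paper treats it exactly as you anticipate (as in \lemref{lem:smbcredY}).
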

\begin{proof}
  By \corref{cor:sat}, it will suffice to show that $t_{\sY'}^* \circ \Ex_*^*$ is invertible for every $(T,t) \in \Lis_\sY$.
  Under the canonical isomorphism
  \[ t_{\sY'}^* g_* \simeq g_{T,*} t_{\sX'}^* \]
  of assumption \itemref{item:prbcredY/t'}, and the isomorphism
  \[
    t_{\sY'}^* q^* f_*
    \simeq q_T^* t^* f_*
    \simeq q_T^* f_{T,*} t_\sX^*
  \]
  of assumption \itemref{item:prbcredY/t}, $t_{\sY'}^* \circ \Ex_*^*$ is identified with the natural transformation
  \[
    \Ex_{*}^* \circ t_\sX^* : g_{T,*} p_T^* t_{\sX}^* \to q_T^* f_{T,*} t_\sX^*,
  \]
  which is invertible by assumption \itemref{item:prbcredY/f_T}.
\end{proof}

\subsubsection{Proof of \itemref{item:Pr3}}

Suppose given a cartesian square \eqref{eq:prbcart} where $f$ is proper and representable.

\noindent\emph{Case 1:}
$q$ is representable.
We may apply \lemref{lem:prbcredY}, where assumptions~\itemref{item:prbcredY/t} and \itemref{item:prbcredY/t'} hold by \sssecref{sssec:xpcdvcno} and \itemref{item:prbcredY/f_T} holds by \itemref{item:Pr3} for spaces.

\noindent\emph{Case 2:}
$\sX=X$ and $\sY=Y$ are spaces, $q$ is arbitrary.
We apply \lemref{lem:prbcredY'}, where the assumptions are verified by Case~1 since $t'$ and $q\circ t'$ are representable.

\noindent\emph{General:}
We apply \lemref{lem:prbcredY}.
For assumptions~\itemref{item:prbcredY/t} and \itemref{item:prbcredY/t'}, use \lemref{lem:limgT*}, whose assumption is verified by Case~1.
For assumption~\itemref{item:prbcredY/f_T}, use Case~2 above (since $f_T$ is a morphism of spaces).

\subsection{\itemref{item:Pr4}}
\label{ssec:Pr4}

Let $\D^*_!$ be a presentable preweave on $(\Art,\cE_\lis)$ whose underlying presheaf $\D^*$ satisfies \v{C}ech descent along smooth covering morphisms.
We assume that the restriction of $\D^*_!$ to $(\Spc,\cE_0)$ admits $\sharp$-direct image for smooth morphisms of spaces, and $*$-direct image for proper morphisms of spaces.

Given a cartesian square
\begin{equation}\label{eq:pr!art}
  \begin{tikzcd}
    \sX' \ar{r}{g}\ar{d}{p}
    & \sY' \ar{d}{q}
    \\
    \sX \ar{r}{f}
    & \sY
  \end{tikzcd}
\end{equation}
with $q$ shriekable, we will show that $f_*$ commutes with $p_!$, i.e., that the exchange transformation $\Ex_{!,*} : q_! g_* \to f_* p_!$ \eqref{eq:Ex_!*} is invertible.

\subsubsection{}

We will require the following reduction lemmas:

\begin{lem}\label{lem:pr!artredT}
  Suppose given a cartesian square of the form \eqref{eq:pr!art}.
  Suppose that for every $(T,t)\in\Lis_\sY$, $f_{T,*}$ commutes with $p_{T,!}$.
  Then $f_*$ commutes with $p_!$.
\end{lem}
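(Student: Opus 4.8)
The plan is to mirror the smooth reduction \lemref{lem:sm!artredT}, reducing the claim to the base-change compatibility \itemref{item:Pr3} established just above. First I would observe that every $!$-direct image below is defined: since $q$ is shriekable and the shriekable class is closed under base change, the maps $p$, $q$, $p_T$, $q_T$ appearing in the cube associated with $(T,t)\in\Lis_\sY$ (as in \eqref{eq:prbcredY}) are all shriekable. Moreover $f$ is proper representable, hence so is its base change $g$, so \itemref{item:Pr3} applies to both $f$ and $g$.

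By \corref{cor:sat} the family $(t^*)_{(T,t)\in\Lis_\sY}$ is jointly conservative on $\D(\sY)$, so it suffices to show that $t^*\eqref{eq:Ex_!*}$ is invertible for each $(T,t)\in\Lis_\sY$. I would then push $t^*$ through the cube. On the source $q_!g_*$, combine the base-change isomorphism $t^*q_! \simeq q_{T,!}t_{\sY'}^*$ with \itemref{item:Pr3} for $g$ in the form $t_{\sY'}^*g_* \simeq g_{T,*}t_{\sX'}^*$; on the target $f_*p_!$, combine \itemref{item:Pr3} for $f$ in the form $t^*f_* \simeq f_{T,*}t_\sX^*$ with the base-change isomorphism $t_\sX^*p_! \simeq p_{T,!}t_{\sX'}^*$. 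Under these identifications $t^*\eqref{eq:Ex_!*}$ becomes the exchange transformation of the front face of the cube, precomposed with $t_{\sX'}^*$:
\[
  \Ex_{!,*}\circ t_{\sX'}^* : q_{T,!}g_{T,*}t_{\sX'}^* \to f_{T,*}p_{T,!}t_{\sX'}^*.
\]
This is invertible precisely because $f_{T,*}$ commutes with $p_{T,!}$ by hypothesis, and joint conservativity then closes the argument.

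The only non-formal ingredient is the coherence claim that, under the four identifications above, $t^*\eqref{eq:Ex_!*}$ genuinely agrees with the front-face exchange — that is, that the exchange data attached to the six faces of the cube paste together compatibly. This is the standard compatibility of base-change squares, identical in spirit to the identification already invoked in \lemref{lem:sm!artredT} and \lemref{lem:prbcredY}, and it is the main (though routine) point to verify; everything else is formal.
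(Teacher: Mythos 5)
Your proposal is correct and follows essentially the same route as the paper: reduce via joint conservativity of the $t^*$ to showing $t^*\Ex_{!,*}$ is invertible, then use the base-change isomorphisms for $p_!$, $q_!$ and \itemref{item:Pr3} for $f_*$, $g_*$ to identify $t^*\Ex_{!,*}$ with $\Ex_{!,*}\circ t_{\sX'}^*$, which is invertible by hypothesis. The paper's proof is exactly this, stated more tersely, and your added remarks (shriekability of the base changes, representability of $g$, the pasting coherence) are accurate elaborations rather than deviations.
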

\begin{proof}
  It will suffice to show that $t^* \Ex_{!,*}$ is invertible.
  Since $p_!$, $q_!$, $f_*$ and $g_*$ commute with $*$-inverse image (the latter two by \itemref{item:Pr3}), this is identified with
  \[
    \Ex_{!,*} \circ t_{\sX'}^* :
    q_{T,!} g_{T,*} t_{\sX'}^*
    \to f_{T,*} p_{T,!} t_{\sX'}^*,
  \]
  which is invertible by the assumption.
\end{proof}

\begin{lem}\label{lem:pr!artredT'}
  Suppose given a cartesian square of the form \eqref{eq:pr!art}.
  Suppose that for every $(T',t')\in\Lis_{\sY'}$,
  \begin{inlinelist}
    \item\label{item:esjrlgtd2}
    $g_*$ commutes with $t'_{\sX',!}$;
    \item\label{item:fmygscsc2}
    $f_*$ commutes with $(p \circ t'_{\sX'})_!$.
  \end{inlinelist}
  Then $f_*$ commutes with $p_!$.
\end{lem}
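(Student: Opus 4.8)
The plan is to follow the proof of the $\sharp$-analogue \lemref{lem:sm!artredT'} essentially verbatim, with $*$-pushforwards in place of $\sharp$-pushforwards. The one structural point that must be handled differently is that $g_\sharp$ and $f_\sharp$ are left adjoints, hence automatically colimit-preserving, whereas $g_*$ and $f_*$ are not adjoints on the side that would make this free; their commutation with colimits is exactly the content of \itemref{item:Pr1}, which is available in the proper setting and is the only genuinely new input.

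First I would note that both composites $q_!g_*$ and $f_*p_!$ preserve colimits: here $p_!$ and $q_!$ are left adjoints (to $p^!$ and $q^!$) and so preserve colimits, while $g_*$ and $f_*$ preserve colimits by \itemref{item:Pr1}. Since, by \ssecref{ssec:colimshp}, the essential images of the functors $t'_{\sX',!}$ generate $\D(\sX')$ under colimits as $(T',t')$ ranges over $\Lis_{\sY'}$, it suffices to prove that the precomposite $\Ex_{!,*}\circ t'_{\sX',!}$ \eqref{eq:Ex_!*} is invertible for each $(T',t')\in\Lis_{\sY'}$.

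Finally I would rewrite this precomposite using the two hypotheses. Assumption~\itemref{item:esjrlgtd2} gives the identification $g_*t'_{\sX',!}\simeq t'_!g_{T',*}$, whence $q_!g_*t'_{\sX',!}\simeq (q\circ t')_!g_{T',*}$; together with $p_!t'_{\sX',!}\simeq (p\circ t'_{\sX'})_!$ this identifies $\Ex_{!,*}\circ t'_{\sX',!}$ with the exchange transformation
\[
  \Ex_{!,*} : (q\circ t')_!\,g_{T',*} \to f_*\,(p\circ t'_{\sX'})_!
\]
attached to the outer cartesian rectangle obtained by stacking the $t'$-base change on top of \eqref{eq:pr!art}. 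This latter transformation is invertible precisely by assumption~\itemref{item:fmygscsc2}. The only step requiring genuine care is the colimit-preservation of $g_*$ and $f_*$ via \itemref{item:Pr1}, without which the reduction to generators collapses; the compatibility of $\Ex_{!,*}$ with the vertical pasting of the two cartesian squares is routine and identical to the bookkeeping in \lemref{lem:sm!artredT'}.
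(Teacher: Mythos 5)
Your proof is correct and follows the same route as the paper's: reduce to generators of the form $t'_{\sX',!}(-)$ via the colimit-preservation of all functors in $\Ex_{!,*}$ (with $f_*$, $g_*$ handled by \itemref{item:Pr1}), then identify $\Ex_{!,*}\circ t'_{\sX',!}$ with the exchange transformation of the pasted rectangle using assumption~\itemref{item:esjrlgtd2} and conclude by assumption~\itemref{item:fmygscsc2}. Nothing to add.
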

\begin{proof}
  By \ssecref{ssec:colimshp} and since each functor in $\Ex_{!,*}$ preserves colimits ($f_*$ and $g_*$ by \itemref{item:Pr1}), it will suffice to show that $\Ex_{!,*} \circ t'_{\sX',!}$ is invertible.
  By assumption~\itemref{item:esjrlgtd2}, this is identified with $\Ex_{!,*}: (q \circ t)'_! g_{T',*} \to f_* (p\circ t'_{\sX'})_!$, which is invertible by assumption~\itemref{item:fmygscsc2}.
\end{proof}

\subsubsection{Proof of \itemref{item:Pr4}}

Suppose given a cartesian square
\begin{equation*}
  \begin{tikzcd}
    \sX'\ar{r}{g}\ar{d}{p}
    & \sY'\ar{d}{q}
    \\
    \sX\ar{r}{f}
    & \sY
  \end{tikzcd}
\end{equation*}
where $f$ is proper representable.
We check that $\Ex_{!,*} : q_!g_* \to f_*p_!$ \eqref{eq:Ex_!*} is invertible.

We argue by induction on the representability of $q$.
Assume first that $q$ is representable.
For every $(T,t)\in\Lis_\sY$, the base changed square
\begin{equation*}
  \begin{tikzcd}
    \sX'_T \ar{r}{g_T}\ar{d}{p_T}
    & \sY'_T \ar{d}{q_T}
    \\
    \sX_T \ar{r}{f_T}
    & T
  \end{tikzcd}
\end{equation*}
consists of spaces.
Thus $f_{T,*}$ commutes with $p_{T,!}$ by \itemref{item:Pr4} for spaces.
By \lemref{lem:pr!artredT} we conclude that $f_*$ commutes with $p_!$ in this case.

Next suppose that $q$ is $n$-representable, and that the claim is known for $f$ representable and $q$ $(n-1)$-representable.
Given $(T,t)\in\Lis_\sY$ and $(T',t')\in\Lis_{\sY'_T}$, form the diagram of cartesian squares
\begin{equation*}
  \begin{tikzcd}
    \sX'_{T'} \ar{r}{g_{T'}}\ar{d}{t'_{\sX'}}
    & T' \ar{d}{t'}
    \\
    \sX'_T \ar{r}{g_T}\ar{d}{p_T}
    & \sY'_T \ar{d}{q_T}
    \\
    \sX_T \ar{r}{f_T}
    & T.
  \end{tikzcd}
\end{equation*}
For every $(T',t')$, $t' : T' \to \sY'_T$ is $(n-1)$-representable since $\sY'_T$ is $n$-Artin.
Thus by the inductive hypothesis, $g_{T,*}$ commutes with $t'_!$.
Similarly, $f_{T,*}$ commutes with $(q_T\circ t')_!$ since $q_T \circ t'$ is representable and shriekable.
Applying \lemref{lem:pr!artredT'} as $(T',t')$ varies, we deduce that $f_{T,*}$ commutes with $q_{T,!}$.
Applying \lemref{lem:pr!artredT} as $(T,t)$ varies, we then deduce that $f_*$ commutes with $q_!$.

\subsection{\itemref{item:Pr5}}
\label{ssec:Pr5}

Let $\D^*_!$ be a presentable preweave on $(\Art,\cE_\lis)$ whose underlying presheaf $\D^*$ satisfies \v{C}ech descent along smooth covering morphisms.
We assume that the restriction of $\D^*_!$ to $(\Spc,\cE_0)$ admits $\sharp$-direct image for smooth morphisms of spaces, and $*$-direct image for proper morphisms of spaces.

\begin{lem}\label{lem:Pr5redT}
  Let $f : \sX \to \sY$ be a proper representable morphism of Artin stacks.
  Assume that $\D^*_!$ satisfies \itemref{item:Pr2} and \itemref{item:Pr3} for $f$, as well as \itemref{item:Pr5} for every base change $f_T : \sX_T \to T$ along $(T,t)\in\Lis_\sY$.
  Then it satisfies \itemref{item:Pr5} for $f$.
\end{lem}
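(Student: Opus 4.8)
The plan is to mirror the proof of \lemref{lem:Sm5redT}, replacing the functor $f_\sharp$ throughout by $f_*$. Let $i : \sY \to \sX$ be a shriekable section of $f$, so that $f \circ i \simeq \id_\sY$ and $i_! : \D(\sY) \to \D(\sX)$ is defined. First I would reduce the claim that $f_* i_!$ is an equivalence to the single assertion that $f_* i_!(\un_\sY) \in \D(\sY)$ is $\otimes$-invertible. This reduction rests on $f_* i_!$ being $\D(\sY)$-linear: for $\sG, \sH \in \D(\sY)$ one computes
\[
  f_* i_!(\sG) \otimes \sH
  \simeq f_*\big(i_!(\sG) \otimes f^*\sH\big)
  \simeq f_*\,i_!\big(\sG \otimes i^* f^* \sH\big)
  \simeq f_*\,i_!(\sG \otimes \sH),
\]
using the projection formula for $f_*$ \itemref{item:Pr2}, the projection formula for $i_!$, and $f \circ i \simeq \id$. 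Taking $\sG = \un_\sY$ gives $f_* i_!(-) \simeq (-) \otimes f_* i_!(\un_\sY)$, so $f_* i_!$ is an equivalence if and only if $f_* i_!(\un_\sY)$ is $\otimes$-invertible.

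Next I would check $\otimes$-invertibility pointwise over the smooth atlas via \lemref{lem:tetuqsba}, whose hypotheses are in force because \itemref{item:Sm2} holds for every $s : S \to \sY$ in $\Lis_\sY$: each such $s$ is a smooth morphism of Artin stacks, for which \itemref{item:Sm2} was established in \propref{prop:Sm2} under the standing smooth-covering descent assumption. It thus suffices to verify that $t^*\big(f_* i_!(\un_\sY)\big)$ is invertible for each $(T,t)\in\Lis_\sY$. Forming the diagram of cartesian squares
\[\begin{tikzcd}
  T \ar{r}{i_T}\ar{d}{t}
  & \sX_T \ar{r}{f_T}\ar{d}{t_\sX}
  & T \ar{d}{t}
  \\
  \sY \ar{r}{i}
  & \sX \ar{r}{f}
  & \sY,
\end{tikzcd}\]
I would use \itemref{item:Pr3} for $f$ to obtain $t^* f_* \simeq f_{T,*} t_\sX^*$, and the base change isomorphism $\Ex^*_!$ for the left-hand square to obtain $t_\sX^* i_! \simeq i_{T,!} t^*$; combined with $t^*\un_\sY \simeq \un_T$ these identify
\[ t^*\big(f_* i_!(\un_\sY)\big) \simeq f_{T,*}\, i_{T,!}(\un_T). \]

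Finally, since shriekable morphisms are closed under base change and $f_T \circ i_T \simeq \id_T$, the morphism $i_T : T \to \sX_T$ is a shriekable section of $f_T$, and the hypothesis that $f_T$ satisfies \itemref{item:Pr5} yields that $f_{T,*} i_{T,!}$ is an equivalence. As $f_{T,*}$ satisfies the projection formula \itemref{item:Pr2} (being a proper morphism of spaces) and $i_{T,!}$ does too, the same linearity argument as in the first paragraph shows that $f_{T,*} i_{T,!}(\un_T)$ is $\otimes$-invertible. Hence $t^*\big(f_* i_!(\un_\sY)\big)$ is invertible for every $(T,t)$, so $f_* i_!(\un_\sY)$ is $\otimes$-invertible and $f_* i_!$ is an equivalence. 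The proof is almost entirely formal given the earlier machinery; the only point requiring care—and hence the step I would flag as the main subtlety rather than a genuine obstacle—is confirming that \itemref{item:Sm2} is available for the atlas maps $s : S \to \sY$ so that \lemref{lem:tetuqsba} applies, which is exactly what \propref{prop:Sm2} supplies.
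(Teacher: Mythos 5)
Your proof is correct and follows essentially the same route as the paper's: reduce via the projection formulas for $f_*$ and $i_!$ to the $\otimes$-invertibility of $f_*i_!(\un_\sY)$, test it on the atlas using \lemref{lem:tetuqsba} together with \itemref{item:Pr3} and base change for $i_!$, and conclude from \itemref{item:Pr5} for $f_T$. The extra details you supply (verifying the hypothesis of \lemref{lem:tetuqsba} via \propref{prop:Sm2}, and extracting invertibility of the unit from the equivalence $f_{T,*}i_{T,!}$) are exactly the steps the paper leaves implicit.
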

\begin{proof}
  Let $i : \sY \to \sX$ be a shriekable section of $f$.
  Form the diagram of cartesian squares
  \begin{equation*}
    \begin{tikzcd}
      T \ar{r}{i_T}\ar{d}{t}
      & \sX_T \ar{r}{f_T}\ar{d}{t_\sX}
      & T \ar{d}{t}
      \\
      \sY \ar{r}{i}
      & \sX \ar{r}{f}
      & \sY.
    \end{tikzcd}
  \end{equation*}
  Since $f_*$ and $i_!$ satisfy the projection formula (the former by \itemref{item:Pr2}), it will suffice to show that $f_* i_!(\un_\sX) \in \D(\sX)$ is $\otimes$-invertible.
  By \lemref{lem:tetuqsba} and the commutativity of $f_*$ and $i_!$ with $*$-inverse image (the former by \itemref{item:Pr3}), this is equivalent to the $\otimes$-invertibility of $f_{T,*} i_{T,!}(\un_T) \in \D(T)$, which follows from the assumption that $f_T$ satisfies \itemref{item:Pr5}.
\end{proof}

\subsubsection{Proof of \itemref{item:Pr5}}

Let $f : \sX \to \sY$ be a proper representable morphism of Artin stacks and $i : \sY \to \sX$ a shriekable section.
By \lemref{lem:Pr5redT} it will suffice to show that for every $(T,t)\in\Lis_\sY$, the base change $f_T : \sX_T \to T$ satisfies \itemref{item:Pr5}.
Since $f_T : \sX_T \to T$ is a proper morphism of spaces, this holds by the assumption.

\subsection{Proof of \thmref{thm:presaxioms}\itemref{item:presaxioms/pr}}

Let $\D^*_!$ be a presentable preweave on $(\Art,\cE_\lis)$ whose underlying presheaf $\D^*$ satisfies \v{C}ech descent along smooth covering morphisms.
We assume that the restriction of $\D^*_!$ to $(\Spc,\cE_0)$ admits $\sharp$-direct image for smooth morphisms of spaces, and $*$-direct image for proper morphisms of spaces.

We have shown in Subsects.~\ref{ssec:Pr1}, \ref{ssec:Pr2}, \ref{ssec:Pr3}, \ref{ssec:Pr4} and \ref{item:Pr5} that $\D^*_!$ satisfies \itemref{item:Pr1}, \itemref{item:Pr2}, \itemref{item:Pr3}, \itemref{item:Pr4} and \itemref{item:Pr5}, respectively, for every proper representable morphism of Artin stacks.
In summary, $\D^*_!$ admits $*$-direct image for every proper representable morphism of Artin stacks.

\section{Construction of the extension}

\subsection{Proof of \thmref{thm:extweave}\itemref{item:extweave/repr}}

Let $\D^*_!$ be a left preweave on $(\Spc,\cE_0)$.
The claim follows by applying the following abstract extension result:

\begin{prop}[Liu--Zheng--Mann]\label{prop:extobj}
  Let $\cC$ be an \inftyCat and $\cE$ a class of morphisms in $\cC$ which is closed under base change.
  Let $\cC'$ be an \inftyCat containing $\cC$ as a full subcategory and $\cE'$ a class of morphisms in $\cC'$ which contains $\cE$.
  Suppose the following condition holds:
  \begin{thmlist}
    \item[$(\ast)$] For any morphism $f : X \to Y$ in $\cE'$, if $Y \in \cC$, then also $X \in \cC$ and $f \in \cE$.
  \end{thmlist}
  Then any left preweave $\D^*_!$ on $(\cC,\cE)$ extends uniquely to a left preweave $\D^{\lis,*}_!$ on $(\cC',\cE')$ such that the underlying presheaf $\D^{\lis,*} : \cC'^\op \to \Cat$ is the right Kan extension of $\D^* : \cC^\op \to \Cat$ along $\cC \sub \cC'$.
  Moreover, if $\D^*_!$ is a presentable preweave, then so is $\D^{\lis,*}_!$.
\end{prop}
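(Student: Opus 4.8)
The plan is to reduce the statement to the abstract machinery for constructing functors out of categories of correspondences developed in \cite{LiuZheng} and repackaged in \cite[\S A.5]{Mann}, for which condition $(\ast)$ is precisely the enabling hypothesis. A left preweave on $(\cC',\cE')$ is a lax symmetric monoidal functor $\D^{\lis,*}_! : \Corr_{\cE'}(\cC') \to \Cat$, and such a functor is determined on objects by the underlying presheaf $\D^{\lis,*} : \cC'^\op \to \Cat$, on a correspondence $X' \xleftarrow{f} W' \xrightarrow{g} Y'$ with $g \in \cE'$ by the composite $g_! f^*$, and by a web of base change and projection-formula coherences. Since $\D^{\lis,*}$ is prescribed to be the right Kan extension of $\D^*$ — so that $\D^{\lis,*}(Y') \simeq \lim_{(T,t)} \D(T)$, the limit taken over the slice $\cC_{/Y'}$ of maps $t : T \to Y'$ with $T \in \cC$ and transition functors the $*$-inverse images — the functors $f^*$ are already fixed. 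The entire problem thus concentrates on the $!$-pushforwards $g_!$ for $g \in \cE'$ and on their coherences.

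For a morphism $g : X' \to Y'$ in $\cE'$ and any $(T,t) \in \cC_{/Y'}$, I would form the base change $g_T : X'_T := X' \fibprod_{Y'} T \to T$. Because $\cE'$ is closed under base change, $g_T \in \cE'$, and since $T \in \cC$, condition $(\ast)$ forces $X'_T \in \cC$ and $g_T \in \cE$; thus the original preweave already supplies $g_{T,!} : \D(X'_T) \to \D(T)$. I would then define $g_!$ as the functor $\D^{\lis,*}(X') \to \D^{\lis,*}(Y') = \lim_{(T,t)} \D(T)$ whose component at $(T,t)$ is $g_{T,!}\, t_{X'}^* : \D^{\lis,*}(X') \to \D(T)$, where $t_{X'} : X'_T \to X'$ is the projection and $t_{X'}^*$ the corresponding structure map of the limit $\D^{\lis,*}(X') \simeq \lim_{\cC_{/X'}} \D$. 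These components are compatible as $(T,t)$ varies precisely because of the base change isomorphisms $\Ex^*_!$ of $\D^*_!$: for $h : (T_1,t_1) \to (T_2,t_2)$ one has $h^* g_{T_2,!} \simeq g_{T_1,!} h_{X'}^*$, and $h_{X'}^*$ intertwines the structure maps of the limit. The same identities show that any extension must satisfy $t^* g_! \simeq g_{T,!}\, t_{X'}^*$, and since the $t^*$ jointly detect the limit $\D^{\lis,*}(Y')$, the functor $g_!$ — and hence all of the correspondence-level data — is forced. This yields the asserted uniqueness.

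The main obstacle is not the construction of these individual functors but the assembly of all the data into a single homotopy-coherent lax symmetric monoidal functor on $\Corr_{\cE'}(\cC')$: one must produce compatible base change squares, projection formulas, and all their higher coherences simultaneously. Here I would invoke the Liu--Zheng formalism as distilled in \cite[\S A.5]{Mann}, which is designed exactly to extend a functor from $\Corr_\cE(\cC)$ to $\Corr_{\cE'}(\cC')$ once a hypothesis of the form $(\ast)$ is available. The point is that the verification of each required coherence for the extended structure reduces, after pulling back along all $(T,t) \in \cC_{/\bullet}$ and applying $(\ast)$ to identify the relevant base changes as morphisms living entirely in $(\cC,\cE)$, to the corresponding coherence already present in $\D^*_!$ over $\cC$; the machinery then organizes these levelwise identities into the global lax symmetric monoidal extension.

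Finally, for the presentable case I would note that each $\D^{\lis,*}(Y')$ is a limit of presentable \inftyCats along the left-adjoint functors $t^*$, hence is itself presentable, and that both $f^*$ and the $g_!$ constructed above are computed as limits of colimit-preserving functors (colimits in such a limit being computed componentwise) and therefore again preserve colimits. Thus $\D^{\lis,*}_!$ takes values in $\CatL$, indeed in $\PrL$, so it is a presentable preweave whenever $\D^*_!$ is; this uses only the standard behaviour of limits in $\PrL$ and $\PrR$ (e.g. \cite[Thm.~5.5.3.18]{LurieHTT}).
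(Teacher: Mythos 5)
Your proposal is correct and follows essentially the same route as the paper: both reduce the statement to the Liu--Zheng machinery as packaged in \cite[\S A.5]{Mann} (Prop.~A.5.16 there), with the only substantive verification being that condition $(\ast)$ together with base-change stability of $\cE'$ forces every base change of a morphism of $\cE'$ along a map from an object of $\cC$ to land in $(\cC,\cE)$ — which is exactly the check the paper performs. Your additional componentwise description of $g_!$ and the uniqueness argument via joint conservativity of the limit projections is a faithful sketch of what that machinery does internally, and the presentability argument matches the paper's (limits in $\PrL$).
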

\begin{proof}
  To apply \cite[Prop.~A.5.16]{Mann}, we also need to check the following condition:
  For any cartesian square in $\cC'$
  \[\begin{tikzcd}
    X' \ar{r}{g}\ar{d}{p}
    & Y' \ar{d}{q}
    \\
    X \ar{r}{f}
    & Y
  \end{tikzcd}\]
  where $X,Y,Y' \in \cC$ and $f \in \cE$, we must have $X' \in \cC$.
  But since $\cE$ is stable under base change, $g \in \cE$ and hence the condition in the statement implies that $X' \in \cC$.
\end{proof}

\subsection{Proof of \thmref{thm:extweave}\itemref{item:extweave/main}}

\subsubsection{}

Let $\D^*_!$ be a presentable preweave on $(\Spc,\cE_0)$ whose underlying presheaf $\D^*$ satisfies \v{C}ech descent along étale covering morphisms and which admits $\sharp$-direct image along smooth morphisms of spaces.

\subsubsection{}

Applying \thmref{thm:extweave}\itemref{item:extweave/repr}, we obtain a presentable preweave $\D^{\lis,*}_!$ on $(\Art, \cE_\lis^0)$ which is the unique extension of $\D^*_!$ with the property that its underlying presheaf $\D^{\lis,*} : (\Art)^\op \to \Cat$ is the lisse extension of $\D^* : (\Spc)^\op \to \Cat$.
Since $\D^*$ satisfies \v{C}ech descent along étale covering morphisms of spaces, this means that $\D^{\lis,*}$ is equivalently the unique extension of $\D^*$ satisfying \v{C}ech descent along smooth covering morphisms of Artin stacks (see \propref{prop:coccus} and \corref{cor:Shvreseq}).

\subsubsection{}

It remains to extend the preweave $\D^{\lis,*}_!$ from $(\Art, \cE_\lis^0)$ to $(\Art, \cE_\lis)$.
By induction, it will thus suffice to show:

\begin{prop}\label{prop:extweave!ind}
  Let $\D^*_!$ be a presentable preweave on $(\Art_n, \cE_n^{n-1})$, for some $n\ge 0$.
  If $\D^*_!$ admits $\sharp$-direct image for all smooth morphisms of $n$-Artin stacks and satisfies \v{C}ech descent along $(n-1)$-representable smooth covering morphisms of $n$-Artin stacks, then there exists a unique extension of $\D^*_!$ to a presentable preweave on $(\Art_n, \cE_n^n) = (\Art_n, \cE_n)$.
\end{prop}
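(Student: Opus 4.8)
The plan is to obtain the proposition from the abstract descent-extension machinery of Liu--Zheng and Mann, exactly as \propref{prop:extobj} was used to enlarge the underlying \inftyCat, except that here the same formalism is applied to enlarge the class of \emph{shriekable} morphisms from $\cE_n^{n-1}$ to $\cE_n$. The case $n=0$ is trivial (there $\cE_n^{n-1}$ already equals $\cE_n$), so assume $n\ge 1$. The first observation is that the underlying presheaf $\D^* : (\Art_n)^\op \to \PrL$ does not change under the extension: it is a presheaf on $\Art_n$ and is insensitive to which morphisms are declared shriekable. Thus the entire content is to extend the $!$-direction of the lax symmetric monoidal functor $\D^*_! : \Corr_{\cE_n^{n-1}}(\Art_n) \to \Cat$ to $\Corr_{\cE_n}(\Art_n)$, coherently with composition of shrieks, base change, and the projection formula, and to see that this can be done in an essentially unique way. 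Two geometric inputs feed the machine: every morphism of $\cE_n$ admits a two-sided smooth cover by a morphism of spaces, and every cover that occurs is an $(n-1)$-representable smooth covering, along which $\D^*_!$ satisfies \v{C}ech descent by hypothesis.

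For the factorization, recall from \sssecref{sssec:E_lis} that $f : \sX \to \sY$ lies in $\cE_\lis$ precisely when it fits into a commutative square with vertical smooth coverings $v : Y \surj \sY$ and $(u,f_0) : X \surj \sX\fibprod_\sY Y$ by spaces and with $f_0 : X \to Y$ in $\cE_0 \sub \cE_n^{n-1}$. Concretely, given a shriekable $f$, I would first base change along a smooth atlas $b : Y \surj \sY$ to replace the target by a space, obtaining $f_Y : \sX_Y \to Y$, and then choose a smooth atlas $c : X \surj \sX_Y$ with \v{C}ech nerve $X_\bullet \surj \sX_Y$. Since $\sX_Y$ is $n$-Artin, each term $X_m$ is $(n-1)$-Artin (as in the proof of \corref{cor:Shvreseq}); as $Y$ is a space, each composite $f_Y \circ c_m : X_m \to Y$ is then $(n-1)$-representable and shriekable, hence lies in $\cE_n^{n-1}$. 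Since $\D^*_!$ admits $\sharp$-direct image for the smooth atlas $c$, the descent resolution of \lemref{lem:desc*} together with \propref{prop:fshptwist} forces the formula $(f_Y)_! \simeq \colim_m (f_Y\circ c_m)_!\,\Sigma_{c_m}\,c_m^*$, expressing $(f_Y)_!$ in terms of the already-defined functors $(f_Y\circ c_m)_!$; a further descent along $b$ recovers $f_!$ from $(f_Y)_!$. This both pins the extension down uniquely and exhibits the gluing datum on which the abstract construction operates.

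To run the machine I would verify its hypotheses in the present setting. The class $\cE_n$ contains $\cE_n^{n-1}$ and is closed under base change and composition (\sssecref{sssec:E_lis}); the atlas maps $b,c$ and the nerve maps are $(n-1)$-representable smooth coverings, since the diagonal of an $n$-Artin stack is $(n-1)$-representable (\sssecref{sssec:nArt}), so the assumed \v{C}ech descent along $(n-1)$-representable smooth coverings is precisely available along every cover used. Presentability is preserved because all structure functors remain in $\PrL$ and the forgetful functor $\PrR \to \Cat$ creates the limits used to assemble the descended functors (\cite[Thm.~5.5.3.18]{LurieHTT}), as already exploited in \propref{prop:Sm1}. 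The descent-extension result of \cite[\S A.5]{Mann} then produces the desired presentable preweave on $(\Art_n,\cE_n)$, unique subject to restricting to the given one on $\cE_n^{n-1}$.

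The principal obstacle is not geometric but homotopy-theoretic: assembling the levelwise, descended $!$-pushforwards into a single lax symmetric monoidal functor out of $\Corr_{\cE_n}(\Art_n)$ that is simultaneously compatible with composition, base change, and the projection formula. This coherence is exactly what the Liu--Zheng--Mann formalism packages once and for all; what must be supplied by hand is only the geometric factorization property of $\cE_\lis$ recorded above and the matching of the class of covers needed for descent with the class along which descent is assumed. A secondary point to keep honest is that the reduction is genuinely two-sided---covering both source and target by spaces---so that the resolving diagram lands entirely in $\cE_n^{n-1}$; covering the source alone would only lower the representability of $\sX$, not of the morphism into $\sY$, which is why the target is flattened to a space first.
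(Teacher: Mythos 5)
Your overall strategy is the one the paper follows: the underlying presheaf is unchanged, the extension of the $!$-direction from $\cE_n^{n-1}$ to $\cE_n$ is produced by the Liu--Zheng--Mann machinery (this is \propref{prop:extmor}, i.e.\ \cite[Lem.~A.5.11]{Mann}), and the descent hypothesis of that machinery is verified by combining the assumed \v{C}ech descent with $\sharp$-pushforward and the twist $c_\sharp \simeq c_!\,\Sigma_c$ of \propref{prop:fshptwist} to convert $*$-descent into $!$-descent. Your colimit formula $(f_Y)_! \simeq \colim_m (f_Y\circ c_m)_!\,\Sigma_{c_m}\,c_m^*$ is exactly the shadow of the equivalence that the paper establishes, and your observation that the covering must be two-sided (flatten the target to a space first, so that the nerve terms map $(n-1)$-representably to it) is the correct geometric point.

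There is one step you leave unaddressed, and it is where the paper spends most of its effort. The condition $(\ast)$ of \propref{prop:extmor} is not ``\v{C}ech descent along one chosen atlas'': it requires that for every $f : \sY \to \sX$ in $\cE_n$ the canonical functor
\[
  \D(\sY) \;\longrightarrow\; \lim_{(\sS,s)} \D^!(\sS)
\]
be an equivalence, where the limit is taken over the \emph{entire} \inftyCat of pairs $(\sS,s)$ with $s$ and $f\circ s$ in $\cE_n^{n-1}$, with $!$-inverse image transition functors (this is \thmref{thm:!sat}). Descent along the \v{C}ech nerve of a single two-sided cover does not literally give this: one must show that the presheaf $\D^!$ on the large category of pairs is the right Kan extension of its restriction to the $(n-1)$-representable pairs, which the paper does by a separate generation-under-colimits argument (\corref{cor:ShvJres} and \corref{cor:ofbcuwul} in the ``Digression'' subsection), fed by the fact that $\D^!$ satisfies \v{C}ech descent along $(n-1)$-representable smooth coverings (\propref{prop:descweave}, via Poincaré duality). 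Your sketch supplies the descent input but not the cofinality/Kan-extension bridge from one atlas to the full limit; without it, the hypothesis of Mann's lemma has not actually been checked.
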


\subsubsection{}

To prove \propref{prop:extweave!ind} we will apply the following abstract extension result (see \cite[Lem.~A.5.11]{Mann}).

\begin{prop}[Liu--Zheng--Mann]\label{prop:extmor}
  Let $\cC$ be an \inftyCat and $\cE$ a class of morphisms in $\cC$ which is closed under base change.
  Let $\cE'$ be a class of morphisms in $\cC$ which contains $\cE$.
  Suppose the following condition holds:
  \begin{thmlist}
    \item[($\ast$)] For every $X \in \cC$ and every $f : Y \to X$ in $\cE'$, the canonical functor
    \[
      \D(Y) \to \lim_{(S,s)} \D^!(S)
    \]
    is equivalence, where the limit is over pairs $(S,s)$ with $S \in \cC$ and $s : S \to Y$ a morphism in $\cC$ such that $s$ and $f\circ s : S \to X$ both lie in $\cE$.
  \end{thmlist}
  Then any presentable preweave $\D^*_!$ on $(\cC,\cE)$ extends uniquely to a presentable preweave on $(\cC,\cE')$.
\end{prop}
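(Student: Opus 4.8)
The plan is to deduce the statement from the abstract extension criterion \cite[Lem.~A.5.11]{Mann}, so that the real work is to recast our hypothesis $(\ast)$ into the form required there and to explain why it forces an essentially unique coherent extension. Recall that a presentable preweave on $(\cC,\cE)$ is the datum of a lax symmetric monoidal functor $\Corr_\cE(\cC) \to \PrL$, and that the inclusion $\cE \sub \cE'$ induces an identity-on-objects symmetric monoidal functor $\iota : \Corr_\cE(\cC) \to \Corr_{\cE'}(\cC)$. An extension is then a lax symmetric monoidal functor $\Corr_{\cE'}(\cC) \to \PrL$ whose restriction along $\iota$ recovers $\D^*_!$; I must produce one, show it is essentially unique, and check that presentability is retained.

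First I would isolate the only genuinely new datum. On objects nothing changes, and the $*$-pullback presheaf $\D^* : \cC^\op \to \Cat$ is untouched by enlarging the shriekable class; likewise $g_!$ and its right adjoint $g^!$ for $g \in \cE$ are already supplied. The new content is $f_!$ --- equivalently, after passing to right adjoints in $\PrL$, the $!$-pullback $f^! : \D(X) \to \D(Y)$ --- for each $f : Y \to X$ in $\cE'$. This is exactly where $(\ast)$ enters: it presents $\D(Y) \simeq \lim_{(S,s)} \D^!(S)$ as a limit along $!$-pullbacks, indexed by the pairs $(S,s)$ for which both $s$ and $f\circ s$ lie in $\cE$. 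Since a functor into a limit is the same as a compatible family of functors into the terms, and since each such $f\circ s \in \cE$ already furnishes $(f\circ s)^! : \D(X) \to \D(S)$, the family $\{(f\circ s)^!\}_{(S,s)}$ assembles to a functor $f^!$ characterized by $s^! f^! \simeq (f\circ s)^!$. Uniqueness is then immediate, because in any preweave the relation $s^! f^! \simeq (f\circ s)^!$ is forced by functoriality of $!$-pullback, and $(\ast)$ says these relations pin $f^!$ down; computing the limit through the $\PrR$-description and using that the forgetful functor $\PrR \to \Cat$ preserves limits (as in \propref{prop:Sm1}) shows $f^!$ is a right adjoint, so its left adjoint $f_!$ is colimit-preserving and presentability persists.

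The substance of the argument --- and the reason to route it through \cite[Lem.~A.5.11]{Mann} rather than argue by hand --- is to upgrade this object-level prescription to a fully coherent lax symmetric monoidal functor on $\Corr_{\cE'}(\cC)$: one must produce not merely each $f^!$ but all composites, all base-change equivalences relating $f^!$ to $*$-pullback along arbitrary maps, and the monoidal constraints, coherently in the $\infty$-categorical sense. Mann's lemma is designed precisely for this: it presents $\Corr_{\cE'}(\cC)$ relative to $\Corr_\cE(\cC)$ so that a lax symmetric monoidal extension exists and is unique exactly when a descent condition of the shape of $(\ast)$ holds. Thus the plan contracts to a hypothesis-matching: verify that the limit diagram appearing in \cite[Lem.~A.5.11]{Mann} --- indexed by the $(S,s)$ with $s, f\circ s \in \cE$, with transition maps the $!$-pullbacks --- coincides with ours, using closure of $\cE$ under base change to guarantee that the indexing category is functorial in $f$ and compatible with composition of $\cE'$-morphisms.

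I expect the main obstacle to be precisely this coherence bookkeeping rather than the pointwise definition of $f^!$. The delicate point is that the index $\infty$-category of pairs $(S,s)$ must vary functorially as $f$ ranges over $\cE'$ and must interact correctly with composition in $\Corr_{\cE'}(\cC)$ and with base change, so that the pointwise functors glue into an honest functor out of the correspondence category; this is the content deferred to Mann's lemma, and so the crux of the write-up is a careful confirmation that $(\ast)$, together with the standing closure properties of $\cE$, supplies every input that \cite[Lem.~A.5.11]{Mann} demands.
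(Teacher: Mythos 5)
Your proposal takes the same route as the paper: the paper offers no proof of this proposition at all, simply stating it as an import of \cite[Lem.~A.5.11]{Mann} (itself a distillation of the Liu--Zheng machinery), so the entire content is the hypothesis-matching you describe. Your additional remarks --- that the only new datum is $f^!$ for $f\in\cE'$, pinned down by $s^!f^!\simeq(f\circ s)^!$ via the limit in $(\ast)$, with all coherence deferred to Mann's lemma --- are consistent with how that lemma is meant to be used, so this is essentially the paper's (non-)argument made slightly more explicit.
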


\subsubsection{}

The following is the condition~($\ast$) of \propref{prop:extmor} specialized to the situation of \propref{prop:extweave!ind}:

\begin{thm}\label{thm:!sat}
  Let $\D^*_!$ be a presentable preweave on $(\Art_n, \cE_n^{n-1})$, for some $n\ge 0$.
  Suppose $\D^*_!$ admits $\sharp$-direct image for all smooth morphisms of $n$-Artin stacks and satisfies \v{C}ech descent along $(n-1)$-representable smooth covering morphisms of $n$-Artin stacks.
  Then for every morphism of $n$-Artin stacks $f : \sY \to \sX$ that lies in $\cE_n$, the canonical functor
  \begin{equation}\label{eq:hynzljua}
    \D(\sY) \to \lim_{(\sS,s)} \D^{!}(\sS)
  \end{equation}
  is equivalence, where the limit is over $(\sS,s)$ with $\sS$ $n$-Artin and $s : \sS \to \sY$ an $(n-1)$-representable morphism in $\cE$ such that $f\circ s : \sS \to \sX$ is $(n-1)$-representable (and the transition functors are given by $!$-inverse image).
\end{thm}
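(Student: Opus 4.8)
The plan is to realize $\sY$ through a single well-chosen atlas whose \emph{entire} \v{C}ech nerve already lies in the indexing category of the theorem, reduce the $!$-limit to a totalization along that nerve by descent, and then upgrade this to the full limit by a gluing construction that is pinned down by a secondary descent on each source. Throughout, write $\cI$ for the indexing category (objects $(\sS,s)$ with $\sS$ $n$-Artin, $s:\sS\to\sY$ in $\cE$ and $(n-1)$-representable, and $f\circ s$ $(n-1)$-representable); note that a morphism in $\cI$ is a $\sY$-morphism, hence automatically shriekable since $\cE$ is closed under two-of-three, so the $!$-pullback transition functors are defined.

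First I would exploit the hypothesis $f\in\cE_n=\cE_\lis|_{\Art_n}$: unfolding the definition in \sssecref{sssec:E_lis} produces a commutative square with $Y_0,X_0$ spaces, $v:X_0\surj\sX$ and $(u,f_0):Y_0\surj\sY\fibprod_\sX X_0$ smooth coverings, and $f_0:Y_0\to X_0$ in $\cE_0$. In particular $u:Y_0\surj\sY$ is an $(n-1)$-representable smooth covering from a space, and $f\circ u=v\circ f_0$ factors through the space $X_0$. Writing $Y_\bullet$ for the \v{C}ech nerve of $u$, each $u_m:Y_m\to\sY$ is again an $(n-1)$-representable smooth covering, while $f\circ u_m$ factors as $Y_m\to Y_0\xrightarrow{f_0}X_0\xrightarrow{v}\sX$ and is therefore $(n-1)$-representable by the representability calculus of \sssecref{sssec:nArt}. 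Hence every $(Y_m,u_m)$ lies in $\cI$, and $Y_\bullet$ defines a cosimplicial diagram $\bDelta\to\cI$. By the descent hypothesis $\D^*$ satisfies \v{C}ech descent along $u$, so by \propref{prop:descweave} the same holds for $\D^!$, giving an equivalence $\psi:\D(\sY)\xrightarrow{\ \sim\ }\Tot(\D^!(Y_\bullet))$.

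Next I would set up the comparison. Let $\Phi:\D(\sY)\to\lim_{(\sS,s)\in\cI}\D^!(\sS)$ be the canonical functor \eqref{eq:hynzljua}, and let $R$ denote restriction of this limit along $\bDelta\to\cI$, so that $R\circ\Phi\simeq\psi$. I would then define a candidate inverse $\Psi:=\psi^{-1}\circ R$; the relation $\Psi\circ\Phi\simeq\psi^{-1}\circ R\circ\Phi\simeq\id_{\D(\sY)}$ is immediate. The whole content is thus concentrated in proving $\Phi\circ\Psi\simeq\id$, i.e.\ that every object $\sG=(\sG_{(\sS,s)})$ of the limit is recovered from its restriction to $Y_\bullet$: for each $(\sS,s)\in\cI$ one must produce a coherent identification $s^!(\psi^{-1}R\sG)\simeq\sG_{(\sS,s)}$.

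This last point is the crux, and I would handle it by descending on the source. The base change $\sS\fibprod_\sY Y_0\surj\sS$ of $u$ is again an $(n-1)$-representable smooth covering, so $\D(\sS)\simeq\Tot(\D^!(\sS\fibprod_\sY Y_\bullet))$ and it suffices to compare after $!$-pullback along each projection $\sigma_m:\sS_m:=\sS\fibprod_\sY Y_m\to\sS$. Each $(\sS_m,s\circ\sigma_m)$ again lies in $\cI$ (the relevant maps being $(n-1)$-representable as base changes and composites of such), and in $\cI$ it sits over both $(\sS,s)$ via $\sigma_m$ and $(Y_m,u_m)$ via the other projection $\pi_m:\sS_m\to Y_m$, with $s\circ\sigma_m=u_m\circ\pi_m$. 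Using the coherence isomorphisms of the descent datum $\sG$ along $\sigma_m$ and $\pi_m$, together with $(\psi^{-1}R\sG)|_{Y_m}\simeq\sG_{Y_m}$, I would identify
\[ \sigma_m^!\,s^!(\psi^{-1}R\sG)\simeq\pi_m^!\,u_m^!(\psi^{-1}R\sG)\simeq\pi_m^!\sG_{Y_m}\simeq\sG_{\sS_m}\simeq\sigma_m^!\sG_{(\sS,s)} \]
compatibly in $[m]\in\bDelta$; since the $\sigma_m^!$ are jointly conservative by descent on $\sS$, this yields $s^!(\psi^{-1}R\sG)\simeq\sG_{(\sS,s)}$. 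I expect the main obstacle to be not any single identification but the bookkeeping of homotopy coherence: assembling these objectwise equivalences into one natural equivalence $\Phi\Psi\simeq\id$ of functors on $\lim_{\cI}\D^!$. This is handled by performing the secondary-descent comparison at the level of the cosimplicial diagrams $\sS_\bullet=\sS\fibprod_\sY Y_\bullet$ functorially in $(\sS,s)\in\cI$, so that all the $\sigma_m,\pi_m$ and the structure maps of $\sG$ vary coherently.
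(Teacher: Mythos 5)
Your first half runs parallel to the paper's argument: the atlas $u : Y_0 \surj \sY$ extracted from the definition of $\cE_\lis$ \sssecref{sssec:E_lis}, with $f\circ u$ factoring through a space over $\sX$ so that the whole \v{C}ech nerve lands in the indexing category, is exactly the atlas used in the proof of \corref{cor:ShvJres}; and the transfer of \v{C}ech descent from $\D^*$ to $\D^!$ via Poincaré duality and \propref{prop:descweave} is the paper's first step verbatim. Where you diverge is in how the limit over $\cI$ is compared with $\D(\sY)$: the paper never constructs an explicit quasi-inverse, but instead proves \corref{cor:ofbcuwul} by showing that restriction from sheaves on $\cE_\sX(n)$ to sheaves on the full subcategory $\cE^0_\sX(n)$ is an equivalence (\corref{cor:ShvJres}), using the right-Kan-extension adjoint, \propref{prop:RKEpresdesc}, and a generation-under-colimits argument in the universal case $\cV=\Ani$. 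The comparison between $F|_{\cE_\sX(n)}$ and the right Kan extension is then forced by a \emph{uniqueness} statement (both extend the same restriction and satisfy descent), so no coherence data ever has to be constructed by hand.

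This points to the genuine gap in your proposal: the step $\Phi\Psi\simeq\id$. Your chain of objectwise identifications
\[
\sigma_m^!\,s^!(\psi^{-1}R\sG)\simeq\pi_m^!\,\sG_{(Y_m,u_m)}\simeq\sG_{(\sS_m,\,s\sigma_m)}\simeq\sigma_m^!\,\sG_{(\sS,s)}
\]
is correct for each fixed $m$ and $(\sS,s)$, and joint conservativity of the $\sigma_m^!$ would let you \emph{check} that an already-given map $s^!(\psi^{-1}R\sG)\to\sG_{(\sS,s)}$ is invertible; but there is no such map yet. To produce one you must lift the identifications to an equivalence in $\Tot(\D^!(\sS_\bullet))\simeq\D(\sS)$, i.e.\ make them compatible with all cosimplicial structure maps, and then further assemble these into a single natural transformation of endofunctors of $\lim_\cI\D^!$, compatibly with the transition data of $\sG$ across all of $\cI$. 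In this $\infty$-categorical setting that coherence is not bookkeeping — it is the entire content of the statement, and "performing the comparison functorially in $(\sS,s)$" is a restatement of the problem rather than a solution (note also that $\bDelta\to\cI$ is not coinitial, so one cannot shortcut by identifying $R$ with an equivalence of limits). This is precisely the difficulty that the paper's detour through presheaf categories and Kan extensions is designed to eliminate. Your argument \emph{does} yield cleanly that $\Phi$ admits a retraction and that $R$ is conservative, but neither suffices to conclude that $\Phi$ is an equivalence.
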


We will prove \thmref{thm:!sat} in \ssecref{ssec:!satproof} below.
This will thus conclude the proof of \propref{prop:extweave!ind} and thus that of \thmref{thm:extweave}\itemref{item:extweave/main}.

\subsection{Digression}

In this subsection we provide an abstract criterion (\corref{cor:ofbcuwul}) for a presheaf to satisfy the condition appearing in \thmref{thm:!sat}.

\subsubsection{}

Recall the class of morphisms $\cE_0^\lis$ in $\Art$ \sssecref{sssec:E_lis^0}.

Let $\sX$ be an $(n+1)$-Artin stack, $n\ge 0$, and denote by $\cE_\sX(n)$ the \inftyCat whose objects are pairs $(\sS, s : \sS \to \sX)$ where $\sS$ is an $(n+1)$-Artin stack and $s$ lies in the class $\cE$, and whose morphisms $(\sS',s') \to (\sS,s)$ are \emph{$n$-representable} morphisms $\sS' \to \sS$ compatible with $s'$ and $s$.

Denote by $\cE^0_\sX(n)$ the \inftyCat whose objects are pairs $(\sS, s)$ where $\sS$ is an $(n+1)$-Artin stack and $s : \sS \to \sX$ is a \emph{$n$-representable} morphism that lies in $\cE$, and whose morphisms $(\sS',s') \to (\sS,s)$ are (necessarily $n$-representable) morphisms $\sS' \to \sS$ compatible with $s'$ and $s$.

Thus $\cE^0_\sX(n)$ is a full subcategory of $\cE_\sX(n)$.
An object of $\cE_\sX(n)$ of the form $(\sS,s)$ belongs to $\cE^0_\sX(n)$ if and only if $s : \sS \to \sX$ is $n$-representable.
We have the forgetful functors:
\[\begin{tikzcd}
  \cE^0_\sX(n) \ar[hookrightarrow]{rr}\ar{rd}
  & & \cE_\sX(n) \ar{ld}
  \\
  & \cE_\lis^0. &
\end{tikzcd}\]

\subsubsection{}

Given an \inftyCat with limits $\cV$, we denote by $F \mapsto F^{\mrm{RKE}}$ the functor of right Kan extension of $\cV$-valued presheaves along the above inclusion.
Thus we have for $\sY \in \cE_\sX(n)$,
\begin{equation}
  F^{\mrm{RKE}}(\sY) \simeq \lim_{(\sS,s)} F(\sS),
\end{equation}
where the limit is taken over the \inftyCat $\cI_{\sY/\sX}(n)$ of pairs $(\sS,s)$ where $\sS$ is an $(n+1)$-Artin stack and both morphisms $s : \sS \to \sY$ and $\sS \to \sY \to \sX$ are \emph{$n$-representable} and lie in $\cE$.
Morphisms $(\sS',s') \to (\sS,s)$ in $\cI_{\sY/\sX}(n)$ are (necessarily $n$-representable) morphisms $\sS' \to \sS$ compatible with $s'$ and $s$.

\begin{prop}\label{prop:RKEpresdesc}
  Let $F : (\cE^0_\sX(n))^\op \to \cV$ be a presheaf satisfying \v{C}ech descent along smooth covering morphisms of Artin stacks.
  Then its right Kan extension $F^{\mrm{RKE}} : (\cE_\sX(n))^\op \to \cV$ satisfies \v{C}ech descent along smooth covering morphisms of Artin stacks.
\end{prop}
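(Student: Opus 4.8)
The plan is to exploit the pointwise formula for the right Kan extension, $F^{\mrm{RKE}}(\sY)\simeq\lim_{(\sS,s)\in\cI_{\sY/\sX}(n)}F(\sS)$, together with the descent hypothesis on $F$ and the fact that base change along the covering is cofinal in the sense of \lemref{lem:cofin1}; this is the abstract analogue of the proof of \propref{prop:coccus}. Fix a smooth covering morphism $p:\sY'\to\sY$ in $\cE_\sX(n)$ and let $\sY'_\bullet$ denote its \v{C}ech nerve, a simplicial object of $\cE_\sX(n)$. The goal is to show that $F^{\mrm{RKE}}(\sY)\to\Tot(F^{\mrm{RKE}}(\sY'_\bullet))$ is invertible.

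First I would record a levelwise base-change comparison. For each $[k]\in\bDelta$, base change along $\sY'_k\to\sY$ defines a functor $\Phi_k:\cI_{\sY/\sX}(n)\to\cI_{\sY'_k/\sX}(n)$ sending $(\sS,s)$ to $(\sY'_k\fibprod_\sY\sS,\pr)$. One checks the target genuinely lies in $\cI_{\sY'_k/\sX}(n)$: since $p$, and hence $\sY'_k\to\sY$, is $n$-representable and lies in $\cE$, both the projection $\sY'_k\fibprod_\sY\sS\to\sY'_k$ and the composite $\sY'_k\fibprod_\sY\sS\to\sX$ are again $n$-representable morphisms in $\cE$ with $(n+1)$-Artin source. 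Exactly as in \lemref{lem:cofin1}, $\Phi_k$ admits a left adjoint, namely post-composition of the structure morphism with $\sY'_k\to\sY$ (using the universal property of the fibre product); hence $\Phi_k$ is cofinal and restriction along it induces an equivalence $F^{\mrm{RKE}}(\sY'_k)=\lim_{\cI_{\sY'_k/\sX}(n)}F\xrightarrow{\ \sim\ }\lim_{(\sS,s)\in\cI_{\sY/\sX}(n)}F(\Phi_k(\sS,s))$. These equivalences are natural in $[k]$, so they assemble into an equivalence of cosimplicial objects in $\cV$.

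Next I would feed in descent for $F$. For a fixed $(\sS,s)\in\cI_{\sY/\sX}(n)$, the simplicial object $[k]\mapsto\Phi_k(\sS,s)=\sY'_k\fibprod_\sY\sS$ is the \v{C}ech nerve of the base change $\sY'\fibprod_\sY\sS\to\sS$, which is a smooth covering morphism and a morphism of $\cE^0_\sX(n)$. Since $F$ satisfies \v{C}ech descent along smooth covering morphisms, $F(\sS)\xrightarrow{\ \sim\ }\Tot_{[k]}F(\sY'_k\fibprod_\sY\sS)$, naturally in $(\sS,s)$. Taking $\lim_{(\sS,s)\in\cI_{\sY/\sX}(n)}$ of these equivalences and commuting the two limits gives $F^{\mrm{RKE}}(\sY)=\lim_{(\sS,s)}F(\sS)\simeq\lim_{(\sS,s)}\Tot_{[k]}F(\Phi_k(\sS,s))\simeq\Tot_{[k]}\lim_{(\sS,s)}F(\Phi_k(\sS,s))$, and the comparison of the previous paragraph identifies the inner limit cosimplicially with $F^{\mrm{RKE}}(\sY'_k)$; hence the whole expression is $\Tot(F^{\mrm{RKE}}(\sY'_\bullet))$, as desired.

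I expect the only real obstacle to be bookkeeping rather than conceptual. One must verify that the cofinality comparison of $\Phi_k$ is natural in the simplicial variable $[k]$, so that the levelwise equivalences of the second paragraph genuinely upgrade to an equivalence of cosimplicial objects and survive the passage to $\Tot$; and one must track that every base-changed structure map remains $n$-representable and lies in $\cE$, so that all objects really inhabit the categories $\cE^0_\sX(n)$ and $\cI_{\sY'_k/\sX}(n)$ over which the limits are taken. The Fubini interchange of limits and the input from \v{C}ech descent for $F$ are then formal.
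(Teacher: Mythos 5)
Your proposal is correct and follows essentially the same route as the paper: the cofinality of the base-change functor $\cI_{\sY/\sX}(n)\to\cI_{\sY'_k/\sX}(n)$ via its left adjoint is exactly the paper's \lemref{lem:cofinE}, and the rest (descent for $F$ along the base-changed covers in $\cE^0_\sX(n)$, then interchanging the totalization with the limit over $(\sS,s)$) matches the paper's argument. The only cosmetic difference is that the paper isolates the cofinality step as a standalone lemma, whereas you prove it inline.
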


\begin{lem}\label{lem:cofinE}
  Let $F : (\cE^0_\sX(n))^\op \to \cV$ be a presheaf satisfying \v{C}ech descent along smooth covering morphisms of Artin stacks.
  For every $n$-representable morphism $f : \sY_1 \to \sY_2$ over $\sX$, the canonical map
  \[
    F^{\mrm{RKE}}(\sY_1) \to \lim_{(\sT,t)\in\cI_{\sY_2/\sX}(n)} F(\sY_1 \fibprod_{\sY_2} \sT)
  \]
  is invertible.
\end{lem}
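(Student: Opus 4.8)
The plan is to realize the asserted map as the comparison map attached to a base-change functor between the two indexing $\infty$-categories, and then to prove that this functor is cofinal, in exact parallel with the proof of \lemref{lem:cofin1}. Concretely, since $f : \sY_1 \to \sY_2$ is $n$-representable over $\sX$, base change along $f$ defines a functor
\[
  \Phi : \cI_{\sY_2/\sX}(n) \to \cI_{\sY_1/\sX}(n),
  \qquad (\sT, t) \mapsto (\sY_1 \fibprod_{\sY_2} \sT,\ \pr_1),
\]
with $\pr_1 : \sY_1 \fibprod_{\sY_2} \sT \to \sY_1$ the projection. By definition of the right Kan extension, $F^{\mrm{RKE}}(\sY_1) \simeq \lim_{(\sS,s) \in \cI_{\sY_1/\sX}(n)} F(\sS)$, while the target of the asserted map is $\lim_{(\sT,t) \in \cI_{\sY_2/\sX}(n)} F(\sY_1 \fibprod_{\sY_2} \sT)$, i.e. the limit of the very same diagram restricted along $\Phi$. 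The map in question is precisely the induced comparison map, so it suffices to show $\Phi$ is cofinal.

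First I would check that $\Phi$ is well defined, i.e. that $(\sY_1 \fibprod_{\sY_2} \sT, \pr_1)$ genuinely lies in $\cI_{\sY_1/\sX}(n)$. The projection $\pr_1$ is a base change of $t : \sT \to \sY_2$, hence $n$-representable and in $\cE$ (as $\cE$ is closed under base change), while the other projection $\pr_2 : \sY_1 \fibprod_{\sY_2} \sT \to \sT$ is a base change of $f$, hence $n$-representable. The structure morphism $\sY_1 \fibprod_{\sY_2} \sT \to \sX$ is then $n$-representable (factoring it as $\pr_2$ followed by $\sT \to \sX$, both $n$-representable) and lies in $\cE$ (factoring it instead as $\pr_1$ followed by $\sY_1 \to \sX$, both in $\cE$); and $\sY_1 \fibprod_{\sY_2} \sT$ is again $(n+1)$-Artin since $\pr_1$ is $n$-representable with $(n+1)$-Artin target $\sY_1$.

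Next I would exhibit a left adjoint to $\Phi$ by postcomposition with $f$:
\[
  \Lambda : \cI_{\sY_1/\sX}(n) \to \cI_{\sY_2/\sX}(n),
  \qquad (\sS, s) \mapsto (\sS,\ f \circ s).
\]
That $\Lambda$ lands in $\cI_{\sY_2/\sX}(n)$ follows since $f \circ s$ is $n$-representable, and lies in $\cE$ by the cancellation (two-of-three) property of $\cE$: the structure maps $\sS \to \sX$ and $\sY_2 \to \sX$ both lie in $\cE$, whence so does $f \circ s$; the structure map $\sS \to \sX$ is itself $n$-representable and in $\cE$ by hypothesis on $(\sS,s)$. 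The adjunction $\Lambda \dashv \Phi$ is then immediate from the universal property of the fibred product: for $(\sS,s) \in \cI_{\sY_1/\sX}(n)$ and $(\sT,t) \in \cI_{\sY_2/\sX}(n)$, a map $\sS \to \sY_1 \fibprod_{\sY_2} \sT$ over $\sY_1$ (with $\sY_1$-component $s$) is the same datum as a map $\sS \to \sT$ over $\sY_2$, and the morphisms on both sides are automatically $n$-representable, yielding
\[
  \Maps_{\cI_{\sY_1/\sX}(n)}\big((\sS,s),\ \Phi(\sT,t)\big)
  \simeq \Maps_{\cI_{\sY_2/\sX}(n)}\big(\Lambda(\sS,s),\ (\sT,t)\big).
\]

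Since $\Phi$ admits a left adjoint it is cofinal, and because $F$ is a presheaf (so the limit is taken along the opposite of the indexing category) this cofinality gives that the comparison map $F^{\mrm{RKE}}(\sY_1) \to \lim_{(\sT,t)} F(\sY_1 \fibprod_{\sY_2} \sT)$ is an equivalence, exactly as in \lemref{lem:cofin1}. I expect the only genuine bookkeeping to be the verification that $\Phi$ and $\Lambda$ take values in the prescribed $\infty$-categories: this is where one must separate the two required properties of each structure morphism — being $n$-representable, and lying in $\cE$ — across the two available factorizations through the fibred product, using closure of $\cE$ under base change and two-of-three. The argument is otherwise purely formal, and the descent hypothesis on $F$ plays no role in this particular step.
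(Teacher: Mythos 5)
Your proof is correct and follows essentially the same route as the paper: the paper also realizes the map as restriction along the base-change functor $f^* : \cI_{\sY_2/\sX}(n) \to \cI_{\sY_1/\sX}(n)$, exhibits the postcomposition functor $(\sS,s)\mapsto(\sS,f\circ s)$ as its left adjoint, and concludes cofinality. Your write-up merely spells out the well-definedness checks (separating $n$-representability from membership in $\cE$ via the two factorizations and two-of-three) in more detail than the paper does.
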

\begin{proof}
  Consider the base change functor
  \[
    f^* : \cI_{\sY_2/\sX}(n) \to \cI_{\sY_1/\sX}(n).
  \]
  Note that this is well-defined since if $(\sT, t : \sT \to \sY_2)$ is a pair with $t$ and $\sT \to \sY_2 \to \sX$ $n$-representable, then $\sT \fibprod_{\sY_2} \sY_1 \to \sY_1$ and $\sT \fibprod_{\sY_2} \sY_1 \to \sY_1 \to \sX$ are still $n$-representable.
  Note moreover that $f^*$ admits a left adjoint $f_\sharp$, sending a pair $(\sS, s : \sS \to \sY_1)$ to the composite $(\sS, f \circ s : \sS \to \sY_1 \to \sY_2)$; this is also well-defined since $f : \sY_1 \to \sY_2$ is $n$-representable.
  It follows that $f^*$ is cofinal.
\end{proof}

\begin{proof}[Proof of \propref{prop:RKEpresdesc}]
  Let $f : \sY_1 \surj \sY_2$ be a smooth covering morphism in $\cE_\sX(n)$, i.e., $f$ is a $n$-representable smooth covering morphism over $\sX$.
  For every $(\sS,s) \in \cI_{\sY_2/\sX}(n)$, the base change $\sY_1 \fibprod_{\sY_2} \sS \surj \sS$ is also a $n$-representable smooth covering morphism.
  Moreover, $\sS$ and $\sY_1\fibprod_{\sY_2}\sS$ belong to $\cE^0_\sX(n)$ since $\sS \to \sY_2 \to \sX$ and $\sY_1\fibprod_{\sY_2}\sS \to \sY_1 \to \sX$ are both $n$-representable.\footnote{%
    Indeed the latter factors as the composite of the $n$-representable morphisms $\sY_1\fibprod_{\sY_2}\sS \to \sS$ and $\sS \to \sX$.
  }
  Hence by assumption we have for each $(\sS,s)$ the canonical isomorphism
  \[
    F(\sS)
    \to \Tot(F(\sY_{1,\bullet}\fibprod_{\sY_2}\sS))
  \]
  where $\sY_{1,\bullet}$ denotes the \v{C}ech nerve of $\sY_1 \surj \sY_2$.
  Passing to the limit over $(\sS,s)$ and using \lemref{lem:cofinE}, we deduce that the canonical map
  \[
    F^{\mrm{RKE}}(\sY_2)
    \to \Tot(F^{\mrm{RKE}}(\sY_{1,\bullet}))
  \]
  is invertible.
\end{proof}

Denote by $\Shv(\cE_\sX(n))_\cV$ the full subcategory of presheaves on $\cE_\sX(n)$ satisfying \v{C}ech descent with respect to smooth covering morphisms, and similarly for $\Shv(\cE^0_\sX(n))_\cV$.

\begin{cor}\label{cor:ShvJres}
  The restriction functor
  \begin{equation}\label{eq:jfkggmqq}
    \Shv(\cE_\sX(n))_\cV \to \Shv(\cE^0_\sX(n))_\cV
  \end{equation}
  is an equivalence, whose inverse is $F \mapsto F^{\mrm{RKE}}$.
\end{cor}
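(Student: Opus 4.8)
The plan is to promote the presheaf-level adjunction ``restriction $\dashv$ right Kan extension'' to the sheaf categories and then show that its unit is invertible. Write $j : \cE^0_\sX(n) \hook \cE_\sX(n)$ for the full subcategory inclusion, so that $j^*$ is restriction and $(-)^{\mrm{RKE}} = j_*$ is its right adjoint. Since $j$ is fully faithful, the counit $j^* \circ (-)^{\mrm{RKE}} \simeq \id$ is invertible; in particular $(-)^{\mrm{RKE}}$ is fully faithful. First I would check that both functors respect the descent conditions. On the one hand $(-)^{\mrm{RKE}}$ carries $\Shv(\cE^0_\sX(n))_\cV$ into $\Shv(\cE_\sX(n))_\cV$ by \propref{prop:RKEpresdesc}. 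On the other hand $j^*$ carries $\Shv(\cE_\sX(n))_\cV$ into $\Shv(\cE^0_\sX(n))_\cV$, because the \v{C}ech nerve of a smooth covering morphism $\sY_1 \surj \sY_2$ between objects of $\cE^0_\sX(n)$ again lands in $\cE^0_\sX(n)$: each term $\sY_{1,m} \to \sX$ factors as $\sY_{1,m} \to \sY_2 \to \sX$, a composite of $n$-representable morphisms lying in $\cE$. Thus we obtain an adjunction $j^* \dashv (-)^{\mrm{RKE}}$ on sheaf categories with $(-)^{\mrm{RKE}}$ fully faithful and $j^* (-)^{\mrm{RKE}} \simeq \id$.

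It then remains to prove that the unit $F \to (F|_{\cE^0_\sX(n)})^{\mrm{RKE}}$ is invertible for every $F \in \Shv(\cE_\sX(n))_\cV$; equivalently, that the fully faithful $(-)^{\mrm{RKE}}$ is essentially surjective. Both $F$ and $G := (F|_{\cE^0_\sX(n)})^{\mrm{RKE}}$ are sheaves on $\cE_\sX(n)$, and their restrictions to $\cE^0_\sX(n)$ agree (the unit restricts to the canonical identification, by the triangle identity). I would therefore show that two sheaves agreeing on $\cE^0_\sX(n)$ agree on all of $\cE_\sX(n)$, by exhibiting for each object a smooth covering by objects of $\cE^0_\sX(n)$ whose nerve also lies in $\cE^0_\sX(n)$, and invoking descent.

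Concretely, given $(\sY, s_\sY) \in \cE_\sX(n)$, with $\sY$ an $(n+1)$-Artin stack and $s_\sY \in \cE$, I would choose a smooth covering atlas $p : U \surj \sY$ with $U$ a space. Then $(U, s_\sY \circ p)$ lies in $\cE^0_\sX(n)$: its structure morphism $U \to \sX$ lies in $\cE$ (a composite of the smooth $p$ and $s_\sY$) and is $n$-representable, since any morphism from a space to the $(n+1)$-Artin stack $\sX$ is $n$-representable --- for a space $V \to \sX$, the morphism $U \fibprod_\sX V \to U \times V$ is a base change of the $n$-representable diagonal $\Delta_\sX$, so $U \fibprod_\sX V$ is $n$-Artin. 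The same computation shows more generally that \emph{any} morphism from an $n$-Artin stack to $\sX$ is $n$-representable. Hence every term $U_m = U \fibprod_\sY \cdots \fibprod_\sY U$ of the \v{C}ech nerve of $p$, being $n$-Artin (as an iterated $n$-representable base change of $p$ over the space $U$) with structure morphism to $\sX$ lying in $\cE$, again lies in $\cE^0_\sX(n)$. Since $F$ and $G$ satisfy descent along the smooth covering $p$, the map $F(\sY) \to G(\sY)$ is the totalization of the maps $F(U_m) \to G(U_m)$, each invertible because $U_m \in \cE^0_\sX(n)$ and $F, G$ agree there. Thus the unit is invertible and $(-)^{\mrm{RKE}}$ is an equivalence inverse to \eqref{eq:jfkggmqq}.

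The main obstacle is the representability bookkeeping of the last paragraph: one must confirm that passing to the \v{C}ech nerve of the atlas never raises the representability of the structure morphism to $\sX$ above $n$. This rests on the two standard facts about $n$-Artin stacks \sssecref{sssec:nArt} that the diagonal of an $(n+1)$-Artin stack is $n$-representable and that an $n$-representable morphism with $n$-Artin (or space) target has $n$-Artin source; together they yield that any morphism from an $n$-Artin stack into $\sX$ is $n$-representable, which is exactly what keeps the covering and its nerve inside $\cE^0_\sX(n)$. The whole argument is parallel to the generation-under-colimits proof of \corref{cor:Shvreseq}, with representability over $\sX$ playing the role there played by the Artin level.
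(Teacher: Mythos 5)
Your proof is correct, but it takes a genuinely different formal route from the paper's. The paper reduces to the universal case $\cV=\Ani$ (as in its proof of \corref{cor:Shvreseq}), observes that the restriction functor then has a fully faithful, colimit-preserving \emph{left} adjoint given by left Kan extension followed by localization, and shows that this left adjoint generates under colimits by writing each $(\sS,s)\in\cE_\sX(n)$ as the geometric realization of the \v{C}ech nerve of an atlas lying in $\cE^0_\sX(n)$. You instead stay with an arbitrary $\cV$ and prove directly that the unit $F \to (F|_{\cE^0_\sX(n)})^{\mrm{RKE}}$ is invertible, by descent along a covering whose entire \v{C}ech nerve lies in $\cE^0_\sX(n)$ — a ``two sheaves agreeing on a generating subcategory agree everywhere'' argument. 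The geometric core is the same in both proofs (exhibiting such a covering), but the details differ: the paper first refines through an atlas $X\surj\sX$ of the \emph{base}, so that $S \to X \surj \sX$ is $n$-representable as a composite of a morphism of spaces with the ($n$-representable) atlas, whereas you take a direct atlas $U\surj\sY$ and obtain $n$-representability of $U\to\sX$ from the $n$-representability of $\Delta_\sX$. Both routes lean on the same standard closure properties of $n$-representability (composition, and that the source of an $n$-representable morphism over an $n$-Artin stack is $n$-Artin), which the paper also invokes without proof; your version has the mild advantage of avoiding the reduction to $\Ani$ and the appeal to the universal property of the sheafified presheaf category, at the cost of slightly heavier representability bookkeeping in the diagonal argument.
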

\begin{proof}
  At the level of presheaves, $F \mapsto F^{\mrm{RKE}}$ is right adjoint to the restriction functor.
  Since it preserves the descent condition by \propref{prop:RKEpresdesc}, it restricts to a right adjoint to \eqref{eq:jfkggmqq}.
  To show that it is an equivalence, it will suffice as in the proof of \corref{cor:Shvreseq} to consider the case $\cV = \Ani$.
  Then \eqref{eq:jfkggmqq} also admits a left adjoint, given by left Kan extension (followed by localization), which is fully faithful and colimit-preserving.
  It will thus suffice to show that it generates under colimits.

  Let $(\sS,s)$ be an object of $\cE_\sX(n)$.
  Let $p : X \surj \sX$ be a smooth covering morphism where $X$ is a space and $q : S \surj \sS\fibprod_\sX X$ a smooth covering morphism where $S$ is a space.
  Consider the commutative diagram
  \[\begin{tikzcd}
    S \ar[twoheadrightarrow]{r}{q}\ar{rd}
    & \sS\fibprod_\sX X \ar{r}\ar{d}
    & \sS \ar{d}{s}
    \\
    & X \ar[twoheadrightarrow]{r}{p}
    & \sX.
  \end{tikzcd}\]
  Note that $p$ and $q$ are $n$-representable since $\sX$ and $\sS$ are $(n+1)$-Artin.
  In particular, $s_0 : S \to X \surj \sX$ is $n$-representable as the composite of $p$ and a morphism of spaces, hence $(S,s_0)$ belongs to $\cE^0_\sX(n)$.
  Since $S \surj \sS\fibprod_\sX X \to \sS$ is also $n$-representable, it determines a morphism $(S,s_0) \to (\sS,s)$ in $\cE_\sX(n)$.
  By construction, its \v{C}ech nerve $(S,s_0)_\bullet$ has geometric realization isomorphic to $(\sS,s)$ in $\Shv(\cE_\sX(n))$.
  Since every term of $(S,s_0)_\bullet$ is $n$-representable over $\sX$ and hence belongs to $\cE^0_\sX(n)$, the claim follows.
\end{proof}

Recall from \sssecref{sssec:E_n} that $\cE_{n+1}^n$ is the class of $n$-representable morphisms in $\Art_{n+1}$.

\begin{cor}
  Let $F : (\cE_{n+1}^n)^\op \to \cV$ be a presheaf satisfying \v{C}ech descent along $n$-representable smooth covering morphisms of Artin stacks.
  For every $(n+1)$-Artin stack $\sX$, $n\ge 0$, there is a canonical isomorphism
  \[
    F|_{\cE_\sX(n)}
    \to (F|_{\cE^0_\sX(n)})^{\mrm{RKE}}
  \]
  of $\cV$-valued presheaves on $\cE_\sX(n)$.
\end{cor}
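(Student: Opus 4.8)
The plan is to mirror the proof of \corref{cor:entropionize}, with \corref{cor:ShvJres} playing the role there played by \corref{cor:Shvreseq}. The canonical map under consideration is the unit of the adjunction between restriction along the inclusion $\cE^0_\sX(n) \hookrightarrow \cE_\sX(n)$ and right Kan extension, evaluated at $F|_{\cE_\sX(n)}$; I want to show it is invertible.

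First I would invoke \corref{cor:ShvJres}, which exhibits $(F|_{\cE^0_\sX(n)})^{\mrm{RKE}}$ as the essentially unique presheaf on $\cE_\sX(n)$ that (i) restricts to $F|_{\cE^0_\sX(n)}$ along $\cE^0_\sX(n) \hookrightarrow \cE_\sX(n)$ and (ii) satisfies \v{C}ech descent along smooth covering morphisms. It therefore suffices to check that $F|_{\cE_\sX(n)}$ enjoys both properties: the asserted isomorphism then follows, since the unit of an adjoint equivalence is invertible.

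Property (i) is immediate, because $\cE^0_\sX(n)$ sits inside $\cE_\sX(n)$ as a full subcategory, so restricting $F|_{\cE_\sX(n)}$ back to $\cE^0_\sX(n)$ tautologically returns $F|_{\cE^0_\sX(n)}$. For property (ii), the key observation is that the underlying morphism $\sS' \to \sS$ of any smooth covering morphism $(\sS',s') \to (\sS,s)$ in $\cE_\sX(n)$ is an $n$-representable smooth covering morphism of $(n+1)$-Artin stacks — precisely the class along which $F$ is assumed to satisfy \v{C}ech descent. Since the forgetful functor $\cE_\sX(n) \to \cE_{n+1}^n$ carries the \v{C}ech nerve of $(\sS',s') \to (\sS,s)$ to the \v{C}ech nerve of $\sS' \to \sS$ (the fibred powers being formed over $\sS$ in either category), the descent diagram for $F|_{\cE_\sX(n)}$ along this morphism coincides with the descent diagram for $F$ along $\sS' \to \sS$, which is a limit diagram by hypothesis.

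The one point that requires care is the bookkeeping in (ii): I must confirm that the forgetful functor to $\cE_{n+1}^n$ is well-defined (using that $\cE$ is closed under two-of-three, so that the structural comparison $\sS' \to \sS$ indeed lands in $\cE_{n+1}^n$) and that it genuinely identifies smooth covering morphisms together with their \v{C}ech nerves on the two sides, so that the two descent conditions coincide rather than merely resemble one another. Once this compatibility is secured, properties (i) and (ii) hold, and \corref{cor:ShvJres} delivers the canonical isomorphism.
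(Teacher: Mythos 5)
Your proposal is correct and follows essentially the same route as the paper's proof: invoke \corref{cor:ShvJres} to characterize $(F|_{\cE^0_\sX(n)})^{\mrm{RKE}}$ by its two properties, then verify that $F|_{\cE_\sX(n)}$ tautologically extends $F|_{\cE^0_\sX(n)}$ and satisfies the descent condition because smooth covering morphisms in $\cE_\sX(n)$ are by definition $n$-representable. The extra bookkeeping you flag (well-definedness of the restriction via two-of-three for $\cE$, and the identification of \v{C}ech nerves) is a legitimate point the paper passes over silently, and it checks out.
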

\begin{proof}
  By \corref{cor:ShvJres}, the target is uniquely determined by the fact that it extends $F|_{\cE^0_\sX(n)}$ and satisfies \v{C}ech descent along smooth covering morphisms in $\cE_\sX(n)$.
  Let us show the same for $F|_{\cE_\sX(n)}$.
  First, we clearly have
  \[ (F|_{\cE_\sX(n)})|_{\cE^0_\sX(n)} \simeq F|_{\cE^0_\sX(n)}. \]
  Second, smooth covering morphisms in $\cE_\sX(n)$ are $n$-representable by definition, so $F$ satisfies \v{C}ech descent along them by the assumption.
\end{proof}

\begin{cor}\label{cor:ofbcuwul}
  Let $F : (\cE_{n+1}^n)^\op \to \cV$ be a presheaf satisfying \v{C}ech descent along $n$-representable smooth covering morphisms of Artin stacks.
  Then for every $(n+1)$-Artin stack $\sX$, $n\ge 0$, and every $(n+1)$-Artin stack $\sY$ over $\sX$, the canonical map
  \[
    F(\sY) \to \lim_{(\sS,s) \in \cI_{\sY/\sX}(n)} F(\sS)
  \]
  is invertible.
\end{cor}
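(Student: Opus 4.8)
The plan is to read the assertion off directly from the preceding corollary, which identifies the restriction $F|_{\cE_\sX(n)}$ with the right Kan extension $(F|_{\cE^0_\sX(n)})^{\mrm{RKE}}$, combined with the pointwise description of that right Kan extension as the limit $F^{\mrm{RKE}}(\sY) \simeq \lim_{(\sS,s)\in\cI_{\sY/\sX}(n)} F(\sS)$ recorded just before \propref{prop:RKEpresdesc}. No fresh descent argument is needed here: all of the descent content has already been packaged into \propref{prop:RKEpresdesc} and \corref{cor:ShvJres}, on which the preceding corollary rests, so the present statement is a formal consequence.

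Concretely, I would proceed as follows. Since $\sY$ is a stack over $\sX$ with structure morphism in $\cE$ (the situation arising in \thmref{thm:!sat}, where $f\in\cE_n$), it is an object of $\cE_\sX(n)$, and the right Kan extension may be evaluated at it. The preceding corollary then supplies an isomorphism of presheaves $F|_{\cE_\sX(n)} \xrightarrow{\sim} (F|_{\cE^0_\sX(n)})^{\mrm{RKE}}$ on $\cE_\sX(n)$; evaluating at the object $\sY$ gives $F(\sY) \xrightarrow{\sim} (F|_{\cE^0_\sX(n)})^{\mrm{RKE}}(\sY)$. Finally I would rewrite the right-hand side via the pointwise formula as $\lim_{(\sS,s)\in\cI_{\sY/\sX}(n)} F(\sS)$ and verify that the resulting composite is the canonical comparison map of the statement, whose $(\sS,s)$-component is $F$ applied to the projection $s : \sS \to \sY$.

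The one point demanding care — indeed the only content beyond invoking the earlier results — is the identification of the indexing \inftyCat for the pointwise right Kan extension at $\sY$ with $\cI_{\sY/\sX}(n)$. An object of that comma category is a pair $(\sS,s)$ with $\sS \in \cE^0_\sX(n)$, i.e.\ with $\sS \to \sX$ both $n$-representable and in $\cE$, equipped with an $n$-representable morphism $s : \sS \to \sY$ over $\sX$; I must check that $s$ then automatically lies in $\cE$, which is precisely the additional condition built into the definition of $\cI_{\sY/\sX}(n)$. This is where the two-of-three (cancellation) property of $\cE$ enters: since both $\sY \to \sX$ and the composite $\sS \to \sY \to \sX$ lie in $\cE$, cancellation forces $s \in \cE$. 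Once this bookkeeping is settled, the chain of isomorphisms above is exactly the claim.
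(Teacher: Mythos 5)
Your proposal is correct and is exactly the paper's (implicit) argument: the corollary is stated without proof as an immediate consequence of the preceding corollary identifying $F|_{\cE_\sX(n)}$ with $(F|_{\cE^0_\sX(n)})^{\mrm{RKE}}$, evaluated at $\sY$ via the pointwise limit formula over $\cI_{\sY/\sX}(n)$. Your added check that the comma category agrees with $\cI_{\sY/\sX}(n)$ (using that $\cE$, being closed under base change and sections, satisfies two-of-three) is the right bookkeeping and consistent with the paper's conventions.
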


\subsection{Proof of \thmref{thm:!sat}}
\label{ssec:!satproof}

\subsubsection{}

Let $\D^*_!$ be a presentable preweave on $(\Art_n, \cE_n^{n-1})$, and assume that $\D^*_!$ admits $\sharp$-direct image for all smooth morphisms of $n$-Artin stacks and satisfies \v{C}ech descent along $(n-1)$-representable smooth covering morphisms of $n$-Artin stacks.

Recall that $\D^{*} : (\Art)^\op \to \Cat$ satisfies \v{C}ech descent along any smooth covering morphism of Artin stacks (\propref{prop:coccus}).
Since every smooth $(n-1)$-representable morphism $f$ in $\Art_n$ is shriekable for $\D^*_!$, we have by \corref{cor:poinc} the Poincaré duality isomorphism $f^! \simeq \Sigma_f f^*$.
It then follows by \propref{prop:descweave} that $\D^{!} : (\cE_n^{n-1})^\op \to \Cat$ satisfies \v{C}ech descent along $(n-1)$-representable smooth covering morphisms of $n$-Artin stacks.
We may now apply \corref{cor:ofbcuwul}, which yields the claim.

\section{Example: weaves on algebraic stacks}

\subsection{Weaves satisfying étale descent}
\label{ssec:weaveschet}

\subsubsection{}

Let $k$ be a commutative ring and denote by $\Asp_k$ denote the category of locally of finite type $k$-algebraic spaces.
Denote by $\Asp_k^\sm$ the class of smooth morphisms, $\Asp_k^\etcov$ the class of surjective étale morphisms, $\Asp_k^\pr$ the class of proper morphisms.
The data $(\Asp_k,\Asp_k,\Asp_k^\sm,\Asp_k^\pr)$ satisfies the axioms of \sssecref{sssec:Csmpr}.

\subsubsection{}

Let $\D^*_!$ be a presentable weave on $(\Asp_k,\Asp_k,\Asp_k^\sm,\Asp_k^\pr)$.
That is, $\D^*_!$ is a presentable preweave on $\Asp_k$ where all morphisms are shriekable, which admits $\sharp$-direct image along smooth morphisms and $*$-direct image along proper morphisms.\footnote{%
  One can use an easier version of the following argument to uniquely lisse-extend weaves satisfying étale descent from schemes to algebraic spaces.
}

We assume that the underlying presheaf $\D^* : (\Asp_k)^\op \to \Cat$ satisfies étale descent, i.e., \v{C}ech descent along étale surjections.

Examples include:
\begin{enumerate}
  \item\emph{Betti sheaves:}
  If $k$ is a $\bC$-algebra, we may take $X \mapsto \D(X)$ sending $X$ to the \inftyCat of sheaves of $R$-modules on the topological space $X(\bC)$, where $R$ is a commutative ring\footnote{%
    or more generally, an $\Einfty$-ring spectrum; see \ssecref{ssec:ShvTop}
  }.

  \item\emph{Mixed Hodge modules:}
  If $k$ is a $\bC$-algebra, we may take $X \mapsto \D(X)$ sending $X$ to the \inftyCat $\on{Ind}\on{D}^{\mrm{b}}(\on{MHM}(-))$ of (ind-completed\footnote{%
    This means that we formally adjoin filtered colimits in order to turn the small \inftyCats $\on{D}^{\mrm{b}}(\on{MHM}(-))$ into presentable ones.
  }) mixed Hodge modules on $X$.
  (See \cite{Tubach} for the $\infty$-categorical enhancement of Saito's triangulated categories.)

  \item\emph{Étale sheaves (finite coefficients):}
  If $n \in k^\times$, we may take $X \mapsto \D(X)$ sending $X$ to the \inftyCat of sheaves of $\bZ/n\bZ$-modules on the small étale site of $X$.

  \item\emph{Étale sheaves ($\ell$-adic coefficients):}
  If $\ell \in k^\times$, we may take $X \mapsto \D(X)$ sending $X$ to the \inftyCat $\on{D}_\et(X, \bZ_\ell)$ of $\ell$-adic sheaves on the small étale site of $X$, i.e., the limit $\on{D}_\et(X, \bZ/\ell^n\bZ)$ over $n>0$.
\end{enumerate}
These examples can all be constructed as weaves using \cite[Thm.~2.51]{weaves} or \cite[Thm.~3.3]{DauserKuijper}.

\subsubsection{}

Let $\Art_k$ denote the \inftyCat of Artin stacks locally of finite type over $k$.
We denote by $\Art_k^\sm$ and $\Art_k^{\pr,\repr}$ the classes of  smooth morphisms and proper representable morphisms of Artin stacks, respectively.
We claim:

\begin{cor}
  Let $\D^*_!$ be a presentable weave on $(\Asp_k,\Asp_k,\Asp_k^\sm,\Asp_k^\pr)$.
  There exists a unique presentable weave $\D^{\lis,*}_!$ on $(\Art_k,\Art_k,\Art_k^\sm,\Art_k^{\pr,\repr})$ whose underlying presheaf $\D^{\lis,*}$ is the lisse extension of $\D^* : (\Asp_k)^\op \to \Cat$ (see \ssecref{ssec:lisextpresheaf}).
  That is, $\D^{\lis,*}_!$ is a presentable weave on $\Art_k$ where all morphisms are shriekable, the ``smooth'' morphisms are smooth morphisms of Artin stacks, and the ``proper'' morphisms are proper representable morphisms of Artin stacks.
\end{cor}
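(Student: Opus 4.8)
The plan is to obtain the statement as a direct specialization of the abstract extension theorem \corref{cor:weave}. I would instantiate the geometric context of \sssecref{sssec:Csmcov} by taking $\Spc = \Asp_k$, with $\cC^\sm = \Asp_k^\sm$, $\cC^\etcov = \Asp_k^\etcov$, and $\cC^\pr = \Asp_k^\pr$, and I would take the class of shriekable morphisms to be $\cE_0 = \Asp_k$, i.e.\ \emph{all} morphisms of algebraic spaces (this matches the hypothesis that $\D^*_!$ is a weave on $(\Asp_k,\Asp_k,\Asp_k^\sm,\Asp_k^\pr)$, where the second factor records the shriekable class). With these choices the two running hypotheses of \corref{cor:weave} are exactly the assumptions in force: $\D^*_!$ is a presentable weave, and its underlying presheaf $\D^*$ satisfies étale descent. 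Granting the axiomatic checks below, \corref{cor:weave} then produces a unique presentable weave $\D^{\lis,*}_!$ on $(\Art,\cE_\lis,(\Asp_k^\sm)_\lis,(\Asp_k^\pr)_\lis^0)$ whose underlying presheaf satisfies \v{C}ech descent along smooth covering morphisms; by \propref{prop:coccus} and \corref{cor:Shvreseq} this presheaf, being the unique smooth-covering sheaf extending the étale sheaf $\D^*$, coincides with the lisse extension $\D^{\lis,*}$.

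First I would verify that the chosen data fits the axiomatic framework. The context conditions of \sssecref{sssec:Csmcov} hold because $\Asp_k$ admits fibred products, because $\Asp_k^\sm$ and $\Asp_k^\etcov$ are stable under base change with $\Asp_k^\etcov \sub \Asp_k^\sm$, and because smoothness of algebraic spaces is étale-local on the source, which supplies the required cancellation property: if $p$ is a surjective étale morphism and $f\circ p$ is smooth, then $f$ is smooth. The conditions on $\cE_0$ demanded in \sssecref{sssec:mainthms} --- closure under base change and sections, containment of all smooth morphisms, and smooth-locality on the source and target --- are all trivial here, since $\cE_0 = \Asp_k$ is the class of every morphism. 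Finally, $\Asp_k^\pr$ is stable under base change, as required in \sssecref{sssec:Cpr}. This exhausts the axiomatic input needed to apply \corref{cor:weave}.

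It then remains to match the abstractly produced classes of morphisms with the named classes in the statement, and this is where the genuine (if entirely standard) geometric content enters, and is the one step I expect to require care rather than mere bookkeeping. Because $\cE_0$ consists of all morphisms of spaces, unwinding \sssecref{sssec:E_lis} shows that $\cE_\lis$ is the class of all morphisms of Artin stacks, so every morphism is shriekable, as asserted. To identify $(\Asp_k^\sm)_\lis$ --- defined in \sssecref{sssec:E_lis} through the existence of a smooth-atlas square with smooth $f_0$ --- with the class $\Art_k^\sm$ of smooth morphisms of Artin stacks defined in \sssecref{sssec:nArt}, I would invoke that smoothness of algebraic spaces is smooth-local on both source and target. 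To identify $(\Asp_k^\pr)_\lis^0$ --- defined in \sssecref{sssec:E_lis^0} as the \emph{representable} morphisms whose base change to any smooth atlas of the target is proper --- with the class $\Art_k^{\pr,\repr}$ of proper representable morphisms, I would use that properness is smooth-local (indeed fppf-local) on the target. These descent properties are classical for algebraic spaces, so no real obstacle remains: the corollary falls out as a clean instantiation of \corref{cor:weave} once they are recorded.
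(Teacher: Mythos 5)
Your proposal is correct and follows essentially the same route as the paper: instantiate the abstract context $(\cC,\cC^\sm,\cC^\etcov)$ as $(\Asp_k,\Asp_k^\sm,\Asp_k^\etcov)$ with $\cE_0$ the class of all morphisms, observe that the abstract Artin stacks and smooth coverings recover the standard notions, and apply \corref{cor:weave}. You merely spell out more of the routine verifications (the cancellation axiom of \sssecref{sssec:Csmcov}, the identification of $(\Asp_k^\sm)_\lis$ and $(\Asp_k^\pr)_\lis^0$ with the named classes) that the paper leaves implicit.
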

\begin{proof}
  The standard notion of (higher) Artin stacks in algebraic geometry is the same as the abstract notion defined in \sssecref{sssec:nArt} when we take the context $(\cC,\cC^\sm,\cC^\etcov)$ to be $(\Asp_k, \Asp_k^\sm, \Asp_k^\etcov)$ (recall that $\Asp_k^\etcov$ is the class of étale surjections.)
  Note that the smooth covering morphisms in the sense of \sssecref{sssec:smooth covering} are the smooth surjections.
  Hence the claim follows by applying \corref{cor:weave}.
\end{proof}

\sssec{}

More generally, using \corref{cor:weavenonart} we find that $\D^{\lis,*}_!$ extends to a presentable weave on the entire \inftyCat $\Stk_k$ of stacks locally of finite type over $k$.
The shriekable morphisms are those which are representable by Artin stacks, the ``smooth'' morphisms are those whose base change to any object of $\Asp_k$ is a smooth morphism of Artin stacks, and the ``proper'' morphisms are those whose base change to any object of $\Asp_k$ is a proper morphism of algebraic spaces.

\subsection{Weaves satisfying Nisnevich descent}
\label{ssec:weaveschnis}

\subsubsection{}

Let $k$ be a commutative ring and let $\Asp'_k$ be the full subcategory of $\Asp_k$ spanned by Zariski-locally quasi-separated algebraic spaces (that are locally of finite type over $k$).
We let $\D^*_!$ be a presentable weave on $\Asp'_k$ where all morphisms are shriekable, the ``smooth'' morphisms are the smooth morphisms, and the ``proper'' morphisms are the proper morphisms.

Unlike in \ssecref{ssec:weaveschet}, we only assume that the underlying presheaf $\D^* : (\Asp'_k)^\op \to \Cat$ satisfies \emph{Nisnevich} descent, i.e., \v{C}ech descent along étale morphisms that are \emph{cd-surjective}.
We say that a morphism in $\Asp_k$ is cd-surjective if it is surjective on $\kappa$-points for every field extension $\kappa/k$.

\subsubsection{}
\label{sssec:weavenisex}

Any topological weave in the sense of \cite[\S 3]{weaves} is an example of a weave satisfying Nisnevich descent.
In particular, the following examples define topological weaves by \cite[Cor.~3.33]{weaves} (see also \cite{six}):

\begin{enumerate}
  \item\emph{Motives:}
  Take $X \mapsto \D(X)$ sending $X$ to the \inftyCat $\on{D}_{\mrm{mot}}(X) := \on{D_{H\bZ}}(X)$ of modules over the integral motivic Eilenberg--MacLane spectrum $H\bZ_X$ as in \cite{Spitzweck}.
  (See \cite[Thm.~5.1]{CisinskiDegliseIntegral} and \cite[\S 14]{CisinskiDegliseBook} for comparisons with other constructions of derived categories of motives.)

  \item\emph{Cobordism motives:}
  Take $X\mapsto \D(X)$ sending $X$ to the \inftyCat $\on{D_{\mrm{MGL}}}$ of modules over Voevodsky's algebraic cobordism spectrum $\mrm{MGL}_X$.

  \item\emph{Motivic stable homotopy theory:}
  Take $X\mapsto \D(X)$ sending $X$ to the \inftyCat $\on{SH}(X)$ of motivic spectra on $X$.
\end{enumerate}

\subsubsection{}

Take the context $(\cC,\cC^\sm,\cC^\etcov)$ to be $(\Asp'_k, \Asp_k'^\sm, \Asp_k'^\etcov)$, where $\Asp_k'^\sm$ is the class of smooth morphisms in $\Asp_k'$ and $\Asp_k'^{\etcov}$ is the class of étale cd-surjective morphisms in $\Asp_k'$.
In this context, the smooth covering morphisms in the sense of \sssecref{sssec:smooth covering} are the smooth cd-surjective morphisms (see \cite[Lem.~0.6]{equilisse}).

Therefore, the abstract notion of ``Artin stacks'' (resp. ``$n$-Artin stacks'') defined in \sssecref{sssec:nArt} with respect to the context $(\Asp'_k, \Asp_k'^\sm, \Asp_k'^\etcov)$ is precisely the notion of ``$\Nis$-Artin stacks'' (resp. ``$(\Nis,n)$-Artin stacks'') as defined in \cite[0.2.2]{equilisse}.
Every $(\Nis,n)$-Artin stack is $n$-Artin, and any $1$-Artin stack which is quasi-separated with separated diagonal is in fact $(\Nis,1)$-Artin (see \cite[Thm.~0.7]{equilisse}).

Denote by $\Art_k'^\sm$ and $\Art_k'^{\pr,\repr}$ the classes of smooth morphisms and proper representable morphisms in $\Art_k'$, respectively.
In the notation of \sssecref{sssec:E_lis} and \sssecref{sssec:E_lis^0} we have $\Art_k'^\sm = (\Asp_k'^\sm)_\lis$ and $\Art_k'^{\pr,\repr} = (\Asp_k'^\pr)_\lis^0$.
Applying \corref{cor:weave} to this context thus yields:

\begin{cor}
  Let $\D^*_!$ be a presentable weave on $\Asp_k'$.
  There exists a unique presentable weave $\D^{\lis,*}_!$ on $(\Art_k',\Art_k',\Art_k'^\sm,\Art_k'^{\pr,\repr})$ whose underlying presheaf $\D^{\lis,*}$ is the lisse extension of $\D^* : (\Asp_k')^\op \to \Cat$ (see \ssecref{ssec:lisextpresheaf}).
\end{cor}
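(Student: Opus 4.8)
The plan is to recognize this corollary as a direct instance of the abstract extension theorem \corref{cor:weave}, applied to the concrete geometric context assembled in the paragraphs immediately preceding it, in exact parallel with the étale-descent case treated in \ssecref{ssec:weaveschet}. The only genuinely new point, relative to the étale case, is the translation of the descent hypothesis, so most of the work is bookkeeping to match the abstract hypotheses of \corref{cor:weave} to the present situation.

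First I would confirm that $(\Asp'_k,\Asp'_k,\Asp_k'^\sm,\Asp_k'^\pr)$ satisfies the axioms of \sssecref{sssec:Csmpr} and that the context $(\Asp'_k,\Asp_k'^\sm,\Asp_k'^\etcov)$ satisfies those of \sssecref{sssec:Csmcov}. Here the shriekable class is $\cE_0=\Asp'_k$, namely \emph{all} morphisms, so closure under base change and under sections is automatic, every smooth morphism trivially lies in $\cE_0$, and smooth-locality on the source and target \sssecref{sssec:smoothloc} holds trivially. The one substantive axiom to check is the cancellation property of \sssecref{sssec:Csmcov}: if $p$ is étale cd-surjective and $f\circ p$ is smooth, then $f$ is smooth. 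This is precisely the statement that smoothness is local on the source for the étale (a fortiori Nisnevich) topology, and I would cite it rather than reprove it.

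The crux is the following observation. In \corref{cor:weave}, the hypothesis that ``$\D^*$ satisfies \v{C}ech descent along étale covering morphisms'' refers to descent along the class $\cC^\etcov$ of the ambient context. In the present context $\cC^\etcov=\Asp_k'^\etcov$ is the class of étale cd-surjective morphisms, so this hypothesis is \emph{exactly} the assumption that $\D^*$ satisfies Nisnevich descent. This is the sole point distinguishing the Nisnevich case from the étale case of \ssecref{ssec:weaveschet}, where $\cC^\etcov$ was instead the class of étale surjections, and correspondingly the abstract Artin stacks of \sssecref{sssec:nArt} become the $\Nis$-Artin stacks \cite[0.2.2]{equilisse} while the smooth covering morphisms become the smooth cd-surjective morphisms \cite[Lem.~0.6]{equilisse}.

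With these identifications in hand, I would invoke the preceding discussion, which records that $\Art_k'^\sm=(\Asp_k'^\sm)_\lis$ \sssecref{sssec:E_lis} and $\Art_k'^{\pr,\repr}=(\Asp_k'^\pr)_\lis^0$ \sssecref{sssec:E_lis^0}, and then apply \corref{cor:weave} to produce a unique presentable weave $\D^{\lis,*}_!$ on $(\Art_k',\Art_k',\Art_k'^\sm,\Art_k'^{\pr,\repr})$ whose underlying presheaf satisfies \v{C}ech descent along smooth covering morphisms. To match the statement verbatim, I would finally note that, since $\D^*$ satisfies $\cC^\etcov$-descent (= Nisnevich descent), \propref{prop:coccus} together with \corref{cor:Shvreseq} identifies this unique smooth-covering-descent extension with the lisse extension of $\D^*$, yielding the uniqueness as phrased. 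I do not expect a serious obstacle: the argument is entirely an application of \corref{cor:weave}, and the only step requiring genuine care is the identification ``étale covering $=$ étale cd-surjective'' that converts the abstract descent hypothesis into Nisnevich descent.
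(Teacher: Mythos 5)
Your proposal is correct and matches the paper's argument exactly: the paper likewise assembles the context $(\Asp'_k,\Asp_k'^{\sm},\Asp_k'^{\etcov})$ with $\cC^{\etcov}$ the étale cd-surjective morphisms (so that the descent hypothesis of \corref{cor:weave} becomes Nisnevich descent), identifies the resulting abstract Artin stacks with $\Nis$-Artin stacks and the smooth coverings with smooth cd-surjective morphisms, records $\Art_k'^{\sm}=(\Asp_k'^{\sm})_\lis$ and $\Art_k'^{\pr,\repr}=(\Asp_k'^{\pr})_\lis^0$, and concludes by applying \corref{cor:weave}.
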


\sssec{}

More generally, using \corref{cor:weavenonart} we find that $\D^{\lis,*}_!$ extends to a presentable weave on the entire \inftyCat $\Stk_k$ of stacks locally of finite type over $k$.
The shriekable morphisms are those which are representable by Artin stacks, the ``smooth'' morphisms are those whose  base change to any object of $\Asp_k$ is a smooth morphism of Artin stacks, and the ``proper'' morphisms are those whose base change to any object of $\Asp_k$ is a proper morphism of algebraic spaces.

\section{Example: sheaves on topological stacks}
\label{sec:shvtop}

\subsection{Sheaves on topological spaces}
\label{ssec:ShvTop}

\subsubsection{}

All topological spaces are implicitly assumed locally compact and Hausdorff.
We write $\Top$ for the category of topological spaces; a \emph{morphism} of topological spaces is a continuous map.

\subsubsection{}

Let $R$ be an $\Einfty$-ring spectrum and denote by $\Mod_R$ the stable \inftyCat of $R$-modules.
For a topological space $X$, we denote by $\Shv(X; R)$ the stable \inftyCat of sheaves on $X$ with values in $\Mod_R$.
That is, its objects are presheaves $\mrm{Op}_X^\op \to \Mod_R$, where $\mrm{Op}_X$ is the poset of opens of $X$, which satisfy \v{C}ech descent.
Below, we will regard $R$ as fixed and write simply $\Shv(X) := \Shv(X; R)$.

\subsubsection{}

For a morphism of topological spaces $f : X \to Y$, the $*$-inverse image functor
\[ f^* : \Shv(Y) \to \Shv(X) \]
is the unique colimit-preserving functor sending the sheaf on $Y$ represented by an open $V \sub Y$ to the sheaf on $X$ represented by the open $f^{-1}(V) \sub X$.

Its right adjoint, the $*$-direct image functor $f_*$, is given informally by the formula $\Gamma(V, f_*(\sF)) = \Gamma(f^{-1}(V), \sF)$ where $\sF \in \Shv(X)$.

We denote by
\begin{equation}
  \Shv^* : \Top^\op \to \PrL
\end{equation}
the functor sending $X \mapsto \Shv(X)$ and $f \mapsto f^*$.

\subsubsection{}

The presheaf of \inftyCats $\Shv^*$ admits $\sharp$-direct image for topological submersions.
Indeed, let $f : X \to Y$ be topological submersion of topological spaces.
Then we have:

\noindent\itemref{item:Sm1}
$f^*$ admits a left adjoint $f_\sharp$: see \cite[Lem.~3.24]{Volpe}.

\noindent\itemref{item:Sm2}
$f_\sharp$ satisfies the projection formula: see \cite[Cor.~3.26]{Volpe}.

\noindent\itemref{item:Sm3}
$f_\sharp$ commutes with $*$-inverse image: see \cite[Lem.~3.25]{Volpe}.

\subsubsection{}

The presheaf of \inftyCats $\Shv^*$ admits $*$-direct image for proper morphisms.
Indeed, let $f : X \to Y$ be a \emph{proper} morphism of topological spaces.
Then we have:

\noindent\itemref{item:Pr1}
$f_*$ is colimit-preserving: combine \cite[Rem.~7.3.1.5]{LurieHTT}, \cite[Thm.~7.3.1.16]{LurieHTT}, and \cite[Cor.~3.12]{HaineBC}.
Alternatively, see \cite[Lem.~5.14]{Volpe}.

\noindent\itemref{item:Pr2}
$f_*$ satisfies the projection formula: see \cite[Lem.~6.3, Prop.~6.12]{Volpe}.

\noindent\itemref{item:Pr3}
$f_*$ commutes with $*$-inverse image: in this generality, see \cite{HaineBC}.

\subsubsection{}
\label{sssec:ShvTopdesc}

The presheaf of \inftyCats $\Shv^* : (\Top)^\op \to \Cat$ satisfies \v{C}ech descent with respect to surjective local homeomorphisms.
For example, this follows from \lemref{lem:desc*}: it is clear that $f^*$ is conservative for any surjective local homeomorphism $f$.

\subsubsection{}

Let $\Top^\sm$ be the class of topological submersions in $\Top$ and $\Top^\pr$ the class of proper morphisms in $\Top$.
The data $(\Top,\Top,\Top^\sm,\Top^\pr)$ satisfies the axioms of \sssecref{sssec:Csmpr}.
In other words, the ``smooth'' morphisms in $\Top$ are the topological submersions, the ``proper'' morphisms are the proper morphisms, and all morphisms are shriekable.

We extend the presheaf $\Shv^*$ to a weave $\Shv^*_!$ on $(\Top,\Top^\sm,\Top^\pr,\Top)$.
That is, $\Shv^*_!$ will be a preweave on $\Top$ where all morphisms are shriekable, topological submersions admit $\sharp$-direct image, and proper morphisms admit $*$-direct image.

\begin{thm}\label{thm:topweave}
  There exists a presentable weave
  \begin{equation}\label{eq:Shv^*_!}
    \Shv^*_! : \Corr(\Top) \to \PrL
  \end{equation}
  on $(\Top,\Top,\Top^\sm,\Top^\pr)$, which is uniquely characterized by the following identifications:
  \begin{thmlist}
    \item $\Shv^*_!|_{(\Top)^\op} \simeq \Shv^*$,
    \item $\Shv^*_!|_{\Top^\pr} \simeq \Shv_*$.
    \item $\Shv^*_!|_{\Top^\mrm{op}} \simeq \Shv_!$, where $\Top^\mrm{op}$ is the class of open embeddings in $\Top$.
  \end{thmlist}
\end{thm}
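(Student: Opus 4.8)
The plan is to deduce the existence, uniqueness, and the three identifications from the abstract weave-construction theorem (\cite[Thm.~2.51]{weaves}, or equivalently \cite[Thm.~3.3]{DauserKuijper}), in exactly the way the algebraic examples of \ssecref{ssec:weaveschet} are produced. That theorem takes as input a lax symmetric monoidal presheaf $\Shv^* : (\Top)^\op \to \PrL$ together with a choice of ``smooth'' class admitting $\sharp$-direct image and a ``proper'' class admitting $*$-direct image, and --- provided every shriekable morphism factors as an open embedding followed by a proper morphism, with the standard base-change compatibilities between the two classes --- assembles a weave in which $f_! := p_* \circ j_!$ for any factorization $f = p \circ j$. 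The three claimed identifications are then built into the conclusion: restricting along $(\Top)^\op \to \Corr(\Top)$ recovers $\Shv^*$; on proper morphisms $f_! \simeq f_*$; and on open embeddings --- which are local homeomorphisms, hence ``étale'', so that $\Sigma_j \simeq \id$ and $j_! = j_\sharp$ --- one recovers the extension-by-zero functor $\Shv_!$. These, together with functoriality on correspondences and the factorization, pin down the weave uniquely.

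Thus the proof reduces to checking the hypotheses, all of which are already recorded. The presheaf $\Shv^*$ is symmetric monoidal for the tensor product of sheaves, hence lax symmetric monoidal for the cartesian structure on $\Top$, and takes values in $\PrL$. The $\sharp$-direct-image data along topological submersions --- conditions \itemref{item:Sm1}, \itemref{item:Sm2}, \itemref{item:Sm3} --- was established following \cite{Volpe}; specializing to open embeddings supplies the left adjoint $j_! = j_\sharp$ with its base change \itemref{item:Sm3} and projection formula \itemref{item:Sm2}. The $*$-direct-image data along proper morphisms --- conditions \itemref{item:Pr1}, \itemref{item:Pr2}, \itemref{item:Pr3} --- was likewise established, using \cite{Volpe} and \cite{HaineBC}. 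Finally, $\Shv^*$ satisfies \v{C}ech descent along surjective local homeomorphisms (\sssecref{sssec:ShvTopdesc}). The remaining compatibilities \itemref{item:Sm4}, \itemref{item:Sm5}, \itemref{item:Pr4}, \itemref{item:Pr5}, which mix $\sharp$- or $*$-pushforward with the not-yet-constructed $f_!$, are produced automatically by the abstract theorem rather than assumed.

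The one genuinely geometric input, and the point I expect to be the main obstacle, is the factorization axiom: every continuous map $f : X \to Y$ of locally compact Hausdorff spaces admits a factorization of the form $X \xrightarrow{\,j\,} \bar X \xrightarrow{\,p\,} Y$ with $j$ an open embedding and $p$ proper. This is the relative (fibrewise) compactification, available because our spaces are locally compact and Hausdorff and hence all maps are separated; concretely one may take the fibrewise one-point compactification. The substantive content of the abstract theorem is then the homotopy-coherent gluing: that $p_* \circ j_!$ is independent of the chosen factorization and functorial in $f$, deduced from the base-change isomorphisms \itemref{item:Sm3} and \itemref{item:Pr3} together with the projection formulas comparing any two factorizations. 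Since this coherence is precisely what \cite[Thm.~2.51]{weaves} (resp. \cite[Thm.~3.3]{DauserKuijper}) packages, supplying the pointwise data above completes the construction and yields the presentable weave $\Shv^*_!$ with the asserted properties.
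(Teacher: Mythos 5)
Your overall strategy is the same as the paper's: build $f_! := p_* \circ j_!$ by gluing $\sharp$-pushforward along open embeddings with $*$-pushforward along proper maps via the Liu--Zheng-style abstract theorem (the paper uses \cite[Prop.~A.5.10]{Mann} for existence and \cite[Thm.~3.3]{DauserKuijper} for uniqueness), with the pointwise inputs \itemref{item:Sm1}--\itemref{item:Sm3} and \itemref{item:Pr1}--\itemref{item:Pr3} taken from \cite{Volpe} and \cite{HaineBC}. However, there are two genuine gaps.

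First, your claim that \itemref{item:Sm4}, \itemref{item:Sm5}, \itemref{item:Pr4}, \itemref{item:Pr5} ``are produced automatically by the abstract theorem'' is wrong for the smooth axioms. The gluing theorem outputs a preweave on $(\Top,\Top)$ together with the identifications $f_! \simeq f_*$ for proper $f$ and $j_! \simeq j_\sharp$ for open embeddings; it says nothing about the class $\Top^\sm$ of all topological submersions, which plays no role in the construction of $f_!$. The axiom \itemref{item:Sm4} --- commutation of $f_\sharp$ with $q_!$ for $f$ a submersion and $q$ arbitrary --- is a nontrivial statement, proved in \cite[Cor.~6.14]{Volpe}; and \itemref{item:Sm5} amounts, via \lemref{lem:checkSm5}, to the $\otimes$-invertibility of $\Sigma_f(\un)$, i.e.\ of the Thom object of the relative tangent microbundle, which is \cite[Prop.~7.7]{Volpe}. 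Neither is formal. (The proper-side axioms do essentially follow from the construction, but even there the paper must argue: for \itemref{item:Pr4} one reduces to $q$ open or proper by choosing a compactification, and the open case is exactly \lemref{lem:supp}.)

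Second, the hypotheses of the gluing theorem are not fully verified. The theorem requires not merely that every morphism of locally compact Hausdorff spaces admit \emph{some} factorization into an open embedding followed by a proper map (your fibrewise one-point compactification gives that), but that the entire category of such factorizations be contractible --- this is what makes $p_* j_!$ coherently independent of the choice; base change and projection formulas alone only compare two given factorizations up to non-coherent isomorphism. The paper proves this as \lemref{lem:compactfilt}, showing the category is cofiltered (nonemptiness via Stone--\v{C}ech, then products and equalizers of compactifications). Likewise the support property --- that $f_*$ commutes with $j_!$ for $f$ proper and $j$ an open embedding --- is an explicit hypothesis of the gluing theorem, not a consequence of \itemref{item:Sm3} and \itemref{item:Pr3}; the paper proves it as \lemref{lem:supp} by reducing to closed embeddings and a K\"unneth formula. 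You gesture at both points but leave them unproved, and they are where the real topological content of the theorem lies.
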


\subsection{Proof of \thmref{thm:topweave}}

We begin with some preliminary lemmas.

\begin{lem}\label{lem:compactfilt}
  Let $f : X \to Y$ be a morphism of topological spaces.
  Consider the category $\cC(f)$ of factorizations of $f$ of the form
  \[ X\xrightarrow{j} P \xrightarrow{p} Y, \]
  where $j$ is an open embedding and $p$ is proper, in which morphisms are commutative diagrams
  \begin{equation*}
    \begin{tikzcd}[matrix yscale=0.5]
      & P\ar{dd}\ar{rd}{p} &
      \\
      X \ar{ru}{j}\ar[swap]{rd}{j'}
      & 
      & Y.
      \\
      & P' \ar[swap]{ru}{p'} &
    \end{tikzcd}
  \end{equation*}
  Then $\cC(f)$ is cofiltered.
  In particular, its underlying \inftyGrpd is contractible.
\end{lem}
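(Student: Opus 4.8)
The plan is to verify directly the three defining conditions of a cofiltered category, relying throughout on one standard topological input: a locally compact subspace $A$ of a locally compact Hausdorff space $Z$ is locally closed in $Z$, i.e. open in its closure $\overline{A}$. Since every space here is locally compact Hausdorff, and a fiber product over a (Hausdorff) base is a closed subspace of the locally compact Hausdorff product and hence again locally compact Hausdorff, this fact will apply to every auxiliary space I build. I will also repeatedly use that a closed subspace of a space proper over $Y$ is proper over $Y$, and that an open subset of $X$ lying inside a subspace is open in that subspace.

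\emph{Nonemptiness.} Let $X^+$ be the one-point compactification of $X$, which is compact Hausdorff since $X$ is locally compact Hausdorff. The map $X \to X^+ \times Y$, $x \mapsto (x, f(x))$, has image the graph of $f$, which is closed in $X \times Y$ (as $Y$ is Hausdorff) and therefore locally closed in $X^+ \times Y$. Let $P$ be the closure of this graph in $X^+ \times Y$. By the fact above $X$ is open in $P$, giving an open embedding $X \hookrightarrow P$, while $P$, being closed in the proper-over-$Y$ space $X^+ \times Y$, is proper over $Y$. This produces an object of $\cC(f)$.

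\emph{Cones over pairs.} Given two factorizations $X \xrightarrow{j_i} P_i \xrightarrow{p_i} Y$ for $i = 1, 2$, form $P_1 \fibprod_Y P_2$; it is proper over $Y$ (a fiber product of proper maps) and locally compact Hausdorff. The induced map $X \to P_1 \fibprod_Y P_2$ is an embedding, since its composite with the projection to $P_1$ is the open embedding $j_1$, and its source is locally compact, so $X$ is open in its closure $P_3$. Then $P_3 \to Y$ is proper and $X \hookrightarrow P_3$ is open, while the two projections restrict to morphisms $P_3 \to P_i$ in $\cC(f)$, exhibiting $P_3$ as a common predecessor.

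\emph{Equalizing parallel arrows.} Given two morphisms $\phi, \psi \colon (P, j, p) \to (P', j', p')$, let $E \sub P$ be the equalizer subspace $\{x : \phi(x) = \psi(x)\}$, closed because $P'$ is Hausdorff. Since $\phi j = \psi j = j'$ we have $j(X) \sub E$; being open in $P$ and contained in $E$, the subset $j(X)$ is open in $E$, so $X \hookrightarrow E$ is an open embedding, and $E \to Y$ is proper as $E$ is closed in $P$. The inclusion $E \hookrightarrow P$ is then a morphism in $\cC(f)$ precomposition with which equalizes $\phi$ and $\psi$. These three points show that $\cC(f)$ is cofiltered, and the contractibility of its underlying anima follows from the standard fact that the nerve of a cofiltered category is weakly contractible (its realization agrees with that of the filtered category $\cC(f)^\op$).

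I expect the technical heart to be the open-in-its-closure step: the main care will be in confirming that each auxiliary space ($X^+ \times Y$, the fiber product, and their closed subspaces) is genuinely locally compact Hausdorff so that the local-closedness fact applies, and in pinning down the notion of properness so that closed subspaces and fiber products of proper maps remain proper over $Y$.
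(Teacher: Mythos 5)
Your proof is correct, and it follows the same overall skeleton as the paper's (verify nonemptiness, binary cones, and equalization of parallel arrows directly), but the technical devices differ in instructive ways. For nonemptiness the paper factors $f$ through $\beta X \fibprod_{\beta Y} Y$ using the Stone--\v{C}ech compactification and the fact that a map of compact spaces is proper, whereas you close up the graph of $f$ inside $X^+ \times Y$; your route is more elementary (no Stone--\v{C}ech) and makes the ambient proper-over-$Y$ space completely explicit, at the cost of invoking the locally-compact-implies-open-in-its-closure lemma one more time. For binary cones the paper simply asserts that $X \hook P \fibprod_Y P' \to Y$ is a factorization of the required form; as literally stated this is too quick, since the induced map $X \to P \fibprod_Y P'$ is an embedding but its image (the diagonal of $X$ inside the open subset $X \fibprod_Y X$) is not open in general --- take $X = \bR$, $Y = \pt$, $P = P' = S^1$. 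Your step of passing to the closure of the image of $X$ and using its local compactness is exactly the needed repair, so on this point your argument is the more complete one. For equalizers you and the paper both restrict to the equalizer subspace $P_0 \sub P$, which is closed (hence proper over $Y$) and contains $j(X)$ as an open subset; the paper then re-compactifies $X \to P_0$, a step your version correctly identifies as unnecessary. The supporting facts you flag --- closed subspaces and fiber products of proper maps are proper, fiber products over a Hausdorff base are closed in the product and hence locally compact Hausdorff, and a locally compact subspace of a Hausdorff space is open in its closure --- are all standard and valid under the paper's blanket assumption that spaces are locally compact Hausdorff.
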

\begin{proof}
  Since $X$ is locally compact and Hausdorff by assumption, the canonical map $X \hook \beta X$ to the Stone-\v{C}ech compactification is an open embedding.
  By the universal property of the latter, $f$ factors through a morphism $\beta f : \beta X \to \beta Y$.
  As a morphism between compact spaces, $\beta f$ is proper.
  Thus $f$ factors canonically as the composite
  \begin{equation}\label{eq:SC}
    f : X \xrightarrow{j} \beta X \fibprod_{\beta Y} Y \xrightarrow{p} Y,
  \end{equation}
  where $j$ is an open embedding and $p$, the base change of $\beta f$, is proper.
  This shows that $\cC(f)$ is nonempty.
  
  Given two factorizations $X \hook P \to Y$ and $X \hook P' \to Y$, $X \hook P \fibprod_Y P' \to Y$ is a new factorization mapping to both.

  Given two factorizations $X \hook P \to Y$ and $X \hook P' \to Y$ with parallel arrows $P \rightrightarrows P'$, let $P_0$ denote the equalizer.
  Then $X \hook P$ factors through $X \hook P_0$.
  Choose a compactification $X \hook P'' \to P_0$ of the latter.
  Then $X \hook P'' \to Y$ is a compactification of $f$ which is equalized by $P \rightrightarrows P'$.
\end{proof}

\begin{lem}[K\"unneth formula]\label{lem:propf_*kunn}
  Let $f_1 : X_1 \to Y_1$ and $f_2 : X_2 \to Y_2$ be proper morphisms of topological spaces.
  Then the natural transformation
  \[
    f_{1,*}(-) \boxtimes f_{2,*}(-)
    \to (f_1 \times f_2)_*(- \boxtimes -),
  \]
  of functors $\Shv(X_1)\otimes\Shv(X_2) \to \Shv(X_1 \times X_2)$, is invertible.
\end{lem}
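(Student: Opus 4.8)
The plan is to reduce the statement to external products of the form $\sF_1 \boxtimes \sF_2$, factor $f_1 \times f_2$ into two maps each of which alters only one factor, and treat each one by combining the base change isomorphism \itemref{item:Pr3} with the projection formula \itemref{item:Pr2} for proper $*$-pushforward. All three proper axioms invoked below have been verified for $\Top$ in \ssecref{ssec:ShvTop}, and $\Top^\pr$ is closed under base change by \sssecref{sssec:Csmpr}.

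First I would reduce to pure tensors. Since proper pushforward preserves colimits by \itemref{item:Pr1}, both assignments $(\sF_1,\sF_2)\mapsto f_{1,*}\sF_1 \boxtimes f_{2,*}\sF_2$ and $(\sF_1,\sF_2)\mapsto (f_1\times f_2)_*(\sF_1\boxtimes\sF_2)$ preserve colimits separately in each variable, hence factor through colimit-preserving functors on the tensor product $\Shv(X_1)\otimes\Shv(X_2)$. As the latter is generated under colimits by objects $\sF_1\boxtimes\sF_2$, it suffices to prove that the canonical map is invertible on such objects. Next I would factor $f_1\times f_2$ as the composite $X_1\times X_2 \xrightarrow{f_1\times\id} Y_1\times X_2 \xrightarrow{\id\times f_2} Y_1\times Y_2$, and use $(f_1\times f_2)_* \simeq (\id\times f_2)_*(f_1\times\id)_*$ together with properness of $f_1\times\id$ and $\id\times f_2$ (base changes of $f_1$ and $f_2$). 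Applying the one-variable case below once to $f_1\times\id$ (with the second factor fixed) and once to $\id\times f_2$ (applied to the already-computed $f_{1,*}\sF_1$) then yields the full statement.

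It thus remains to prove the one-variable case: for a proper $f : X \to Y$, an auxiliary space $Z$, and $\sG\in\Shv(Z)$, the canonical map $f_*\sF \boxtimes \sG \to (f\times\id_Z)_*(\sF\boxtimes\sG)$ is invertible. For this I would use the cartesian square
\[
\begin{tikzcd}
  X \times Z \ar{r}{p}\ar{d}{f\times\id}
  & X \ar{d}{f}
  \\
  Y \times Z \ar{r}{\pi}
  & Y,
\end{tikzcd}
\]
writing $p,r$ for the two projections out of $X\times Z$ and $\pi,\rho$ for those out of $Y\times Z$, so that $\sF\boxtimes\sG = p^*\sF\otimes r^*\sG$ and $f_*\sF\boxtimes\sG = \pi^*(f_*\sF)\otimes\rho^*\sG$, and noting $\rho\circ(f\times\id)=r$. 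Base change \itemref{item:Pr3} gives $\pi^*f_* \simeq (f\times\id)_* p^*$, and the projection formula \itemref{item:Pr2} for $f\times\id$ handles the tensor with $\rho^*\sG$, producing
\[
  f_*\sF \boxtimes \sG
  \;=\; \pi^*(f_*\sF)\otimes\rho^*\sG
  \;\simeq\; (f\times\id)_*(p^*\sF)\otimes\rho^*\sG
  \;\simeq\; (f\times\id)_*\!\big(p^*\sF\otimes r^*\sG\big)
  \;=\; (f\times\id)_*(\sF\boxtimes\sG).
\]

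The main obstacle I anticipate is not any single computation but the coherence bookkeeping: one must check that the displayed composite of base-change and projection-formula isomorphisms is literally the canonical Künneth comparison map appearing in the statement, and that the factorization identifications are natural and compatible with one another. In the $\infty$-categorical setting these amount to compatibilities between the exchange transformations $\Ex^*_*$ and $\Ex^{\otimes,*}_*$ that must be tracked with care, although they are formal once the individual invertibility statements are in hand.
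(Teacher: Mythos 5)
Your argument is correct and is exactly the expansion of the paper's own (one-line) proof, which says the lemma "follows from the base change and projection formulas for proper $*$-direct image": your reduction to pure tensors, the factorization through $Y_1\times X_2$, and the use of \itemref{item:Pr3} and \itemref{item:Pr2} for each factor are precisely what that sentence encodes. No discrepancy to report.
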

\begin{proof}
  This follows from the base change and projection formulas for proper $*$-direct image.
\end{proof}

\begin{lem}\label{lem:supp}
  For every cartesian square
  \[\begin{tikzcd}
    U \ar{r}{g}\ar{d}{u}
    & V \ar{d}{v}
    \\
    X \ar{r}{f}
    & Y
  \end{tikzcd}\]
  where $f$ is proper and $v$ is an open embedding, $f_*$ commutes with $v_\sharp$.
  That is, the natural transformation $\Ex_{\sharp,*} : v_\sharp g_* \to f_* u_\sharp$ \eqref{eq:Ex_shp,*} is invertible.
\end{lem}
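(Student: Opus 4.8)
The plan is to check that the exchange transformation $\Ex_{\sharp,*} : v_\sharp g_* \to f_* u_\sharp$ becomes invertible after applying the two restriction functors attached to the open--closed decomposition of $Y$ cut out by $v$. Write $V \subseteq Y$ for the open image of $v$, let $i : Z \hook Y$ be the complementary closed embedding, and recall that $(v^*, i^*)$ is jointly conservative by the standard open--closed recollement for sheaves on a topological space (see e.g. \cite{Volpe}). It therefore suffices to show that $v^* \Ex_{\sharp,*}$ and $i^* \Ex_{\sharp,*}$ are both invertible. Note first that $g : U \to V$ and $u : U \to X$ are open embeddings, being base changes of $v$, and that $u$ has image $f^{-1}(V)$ with complementary closed embedding $i_X : f^{-1}(Z) \hook X$.

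After applying $v^*$ I claim both sides are canonically identified with $g_*$. Since $v$ and $u$ are monomorphisms, the cartesian squares expressing $V \fibprod_Y V \simeq V$ and $U \fibprod_X U \simeq U$ give $v^* v_\sharp \simeq \id$ and $u^* u_\sharp \simeq \id$ by smooth base change \itemref{item:Sm3}. Hence $v^* v_\sharp g_* \simeq g_*$, while proper base change \itemref{item:Pr3} for the given square gives $v^* f_* \simeq g_* u^*$, so that $v^* f_* u_\sharp \simeq g_* u^* u_\sharp \simeq g_*$. Tracing the construction of $\Ex_{\sharp,*}$ then shows the induced map is the identity, hence invertible.

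After applying $i^*$ I claim both sides vanish. On the left, $Z \fibprod_Y V$ is empty, so \itemref{item:Sm3} identifies $i^* v_\sharp$ with a functor factoring through $\Shv(\varnothing) \simeq 0$; thus $i^* v_\sharp g_* \simeq 0$. On the right, proper base change \itemref{item:Pr3} along $i$ gives $i^* f_* \simeq f_{Z,*} i_X^*$, where $f_Z : f^{-1}(Z) \to Z$ is the (proper) base change of $f$; since $i_X$ is the closed complement of the open embedding $u$, the fibre product $f^{-1}(Z) \fibprod_X U$ is empty and \itemref{item:Sm3} gives $i_X^* u_\sharp \simeq 0$, whence $i^* f_* u_\sharp \simeq f_{Z,*} i_X^* u_\sharp \simeq 0$. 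Both restrictions of $\Ex_{\sharp,*}$ are therefore equivalences, and joint conservativity then yields the claim.

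The only genuinely topological input is the joint conservativity of $(v^*, i^*)$ for an open embedding and its complementary closed embedding; everything else is formal from smooth base change \itemref{item:Sm3}, proper base change \itemref{item:Pr3}, and the vanishing $\Shv(\varnothing) \simeq 0$. I expect the main point to verify with care is precisely that $v^* \Ex_{\sharp,*}$ is the \emph{identity} rather than merely an abstract equivalence, which requires unwinding the definition of the exchange transformation in terms of the unit of $(u_\sharp, u^*)$, the proper base change isomorphism $g_* u^* \simeq v^* f_*$, and the counit of $(v_\sharp, v^*)$.
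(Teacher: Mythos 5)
Your proof is correct, but it follows a genuinely different route from the paper's. The paper factors the proper map $f$ through a compactification, writing it as a closed embedding $X \hook \overline{X}\times Y$ followed by the projection $\overline{X}\times Y \to Y$; it then handles the closed-embedding case by the localization triangle \cite[Cor.~4.10]{Volpe} and the projection case by the K\"unneth formula (\lemref{lem:propf_*kunn}). You instead decompose the \emph{target} $Y$ into $V$ and its closed complement $Z$ and test $\Ex_{\sharp,*}$ against the jointly conservative pair $(v^*,i^*)$, reducing the open part to triangle identities and the closed part to $\Shv(\varnothing)\simeq 0$. Both arguments ultimately rest on the same topological input, the open--closed recollement from \cite{Volpe}, but yours bypasses the compactification and the K\"unneth formula entirely, so it is more self-contained and would transport verbatim to any setting where the recollement holds; the paper's version instead reuses \lemref{lem:compactfilt} and \lemref{lem:propf_*kunn}, which it needs elsewhere in the construction anyway. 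Two small points. First, $g : U \to V$ is a base change of $f$ and hence proper, not an open embedding (only $u$ is a base change of $v$); this slip is harmless since you never actually use it. Second, the verification that $v^*\Ex_{\sharp,*}$ is the identity under your identifications, which you rightly single out as the delicate step, does need to be carried out: it amounts to the compatibility of \eqref{eq:Ex_shp,*} with its alternative description as the mate of $\Ex^*_*$ \eqref{eq:Ex_*^*} with respect to the adjunctions $(v_\sharp,v^*)$ and $(u_\sharp,u^*)$, together with the triangle identities; this is a routine calculus-of-mates chase and does not threaten the argument, but it should be written out or explicitly invoked for the proof to be complete.
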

\begin{proof}
  If $f$ is a closed embedding, this follows easily from the localization triangle, see e.g. \cite[Cor.~4.10]{Volpe}.
  In general, choose a compactification $\overline{X}$ of $X$ so that $f$ factors as the composite
  \[ X \xrightarrow{i} \overline{X} \times Y \xrightarrow{p} Y \]
  where $p$ is the projection.
  Since the source and target of $i$ are both proper over $Y$, $i$ is proper and hence a closed embedding.
  It will thus suffice to show the claim for $p$ instead of $f$.
  This follows easily from \lemref{lem:propf_*kunn}.
\end{proof}

\subsubsection{Proof of \thmref{thm:topweave}}

We apply the work of \cite{LiuZheng} as summarized in \cite[Prop.~A.5.10]{Mann}\footnote{%
  Alternatively, apply Theorems~3.2.2(b) and 5.2.4 of \cite[Chap.~7]{GaitsgoryRozenblyum}.
}, which provides the preweave $\Shv^*_!$ with the asserted properties, once the following conditions are verified:
\begin{enumerate}
  \item The class $\Top^\mrm{op}$ is closed under base change and sections (and hence also under two-of-three).
  \item The class $\Top^\pr$ is closed under base change and sections (and hence also under two-of-three).
  \item Any morphism that is in $\Top^\mrm{op} \cap \Top^\pr$ is a monomorphism.
  \item The category of compactifications of any morphism in $\Top$ is contractible (\lemref{lem:compactfilt}).
  \item $\Shv^*$ admits $\sharp$-direct image for open embeddings.
  \item $\Shv^*$ admits $*$-direct image for proper morphisms.
  \item For every proper morphism $f : X \to Y$ and every open embedding $j : V \hook Y$, $f_*$ commutes with $j_!$ (see \lemref{lem:supp}).
\end{enumerate}
Uniqueness holds by \cite[Thm.~3.3]{DauserKuijper}.

It remains to show that $\Shv^*_!$ is a weave, i.e., that it admits $\sharp$-direct image with respect to topological submersions and $*$-direct image with respect to proper morphisms.
The remaining axioms are as follows:

\noindent\itemref{item:Sm4}:
For every topological submersion $f : X \to Y$, $f_\sharp$ commutes with $!$-direct image.
This holds by \cite[Cor.~6.14]{Volpe}.

\noindent\itemref{item:Sm5}
For every topological submersion $f : X \to Y$ and every section $i : Y \to X$, the functor $f_\sharp i_!$ is an equivalence.
For this, combine \lemref{lem:checkSm5} and \cite[Prop.~7.7]{Volpe}.

\noindent\itemref{item:Pr4}
For every proper morphism $f : X \to Y$, $f_*$ commutes with $!$-direct image.
Indeed, to show that $f_*$ commutes with $q_!$ for a morphism $q : Y' \to Y$, we may assume that $q$ is either proper or an open embedding (by choosing a compactification of $q$).
When $q$ is an open embedding, the statement is \lemref{lem:supp}.
If $q$ is proper, consider the exchange transformation \eqref{eq:Ex_!*}
\[
  \Ex_{!,*} : q_!g_*
  \xrightarrow{\unit} q_!g_* p^!p_!
  \simeq q_! q^!f_* p_!
  \xrightarrow{\counit} f_*p_!.
\]
We have by construction of $\Shv^*_!$ that, under the identifications between $q_! \simeq q_*$ and $p_! \simeq p_*$, the base change isomorphism $\Ex_!^* : g_* p^! \simeq q^! f_*$ is identified with the proper base change isomorphism $\Ex_*^* : q^* f_* \simeq g_* p^*$ \eqref{eq:Ex_*^*}, and hence $\Ex_{!,*}$ is identified with the functoriality isomorphism $q_* g_* \simeq f_* p_*$.

\noindent\itemref{item:Pr5}
For every proper morphism $f : X \to Y$ and every section $i : Y \to X$, the functor $f_* i_!$ is an equivalence.
Indeed, we have by construction of $\Shv^*_!$ the tautological identification $f_! \simeq f_*$, under which the given functor is equivalent to the identity.

\subsection{Topological stacks}

Recall that we have defined the ``smooth'' morphisms in $\Top$ to be the topological submersions.
We define the ``étale covering'' morphisms to be the surjective local homeomorphisms.
Note that the condition of being a topological submersion is local on the source in the sense that if $f : X \to Y$ is a morphism such that there exists a surjective local homeomorphism $p : X' \surj X$ with $f \circ p$ a topological submersion, then $f$ is a topological submersion.
Thus if $\Top^\sm$ and $\Top^\etcov$ denote the classes of topological submersions and surjective local homeomorphisms, respectively, then the data $(\Top, \Top^\sm, \Top^\etcov)$ satisfy the axioms of \sssecref{sssec:Csmcov}.

As in \sssecref{sssec:smooth covering}, a ``smooth covering'' morphism of topological spaces is a topological submersion $f : X \to Y$ for which there exists a surjective local homeomorphism $q : Y' \surj Y$ such that the base change $X \fibprod_Y Y' \to Y'$ admits a section.
In other words, it is a surjective topological submersion.

We may thus define the notions of \emph{topological stack}, \emph{topological $n$-Artin stack} ($n\ge 0$), and \emph{topological Artin stack} as in \sssecref{sssec:nArt}.\footnote{%
  Our definitions are not quite the same as the ones found in the literature.
  First, we only work with locally compact Hausdorff topological spaces (rather than compactly generated topological spaces).
  Second, we allow higher stacks, i.e., sheaves with values in \inftyGrpds rather than $1$-groupoids.
  Third, we impose descent with respect to surjective local homeomorphisms rather than only open covers (but these conditions are in fact equivalent).
  Lastly, our notion of ``topological stacks'' is (the untruncated version of) what is called ``stacks on $\Top$'' in \cite{Noohi}, while our notion of ``topological $1$-Artin stack'' corresponds to what is called ``topological stacks'' in \cite[\S 13.2]{Noohi} when $\mbf{LF}$ in \emph{loc. cit.} is taken to be the class of topological submersions.
}
For example, a topological stack is a presheaf of anima $(\Top)^\op \to \Ani$ satisfying \v{C}ech descent with respect to surjective local homeomorphisms.
A topological stack $\sX$ is $1$-Artin if it has representable diagonal and there exists a topological space $X$ with a morphism $X \to \sX$ which is representable by a smooth covering morphism of spaces.
We write $\Art(\Top)$ for the \inftyCat of topological Artin stacks.

We define topological submersions and proper representable morphisms of Artin stacks as in \sssecref{sssec:nArt} and \sssecref{sssec:Cpr}, respectively.

Applying \corref{cor:weave} to the weave $\Shv^*_!$ on $\Top$ (\thmref{thm:topweave}), which satisfies descent with respect to surjective local homeomorphisms \sssecref{sssec:ShvTopdesc}, we obtain:

\begin{cor}\label{cor:weavetopstk}
  There exists a unique presentable weave $\Shv^{\lis,*}_!$ on $\Art(\Top)$, where all morphisms are shriekable, such that the underlying presheaf $\Shv^{\lis,*}$ is the lisse extension of $\Shv^*$ (see \ssecref{ssec:lisextpresheaf}).
\end{cor}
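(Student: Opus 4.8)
The plan is to obtain \corref{cor:weavetopstk} as a direct instance of the abstract extension theorem \corref{cor:weave}, so that the task reduces to matching the topological setup against the hypotheses of that corollary. First I would instantiate the context of \sssecref{sssec:Csmcov} by taking the spaces to be $\Top$, the smooth morphisms to be the topological submersions $\Top^\sm$, the étale covering morphisms to be the surjective local homeomorphisms $\Top^\etcov$, the proper morphisms to be $\Top^\pr$, and the shriekable class $\cE_0 = \Top$ consisting of all morphisms. The axioms of \sssecref{sssec:Csmcov} for $(\Top,\Top^\sm,\Top^\etcov)$ were verified in the paragraph preceding the statement, the key point being that the property of being a topological submersion is local on the source along surjective local homeomorphisms. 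Since $\cE_0 = \Top$ consists of all morphisms, it is trivially closed under base change and sections, contains all submersions, and satisfies smooth-locality on the source and target, so it also fits the hypotheses of \sssecref{sssec:mainthms}. Under this instantiation the abstract notions of \sssecref{sssec:nArt} and \sssecref{sssec:smooth covering} unwind, as already noted above, to topological Artin stacks and to surjective topological submersions, respectively.

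With the context fixed, I would then supply the two inputs required of the starting weave by \corref{cor:weave}. That $\Shv^*_!$ is a presentable weave on $(\Top,\Top,\Top^\sm,\Top^\pr)$ is exactly the content of \thmref{thm:topweave}. That the underlying presheaf $\Shv^*$ satisfies \v{C}ech descent along étale covering morphisms --- here, along surjective local homeomorphisms --- is \sssecref{sssec:ShvTopdesc}. Feeding these into \corref{cor:weave} produces a unique presentable weave on $(\Art(\Top),\cE_\lis,(\Top^\sm)_\lis,(\Top^\pr)_\lis^0)$ extending $\Shv^*_!$, whose underlying presheaf satisfies \v{C}ech descent along smooth covering morphisms of Artin stacks.

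Finally I would reconcile this output with the exact phrasing of the statement. Because $\cE_0 = \Top$ consists of all morphisms and is closed under base change and sections, its lisse extension $\cE_\lis$ is all of $\Art(\Top)$ by the description in \sssecref{sssec:E_lis}; hence every morphism of topological Artin stacks is shriekable for $\Shv^{\lis,*}_!$. And since $\Shv^*$ satisfies étale descent, the presheaf underlying the extension --- characterized in \corref{cor:weave} by smooth-covering descent --- is identified with the lisse extension of $\Shv^*$ in the sense of \ssecref{ssec:lisextpresheaf}, exactly as in the proof of \thmref{thm:extweave}\itemref{item:extweave/main} via \corref{cor:Shvreseq}. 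I do not anticipate a real obstacle at this stage: every genuine difficulty has been absorbed into \thmref{thm:topweave} (the construction and six-functor formalism of $\Shv^*_!$ on honest topological spaces) and into \corref{cor:weave} (the lisse extension to stacks), and the only point demanding care is confirming that the abstract geometric vocabulary specializes correctly to the topological one --- which the discussion preceding the statement already handles.
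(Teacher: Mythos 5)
Your proposal is correct and follows exactly the paper's route: the paper likewise derives this corollary by feeding the weave $\Shv^*_!$ of \thmref{thm:topweave}, together with the descent property of \sssecref{sssec:ShvTopdesc}, into \corref{cor:weave} in the context $(\Top,\Top^\sm,\Top^\etcov)$ with $\cE_0=\Top$. Your additional remarks on unwinding the abstract vocabulary and identifying $\cE_\lis$ with all morphisms are exactly the points the paper addresses in the discussion preceding the statement.
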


In particular, we have
\[
  \Shv^{\lis}(\sX) \simeq \lim_{(S,s)} \Shv(S)
\]
for every topological Artin stack $\sX$, where the limit is taken over $(S,s)\in\Lis_\sX$ and the transition functors are $*$-inverse image; see \ssecref{ssec:lisextpresheaf} for the definition of $\Lis_\sX$.
\corref{cor:weavetopstk} asserts that the \inftyCats $\Shv^\lis(\sX)$ are equipped with the operations $(\otimes,\uHom)$ and $(f^*,f_*)$ and $(f_!,f^!)$ for arbitrary morphisms of topological Artin stacks, satisfying the usual base change and projection formulas.
Moreover, we have by \corref{cor:poinc} the Poincaré duality isomorphisms $f^! \simeq \Sigma_f f^*$ for any topological submersion of stacks, where $\Sigma_f$ can be interpreted as tensoring with the Thom space of the relative tangent microbundle of $f$.
Dually, we have $f_! \simeq f_*$ for any proper representable morphism.

\sssec{}

More generally, using \corref{cor:weavenonart} we find that $\Shv^{\lis,*}_!$ extends to a presentable weave on the entire \inftyCat $\Stk$ of topological stacks.
The shriekable morphisms are those which are representable by topological Artin stacks, the ``smooth'' morphisms are those whose base change to any topological space is a topological submersion of topological Artin stacks, and the ``proper'' morphisms are those whose base change to any topological space is a proper morphism of topological spaces.

\appendix
\section{Exchange transformations}

Let $(\cC,\cE)$ be as in \sssecref{sssec:leftpreweave} and $\D^*_!$ a left preweave on $(\cC,\cE)$.

\subsection{Projection formulas}

Let $f : X \to Y$ be a morphism in $\cC$.

\subsubsection{}

If $f$ is shriekable, there is a canonical isomorphism
\begin{equation}\label{eq:projection!}
  \Ex^{\otimes,*}_! : f_!(-) \otimes (-) \simeq f_!(- \otimes f^*(-))
\end{equation}
encoded by the lax monoidality of the preweave $\D^*_!$.

\subsubsection{}

If $f$ is shriekable, there is a canonical exchange transformation
\begin{equation}\label{eq:Ex*!otimes}
  \Ex^{*,!}_{\otimes} :
  f^*(-) \otimes f^!(-) \to f^!(- \otimes -)
\end{equation}
which by definition makes the following diagram commute:
\begin{equation*}
  \begin{tikzcd}
    f^*(-) \otimes f^!(-) \ar{r}{\eqref{eq:Ex*!otimes}}\ar{d}{\mrm{unit}}
    & f^!((-) \otimes (-)). \ar[leftarrow]{d}{\mrm{counit}}
    \\
    f^!f_!(f^*(-) \otimes f^!(-)) \ar[equals]{r}{\eqref{eq:projection!}}
    & f^!(f_!f^!(-) \otimes (-))
  \end{tikzcd}
\end{equation*}

\subsubsection{}

If $f^*$ admits a left adjoint $f_\sharp$, then there is a canonical exchange transformation
\begin{equation}\label{eq:projectionshp}
  \Ex^{\otimes,*}_\sharp :
  f_\sharp(- \otimes f^*(\sG))
    \to f_\sharp(-) \otimes \sG
\end{equation}
which by definition makes the following diagram commute:
\begin{equation*}
  \begin{tikzcd}
    f_\sharp(- \otimes f^*(-)) \ar{r}{\eqref{eq:projectionshp}}\ar{d}{\mrm{unit}}
    & f_\sharp(-) \otimes (-). \ar[leftarrow]{d}{\mrm{counit}}
    \\
    f_\sharp(f^*f_\sharp(-) \otimes f^*(-)) \ar[equals]{r}
    & f_\sharp f^*(f_\sharp(-) \otimes (-))
  \end{tikzcd}
\end{equation*}

\subsubsection{}

If $f^*$ admits a right adjoint $f_*$, then there is a canonical exchange transformation
\begin{equation}\label{eq:projection*}
  \Ex^{\otimes,*}_* : f_*(-) \otimes (-) \to f_*(- \otimes f^*(-))
\end{equation}
which by definition makes the following diagram commute:
\begin{equation*}
  \begin{tikzcd}
    f_*(-) \otimes (-) \ar{r}{\eqref{eq:projectionshp}}\ar{d}{\mrm{unit}}
    & f_*(- \otimes f^*(-)). \ar[leftarrow]{d}{\mrm{counit}}
    \\
    f_*f^*(f_*(-) \otimes (-)) \ar[equals]{r}
    & f_*(f^*f_*(-) \otimes f^*(-))
  \end{tikzcd}
\end{equation*}

\subsubsection{}

Suppose that $f^*$ admits a right adjoint $f_*$ and that $\D(X)$ and $\D(Y)$ are closed symmetric monoidal, so that there are internal hom functors $\uHom_X(-, -)$ and $\uHom_Y(-, -)$.

For every morphism $f: X \to Y$, symmetric monoidality of the functor $f^*$ yields a canonical natural transformation
\begin{equation}\label{eq:vrwhvuhe}
  \Ex_{*}^{\uHom,*} : \uHom_Y(-, f_*(-))
  \to f_* \uHom_X(f^*(-), -)
\end{equation}
which by definition makes the following diagram commute for all $\sF \in \D(Y)$, $\sG \in \D(X)$:
\begin{equation*}
  \begin{tikzcd}[scale cd=0.9, matrix xscale=0.5]
    \uHom_Y(\sF, f_*\sG) \ar{d}{\mrm{unit}}\ar{r}{\eqref{eq:vrwhvuhe}}
    & f_* \uHom_X(f^*\sF, \sG). \ar[leftarrow]{d}{\mrm{counit}}
    \\
    f_*f^* \uHom_Y(\sF, f_*\sG) \ar{d}{\mrm{coev}}
    & f_* \uHom_X(f^*\sF, f^*f_*\sG) \ar[leftarrow]{d}{\mrm{ev}}
    \\
    f_* \uHom_X(f^*\sF, f^*\uHom_Y(\sF, f_*\sG) \otimes f^*\sF) \ar[equals]{r}
    & f_* \uHom_X(f^*\sF, f^*(\uHom_Y(\sF, f_*\sG) \otimes \sF))
  \end{tikzcd}
\end{equation*}
Moreover, \eqref{eq:vrwhvuhe} is \emph{invertible}, as one can check by applying $\Maps(\sH, -)$ for all $\sH \in \D(Y)$.

\subsubsection{}

Suppose $\D(X)$ and $\D(Y)$ are closed symmetric monoidal.
There is a canonical natural transformation
\begin{equation}\label{eq:f^*Hom}
  \Ex^{*}_{\uHom} : f^*\uHom_Y(-, -) \to \uHom_X(f^*(-), f^*(-))
\end{equation}
which by definition makes the square
\begin{equation*}
  \begin{tikzcd}
    f^*\uHom_Y(-, -) \ar{r}{\eqref{eq:f^*Hom}}\ar{d}{\mrm{unit}}
    & \uHom_X(f^*(-), f^*(-)). \ar[leftarrow]{d}{\mrm{counit}}
    \\
    f^*\uHom_Y(-, f_*f^*(-)) \ar{r}{\eqref{eq:vrwhvuhe}}
    & f^*f_* \uHom_X(f^*(-), f^*(-))
  \end{tikzcd}
\end{equation*}
commute.

When $f_\sharp$ exists, \eqref{eq:f^*Hom} is the right transpose of the natural transformation $\Ex_\sharp^{\otimes,*} : f_\sharp(- \otimes f^*(-)) \to f_\sharp(-) \otimes (-)$ \eqref{eq:projectionshp}.
Thus in that case, \eqref{eq:f^*Hom} is invertible if and only if $\Ex_\sharp^{\otimes,*}$ is invertible.

\subsection{Base change formulas}

Suppose given a cartesian square
\[\begin{tikzcd}
  X' \ar{r}{g}\ar{d}{p}
  & Y' \ar{d}{q}
  \\
  X \ar{r}{f}
  & Y
\end{tikzcd}\]
in $\cC$.

\subsubsection{}

If $f$ is shriekable, then there is an invertible exchange transformation
\begin{equation}\label{eq:Ex^*_!}
  \Ex^*_!: q^* f_! \to g_! p^*.
\end{equation}
If $f_!$ and $g_!$ admit right adjoints $f^!$ and $g^!$, and $p^*$ and $q^*$ admit right adjoints $p_*$ and $q_*$, then there is by transposition an invertible exchange transformation
\begin{equation}\label{eq:Ex^!_*}
  \Ex^!_* : p_* g^! \to f^! q_*.
\end{equation}

\subsubsection{}

If $f^*$ and $g^*$ admit left adjoints $f_\sharp$ and $g_\sharp$, there is an exchange transformation
\begin{equation}\label{eq:Ex_sharp^*}
  \Ex^*_\sharp : g_\sharp p^*
  \xrightarrow{\unit} g_\sharp p^* f^* f_\sharp
  \simeq g_\sharp g^* q^* f_\sharp
  \xrightarrow{\counit} q^*f_\sharp
\end{equation}
When this is invertible, we will say that \emph{$f_\sharp$ commutes with $q^*$}.

\subsubsection{}

If $f^*$ and $g^*$ admit right adjoints $f_*$ and $g_*$, and $p^*$ and $q^*$ admit right adjoints $p_*$ and $q_*$, there is an exchange transformation
\begin{equation}\label{eq:Ex_*^*}
  \Ex^*_* : q^* f_*
  \xrightarrow{\unit} q^* f_* p_* p^*
  \simeq q^* q_* g_* p^*
  \xrightarrow{\counit} g_* p^*
\end{equation}
When this is invertible, we will say that \emph{$f_*$ commutes with $q^*$}.
When $p^*$ and $q^*$ admit left adjoints, $\Ex^*_*$ is the right transpose of $\Ex^*_{\sharp} : p_\sharp f^* \to f^* q_\sharp$ \eqref{eq:Ex_sharp^*}.

\subsubsection{}

If $q$ is shriekable, and $p_!$ and $q_!$ admit right adjoints $p^!$ and $q^!$, then there is an exchange transformation
\begin{equation}\label{eq:Ex_!*}
  \Ex_{!,*} : g_!p^!
  \xrightarrow{\unit} g_!p^! f^!f_!
  \simeq g_! g^!q^! f_!
  \xrightarrow{\counit} q^! f_!.
\end{equation}

\subsubsection{}

If $f^*$ and $g^*$ admit left adjoints $f_\sharp$ and $g_\sharp$, there is an exchange transformation
\begin{equation}\label{eq:Ex_shp,!}
  \Ex_{\sharp,!} : f_\sharp p_!
  \xrightarrow{\unit} f_\sharp p_! g^* g_\sharp
  \simeq f_\sharp f^* q_! g_\sharp
  \xrightarrow{\counit} q_! g_\sharp,
\end{equation}
where the isomorphism in the middle is $\Ex^*_!$ \eqref{eq:Ex^*_!}.
When this is invertible, we will say that \emph{$f_\sharp$ commutes with $q_!$} (or $p_!$).

\subsubsection{}

If $f^*$ and $g^*$ admit right adjoints $f_*$ and $g_*$, $p^*$ and $q^*$ admit left adjoints $p_\sharp$ and $q_\sharp$, and $q_\sharp$ commutes with $f^*$, then there is an exchange transformation
\begin{equation}\label{eq:Ex_shp,*}
  \Ex_{\sharp,*} : q_\sharp g_*
  \xrightarrow{\unit} f_* f^* q_\sharp g_*
  \simeq f_* p_\sharp g^* g_*
  \xrightarrow{\counit} f_* p_\sharp,
\end{equation}
where the isomorphism in the middle is $\Ex_\sharp^*$ \eqref{eq:Ex_sharp^*}.
When this is invertible, we will say that \emph{$f_*$ commutes with $q_\sharp$} (or $p_\sharp$).

\subsubsection{}

If $q$ is shriekable, $p_!$ and $q_!$ admit right adjoints $p^!$ and $q^!$, and $f^*$ and $g^*$ admit right adjoints $f_*$ and $g_*$, there is an exchange transformation
\begin{equation}\label{eq:Ex^*,!}
  \Ex^{*,!} : g^* q^!
  \xrightarrow{\unit} g^* q^! f_* f^*
  \simeq g^* g_* p^! f^*
  \xrightarrow{\counit} p^! f^*,
\end{equation}
where the isomorphism in the middle is $\Ex^!_*$ \eqref{eq:Ex^!_*}.
When this is invertible, we will say that \emph{$f^*$ commutes with $q^!$} (or $p^!$).
When $f^*$ and $g^*$ admit left adjoints, $\Ex^{*,!}$ is the right transpose of $\Ex_{\sharp,!}$ \eqref{eq:Ex_shp,!}.

\subsubsection{}

If $q$ is shriekable, $f^*$ and $g^*$ admit right adjoints $f_*$ and $g_*$, and $p_!$ and $q_!$ admit right adjoints $p^!$ and $q^!$, then there is an exchange transformation
\begin{equation}\label{eq:Ex_!*}
  \Ex_{!,*} : q_!g_*
  \xrightarrow{\unit} q_!g_* p^!p_!
  \simeq q_! q^!f_* p_!
  \xrightarrow{\counit} f_*p_!
\end{equation}
where the isomorphism in the middle is $\Ex^!_*$ \eqref{eq:Ex^!_*}.

\changelocaltocdepth{1}
\section{Descent by diagrams}

Let $\cC$ be an \inftyCat with fibred products.
Let $X \in \cC$ and $F : \sI \to \cC_{/X}$ a diagram in $\cC_{/X}$ indexed by an \inftyCat $\sI$.
We will denote this informally by $(f_i : Y_i \to X)_{i \in \sI}$, where $Y_i := F(i)$ for $i \in \sI$.

\subsection{}
\label{ssec:limstar}

Let $\D^* : \cC^\op \to \Cat$ be a presheaf of \inftyCats.
Consider the canonical functor
\begin{equation}\label{eq:F^*}
  F^* : \D(X) \to \lim_{i \in I} \D(Y_i).
\end{equation}
If each $f_i^*$ admits a right adjoint $f_{i,*}$, then $F^*$ admits a right adjoint $F_*$ given informally by
\begin{equation}\label{eq:F_*}
  (\sF_i)_{i\in \sI}
  \mapsto
  \lim_{i\in \sI} f_{i,*} (\sF_i).
\end{equation}
If \eqref{eq:F^*} is an equivalence, then in particular the unit $\id \to F_* F^*$ is invertible.
In other words, the canonical morphism in $\D(X)$
\begin{equation}
  \sF \to \lim_{i \in I} f_{i,*} f_i^*(\sF)
\end{equation}
is invertible for all $\sF \in \D(X)$.

\subsection{}
\label{ssec:niavzhjn}

Let $\D^! : \cC^\op \to \Cat$ be a presheaf of \inftyCats.
Consider the canonical functor
\begin{equation}\label{eq:F^!}
  F^! : \D(X) \to \lim_{i \in I} \D(Y_i).
\end{equation}

If each $f_i^!$ admits a left adjoint $f_{i,!}$, then $F^!$ admits a left adjoint $F_!$ given informally by
\begin{equation}\label{eq:F_!}
  (\sF_i)_{i\in \sI}
  \mapsto
  \colim_{i\in \sI} f_{i,!} (\sF_i).
\end{equation}
If \eqref{eq:F^!} is an equivalence, then in particular the counit $F_! F^! \to \id$ is invertible.
In other words, the canonical morphism in $\D(X)$
\begin{equation}
  \colim_{i \in I} f_{i,!} f_i^!(\sF) \to \sF
\end{equation}
is invertible for all $\sF \in \D(X)$.

\subsection{}
\label{ssec:colimshp}

Suppose given a presheaf $\D^* : \cC^\op \to \Cat$ such that each $f_i^*$ admits a \emph{left} adjoint $f_{i,\sharp}$.
The discussion in \ssecref{ssec:niavzhjn} translates as follows.

The functor $F^*$ admits a left adjoint $F_\sharp$, given informally by
\begin{equation}\label{eq:F_sharp}
  (\sF_i)_{i\in \sI}
  \mapsto
  \colim_{i\in \sI} f_{i,\sharp} (\sF_i).
\end{equation}
If \eqref{eq:F^*} is an equivalence, then in particular the counit $F_\sharp F^* \to \id$ is invertible.
In other words, the canonical morphism in $\D(X)$
\begin{equation}
  \colim_{i \in I} f_{i,\sharp} f_i^*(\sF) \to \sF
\end{equation}
is invertible for all $\sF \in \D(X)$.


\bibliographystyle{halphanum}

{\small
\noindent
Institute of Mathematics, Academia Sinica, Taipei 106319, Taiwan}

\end{document}